\documentclass[11pt,a4paper]{amsart}
\pdfoutput=1
\usepackage[margin=2.5cm]{geometry} % - Control margins
\usepackage{amssymb}
\usepackage{amsmath}
\usepackage{mathtools}
\usepackage{amsthm}
\usepackage{amsfonts}
\usepackage{kpfonts} % - Nice overall font style
\usepackage{bbm} % - allows for \mathbbm fonts
\usepackage{dutchcal} % - mathcal font style
\usepackage[mathscr]{euscript} % - mathscr font style
\usepackage{tikz-cd} % - For commutative diagrams
\usepackage{todonotes} % - For nice informal todo notes
\usepackage{graphicx} % - Required for inserting images
\usepackage[toc,page]{appendix} % - used for making appendices
\usepackage{xcolor} % - for colors
\usepackage{float} % - force a figure to stay put
\usepackage{enumitem} % - more detailed options for enumerate
\usepackage{comment} % - for long comments
\usepackage[normalem]{ulem} %for strikethrough e.g. \sout{blah}

\usepackage{url}
\usepackage[style = alphabetic,
            backref = true,
            backrefstyle = two]{biblatex}
\renewcommand\mathfrak[1]{\mbox{\usefont{U}{euf}{m}{n}#1}}

\definecolor{winered}{rgb}{0.5,0,0}
\usepackage[citecolor = winered, 
            urlcolor = winered, 
            linkcolor = winered,
            menucolor = winered, 
            colorlinks = true]{hyperref}

\DefineBibliographyStrings{english}{%
  backrefpage = {p.}, % - originally "cit. on p.", changed to "p."
  backrefpages = {pp.}, % - originally "cit. on pp.", changed to "pp."
}

\newtheoremstyle{theoremdd}
{\topsep}{\topsep}{\upshape}{0pt}{\bfseries}{.}{ }{\thmname{#1}\thmnumber{ #2}\thmnote{ (#3)}}

\theoremstyle{definition}
\newtheorem{Th}{Theorem}[section]
\newtheorem{Lemma}[Th]{Lemma}
\newtheorem{Cor}[Th]{Corollary}
\newtheorem{Prop}[Th]{Proposition}
\newtheorem{Def}[Th]{Definition}

\newtheorem{Rem}[Th]{Remark}
\newtheorem{Ex}[Th]{Example}

\newcommand{\Hom}{\text{Hom}}
\newcommand{\C}{\mathbb{C}}
\newcommand{\R}{\mathbb{R}}

\newcommand{\ncat}{\mathbf} % - named categories
\newcommand{\cat}{\mathcal} % - unnamed categories
\newcommand{\cons}{\text} % - constructions
\renewcommand{\u}{\underline}
\newcommand{\colim}{\text{colim}}
\newcommand{\ncolim}[1]{\underset{#1}{\colim} \,}

\newcommand{\op}{\text{op}} 

\newcommand{\Pre}{\ncat{Pre}}

\newcommand{\Sh}{\ncat{Sh}}

\newcommand{\sat}[1]{\text{sat}(#1)}

\newcommand{\Map}{\text{Map}}

\newcommand{\Gro}{\text{Gro}}

\newcommand{\nd}{\text{nd}}

\newcommand{\sPre}{\ncat{sPre}}

\newcommand{\B}{\mathbf{B}}

\renewcommand{\H}{\mathbb{H}}

\newcommand{\hocolim}{\text{ho}\colim}
\newcommand{\nhocolim}[1]{\underset{#1}{\hocolim} \,}
\newcommand{\closed}{\mathscr{C}}
\newcommand{\cosk}{\text{cosk}}

\makeatletter
\newtheorem*{rep@theorem}{\rep@title}
\newcommand{\newreptheorem}[2]{%
\newenvironment{rep#1}[1]{%
 \def\rep@title{#2 \ref{##1}}%
 \begin{rep@theorem}}%
 {\end{rep@theorem}}}
\makeatother

\newreptheorem{theorem}{Theorem}

\newreptheorem{lemma}{Lemma}

\newreptheorem{cor}{Corollary}
\newreptheorem{prop}{Proposition}

\usetikzlibrary{calc}
\usetikzlibrary{decorations.pathmorphing}

\tikzset{curve/.style={settings={#1},to path={(\tikztostart)
    .. controls ($(\tikztostart)!\pv{pos}!(\tikztotarget)!\pv{height}!270:(\tikztotarget)$)
    and ($(\tikztostart)!1-\pv{pos}!(\tikztotarget)!\pv{height}!270:(\tikztotarget)$)
    .. (\tikztotarget)\tikztonodes}},
    settings/.code={\tikzset{quiver/.cd,#1}
        \def\pv##1{\pgfkeysvalueof{/tikz/quiver/##1}}},
    quiver/.cd,pos/.initial=0.35,height/.initial=0}

\tikzset{tail reversed/.code={\pgfsetarrowsstart{tikzcd to}}}
\tikzset{2tail/.code={\pgfsetarrowsstart{Implies[reversed]}}}
\tikzset{2tail reversed/.code={\pgfsetarrowsstart{Implies}}}

\definecolor{emilioeditcolor}{rgb}{0.94, 0.97, 1.0}
\newcommand{\emilio}[1]{\todo[inline,color=emilioeditcolor, size=\small]{{\bf E:} #1}}

\definecolor{cheyneeditcolor}{rgb}{1.0, 0.71, 0.76}

\title{Hypercovers in Differential Geometry}

\usepackage[foot]{amsaddr}
\author{Cheyne Glass$^1$}
\address{$^1$SUNY New Paltz}

\author{Emilio Minichiello$^2$}
\address{$^2$CUNY CityTech}
\email{glassc(at)newpaltz.edu, eminichiello67(at)gmail.com}

\date{Last compilation: \today}
\begin{document}

\begin{abstract}
In this paper we provide a simple proof that for several sites of interest in differential geometry, the local projective model structure and the \v{C}ech projective model structure are equal. In particular, this applies to the site of smooth manifolds with open covers and the site of cartesian spaces with good open covers. As an application, we show that for a presheaf of sets on these sites, applying the plus construction once is enough to sheafify.
\end{abstract}

\maketitle

\setcounter{tocdepth}{1}
\tableofcontents

\section{Introduction}

Higher sheaf theory is the study of sheaves of homotopy types, rather than sheaves of sets or abelian groups. While many applications of higher sheaf theory are found in algebraic geometry and homotopy theory, there is a growing body of work not limited to the following examples \cite{Moerdijk1997, Larusson2002, Carchedi2011, Fiorenza2011, Bunke2013, Nikolaus2014a, Hopkins2015, Amabel2021, Bunk2021, Clough2021, Bunk2022, Glass2022, Grady2022, Pavlov2022, Minichiello2024, Minichiello2024Obstruction} that applies higher sheaf theory to differential geometry. This burgeoning field is a subset of \textbf{higher differential geometry}, a relatively new subject which uses higher categorical structures to study differential geometric objects, with motivation mainly coming from physics.

Much of this theory can be studied using $\infty$-categories, and it benefits immensely from the elegance and ease which comes from using this formalism. However, in order to obtain cocycle data from higher sheaves on manifolds, one must use an underlying model category which can ``transmit'' information between the differential geometric world and the homotopical world.

For the moment, let us fix the site $(\ncat{Man}, j_{\text{open}})$ of finite dimensional smooth manifolds with the coverage of open covers, and let $\ncat{sPre}(\ncat{Man})$ denote the category of simplicial presheaves on $\ncat{Man}$, namely functors $ X: \ncat{Man}^\op \to \ncat{sSet}$, with morphisms natural transformations. In this paper, we are concerned with various model structures on $\ncat{sPre}(\ncat{Man})$. Let $\H(\ncat{Man})$ denote\footnote{This notation was inspired by Urs Schreiber's notation for $\infty$-toposes \cite{Schreiber2013}.} the category of simplicial presheaves equipped with the \textbf{projective model structure}, where weak equivalences and fibrations are defined objectwise. The \textbf{local projective model structure} on simplicial presheaves, denoted $\hat{\H}(\ncat{Man}, j_{\text{open}})$, has as its weak equivalences the $j_{\text{open}}$-\textbf{local weak equivalences}, which blend the homotopical constraints of a weak equivalence with locality coming from the site structure.

This idea of local weak equivalence (which can be defined for simplicial presheaves over arbitrary sites) has a long history. Perhaps the first reference is Joyal's letter to Grothendieck \cite{Joyal1984}, where he develops this notion for simplicial sheaves. The motivation for this notion was to develop a homotopy theory of $\infty$-stacks---a generalization of the stacks of groupoids used in algebraic geometry---the philosophy of which is laid out in Grothendieck's Pursuing Stacks \cite{Grothendieck1983}. It was believed\footnote{See the bottom of page 2 in \cite{Dugger2004}.} that the fibrant objects in the local projective model structure were those objectwise fibrant simplicial presheaves that satisfied the following property, called \textbf{\v{C}ech descent}: if $X$ is an objectwise fibrant simplicial presheaf, and $\mathcal{U} = \{U_i \subseteq M \}_{i \in I}$ is an open cover of $M$, then the canonical map
\begin{equation*}
X(M) \to \text{holim}_{\mathbf{\Delta}} \left( \prod_i X(U_i) \, \substack{\leftarrow\\[-1em] \leftarrow} \, \prod_{ij} X(U_{ij}) \,  \substack{\leftarrow\\[-1em] \leftarrow \\[-1em] \leftarrow} \, \prod_{ijk} X(U_{ijk}) \, \dots \right)
\end{equation*}
is a weak equivalence. This is equivalent to the map of simplicial sets
\begin{equation} \label{eq equiv def of cech descent}
    X(M) \cong \u{\sPre}(\ncat{Man})(y(M), X) \to \u{\sPre}(\ncat{Man})(\check{C}(\mathcal{U}), X)
\end{equation}
induced by the canonical map $\check{C}(\mathcal{U}) \to y(M)$ of simplicial presheaves being a weak equivalence, where $\u{\ncat{sPre}}(\cat{C})(X,Y)$ denotes the simplicially enriched Hom in the category of simplicial presheaves over a category $\cat{C}$ and $\check{C}(\mathcal{U})$ is the \v{C}ech nerve of the cover $\mathcal{U}$.

If $X$ is a discrete simplicial presheaf, i.e. a presheaf of sets, then $X$ satisfies \v{C}ech descent if and only if $X$ is a sheaf of sets, and a similar statement holds for stacks of groupoids \cite[Section 5]{Minichiello2024}. Hence a simplicial presheaf satisfying \v{C}ech descent really deserves to be called an $\infty$-stack.

However, Dugger-Hollander-Isaksen proved \cite[Theorem 1.2]{Dugger2004} that the fibrant objects in the local projective model structure are \textit{not} those simplicial presheaves satisfying \v{C}ech descent. Rather, they are those (objectwise fibrant) simplicial presheaves that satisfy a stronger property called \textbf{hyperdescent}. Rather than requiring that $X$ be local (in the sense of (\ref{eq equiv def of cech descent}) being a weak equivalence) with respect to maps $\check{C}(\mathcal{U}) \to y(M)$, hyperdescent requires that $X$ be local with respect to all maps $H \to y(M)$ which are \textbf{hypercovers}. Intuitively, one can think of a hypercover as consisting of an open cover $\mathcal{U} = \{U_i \subseteq M \}$ of $M$, and for every nonempty double intersection $U_{ij}$ an open cover $\mathcal{V}^{ij} = \{V^{ij}_\alpha \subseteq U_{ij} \}_{\alpha \in I^{ij}}$, and for every triple intersection $V^{ij}_\alpha \cap V^{jk}_\beta \cap V^{ik}_\gamma$ a cover, etc. Defining hypercovers rigorously takes some work, and they can be daunting objects to deal with.

To obtain the more intuitive \v{C}ech descent, a model structure is constructed by Dugger-Hollander-Isaksen in \cite[Appendix A]{Dugger2004}, called the \textbf{\v{C}ech projective model structure}, denoted $\check{\H}(\ncat{Man}, j_{\text{open}})$. It is the left Bousfield localization of $\H(\ncat{Man})$ at the projection maps $\check{C}(\mathcal{U}) \to y(M)$, where $\mathcal{U}$ is an open cover of a manifold $M$. The fibrant objects of this model structure are precisely those objectwise fibrant simplicial presheaves that satisfy \v{C}ech descent. But now the weak equivalences of this model structure are mysterious, compared to the intuitive and useful local weak equivalences of the local projective model structure $\hat{\H}(\ncat{Man}, j_{\text{open}})$. 

In general, the \v{C}ech and local projective model structures are not Quillen equivalent \cite[Example A.10]{Dugger2004}. If $(\cat{C}, j)$ is a site where If $\hat{\H}(\cat{C}, j)$ and $\check{\H}(\cat{C}, j)$ are Quillen equivalent, then we say that $(\cat{C}, j)$ is \textbf{hypercomplete}. If the two model structures actually coincide $\hat{\H}(\cat{C}, j) = \check{\H}(\cat{C}, j)$, then we say that $(\cat{C}, j)$ is \textbf{strictly hypercomplete}. In cases of strict hypercompleteness, one has the best of both model categories: the fibrant objects are those objectwise fibrant simplicial presheaves satisfying \v{C}ech descent, and the weak equivalences are just the local weak equivalences\footnote{Furthermore the cofibrations are relatively easy to understand, since they agree with those of the projective model structure, which is cofibrantly generated.}. This makes working with simplicial presheaves over strictly hypercomplete sites easier.

In an unfinished set of notes, Dugger claims \cite[Proposition 3.4.8]{Dugger1998} that $(\ncat{Man}, j_{\text{open}})$ is strictly hypercomplete. He even refers to this as an ``important result,'' which is ``not an easy proposition'' \cite[Page 31]{Dugger1998}. However, no proof is provided, and the notes remain unfinished two decades later. 

In this paper, we prove Dugger's claim, and extend it to a variety of sites relevant to differential geometry:

\begin{reptheorem}{th many sites are strictly hypercomplete}
Let $X$ be a paracompact Hausdorff topological space with finite covering dimension. Then all of the sites from Example \ref{ex sites} are strictly hypercomplete:
\begin{equation*}
    (\ncat{TopMan}, j_{\text{open}}), \quad (\mathcal{O}(X), j_X), \quad (\ncat{Man}, j_{\text{open}}), \quad (\ncat{Cart}, j_{\text{open}/\text{good}}), \quad (\C\ncat{Man}, j_{\text{open}}), \quad (\ncat{Stein}, j_{\text{open}}).
\end{equation*}
\end{reptheorem}

Our proof relies on a result of Jacob Lurie \cite{Lurie2009}, which we refer to as \textbf{Lurie's Lemma}, that shows how one can refine each level of a hypercover by an actual open cover. It is a folklore result that $(\ncat{Cart}, j_{\text{open}})$ and $(\ncat{Man}, j_{\text{open}})$ are hypercomplete \cite{Hoy2013}. However, the proofs given by Hoyois and Carchedi are explicitly $\infty$-categorical, and furthermore rely on somewhat advanced $\infty$-categorical machinery (postnikov towers and homotopy dimension of $\infty$-toposes). Our proof is short and simple, using only the barest of model categorical machinery. Furthermore, since we are working directly with the model categories, our proof provides strict hypercompleteness.

Next we turn to the issue of how one computes fibrant replacement in $\check{\H}(\ncat{Man}, j_{\text{open}})$. Such a construction is analogous to $\infty$-sheafification, and is very important to the study of higher differential geometry. Many higher sheaves in higher differential geometry are constructed by defining a structure that is locally trivial, and then taking some sort of ``stackification'' to obtain a nontrivial global structure. Perhaps the best example of this is the construction of abelian bundle gerbes by Nikolaus and Schweigert in \cite[Section 4]{Nikolaus2011}, also see \cite[Remark 3.2.2]{Fiorenza2011}. It turns out that a fibrant replacement functor for the local projective model structure has already been constructed by Zhen Lin Low in \cite{Low2015}. Given a locally fibrant simplicial presheaf $A$ on a site $(\cat{C}, j)$ with $U \in \cat{C}$, if we let $\hat{A}$ denote a fibrant replacement of $A$ in $\hat{\H}(\cat{C}, j)$, then there is a weak equivalence of simplicial sets
\begin{equation} \label{eq low's formula intro}
   \hat{A}(U) \simeq \nhocolim{H \in \ncat{Hyp}^\op_U} \u{\ncat{sPre}}(\cat{C})(H, A),
\end{equation}
where $\ncat{Hyp}_U$ denotes the category of hypercovers over $y(U)$. By combining the above construction with Theorem \ref{th many sites are strictly hypercomplete}, we obtain a fibrant replacement functor for $\check{\H}(\ncat{Man}, j_{\text{open}})$. Note that Low's formula (\ref{eq low's formula intro}) is a strengthening of the well-known Verdier Hypercovering Theorem, which in modern form states that given locally fibrant simplicial presheaves $X$ and $A$, there is an isomorphism
\begin{equation} \label{eq verdier hypercovering theorem intro}
  \ncolim{H\in [\ncat{Triv}_X^\op]} \pi_0 \u{\ncat{sPre}}(\cat{C})(H,A) \cong [X,A],
\end{equation}
where $[X,A]$ denotes the set of morphisms from $X$ to $A$ in the homotopy category $\text{ho} \hat{\H}(\cat{C}, j)$, and $[\ncat{Triv}_X]$ denotes the category whose objects are local trivial fibrations over $X$ and whose morphisms are homotopy classes of maps between them.

Given a presheaf of sets $A$ on a site $(\cat{C}, j)$, let $A^+$ denote the plus construction. Typically one needs to apply the plus construction twice to a presheaf over a site in order to obtain a sheaf, see \cite[Section V.3]{MacLane2012}. However, the plus construction can be seen as a special case of applying fibrant replacement to a discrete simplicial presheaf, which gives the following result.

\begin{reptheorem}{th plus construction is sheafification}
Given a presheaf of sets $A$ over any of the sites mentioned in Theorem \ref{th many sites are strictly hypercomplete}, the presheaf $A^+$ is a sheaf.
\end{reptheorem}

In Section \ref{section sites and sheaves} we review the basic notions of topos theory: sites and sheaves, and give the examples of sites we are interested in for this paper. In Section \ref{section simplicial presheaves} we review the machinery of simplicial presheaves, their various model structures and describe different notions of hypercovers. In Section \ref{section strictly hypercomplete}, we prove Theorem \ref{th many sites are strictly hypercomplete}. In Section \ref{section plus construction} we prove Theorem \ref{th plus construction is sheafification}. In Section \ref{section applications} we discuss some applications of our main results. We show how strict hypercompleteness is useful for deriving cocycle constructions in higher differential geometry, for obtaining isomorphisms between \v{C}ech and sheaf cohomology, and for proving Verdier's Hypercovering theorem.

\subsection*{Note}
For the preprint version of this paper, we have decided to include significantly more expository material than is strictly necessary for the main results Theorem \ref{th many sites are strictly hypercomplete} and Theorem \ref{th plus construction is sheafification}. This is because we found the literature on this material to be a bit scattered and confusing. Since internet ink is free, we figured we would add this material to this preprint version.

This includes an extended section (Section \ref{section examples of sites}) with many examples of sites and their properties, and many appendices which provide more detail on results used in the paper, along with an appendix on the winding history of hypercovers. For similar reasoning, we also provide in this version some extra partial results which are somewhat tangential to the main results of this paper: Theorem \ref{th truncated result} and Theorem \ref{prop can refine hypercovers by Verdier hypercovers on matching Verdier site}.

In Appendix \ref{section augmented and truncated simplicial presheaves} we review the theory of augmented and truncated simplicial presheaves. In Appendix \ref{section verdier sites} we give some background on Verdier sites and prove Proposition \ref{prop can refine DHI-hypercover by verdier hypercover on Verdier site}. In Appendix \ref{section matching verdier sites} we discuss some partial results we obtained in an attempt to loosen the requirements on Verdier sites. In Appendix \ref{section luries lemma} we give a detailed proof of Lurie's Lemma \ref{lurie's lemma}. In Appendix \ref{section homotopical categories} we review some of the theory of homotopical categories, especially categories of fibrant objects, that is needed in Section \ref{section plus construction}. Finally in Appendix \ref{section history} we review some of the history of hypercovers.

The version of this paper we submit for publication will consist of only the strictly necessary material to present the new theorems and will be significantly shorter.

\subsection*{Acknowledgements}

The authors would like to thank Mahmoud Zeinalian, Matthew Cushman, Tim Hosgood, Severin Bunk and Dmitri Pavlov for various helpful discussions. The second author was supported by PSC-CUNY grant TRADB 56-13.

\section{Sites and Sheaves} \label{section sites and sheaves}

In this section we quickly overview the basics of the theory of sites and sheaves. For more details, see \cite[Section 2]{Minichiello2025}.

\begin{Def} \label{def family of morphisms}
Let $\cat{C}$ be a category, and $U \in \cat{C}$. A \textbf{family of morphisms} over $U$ is a set of morphisms $r = \{r_i : U_i \to U \}_{i \in I}$ in $\cat{C}$ with codomain $U$. A \textbf{refinement} of a family of morphisms $t = \{t_j : V_j \to U \}_{j \in J}$ over $U$ consists of a family of morphisms $r = \{r_i : U_i \to U \}_{i \in I}$, a function $\alpha: I \to J$ and for each $i \in I$ a map $f_i: U_i \to V_{\alpha(i)}$, which we call the $i$ \textbf{component} of $f$, making the following diagram commute:
\begin{equation*}
    \begin{tikzcd}
	{U_i} && {V_{\alpha(i)}} \\
	& U
	\arrow["{f_i}", from=1-1, to=1-3]
	\arrow["{r_i}"', from=1-1, to=2-2]
	\arrow["{t_{\alpha(i)}}", from=1-3, to=2-2]
\end{tikzcd}
\end{equation*}
If $r$ is a refinement of $t$, with maps $f_i: U_i \to V_{\alpha(i)}$, then we write $f : r \to t$. We say that $r$ refines $t$, and write $r \leq t$ if there exists a refinement $f : r \to t$.
\end{Def}

\begin{Def} \label{def collection of families}
A \textbf{collection of families} $j$ on a category $\cat{C}$ consists of a set $j(U)$ for each $U \in \cat{C}$, whose elements $r \in j(U)$ are families of morphisms over $U$. We write $r \in j$ to mean that there exists some $U \in \cat{C}$ such that $r \in j(U)$. 

If $j$ and $j'$ are collections of families on a category $\cat{C}$, then we say that $j'$ \textbf{contains} $j$ or $j \subseteq j'$ if for every $U \in \cat{C}$, $j(U) \subseteq j'(U)$. We say that $j$ \textbf{refines} $j'$ and write $j \leq j'$ if for every $r' \in j'$ there exists a $r \in j$ and a refinement $r \leq r'$.
\end{Def}

\begin{Def} \label{def coverage}
We say that a collection of families $j$ on a small category $\cat{C}$ is a \textbf{coverage} if
\begin{enumerate}
    \item for every isomorphism\footnote{This differs from \cite[Definition 2.9]{Minichiello2025}, but by \cite[Lemma 2.26]{Minichiello2025}, in practice this makes no difference.} $f : V \to U$, $\{f : V \to U \} \in j(U)$, and
    \item for every $U \in \cat{C}$, $r \in j(U)$, and $g : V \to U$, there exists a $t \in j(V)$ such that for every $t_j : V_j \to V$ in $t$ there exists a map $r_i : U_i \to U$ in $r$ and a map $s_j: V_j \to U_i$ in $\cat{C}$ making the following diagram commute:
\begin{equation*} \label{eqn coverage def}
    \begin{tikzcd} 
	{V_j} & {U_i} \\
	V & U
	\arrow["{t_j}"', from=1-1, to=2-1]
	\arrow["{s_j}", from=1-1, to=1-2]
	\arrow["{r_i}", from=1-2, to=2-2]
	\arrow["g"', from=2-1, to=2-2]
\end{tikzcd}
\end{equation*}
\end{enumerate}
If $j$ only satisfies (2), then we call it a \textbf{precoverage}. If $j$ is a coverage on $\cat{C}$, then we call families $r \in j(U)$ \textbf{covering families} over $U$. If a map $r_i: U_i \to U$ belongs to a covering family $r \in j(U)$, then we say that $r_i$ is a \textbf{basal map}\footnote{This terminology comes from \cite[Definition 9.1]{Dugger2004}. Many references simply call these covering maps.}. If $\cat{C}$ is a small category, and $j$ is a coverage on $\cat{C}$, then we call the pair $(\cat{C}, j)$ a \textbf{site}. We also think of $j(U)$ as a poset, where $r \leq r'$ in $j(U)$ if both $r$ and $r'$ are covering families over $U$ and there is a refinement $f : r \to r'$.

Given coverages $j, j'$ on $\cat{C}$, we say that $j$ \textbf{refines} $j'$, and write $j \leq j'$ if for every $U \in \cat{C}$ and every $r' \in j'(U)$, there exists a $r \in j(U)$ such that $r \leq r'$. We write $j \subseteq j'$ if $j(U) \subseteq j'(U)$ for every $U \in \cat{C}$.
\end{Def}

\begin{Def} \label{def presheaf, section, matching family, amalgamation}
A \textbf{presheaf} on a category $\cat{C}$ is a functor $X: \cat{C}^{\op} \to \ncat{Set}$. An element $x \in X(U)$ for an object $U \in \cat{C}$ is called a \textbf{section} over $U$. If $f: U \to V$ is a map in $\cat{C}$, and $x \in X(V)$, then we sometimes denote $X(f)(x)$ by $x|_U$. Given a family $r = \{r_i : U_i \to U \}_{i \in I}$ over $U$, an \textbf{$X$-matching family} over $r$ is a collection $\{x_i \in X(U_i) \}_{i \in I}$ such that given any commutative diagram in $\cat{C}$ of the form
\begin{equation*}
\begin{tikzcd}
	U_{ij} & {U_j} \\
	{U_i} & U
	\arrow["{r_i}"', from=2-1, to=2-2]
	\arrow["{r_j}", from=1-2, to=2-2]
	\arrow["u_i"', from=1-1, to=2-1]
	\arrow["u_j", from=1-1, to=1-2]
\end{tikzcd}
\end{equation*}
then $X(u_i)(x_i) = X(u_j)(x_j)$ for all $i,j \in I$. Note here that $U_{ij}$ is an arbitrary object in $\cat{C}$ and $u_i$, $u_j$ are arbitrary maps. If the presheaf is clear from context we may say that $\{ x_i \}$ is a matching family for $r$.

If $X$ is a presheaf on $\cat{C}$, and $r$ is a family of morphisms on $U$, then let $\text{Match}(r,X)$ denote the set of $X$-matching families over $r$. Given a $X$-matching family $\{ x_i \}$ over $r$, an \textbf{amalgamation} for $\{ x_i \}$ is a section $x \in X(U)$ such that $X(r_i)(x) = x_i$ for all $i$.
\end{Def}

\begin{Lemma}[{\cite[Lemma 2.27]{Minichiello2025}}] \label{lem coverage if pullbacks exist}
Suppose that $j$ is a coverage on a small category $\cat{C}$ such that pullbacks along covering maps exist. If $r = \{ r_i: U_i \to U \}_{i \in I}$ is a covering family of $U$, then a collection $\{s_i \in X(U_i) \}$ of sections of a presheaf $X$ on $\cat{C}$ is a matching family for $r$ if and only if for every pullback square of the form
\begin{equation*}
\begin{tikzcd}
	{U_i \times_U U_j} & {U_j} \\
	{U_i} & U
	\arrow["{r_i}"', from=2-1, to=2-2]
	\arrow["{r_j}", from=1-2, to=2-2]
	\arrow["{\pi_i}"', from=1-1, to=2-1]
	\arrow["{\pi_j}", from=1-1, to=1-2]
\end{tikzcd}
\end{equation*}
it follows that $X(\pi_i)(s_i) = X(\pi_j)(s_j)$.
\end{Lemma}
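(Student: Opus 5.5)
The plan is to prove the two implications separately, using only the universal property of the pullback. Throughout, fix a covering family $r = \{ r_i: U_i \to U \}_{i \in I}$ and sections $s_i \in X(U_i)$. By hypothesis each $r_i$ is a covering map, so for every pair $i,j \in I$ the pullback $U_i \times_U U_j$ exists, with projections $\pi_i$ and $\pi_j$.

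The forward direction is immediate. Suppose $\{s_i\}$ is a matching family in the sense of Definition \ref{def presheaf, section, matching family, amalgamation}. The pullback square
\begin{equation*}
\begin{tikzcd}
	{U_i \times_U U_j} & {U_j} \\
	{U_i} & U
	\arrow["{r_i}"', from=2-1, to=2-2]
	\arrow["{r_j}", from=1-2, to=2-2]
	\arrow["{\pi_i}"', from=1-1, to=2-1]
	\arrow["{\pi_j}", from=1-1, to=1-2]
\end{tikzcd}
\end{equation*}
is one particular commutative square of the required shape, taking $U_{ij} = U_i \times_U U_j$, $u_i = \pi_i$ and $u_j = \pi_j$. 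The matching condition therefore gives $X(\pi_i)(s_i) = X(\pi_j)(s_j)$ directly.

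For the converse, assume $X(\pi_i)(s_i) = X(\pi_j)(s_j)$ for all $i,j$. Take an arbitrary commutative square with an object $W$ in the corner and maps $u_i : W \to U_i$, $u_j : W \to U_j$ satisfying $r_i u_i = r_j u_j$.
\begin{enumerate}
\item By the universal property of the pullback there is a unique map $h : W \to U_i \times_U U_j$ with $\pi_i h = u_i$ and $\pi_j h = u_j$.
\item By functoriality of $X$, $X(u_i)(s_i) = X(h)X(\pi_i)(s_i)$.
\item By the hypothesis, $X(h)X(\pi_i)(s_i) = X(h)X(\pi_j)(s_j)$.
\item By functoriality again, $X(h)X(\pi_j)(s_j) = X(u_j)(s_j)$.
\end{enumerate}
Chaining these gives $X(u_i)(s_i) = X(u_j)(s_j)$, which is exactly the matching condition.

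There is no real obstacle. The only subtlety is that the matching-family definition quantifies over all commutative squares, whereas the lemma's condition involves only pullback squares. Step (1) of the converse is what handles this: every commutative square factors through the pullback, so the pullback squares suffice. The existence of these pullbacks is precisely the hypothesis that pullbacks along covering maps exist. The coverage axioms themselves are not needed.
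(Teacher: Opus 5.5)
Your proof is correct: the forward direction holds because a pullback square is a particular commutative square of the required shape, and the converse follows by factoring an arbitrary commutative square through $U_i \times_U U_j$ via the universal property and then using functoriality of $X$. This is the standard argument (the paper does not prove the lemma itself but cites it from Minichiello's earlier work), and you are right that only the existence of pullbacks along the covering maps $r_i$ is used, not the coverage axioms.
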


\begin{Lemma}[{\cite[Lemma 2.20]{Minichiello2025}}] \label{lem pullback of matching family by refinement is a matching family}
Given a presheaf $X$ on a category $\cat{C}$, suppose that $r = \{r_i: U_i \to U \}$ and $t = \{t_j: V_j \to U \}$ are families over $U$ and $f : r \to t$ is a refinement. If $\{ x_j \}$ is a matching family for $X$ over $t$, then $\{ X(f_i)(x_{\alpha(i)}) \}$ is a matching family for $X$ over $r$.
\end{Lemma}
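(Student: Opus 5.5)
The plan is to verify the matching condition directly from the definitions, transporting commutative squares along the refinement. Set $y_i := X(f_i)(x_{\alpha(i)}) \in X(U_i)$ for each $i \in I$. We must show that for every object $W$ of $\cat{C}$ and every pair of maps $u_i : W \to U_i$, $u_{i'} : W \to U_{i'}$ with $r_i u_i = r_{i'} u_{i'}$, we have $X(u_i)(y_i) = X(u_{i'})(y_{i'})$.

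The key step is to push such a square forward along the refinement. Consider the maps $f_i u_i : W \to V_{\alpha(i)}$ and $f_{i'} u_{i'} : W \to V_{\alpha(i')}$. The refinement triangles give $t_{\alpha(i)} f_i = r_i$ and $t_{\alpha(i')} f_{i'} = r_{i'}$, so
\begin{equation*}
t_{\alpha(i)} (f_i u_i) = r_i u_i = r_{i'} u_{i'} = t_{\alpha(i')} (f_{i'} u_{i'}).
\end{equation*}
Thus $(W, f_i u_i, f_{i'} u_{i'})$ is a commutative square over the pair $t_{\alpha(i)}, t_{\alpha(i')}$ in the family $t$. Since $\{x_j\}$ is a matching family over $t$, we obtain $X(f_i u_i)(x_{\alpha(i)}) = X(f_{i'} u_{i'})(x_{\alpha(i')})$.

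Finally, by contravariant functoriality of $X$, $X(f_i u_i) = X(u_i) X(f_i)$, so the left side is $X(u_i)(y_i)$ and the right side is $X(u_{i'})(y_{i'})$, which is exactly the required equality. There is no real obstacle. The only care needed is that the matching condition in Definition \ref{def presheaf, section, matching family, amalgamation} quantifies over arbitrary test objects $W$ and arbitrary maps, not only pullbacks, so no existence of pullbacks is needed. The case $\alpha(i) = \alpha(i')$ is handled uniformly by the same argument.
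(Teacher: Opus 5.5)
Your argument is correct. Composing a test square for $r$ with the refinement components $f_i, f_{i'}$ gives a test square for $t$, and contravariant functoriality of $X$ then finishes the proof, with no pullbacks needed. The paper gives no proof of its own here; it only cites \cite[Lemma 2.20]{Minichiello2025}, and your direct diagram chase is the standard argument for that result.
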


\begin{Def} \label{def separated and sheaf}
Given any presheaf $X$ and family of morphisms $r$, there is a canonical map
\begin{equation} \label{eq restriction map of presheaf to matching families}
    \text{res}_{r,X}: X(U) \to \text{Match}(r,X)
\end{equation}
which is defined for an element $x \in X(U)$ to be the matching family $\text{res}_{r,X}(x) = \{ X(r_i)(x) \}$ of $X$ over $r$. We say that $X$ is \textbf{separated} on $r$ if $\text{res}_{r,X}$ is injective. We say that $X$ is a \textbf{sheaf} on $r$ if $\text{res}_{r,X}$ is bijective. Given a collection of families $j$ on $\cat{C}$, we say that $X$ is a (separated presheaf) sheaf on $j$ if $X$ is a (separated presheaf) sheaf on $r$ for every $r \in j$.

Given a site $(\cat{C}, j)$, we call $X$ a $j$-\textbf{sheaf} or just a sheaf if it is a sheaf on every covering family $r$ of $j$. Let $\ncat{Sh}(\cat{C}, j)$ denote the full subcategory of $\ncat{Pre}(\cat{C})$ whose objects are sheaves.
\end{Def}

\begin{Lemma}[{\cite[Lemma 2.30]{Minichiello2025}}] \label{lem coverage comparison results}
Given a small category $\cat{C}$ and coverages $j, j'$ on $\cat{C}$, 
\begin{enumerate}
    \item if $j \leq j'$, then $\Sh(\cat{C}, j) \subseteq \Sh(\cat{C}, j')$.
    \item if $j \subseteq j'$, then $\Sh(\cat{C}, j') \subseteq \Sh(\cat{C}, j)$,
\end{enumerate}
\end{Lemma}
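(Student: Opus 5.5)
The plan is to prove the two parts separately, directly from Definition \ref{def separated and sheaf} and the definitions of refinement, using Lemma \ref{lem pullback of matching family by refinement is a matching family} as the main tool.

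For (2), suppose $j \subseteq j'$ and $X \in \Sh(\cat{C}, j')$. Every $r \in j(U)$ also lies in $j'(U)$, so $\text{res}_{r,X}$ is bijective for every $r \in j$. Hence $X$ is a $j$-sheaf. This part is immediate.

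For (1), suppose $j \leq j'$ and let $X$ be a $j$-sheaf. Fix $r' = \{r'_k : W_k \to U\}_{k \in K} \in j'(U)$, and choose $r = \{r_i : U_i \to U\} \in j(U)$ with a refinement $f : r \to r'$, given by $\alpha$ and components $f_i : U_i \to W_{\alpha(i)}$. We must show that $\text{res}_{r',X}$ is a bijection.

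\emph{Injectivity.} If $x, y \in X(U)$ agree on every $W_k$, then restricting along $f_i$ shows they agree on every $U_i$. Since $X$ is separated on $r$, we get $x = y$.

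\emph{Surjectivity.} Let $\{w_k\}$ be a matching family over $r'$. By Lemma \ref{lem pullback of matching family by refinement is a matching family}, $\{X(f_i)(w_{\alpha(i)})\}$ is a matching family over $r$. It therefore has an amalgamation $x \in X(U)$, since $X$ is a $j$-sheaf. It remains to check that $X(r'_k)(x) = w_k$ for every $k$, and I expect this to be the main obstacle.

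To handle it, I use the pullback stability axiom (2) of Definition \ref{def coverage}: pull $r$ back along $r'_k$ to get $t = \{t_l : V_l \to W_k\} \in j(W_k)$, with maps $s_l : V_l \to U_i$ such that $r_i s_l = r'_k t_l$. Then $X(t_l)$ applied to $X(r'_k)(x)$ equals $X(s_l)$ applied to $X(f_i)(w_{\alpha(i)})$.

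Now consider the square formed by $f_i s_l : V_l \to W_{\alpha(i)}$ and $t_l : V_l \to W_k$ over $U$. Because $\{w_k\}$ matches over $r'$, this square gives $X(f_i s_l)(w_{\alpha(i)}) = X(t_l)(w_k)$. Hence $X(r'_k)(x)$ and $w_k$ have the same restriction along every $t_l$. Separatedness of $X$ on $t \in j$ then gives $X(r'_k)(x) = w_k$.

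The subtle point is that we cannot compare $x$ and $w_k$ directly on $W_k$. We have to pass through the auxiliary $j$-cover $t$ of $W_k$, which exists only because $j$ is a coverage, and then use separatedness on $t$.
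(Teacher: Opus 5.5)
Your proof is correct. Part (2) is immediate. In part (1) the only nontrivial point is that the amalgamation $x$ over $r$ actually restricts to $w_k$ on each $W_k$, and you handle it properly. You use axiom (2) of Definition \ref{def coverage} to produce a $j$-cover $t$ of $W_k$; this is not pullback stability in the sense of Definition \ref{def pullback-stable}, but the weaker axiom is all you need. You then apply the matching condition for $\{w_k\}$ to the square $r'_{\alpha(i)} f_i s_l = r'_k t_l$, and finish with separatedness of $X$ on $t$.

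The paper gives no proof of its own here and simply cites \cite[Lemma 2.30]{Minichiello2025}, so your argument supplies the standard direct proof. It also never uses that $j'$ is a coverage, so (1) holds for any collection of families $j'$ refined by $j$.
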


\begin{Def} \label{def equiv coverages}
Given coverages $j$ and $j'$ on a small category $\cat{C}$, we write $j \simeq j'$ if $j \leq j'$ and $j' \leq j$, and we say that $j$ and $j'$ are \textbf{refinement equivalent}. By Lemma \ref{lem coverage comparison results}, if two coverages $j,j'$ on the same category $\cat{C}$ are refinement equivalent, then
\begin{equation*}
    \ncat{Sh}(\cat{C}, j) = \ncat{Sh}(\cat{C}, j').
\end{equation*}
\end{Def}

\begin{Def} \label{def saturated coverage}
We say that a coverage $j$ on a small category $\cat{C}$ is \textbf{refinement closed} if for every $r \in j$ and $f : r \to t$ is a refinement, then $t \in j$. We say that $j$ is \textbf{composition closed} if whenever there is a covering family $r  = \{r_i : U_i \to U \}_{i \in I} \in j(U)$ and for each $i \in I$ there is a covering family $t^i = \{ t^i_j : U^i_j \to U_i \}_{j \in J_i} \in j(U_i)$ their composite
\begin{equation*}
    (r \circ t) \coloneqq \{ r_i t^i_j : U^i_j \to U \}_{j \in J_i}
\end{equation*}
is a covering family. We say that $j$ is \textbf{saturated} if it is both refinement closed and composition closed. 
\end{Def}

\begin{Prop}[{\cite[Proposition 6.11]{Minichiello2025}}]
Given a coverage $j$ on a small category $\cat{C}$, there exists a smallest saturated coverage $\cons{sat}(j)$ (which we call the \textbf{saturated closure} of $j$) containing $j$ such that
\begin{equation*}
    \Sh(\cat{C}, j) = \Sh(\cat{C}, \cons{sat}(j)).
\end{equation*}
\end{Prop}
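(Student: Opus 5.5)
The plan is to define $\cons{sat}(j)$ explicitly as an intersection and then show that saturating $j$ does not change the sheaves. Let $\Sigma$ be the set of all saturated coverages on $\cat{C}$ that contain $j$. This set is nonempty: the coverage $j_{\max}$ with $j_{\max}(U)$ the set of all families over $U$ is a coverage, and it is trivially refinement and composition closed. Since $\cat{C}$ is small, $\Sigma$ is a set. Define $\cons{sat}(j)(U) = \bigcap_{k \in \Sigma} k(U)$.

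First I would check that $\cons{sat}(j)$ is again a saturated coverage containing $j$, so that it is the smallest such. Containment of $j$, refinement closure and composition closure are each conditions of the form ``if certain families lie in $k$, then another family lies in $k$'', so they pass to intersections. Axiom (1) holds because isomorphisms lie in $j \subseteq \cons{sat}(j)$. Axiom (2) needs care, since it is an existence statement. Given $r \in \cons{sat}(j)(U)$ and $g : V \to U$, I would take the pullback family $g^*r = \{ h : W \to V \mid gh \text{ factors through some } r_i \}$. For each $k \in \Sigma$, axiom (2) for $k$ supplies some $t \in k(V)$ that refines $g^*r$ in the required sense. Refinement closure of $k$ then gives $g^*r \in k(V)$. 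Since this holds for every $k$, we get $g^*r \in \cons{sat}(j)(V)$, and this family witnesses axiom (2).

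Next, the equality of sheaves. Since $j \subseteq \cons{sat}(j)$, Lemma \ref{lem coverage comparison results}(2) gives $\Sh(\cat{C},\cons{sat}(j)) \subseteq \Sh(\cat{C},j)$. For the reverse inclusion, fix a $j$-sheaf $X$ and let $k_X(U)$ be the set of families $r$ over $U$ such that $X$ is a sheaf on $g^*r$ for every $g : V \to U$ (so $X$ is ``universally'' a sheaf on $r$). It then suffices to prove $k_X \in \Sigma$, because then $\cons{sat}(j) \subseteq k_X$ and $X$ is a $\cons{sat}(j)$-sheaf.

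Proving $k_X \in \Sigma$ is the main obstacle, and it rests on the standard sieve arguments, carried out directly with matching families.

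Containment $j \subseteq k_X$: for $r \in j(U)$, the family $g^*r$ is refined by some $t \in j(V)$, and $X$ is a sheaf on $t$. I would show $X$ is a sheaf on $g^*r$ by the usual transfer lemma: a matching family on $g^*r$ restricts to $t$ (Lemma \ref{lem pullback of matching family by refinement is a matching family}) and amalgamates to some $x$. To check that $x$ restricts correctly on each $h \in g^*r$, pull $t$ back along $h$ and use the sheaf property on a $j$-cover of the domain of $h$, obtained from axiom (2) again. Uniqueness follows from separatedness on $t$.

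Refinement closure: if $r \le t$ and $r \in k_X$, the same transfer lemma applies, now using that pullbacks of $r$ are sheaf families.

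Composition closure: glue twice. Amalgamate on each $t^i$, check that the resulting sections form a matching family on $r$ using separatedness on pullbacks of the $t^i$, then amalgamate on $r$. Uniqueness is inherited from the two separatedness conditions.

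Axiom (2) for $k_X$ holds by construction, since $g^*r$ is pulled back again, and axiom (1) is immediate. The bookkeeping in the transfer lemma is the delicate point. I expect it to be exactly the content of the cited results in \cite{Minichiello2025}, and I would use their sieve-theoretic form (local character and transitivity of the covering sieves for which $X$ is a sheaf) where possible.
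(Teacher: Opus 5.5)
Your proof is correct, but it takes a different route from the one the paper relies on. The paper, via \cite[Proposition 6.11]{Minichiello2025} and the remark after it, builds $\text{sat}(j)$ explicitly: it first closes $j$ under iterated composition, then closes the result under refinement, and checks that neither step changes the sheaves. You instead define $\text{sat}(j)$ as the intersection of all saturated coverages containing $j$, which makes existence and minimality immediate. The one non-formal point there is axiom (2), and you handle it neatly: refinement closure of each member forces the pulled-back sieve $g^*r$ itself into every member.

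You then put the whole sheaf comparison into a single lemma: for a fixed $j$-sheaf $X$, the families on which $X$ is universally a sheaf form a saturated coverage $k_X \supseteq j$. This is the classical fact that the sieves $R$ with $X$ a sheaf on every $h^*R$ form a Grothendieck topology, transported along $r \mapsto \overline{r}$. The gluing work (transfer along refinements, gluing twice along composites) is essentially the same in both routes. Yours avoids checking that the refinement closure of a composition-closed coverage is again composition closed, which in the explicit construction requires pulling covers back along the components of a refinement. In exchange, the explicit route gives a concrete description of the covering families of $\text{sat}(j)$ as families refined by iterated composites of $j$-covers. You only recover that a posteriori, by checking that the two-step closure is saturated and invoking minimality.

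Make two steps explicit when you write it up. First, $r \in k_X(U)$ only gives that $X$ is a sheaf on $1_U^*r = \overline{r}$, so you need the standard equivalence between being a sheaf on $r$ and on $\overline{r}$. Second, composition closure of $k_X$ must be checked on $g^*(r\circ t)$ for every $g$, not just on $r \circ t$. For $h \in g^*r$ with $gh = r_i s$, use the inclusion $s^*t^i \subseteq (gh)^*(r\circ t)$ to show $X$ is a sheaf on each $(gh)^*(r\circ t)$, then glue along $g^*r$.
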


\begin{Rem}
The saturated closure is obtained by first closing $j$ under composition and then closing it under refinement of covering families, see \cite[Section 6]{Minichiello2025} for more details.
\end{Rem}

\begin{Def}
Given a category $\cat{C}$ with $U \in \cat{C}$, a \textbf{sieve} $R$ over $U$ is a family of morphisms over $U$ that is closed under precomposition, namely if $r_i : U_i \to U \in R$, then for any morphism $g: V \to U_i$, the composite $r_i g : V \to U$ is also in $R$. Let $\cons{Sieve}(U)$ denote the full subcategory of $\cons{Fam}(U)$ on the sieves over $U$. Note that if $R$ and $T$ are sieves, then $R \leq T$ if and only if $R \subseteq T$. Let $\cons{sieve}(U)$ denote the poset with objects sieves on $U$ and partial order $\leq$.
\end{Def}

\begin{Def} \label{def sifted coverage}
We say a collection of families $j$ on a small category $\cat{C}$ is \textbf{sifted} if each family $R \in j$ is a sieve. By a \textbf{sifted coverage} we mean a sifted collection of families $J$ such that each $R \in J(U)$ is a sieve, and $y(U) \in J(U)$ for every $U \in \cat{C}$. 
\end{Def}

\begin{Lemma}[{\cite[Lemma 3.2]{Minichiello2025}}] \label{lem sieves and subfunctors iso}
Given a category $\cat{C}$ with $U \in \cat{C}$, there is an isomorphism of posets
\begin{equation}
    \cons{sieve}(U) \cong \cons{Sub}(y(U)),
\end{equation}
where $\cons{Sub}(y(U))$ denotes the poset of subobjects $R \hookrightarrow y(U)$ in $\Pre(\cat{C})$, and where $y(U) = \left( V \mapsto \cat{C}(V,U) \right)$ denotes the Yoneda embedding on $U$.
\end{Lemma}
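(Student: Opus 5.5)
The plan is to write down explicit, mutually inverse order-preserving maps between $\cons{sieve}(U)$ and $\cons{Sub}(y(U))$. Given a sieve $R$ over $U$, define a presheaf $\tilde{R}$ by
\begin{equation*}
    \tilde{R}(V) = \{ f : V \to U \; : \; f \in R \} \subseteq \cat{C}(V,U) = y(U)(V).
\end{equation*}
For $g : W \to V$, define $\tilde{R}(g)(f) = fg$. This lands in $\tilde{R}(W)$ precisely because $R$ is closed under precomposition. Functoriality is inherited from $y(U)$, so $\tilde{R}$ is a subpresheaf of $y(U)$. The objectwise inclusion is a natural transformation and a monomorphism, since monos in $\Pre(\cat{C})$ are detected objectwise.

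In the other direction, take a subobject $m : S \hookrightarrow y(U)$. First I replace it by its image, which is an actual subfunctor: I use the standard fact that a monomorphism of presheaves is objectwise injective, so $S$ is isomorphic over $y(U)$ to the subfunctor $V \mapsto m_V(S(V))$. Here I am treating $\cons{Sub}(y(U))$ as isomorphism classes of monos, so subobjects correspond bijectively to subfunctors. Given a subfunctor $S \subseteq y(U)$, set
\begin{equation*}
    R_S = \bigcup_{V} S(V),
\end{equation*}
a family of morphisms with codomain $U$. It is a sieve because naturality of the inclusion gives $f \in S(V)$, $g : W \to V \Rightarrow fg = y(U)(g)(f) \in S(W)$.

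Next I check that the two constructions are mutually inverse, which is immediate by unwinding: $\tilde{R}(V)$ is exactly the set of arrows in $R$ with domain $V$. Finally I check order preservation in both directions. By the preceding definition, for sieves $R \leq T$ iff $R \subseteq T$. This in turn holds iff $\tilde{R}(V) \subseteq \tilde{T}(V)$ for all $V$, iff $\tilde{R} \hookrightarrow y(U)$ factors through $\tilde{T}$, which is the order on subobjects.

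The only genuinely nontrivial point, and so the main obstacle, is the passage between abstract subobjects and subfunctors. This requires that monomorphisms in $\Pre(\cat{C})$ are objectwise injective. I would prove it by testing against representables: a mono $m : S \to P$ is injective on $S(V) \cong \Pre(\cat{C})(y(V), S)$ by the Yoneda lemma, since postcomposition with a mono is injective. For the order relation, one also needs that a factorization of subfunctor inclusions through each other is unique and is the objectwise inclusion, which follows since $\tilde{T} \to y(U)$ is mono. Everything else is routine bookkeeping.
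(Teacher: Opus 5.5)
Your proposal is correct. It is the standard argument: a sieve $R$ corresponds to the subfunctor $V \mapsto \{ f : V \to U \mid f \in R \}$ of $y(U)$, and a subfunctor corresponds to the sieve of all arrows lying in some $S(V)$. The orders match because $R \leq T$ iff $R \subseteq T$ for sieves, and your Yoneda check that presheaf monomorphisms are objectwise injective identifies subobjects with subfunctors. The paper gives no proof of its own and simply cites \cite[Lemma 3.2]{Minichiello2025}, presumably for this same correspondence.
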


\begin{Def} \label{def sieve generated by a family of morphisms}
Given a category $\cat{C}$, with $U \in \cat{C}$, and a family of morphisms $r = \{ r_i : U_i \to U \}$ over $U$, let $\overline{r}$ denote the set of morphisms $f : V \to U$ such that $f$ factors through some $r_i \in r$:
\begin{equation*}
    \begin{tikzcd}
	V && U \\
	& {U_i}
	\arrow["f", from=1-1, to=1-3]
	\arrow["{s_f}"', from=1-1, to=2-2]
	\arrow["{r_i}"', from=2-2, to=1-3]
\end{tikzcd}
\end{equation*}
We say that $R$ is \textbf{generated} by $r$ if $R = \overline{r}$, and that $r$ is a \textbf{generating family} for $R$. 
\end{Def}

\begin{Def} \label{def sifted closure}
If $j$ is a coverage, then let $\overline{j}$ denote the sifted collection of families where $R \in \overline{j}(U)$ if $R = \overline{r}$ for some $r \in j(U)$. We call $\overline{j}$ the \textbf{sifted closure} of $j$.   
\end{Def}

\begin{Prop}[{\cite[Corollary 3.23]{Minichiello2025}}] \label{cor sheaves on sifted closure}
A presheaf $X$ on a category $\cat{C}$ is a sheaf for a coverage $j$ if and only if it is a sheaf for $\overline{j}$. In other words, $\ncat{Sh}(\cat{C}, j) = \ncat{Sh}(\cat{C}, \overline{j})$. 
\end{Prop}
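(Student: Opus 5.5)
The plan is to compare the two restriction maps directly. Fix a coverage $j$, a covering family $r = \{r_i : U_i \to U\}_{i \in I} \in j(U)$, and the sieve $\overline{r} \in \overline{j}(U)$ it generates. I will show there is a natural bijection
\begin{equation*}
\Phi: \text{Match}(\overline{r}, X) \to \text{Match}(r, X)
\end{equation*}
satisfying $\Phi \circ \text{res}_{\overline{r},X} = \text{res}_{r,X}$. Once this holds, $\text{res}_{r,X}$ is bijective exactly when $\text{res}_{\overline{r},X}$ is. Since every family in $\overline{j}$ has the form $\overline{r}$ for some $r \in j$, and every $r \in j$ yields such a family, this gives $\Sh(\cat{C}, j) = \Sh(\cat{C}, \overline{j})$.

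\textbf{Definition of $\Phi$.} Each $r_i$ lies in $\overline{r}$, so a matching family $\{x_f\}_{f \in \overline{r}}$ restricts to $\{x_{r_i}\}_{i \in I}$. This restricted family is still matching: the matching condition for $r$ only involves commutative squares between the $r_i$, and these are among the squares tested for $\overline{r}$. With this definition, the compatibility $\Phi \circ \text{res}_{\overline{r},X} = \text{res}_{r,X}$ is immediate.

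\textbf{Inverse of $\Phi$.} Start with a matching family $\{x_i\}$ over $r$. For $f \in \overline{r}$, choose a factorization $f = r_i s_f$ and set $x_f = X(s_f)(x_i)$.
\begin{itemize}
\item \emph{Independence of the choice.} Suppose $f = r_i s = r_j s'$. Then the square with $U_{ij} := V$, $u_i = s$ and $u_j = s'$ commutes over $U$, so the matching condition for $r$ gives $X(s)(x_i) = X(s')(x_j)$.
\item \emph{Matching over $\overline{r}$.} Take a commutative square $f u = g v$ with $f, g \in \overline{r}$. Writing $f = r_i s_f$ and $g = r_j s_g$, we get $r_i(s_f u) = r_j(s_g v)$. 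By the same matching argument, $X(u)(x_f) = X(v)(x_g)$.
\item \emph{Two-sided inverse.} Applying $\Phi$ to this family returns $\{x_i\}$: choose $s = \mathrm{id}$ as the factorization of $r_i$, which is allowed because the value is independent of the choice. Conversely, a matching family over $\overline{r}$ is recovered from its values on the $r_i$. Indeed, for $f = r_i s_f$, the square $r_i s_f = f \circ \mathrm{id}$ together with the matching condition forces $x_f = X(s_f)(x_{r_i})$.
\end{itemize}

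\textbf{Main obstacle.} The only delicate step is well-definedness of the inverse, that is, independence of the chosen factorization. It relies on the fact that Definition~\ref{def presheaf, section, matching family, amalgamation} tests the matching condition against \emph{arbitrary} commuting squares, not only pullbacks. The argument therefore needs no existence of pullbacks, and it never uses the coverage axioms, so it applies to any family $r$.
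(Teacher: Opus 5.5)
Your argument is correct and complete. The inverse is well defined because the paper's definition of matching family tests arbitrary commuting squares over $U$. This includes the case $i=j$, which handles factorizations $f=r_is=r_is'$ through a non-monic $r_i$. The identity $\Phi\circ\text{res}_{\overline{r},X}=\text{res}_{r,X}$, with $\Phi$ bijective, then transfers bijectivity between $r$ and $\overline{r}$, and injectivity too, so separatedness as well.

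The paper itself gives no proof of this statement; it only imports it from \cite[Corollary 3.23]{Minichiello2025}. The framework it sets up suggests a more conceptual packaging, via sieves as subfunctors of $y(U)$ (Lemma~\ref{lem sieves and subfunctors iso}) and the coequalizer formula (\ref{eq pi0 of cech nerve}). By Yoneda, a matching family over $r$ is exactly a map $\sum_i y(U_i)\to X$ that coequalizes the two projections from $\sum_{i,j}y(U_i)\times_{y(U)}y(U_j)$; these fiber products always exist in $\Pre(\cat{C})$. Since $\overline{r}$ is the image of $\sum_i y(U_i)\to y(U)$, it is the coequalizer of this kernel pair, so one gets $\text{Match}(r,X)\cong\Pre(\cat{C})(\overline{r},X)$. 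Under this identification, $\text{res}_{r,X}$ becomes restriction along $\overline{r}\hookrightarrow y(U)$. That map depends only on the sieve, and $\overline{\overline{r}}=\overline{r}$, so the proposition follows at once.

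Your $\Phi$ is this identification written out elementwise. Your independence-of-factorization check is exactly the fact that a map coequalizing the kernel pair descends to the image. Your version is more elementary and shows plainly that neither the coverage axioms nor pullbacks in $\cat{C}$ are used. The subfunctor version yields the formula $\text{Match}(r,X)\cong\Pre(\cat{C})(\overline{r},X)$ itself, which the paper relies on later. It writes the plus construction as $\colim_R\Pre(\cat{C})(R,X)$, and identifies $\Pre(\cat{C})(\pi_0\check{C}(\mathcal{U}),A)$ with $\Pre(\cat{C})(R,A)$ in the proof of Theorem~\ref{th only need plus construction once on man}.
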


\begin{Def} \label{def Grothendieck coverage}
A \textbf{Grothendieck coverage}\footnote{More commonly known as a \textbf{Grothendieck topology}} on a small category $\cat{C}$ is a sifted precoverage $J$ on $\cat{C}$ such that
\begin{itemize}
	\item[(1)] $y(U) \in J(U)$ for every $U \in \cat{C}$,
	\item[(2)] for any sieve $R \in J(U)$ and any morphism $g: V \to U$, $g^*(R) \in J(V)$, and
	\item[(3)] if $R \in J(U)$, $R'$ is a sieve on $U$, and $g^*(R') \in J(V)$ for every $g \in R(V)$, then $R' \in J(U)$.
\end{itemize}
We refer to a category $\cat{C}$ equipped with a Grothendieck coverage a \textbf{Grothendieck site}.
\end{Def}

\begin{Def} \label{def interior coverage}
Suppose that $J$ is a Grothendieck coverage on $\cat{C}$, let $J^\circ$ denote the collection of families on $\cat{C}$ where a family $r$ on $U$ belongs to $J^\circ(U)$ if $\overline{r} \in J(U)$. We call this the \textbf{interior coverage} of $J$.
\end{Def}

\begin{Def}
Given a category $\cat{C}$, let $\ncat{SatCvg}(\cat{C})$ denote the (large) poset of saturated coverages equipped with the $\subseteq$ relation. Similarly let $\ncat{GroCvg}(\cat{C})$ denote the (large) poset of Grothendieck coverages. The constructions $\overline{(-)}$ and $(-)^\circ$ defined above can easily be seen to define maps of posets.
\end{Def}

\begin{Prop}[{\cite[Proposition 6.35]{Minichiello2025}}]
The maps of (large) posets
\begin{equation}
\begin{tikzcd}
	{\ncat{SatCvg}} && {\ncat{GroCvg}}
	\arrow[""{name=0, anchor=center, inner sep=0}, "{{\overline{(-)}}}", curve={height=-18pt}, from=1-1, to=1-3]
	\arrow[""{name=1, anchor=center, inner sep=0}, "{{(-)^\circ}}", curve={height=-18pt}, from=1-3, to=1-1]
	\arrow["\dashv"{anchor=center, rotate=-90}, draw=none, from=0, to=1]
\end{tikzcd}
\end{equation}
form an adjoint isomorphism.
\end{Prop}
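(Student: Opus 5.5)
The plan is to verify four things in turn: that $\overline{(-)}$ sends saturated coverages to Grothendieck coverages, that $(-)^\circ$ sends Grothendieck coverages to saturated coverages, and that the two composites are identities. Both maps are monotone for $\subseteq$, so mutually inverse monotone maps are an isomorphism of posets. An isomorphism of posets is in particular an adjunction with unit and counit identities, which is what "adjoint isomorphism" means here.

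\textbf{The composites are identities.} I start with these, since they are the cleanest.
\begin{itemize}
\item For a Grothendieck coverage $J$, a sieve $R \in J(U)$ is its own generated sieve, $\overline{R} = R$. Hence $R \in J^\circ(U)$, and $R = \overline{R} \in \overline{J^\circ}(U)$. Conversely, $R \in \overline{J^\circ}(U)$ means $R = \overline{r}$ with $\overline{r} \in J(U)$. So $\overline{J^\circ} = J$.
\item For a saturated $j$, clearly $j \subseteq (\overline{j})^\circ$. Conversely, let $r$ be a family with $\overline{r} = \overline{t}$ for some $t \in j(U)$. Every $t_k$ lies in $\overline{r}$, so it factors through some $r_i$, giving a refinement $t \to r$. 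Refinement closure of $j$ then gives $r \in j(U)$. So $(\overline{j})^\circ = j$.
\end{itemize}
This is exactly where refinement closedness is used.

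\textbf{$\overline{j}$ is a Grothendieck coverage for saturated $j$.}
\begin{itemize}
\item Axiom (1): $\{1_U\} \in j(U)$ since identities are isomorphisms, and $\overline{\{1_U\}} = y(U)$.
\item Axiom (2): given $r \in j(U)$ and $g : V \to U$, the coverage axiom gives $t \in j(V)$ with each $g t_k$ factoring through some $r_i$. Then $t \subseteq g^*(\overline{r})$ as families, so $t \leq g^*(\overline{r})$. Refinement closure puts $g^*(\overline{r})$, viewed as a family, in $j(V)$, and since it is a sieve it equals its own generated sieve.
\item Axiom (3): let $R = \overline{r}$ with $r \in j(U)$, and let $R'$ be a sieve with $g^*(R') \in \overline{j}(V)$ for all $g \in R(V)$. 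By the argument in the previous bullet, $g^*(R')$ itself belongs to $j(V)$ as a family. Apply this with $g = r_i$ and form the composite family $\{ r_i s \mid s \in r_i^*(R') \}$. Composition closure puts it in $j(U)$. All of its members lie in $R'$, so it refines $R'$, and refinement closure gives $R' \in j(U)$, hence $R' = \overline{R'} \in \overline{j}(U)$.
\end{itemize}
Since the sets of morphisms involved are sieves, they are sets because $\cat{C}$ is small.

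\textbf{$J^\circ$ is a saturated coverage.}
\begin{itemize}
\item Isomorphisms: an isomorphism $f : V \to U$ generates the maximal sieve $y(U)$, so $\{f\} \in J^\circ(U)$.
\item Coverage axiom: for $r \in J^\circ(U)$ and $g : V \to U$, take $t = g^*(\overline{r}) \in J(V)$, viewed as a family. It lies in $J^\circ(V)$, and each of its members factors as required.
\item Refinement closure: if $r \leq t$ then $\overline{r} \subseteq \overline{t}$. A standard consequence of axioms (2) and (3) is upward closure of $J$: for $g \in \overline{r}$ one has $g^*(\overline{t}) = y(\mathrm{dom}\, g)$, which lies in $J$ by (1), so (3) applies. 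Hence $\overline{t} \in J(U)$.
\item Composition closure: given $r \in J^\circ(U)$ and $t^i \in J^\circ(U_i)$, let $S$ be the sieve generated by the composite family. For any $h \in \overline{r}$, write $h = r_i k$. Then $h^*(S) \supseteq k^*(\overline{t^i}) \in J$, so by upward closure $h^*(S) \in J$. Axiom (3) then gives $S \in J(U)$.
\end{itemize}

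I expect the main obstacle to be axiom (3) for $\overline{j}$. There one must carefully pass between sieves and families and invoke both halves of saturation, while checking that no size issues arise. Everything else is routine unwinding of definitions.
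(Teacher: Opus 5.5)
Your proposal is correct. The paper gives no argument of its own here; it simply cites \cite[Proposition 6.35]{Minichiello2025}. Your direct verification is therefore a legitimate self-contained proof.

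Checking that both maps land in the right posets and that the two composites are identities covers everything needed. The two observations that carry the argument are:
\begin{itemize}
\item for refinement-closed $j$, a sieve lies in $\overline{j}(U)$ exactly when it lies in $j(U)$;
\item Grothendieck coverages are upward closed.
\end{itemize}

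The only unstated check is that $\overline{j}$ is a sifted precoverage. This follows at once from your axiom (2): take $g^*(\overline{r})$ itself as the covering sieve, and factor each of its members trivially through itself.
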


\begin{Def} \label{def Grothendieck closure of a coverage}
Given a coverage $j$ on a category $\cat{C}$, let 
$$\Gro(j) = \overline{\sat{j}}.$$
We call this the \textbf{Grothendieck closure} of $j$. 
\end{Def}

\begin{Lemma}[{\cite[Lemma 6.37]{Minichiello2025}}] \label{lem sifted closure has same Grothendieck closure}
Given a coverage $j$ on a small category $\cat{C}$, we have
\begin{equation*}
    \Gro(j) = \Gro(\overline{j}).
\end{equation*}
Furthermore, $\Gro(j)$ is the smallest Grothendieck coverage containing $\overline{j}$ and 
$$\Sh(\cat{C},j) = \Sh(\cat{C}, \Gro(j)).$$
\end{Lemma}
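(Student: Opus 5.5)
The statement has three parts: $\Gro(j) = \Gro(\overline{j})$, that $\Gro(j)$ is the smallest Grothendieck coverage containing $\overline{j}$, and that $\Sh(\cat{C},j) = \Sh(\cat{C},\Gro(j))$. I will prove the minimality claim first and derive the other two from it.

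\emph{Minimality.} First, $\Gro(j)$ contains $\overline{j}$. Indeed $j \subseteq \sat{j}$, so $\overline{j} \subseteq \overline{\sat{j}} = \Gro(j)$, because $\overline{(-)}$ is monotone. By the adjoint isomorphism between $\ncat{SatCvg}$ and $\ncat{GroCvg}$, $\Gro(j)$ is a Grothendieck coverage.

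Now let $J$ be any Grothendieck coverage with $\overline{j} \subseteq J$. For $r \in j(U)$ we have $\overline{r} \in J(U)$, so $r \in J^\circ(U)$ by Definition \ref{def interior coverage}. Hence $j \subseteq J^\circ$. Since $J^\circ$ is saturated, it belongs to $\ncat{SatCvg}$, and minimality of the saturated closure gives $\sat{j} \subseteq J^\circ$. Applying the monotone map $\overline{(-)}$ and using $\overline{J^\circ} = J$ (the maps form an isomorphism), we get $\Gro(j) \subseteq J$.

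The only subtle point is that $\sat{j}$ is described as the smallest \emph{saturated coverage containing $j$} with the same sheaves. I need it to be smallest among all saturated coverages containing $j$. That follows from the construction in the Remark: close under composition, then under refinement. Any saturated coverage containing $j$ is closed under both operations, so it contains every family these closures add.

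\emph{$\Gro(j)=\Gro(\overline{j})$.} Apply the characterisation to $\overline{j}$; this needs $\overline{j}$ to be a coverage, which follows directly from $j$ being one. Then $\Gro(\overline{j})$ is the smallest Grothendieck coverage containing $\overline{\overline{j}}$. A sieve is its own generated sieve, so $\overline{\overline{j}} = \overline{j}$. Therefore $\Gro(\overline{j})$ and $\Gro(j)$ are both the smallest Grothendieck coverage containing $\overline{j}$, and so they are equal.

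\emph{Sheaves.} By the Proposition on saturated closure, $\Sh(\cat{C},j) = \Sh(\cat{C},\sat{j})$. By Proposition \ref{cor sheaves on sifted closure} applied to the coverage $\sat{j}$, this equals $\Sh(\cat{C},\overline{\sat{j}}) = \Sh(\cat{C},\Gro(j))$.
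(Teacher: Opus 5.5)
Your argument is correct. The paper gives no proof of this lemma; it imports it from \cite[Lemma 6.37]{Minichiello2025}, so there is no in-paper argument to compare against. Your route is a natural self-contained one: the adjoint isomorphism between $\ncat{SatCvg}$ and $\ncat{GroCvg}$, minimality of $\sat{j}$, and Proposition \ref{cor sheaves on sifted closure}. You also correctly identified the point that needs checking. You need $\sat{j}$ to be smallest among \emph{all} saturated coverages containing $j$, not only those with the same sheaves, and the composition-then-refinement construction gives exactly that.

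Two small remarks follow.

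First, under Definition \ref{def coverage} as written here, $\overline{j}$ is not literally a coverage. The sets $\overline{j}(U)$ consist only of sieves, so they do not in general contain the singletons $\{f\}$ for isomorphisms $f$. What $\overline{j}$ does satisfy is condition (2), together with $y(U) = \overline{\{1_U\}} \in \overline{j}(U)$. Since $y(U) \leq \{f\}$ for any isomorphism $f$, the composition-then-refinement closure of $\overline{j}$ still contains all such $\{f\}$. So $\sat{\overline{j}}$ makes sense and your minimality argument applies to it. The statement itself presupposes this anyway.

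Second, you can get $\Gro(j) = \Gro(\overline{j})$ without the universal property. Since $r \leq \overline{r}$ and $\overline{r} \leq r$ for every family $r$, refinement closure gives $\overline{j} \subseteq \sat{j}$ and $j \subseteq \sat{\overline{j}}$. Minimality then gives $\sat{j} = \sat{\overline{j}}$, and applying $\overline{(-)}$ yields the equality.
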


\begin{Def} \label{def pullback-stable}
We say that a coverage $j$ on a small category $\cat{C}$ is \textbf{pullback stable} if the pullback of any basal map $r_i : U_i \to U$ along an arbitrary map $f : V \to U$ exists and if $r = \{r_i : U_i \to U \}_{i \in I}$ is a covering family on $U$, then $f^*(r) = \{V \times_U U_i \to V \}_{i \in I}$ is a covering family on $V$.
\end{Def}

\begin{Def} \label{def grothendieck pretopology}
Given a small category $\cat{C}$, we say that a coverage $j$ is a \textbf{Grothendieck pretopology} or just pretopology, if it is pullback stable and composition closed.
\end{Def}

\begin{Prop}[{\cite[Section 7.4]{Minichiello2025}}]
Given a site $(\cat{C}, j)$, the inclusion functor of $j$-sheaves into presheaves has a left adjoint called \textbf{sheafification}
\begin{equation*}
   \begin{tikzcd}
	{\ncat{Sh}(\cat{C},j)} && {\ncat{Pre}(\cat{C})}
	\arrow[""{name=0, anchor=center, inner sep=0}, shift right=2, hook, from=1-1, to=1-3]
	\arrow[""{name=1, anchor=center, inner sep=0}, "a"', shift right=2, from=1-3, to=1-1]
	\arrow["\dashv"{anchor=center, rotate=-90}, draw=none, from=1, to=0]
\end{tikzcd} 
\end{equation*}
\end{Prop}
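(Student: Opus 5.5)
The plan is to build $a$ explicitly by the plus construction applied twice, and to check the adjunction through the universal property of $X \to X^{++}$. By Lemma \ref{lem sifted closure has same Grothendieck closure} we have $\Sh(\cat{C}, j) = \Sh(\cat{C}, \Gro(j))$, so we may replace $j$ by the Grothendieck coverage $J = \Gro(j)$. For each $U$, the covering sieves $J(U)$ ordered by reverse inclusion form a directed poset: the intersection of two covering sieves is again covering, by axioms (2) and (3) of Definition \ref{def Grothendieck coverage}. We then define
\begin{equation*}
    X^+(U) = \ncolim{R \in J(U)^\op} \text{Match}(R, X).
\end{equation*}
Functoriality in $U$ comes from pulling back sieves along $g : V \to U$ using axiom (2), together with Lemma \ref{lem pullback of matching family by refinement is a matching family}. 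The canonical map $\eta : X \to X^+$ is induced by $\text{res}_{y(U),X}$, using $y(U) \in J(U)$.

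The key steps, in order, are the following.

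(i) $X^+$ is always separated. Suppose two classes agree after restriction along every map of a covering sieve $R$. Each restricted pair is then represented by matching families that agree on some covering sieve over the domain of that map. Axiom (3) glues these sieves into a single covering sieve of $U$ on which the two representatives agree.

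(ii) If $X$ is separated, then $X^+$ is a sheaf. Take a matching family of classes over $R$. Choose representatives $x_f \in \text{Match}(S_f, X)$, one for each $f \in R$. Separatedness of $X$ makes the compatible pieces agree on the nose. They therefore assemble into one matching family on the composite sieve $\{ f g : f \in R, g \in S_f \}$, which is covering by axiom (3). Its class is the amalgamation, and uniqueness follows from (i).

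(iii) For any sheaf $F$, every map $X \to F$ factors uniquely through $\eta$. A matching family of $X$ over $R$ maps to a matching family of $F$, and that family has a unique amalgamation. This is compatible with the colimit, so it defines $X^+ \to F$. Uniqueness holds because every element of $X^+(U)$ is locally in the image of $\eta$ and $F$ is separated.

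With (i)--(iii) in hand, set $a(X) = X^{++}$. By (i), $X^+$ is separated, so by (ii), $X^{++}$ is a sheaf. Applying (iii) twice shows that $X \to X^{++}$ is universal among maps into sheaves. The adjunction $a \dashv \iota$ then follows from this standard universal-arrow characterization.

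I expect step (ii) to be the main obstacle. It requires careful bookkeeping: choosing representatives, verifying that the glued family is matching on the composite sieve, and using both separatedness and transitivity of $J$. A further technical point is smallness. Because $\cat{C}$ is small, each $J(U)$ is a set, so the colimits exist in $\ncat{Set}$.
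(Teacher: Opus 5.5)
Your proposal is correct and takes essentially the same approach as the paper, which defers the proof to \cite[Section 7.4]{Minichiello2025}; as recalled in Section \ref{section plus construction}, the paper defines $a = (-)^{++}$ via the plus construction over the covering sieves of $\Gro(j)$, and since $\text{Match}(R,X) \cong \Pre(\cat{C})(R,X)$ for a sieve $R$, your $X^+$ is the paper's. The one ingredient you use without stating it is that any sieve containing a covering sieve is itself covering; this follows at once from axioms (1) and (3) of Definition \ref{def Grothendieck coverage}, and you need it when you invoke axiom (3) in steps (i) and (ii).
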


\begin{Def} \label{def local epi}
Given a site $(\cat{C}, j)$ and a map $f : X \to Y$ of presheaves, we say that $f$ is a \textbf{$j$-local epimorphism} (or just local epimorphism) if for every map $s : y(U) \to Y$ from a representable, there exists a $j$-covering family $r = \{r_i : U_i \to U \}$ and maps $s_i : y(U_i) \to X$ making the following diagram commute
\begin{equation*}
    \begin{tikzcd}
	{y(U_i)} & X \\
	{y(U)} & Y
	\arrow["{s_i}", from=1-1, to=1-2]
	\arrow["{r_i}"', from=1-1, to=2-1]
	\arrow["f", from=1-2, to=2-2]
	\arrow["s"', from=2-1, to=2-2]
\end{tikzcd}
\end{equation*}
\end{Def}

\begin{Rem} \label{rem local epis are problematic}
The notion of local epimorphism that we use above is not the same as in \cite[Definition 5.1]{Minichiello2025}, instead it is what we call a strong local epimorphism in \cite[Remark 5.3]{Minichiello2025}. In order for this confusion to not permeate further, let us call a morphism $f : X \to Y$ of presheaves satisfying \cite[Definition 5.1]{Minichiello2025} a \textbf{$j$-tree epimorphism}. When the site $(\cat{C}, j)$ is composition closed, a map of presheaves is a local epimorphism if and only if it is a $j$-tree epimorphism.

Let us explain the difference of these notions and why they it is an important distinction. If $f : X \to Y$ is a map of presheaves on a site $(\cat{C}, j)$ (with no further assumptions) and we let $a : \ncat{Pre}(\cat{C}) \to \ncat{Sh}(\cat{C}, j)$ denote the sheafification functor \cite[Definition 7.19]{Minichiello2025}, then $af$ is an epimorphism if and only if $f$ is a $j$-tree epimorphism. We show in \cite[Remark 7.39]{Minichiello2025} that there exist maps $f : X \to Y$ of presheaves such that $af$ is an epimorphism but $f$ is not a local epimorphism.

However, for the purposes of this paper, requiring the more general notion of $j$-tree epimorphism is unnecessary. We will just consider this stronger notion of local epimorphism. This is especially important for the site $(\ncat{Cart}, j_{\text{good}})$ from Example \ref{ex sites} below, as it is not composition closed, and it is the main site one wishes to work with in higher differential geometry. Hence, while this may be a confusing point, it is worth distinguishing it so that we may obtain strict hypercompleteness of $(\ncat{Cart}, j_{\text{good}})$ in Theorem \ref{th many sites are strictly hypercomplete}.
\end{Rem}

\subsection{Examples of Sites} \label{section examples of sites}

\begin{Rem}
In what follows, we always assume (topological, smooth, complex) manifolds are Hausdorff, second countable and have the same dimension in each connected component. In particular, a manifold can only have countably many connected components. Furthermore we do not consider the empty set a manifold for technical reasons\footnote{In the case that we do include the empty set as a manifold, then not all of the coverages on $\ncat{Man}$ given in Example \ref{ex coverages on Man} will be refinement equivalent. For example, we cannot refine the empty covering family on the empty set by a surjective submersion, so by allowing empty manifolds it follows that $j_{\text{open}} \simeq j_{\text{sub}}$.
Waldorf has also encountered this issue and dealt with it a different way, mainly by changing the notion of refinement and the definition of sheaves, see \cite[Warning 5.1.4]{waldorf2024internalgeometry}.}.

A priori, the categories of (topological, smooth, complex) manifolds are large categories, but using the Whitney embedding theorem \cite[Example A.27]{Minichiello2025} for smooth manifolds or a direct set-theoretic argument \cite[Example A.28]{Minichiello2025} for each of these cases, one can show that each of these categories is essentially small. Hence we will silently replace each of these categories by one of their small equivalent categories.
\end{Rem}

\begin{Rem}
For many of the following examples we provide a table summarizing the properties of the corresponding coverages, here is a guide for the shorthand we use for the headings of these tables:
\begin{itemize}
    \item cvg - coverage (Definition \ref{def coverage}),
    \item ref. cl - refinement closed (Definition \ref{def saturated coverage}),
    \item comp. cl - composition closed (Definition \ref{def saturated coverage}),
    \item sat - saturated (Definition \ref{def saturated coverage}),
    \item cofib - cofibrant (Definition \ref{def cofibrant site}),
    \item plbk - pullback-stable (Definition \ref{def pullback-stable}),
    \item Verdier - Verdier pretopology (Definition \ref{def verdier site})
\end{itemize}
\end{Rem}

\begin{Ex} \label{ex site of top space}
Given a topological space $X$, let $\mathcal{O}(X)$ denote the poset of open subsets of $X$, ordered by inclusion $U \subseteq V$. Let $j_X$ denote the coverage given by open covers.
\begin{center}    
\begin{tabular}{|c|c|c|c|c|c|c|} 
 \hline
 cvg & ref. cl & comp. cl & sat & cofib & plbk & Verdier \\ [0.5ex] 
 \hline\hline
 $j_X$ & $\checkmark$ & $\checkmark$ & $\checkmark$ & $\checkmark$ & $\checkmark$ & $\checkmark$ \\ 
 \hline
\end{tabular}
\end{center}
\end{Ex}

\begin{Ex} \label{ex coverages on Man}
Let $\ncat{Man}$ denote the category of nonempty finite dimensional smooth manifolds and smooth maps between them. Given a finite dimensional smooth manifold $M$, we call a family of morphisms $r = \{r_i : U_i \to M \}$ an
\begin{enumerate}
    \item \textbf{open covering family} if each map $r_i : U_i \to M$ is an open embedding\footnote{A map $f : M \to N$ of manifolds is an open embedding if it is an immersion and the underlying map $\tau(f) : \tau(M) \to \tau(N)$ of topological spaces is an open embedding, which means that $\tau(f)$ is a homeomorphism onto its image with the subspace topology, which implies that $f$ has underlying injective set function.} and $\bigcup_i r_i(U_i) = M$. Let $j_{\text{open}}$ denote the collection of open embedding families.
    \item \textbf{countable open covering family} if it is an open covering family with countable cardinality. Let $j_{\omega}$ denote the collection of countable open covering families. 
    \item \textbf{surjective submersion} if $r$ is a singleton set $r = \{f : N \to M\}$ such that $f$ is a surjective submersion of smooth manifolds. Let $j_{\text{sub}}$ denote the collection of surjective submersions.
    \item \textbf{surjective local diffeomorphism} if $r$ is a singleton set $r = \{f : N \to M\}$ such that $f$ is a surjective local diffeomorphism. Let $j_{\text{sld}}$ denote the collection of surjective local diffeomorphisms.
\end{enumerate}
It is easy to verify that each of these collections of families is a coverage, and each is furthermore composition closed. 

\begin{Rem}
Let us note that sometimes in the literature $j_{\text{open}}$ is taken to consist of covering families $\{r_i : U_i \to M\}$ where each $r_i$ is actually \textit{equal} to the inclusion of an open subset and $\cup U_i = M$. This collection of families forms a precoverage (Definition \ref{def coverage}) but not a coverage as we have defined it. This was a conscious choice we have made, as we wish for all the coverages we consider to be \textbf{isomorphism closed} in the sense that if $f : V \to U \in j(U)$ and there is an isomorphism $g : V' \to V$, then $fg \in j(U)$. This is an important property to have for example in the proof of Proposition \ref{prop can refine hypercovers by Verdier hypercovers on matching Verdier site} where we start to manipulate objects that are only defined up to isomorphism. However, it is not totally necessary. By making careful choices throughout all of the proofs in this paper, one could allow $j_{\text{open}}$ to be open covers.

Nothing is lost in working with open embeddings rather than open covers. Indeed, if $r_i : U_i \hookrightarrow M$ is an open embedding, then the manifold given by taking the image set $r_i(U_i)$ equipped with the induced smooth structure from $M$ is diffeomorphic to $U_i$, since the map $U \to r_i(U_i)$ is a bijective immersion. Hence using the inverse map, we have a commutative diagram
\begin{equation*}
\begin{tikzcd}
	{r_i(U_i)} && {U_i} \\
	& M
	\arrow["\cong", from=1-1, to=1-3]
	\arrow[hook, from=1-1, to=2-2]
	\arrow["{r_i}", hook', from=1-3, to=2-2]
\end{tikzcd}
\end{equation*} 
where the lefthand map is an inclusion of an open subset of $M$.

In practice this means that we can always identify an open covering family with an open cover and vice versa. Hence going forward, we will not distinguish between an open cover $\mathcal{U} = \{U_i \subseteq M \}$ of a topological space $M$ and a family $\{r_i : U_i \hookrightarrow M \}$ of open embeddings such that $\cup_i r_i(U_i) = M$.
\end{Rem}

Now the category $\ncat{Man}$ does not have all pullbacks, but it does have pullbacks of transverse maps, and hence the pullback of any smooth map along a submersion exists. Indeed, given two smooth maps $f : N \to M$ and $g : P \to M$ that are transverse, we can consider the diagonal $\Delta : M \to M \times M$ as an embedded submanifold. Since $f$ and $g$ are transverse to each other, the map $f \times g$ is transverse to $\Delta$ and hence by \cite[Theorem 6.30]{lee2003smooth}, the preimage $(f \times g)^{-1}(\Delta(M))$ is a smooth manifold, and this is isomorphic to the pullback $N \times_M P$. All maps are transverse to submersions, so the pullback of a smooth map along a submersion always exists in $\ncat{Man}$.

Given an open embedding $r_i : U_i \hookrightarrow M$ and a smooth map $g : N \to M$, then $g^{-1}(r_i(U_i))$ is an open subset of $N$, and hence inherits a smooth manifold structure such that it is an embedded submanifold \cite[Section 5.1]{lee2003smooth}. But $g^{-1}(r_i(U_i))$ is isomorphic to the pullback $N \times_M U_i$. Hence pullbacks of open embeddings by arbitrary smooth maps also exist in $\ncat{Man}$. It follows that $j_{\text{open}}, j_\omega, j_{\text{sub}}$ and $j_{\text{sld}}$ are all pretopologies. Now let us show that all of these coverages are refinement equivalent.

Clearly $j_{\text{sub}} \leq j_{\text{sld}}$. Let $f : N \to M$ be a surjective submersion. Then there exists an open cover $\mathcal{U} = \{U_i \subseteq M \}$ of $M$ and sections $s_i : U_i \to N$ of $f$. Since $M$ is second countable, we can find a countable refinement $\mathcal{V} = \{V_j \subseteq M \}$ of $\mathcal{U}$ by \cite[Theorem 2.50]{lee2000introduction}. Then $\sum_j V_j$ is a finite dimensional smooth manifold -- $\ncat{Man}$ has countable coproducts of manifolds of the same dimension, but not all small coproducts -- and we obtain the following commutative diagram
\begin{equation*}
   \begin{tikzcd}
	{\sum_j V_j} && N \\
	& M
	\arrow["{s}", from=1-1, to=1-3]
	\arrow["{f'}"', from=1-1, to=2-2]
	\arrow["f", from=1-3, to=2-2]
\end{tikzcd} 
\end{equation*}
where $s$ is defined componentwise by the local sections, and the map $f'$ is a surjective local diffeomorphism. Hence $j_{\text{sld}} \leq j_{\text{sub}}$. This argument also shows that $j_{\omega} \leq j_{\text{open}}$, and clearly $j_{\text{open}} \leq j_{\omega}$. Each component $f'_j : V_j \to M$ is an open embedding, and hence we also see that $j_{\text{open}} \leq j_{\text{sub}}$. Conversely given an open embedding family, we can find a countable refinement of the family of the image inclusions, and hence $j_{\text{sld}} \leq j_{\text{open}}$. Hence all of the coverages considered above are refinement equivalent
\begin{equation*}
    j_{\text{open}} \simeq j_{\omega} \simeq j_{\text{sub}} \simeq j_{\text{sld}}.
\end{equation*}
Furthermore, the coverage $j_{\text{sub}}$ is refinement closed. Indeed, given a commutative diagram
\begin{equation*}
    \begin{tikzcd}
	P && N \\
	& M
	\arrow["h", from=1-1, to=1-3]
	\arrow["f"', from=1-1, to=2-2]
	\arrow["g", from=1-3, to=2-2]
\end{tikzcd}
\end{equation*}
where $f$ is a surjective submersion, then $g$ is surjective. By the local sections theorem \cite[Theorem 4.26]{lee2003smooth}, $g$ is a submersion if and only if admits local sections. Since $f$ is a submersion, we can compose its local sections with $h$ and obtain local sections of $g$. Thus $j_{\text{sub}}$ is a saturated pretopology.

We summarize the properties of the various coverages on $\ncat{Man}$ in the following table.
\begin{center}    
\begin{tabular}{|c|c|c|c|c|c|c|} 
 \hline
 cvg & ref. cl & comp. cl & sat & cofib & plbk & Verdier\\ [0.5ex] 
 \hline\hline
$j_{\text{open}}$, $j_{\omega}$ & $\times$ & $\checkmark$ & $\times$ & $\checkmark$ & $\checkmark$ & $\checkmark$\\ 
 \hline
 $j_{\text{sub}}$ & $\checkmark$ & $\checkmark$ & $\checkmark$ & $\checkmark$ & $\checkmark$ & $\times$  \\ 
 \hline
 $j_{\text{sld}}$ & $\times$ & $\checkmark$ & $\times$ & $\checkmark$ & $\checkmark$ & $?$ \\ 
 \hline
\end{tabular}
\end{center}
\end{Ex}

\begin{Ex} \label{ex coverages on Cart}
Let $\ncat{Cart}$ denote the full subcategory on those manifolds that are diffeomorphic to $\R^n$ for some $n \geq 0$. We call the objects of this category \textbf{cartesian spaces}.
We can similarly define coverages $j_{\text{open}}$ and $j_{\omega}$ for $\ncat{Cart}$ as we did above for $\ncat{Man}$, though the covers must now be cartesian covers, i.e. they must be open covers whose open subsets are cartesian spaces. Furthermore, the preimage of a cartesian space by a smooth map is no longer a cartesian space in general, and hence they are not Grothendieck pretopologies on $\ncat{Cart}$, though they are composition-closed. Let us also mention the coverage $j_{\text{good}}$ of good open covers, described in detail in \cite[Introduction and Example 8.29]{Minichiello2025}. This coverage, defined on $\ncat{Cart}$ is neither refinement or composition closed, but it is a cofibrant site (Definition \ref{def cofibrant site}), which is a property the other coverages on $\ncat{Cart}$ lack. We discuss $(\ncat{Cart}, j_{\text{good}})$ and its relation to higher differential geometry in more detail in Section \ref{section higher differential geometry}.

We summarize the properties of the various coverages on $\ncat{Cart}$ in the following table.
\begin{center}    
\begin{tabular}{|c|c|c|c|c|c|c|c|} 
 \hline
 cvg & ref. cl & comp. cl & sat & cofib & plbk & Verdier \\ [0.5ex] 
 \hline\hline
$j_{\text{open}}$, $j_{\omega}$ & $\times$ & $\checkmark$ & $\times$ & $\times$ & $\times$ & $\times$ \\ 
 \hline
 $j_{\text{good}}$ & $\times$ & $\times$ & $\times$ & $\checkmark$ & $\times$ & $\times$ \\ 
 \hline
\end{tabular}
\end{center}
\end{Ex}

\begin{Ex} \label{ex coverages on top manifolds}
Let $\ncat{TopMan}$ denote the category whose objects are finite-dimensional topological manifolds and whose morphisms are continuous maps. We define $j_{\text{open}}$ and $j_{\omega}$ as we did for $\ncat{Man}$ in Example \ref{ex coverages on Man}. Now we define $j_{\text{sub}}$ to consist of singleton families of maps with local sections as in Example \ref{ex coverages on Top}. 
\begin{center}    
\begin{tabular}{|c|c|c|c|c|c|c|} 
 \hline
 cvg & ref. cl & comp. cl & sat & cofib & plbk & Verdier\\ [0.5ex] 
 \hline\hline
$j_{\text{open}}$, $j_{\omega}$ & $\times$ & $\checkmark$ & $\times$ & $\checkmark$ & $\checkmark$ & $\checkmark$\\ 
 \hline
 $j_{\text{sub}}$ & $\checkmark$ & $\checkmark$ & $\checkmark$ & $\checkmark$ & $\checkmark$ & $\times$  \\ 
 \hline
\end{tabular}
\end{center}
\end{Ex}

\begin{Ex} \label{ex coverages on generalized manifolds}
We can also augment the surjective submersion coverage on $\ncat{Man}$ to other more generalized kinds of manifolds. The following sites are Grothendieck pretopologies:
\begin{enumerate}
    \item $(\ncat{HilMan}, j_{\text{sub}})$ the site of Hilbert manifolds,
    \item $(\ncat{BanMan}, j_{\text{sub}})$ the site of Banach manifolds,
    \item $(\ncat{FreMan}, j_{\text{sub}})$ the site of Fr\'echet manifolds, and
    \item $(\ncat{ConMan}, j_{\text{sub}})$ the site of locally convex manifolds.
\end{enumerate}
See \cite[Section 9.3]{meyer2015groupoids} for more details.
\end{Ex}

\begin{Ex} \label{ex coverages on complex sites}
Let $\C\ncat{Man}$ denote the category of finite dimensional complex manifolds and holomorphic maps. Let $\ncat{Stein}$ and $\ncat{\C Disk}$ denote the full subcategories of Stein manifolds and polydisks with their corresponding coverages $j_{\text{open}}, j_\omega$ in \cite[Example 8.32]{Minichiello2025}.
\begin{center}    
\begin{tabular}{|c|c|c|c|c|c|c|} 
 \hline
 cvg & ref. cl & comp. cl & sat & cofib & plbk & Verdier \\ [0.5ex] 
 \hline\hline
 $j^{\ncat{\C Man}}_{\text{open}}, j^{\ncat{\C Man}}_{\omega}$ & $\times$ & $\checkmark$ & $\times$ & $\checkmark$ & $\checkmark$ & $\checkmark$ \\ 
 \hline
 $ j^{\ncat{Stein}}_{\text{open}}, j^{\ncat{Stein}}_{\omega}$ & $\times$ & $\checkmark$ & $\times$ & ? & $\checkmark$ & $\checkmark$ \\ 
 \hline
 $j^{\ncat{\C Disk}}_{\text{open}}, j^{\ncat{\C Disk}}_{\omega}$ & $\times$ & $\checkmark$ & $\times$ & ? & $\times$ & ?\\ 
 \hline
\end{tabular}
\end{center}
\end{Ex}

Now we wish to consider coverages on the categories $\ncat{Top}$ and $\ncat{Diff}$, the category of topological spaces and diffeological spaces respectively. There is a set-theoretical issue however. This is explained in detail in \cite[Section 8.4]{Minichiello2025}. The categories $\ncat{Top}$ and $\ncat{Diff}$ are essentially large categories, and hence the definition of coverages (Definition \ref{def coverage}) does not apply. We deal with this using Grothendieck universes \cite[Definition A.15]{Minichiello2025}. Suppose we have three Grothendieck universes $\mathbbm{u} \in \mathbb{U} \in \mathbb{V}$. We call the objects in $\mathbbm{u}$ \textbf{extra small}, the objects in $\mathbbm{U}$ \textbf{small} and the objects in $\mathbbm{V}$ \textbf{large}. We say a category $\cat{C}$ is $\mathbbm{W}$-small for $\mathbbm{W} \in \{ \mathbbm{u}, \mathbb{U}, \mathbb{V} \}$ if the sets $\text{Obj}(\cat{C})$, $\text{Mor}(\cat{C})$ of objects and morphisms respectively belong to $\mathbb{W}$. If $\cat{C}$ is a concrete category, i.e. has a faithful functor $\pi : \cat{C} \to \ncat{Set}$, then we let $\cat{C}_{\mathbbm{W}}$ denote the full subcategory on those objects $U \in \cat{C}$ such that $\pi(U) \in \mathbbm{W}$.

\begin{Ex} \label{ex coverages on Top}
Let $\ncat{Top}$ denote the category of nonempty $\mathbbm{u}$-small topological spaces. This is a small category, and so we can equip it with a coverage. We list a few possible coverages on topological spaces
\begin{enumerate}
    \item $j_{\text{open}}$ consisting of families of open embeddings $r = \{r_i : U_i \to U \}$, i.e. those maps that are homeomorphisms onto their image and $\bigcup_i r_i(U_i) = U$,
    \item $j_{\text{sub}}$ consisting of singleton families $\{f : V \to U \}$ that have local sections, i.e. for every point $x \in U$ there is an open set $W \subseteq U$ containing $x$ and a map $s_x : W \to V$ such that $f s_x = 1_W$.
    \item $j_{\text{slh}}$ consisting of surjective local homeomorphisms, also called surjective \'etale maps,
    \item $j_{\omega}$ consisting of countable open covers,
    \item $j_{\text{surj}}$ consisting of continuous surjections, and
    \item $j_{\text{proj}}$ consisting of proper surjections.
\end{enumerate}
Now $\ncat{Top}$ has all $\mathbbm{u}$-small coproducts.  
The same arguments as in Example \ref{ex coverages on Man} show that (1) - (4) are all refinement equivalent. Since $\ncat{Top}$ has pullbacks, each of the above coverages are Grothendieck pretopologies and $j_{\text{sub}}$ is also saturated. We note that not all open covers of an arbitrary topological space have a countable refinement, and hence while  $j_{\text{open}} \leq j_{\omega}$, it is not the case that $j_{\omega}$ is refinement equivalent to any of the above coverages. For more possible coverages on $\ncat{Top}$ see \cite[Section 9.2]{meyer2015groupoids} and \cite[Section 5.5]{waldorf2024internalgeometry}. We note that $j_{\text{surj}} \leq j$ for all coverages $j$ mentioned above.
\end{Ex}

\begin{Ex} \label{ex coverages on diffeological spaces}
Let $\ncat{Diff}$ denote the category of $\mathbbm{u}$-small diffeological spaces. There are several options for coverages on diffeological spaces:
\begin{enumerate}
    \item let $j_{\text{open}}$ denote the collection of families consisting of $D$-open covers,
    \item let $j_{\text{subd}}$ denote the collection of singleton families consisting of subductions, and 
    \item let $j_{\text{sub}}$ denote the collection of singleton families which admits $D$-local sections.
\end{enumerate}
All of the above coverages are Grothendieck pretopologies. Furthermore $j_{\text{subd}}$ and $j_{\text{sub}}$ are saturated. See \cite[Section 5.4]{waldorf2024internalgeometry} for more details.
\end{Ex}

\begin{Ex} \label{ex sites}
Here we collect several sites that will be our main interest in this paper: Let $X$ be a topological space
\begin{equation*}
  (\ncat{TopMan}, j_{\text{open}}), \quad (\mathcal{O}(X), j_X), \quad (\ncat{Man}, j_{\text{open}}), \quad (\ncat{Cart}, j_{\text{open}/\text{good}}), \quad (\C\ncat{Man}, j_{\text{open}}), \quad (\ncat{Stein}, j_{\text{open}}). 
\end{equation*}
These sites will be the focus of our main result, Theorem \ref{th many sites are strictly hypercomplete}.
\end{Ex}

\section{Simplicial Presheaves} \label{section simplicial presheaves}

We quickly overview some notation for simplicial presheaves. We have the following adjoint triples.
\begin{equation} \label{eq adjunctions for simplicial presheaves}
\begin{tikzcd}
	{\sPre(\cat{C})} && {\ncat{sSet}}
	\arrow[""{name=0, anchor=center, inner sep=0}, "{(-)_c}"{description}, from=1-3, to=1-1]
	\arrow[""{name=1, anchor=center, inner sep=0}, "{\text{colim}_{\cat{C}^\op}}", curve={height=-30pt}, from=1-1, to=1-3]
	\arrow[""{name=2, anchor=center, inner sep=0}, "{\text{lim}_{\cat{C}^{\op}}}"', curve={height=30pt}, from=1-1, to=1-3]
	\arrow["\dashv"{anchor=center, rotate=-93}, draw=none, from=0, to=2]
	\arrow["\dashv"{anchor=center, rotate=-87}, draw=none, from=1, to=0]
\end{tikzcd},
\qquad
\begin{tikzcd}
	{\sPre(\cat{C})} && {\Pre(\cat{C})}
	\arrow[""{name=0, anchor=center, inner sep=0}, "{{}^c(-)}"{description}, from=1-3, to=1-1]
	\arrow[""{name=1, anchor=center, inner sep=0}, "{\pi_0}", curve={height=-30pt}, from=1-1, to=1-3]
	\arrow[""{name=2, anchor=center, inner sep=0}, "{(-)_0}"', curve={height=30pt}, from=1-1, to=1-3]
	\arrow["\dashv"{anchor=center, rotate=-89}, draw=none, from=1, to=0]
	\arrow["\dashv"{anchor=center, rotate=-91}, draw=none, from=0, to=2]
\end{tikzcd}
\end{equation}
The fully faithful embedding ${}^c(-) : \Pre(\cat{C}) \hookrightarrow \sPre(\cat{C})$ is defined objectwise as $\left( {}^cF \right)(U) = {}^c F(U)$ for all $U \in \cat{C}$, where ${}^c F(U)$ denotes the discrete simplicial set on the set $F(U)$, with all face and degeneracy maps given by the identity. We call a simplicial presheaf of this form a \textbf{discrete simplicial presheaf}. This functor has a left adjoint $\pi_0: \sPre(\cat{C}) \to \Pre(\cat{C})$, defined objectwise by
$$ (\pi_0 X)(U) = \text{coeq} \left( \begin{tikzcd}
	{X(U)_0} & {X(U)_1}
	\arrow["d_0"', shift right=2, from=1-2, to=1-1]
	\arrow["d_1", shift left=2, from=1-2, to=1-1]
\end{tikzcd} \right),$$
and a right adjoint $(-)_0: \sPre(\cat{C}) \to \Pre(\cat{C})$ defined objectwise by $X_0(U) = X(U)_0$. Note that limits and colimits in $\sPre(\cat{C})$ are computed objectwise. If $U \in \cat{C}$, then we let $y(U) \in \Pre(\cat{C})$ denote its corresponding representable presheaf, with $y(U)(V) = \cat{C}(V, U)$. There is also a functor $(-)_c : \ncat{sSet} \to \sPre(\cat{C})$ defined objectwise by $X_c(U) = X$ for every $U \in \cat{C}$, and where for every $f : U \to V$ in $\cat{C}$, $X_c(f)$ is the identity. We call a simplicial presheaf of this form a \textbf{constant simplicial presheaf}. We will often hide the notation ${}^c(-)$ and $(-)_c$ for discrete and constant simplicial presheaves when it is clear from context. The functor $(-)_c$ has a left and right adjoint given by taking the (co)limit over $\cat{C}^\op$. We call the functor $ \Gamma = \lim_{\cat{C}^\op}$ the \textbf{global sections functor}. Since $\Gamma = \colim_{\cat{C}}$, if $\cat{C}$ has a terminal object, then $\Gamma(X) = X(*)$.

The category of simplicial presheaves on $\cat{C}$ is canonically $\ncat{sSet}$-enriched. Let $X$ and $Y$ be simplicial presheaves, then let $\u{\sPre}(\cat{C})(X,Y)$ denote the simplicially-enriched hom, defined degreewise by
$$\u{\sPre}(\cat{C})(X,Y)_n = \sPre(\cat{C})(\Delta^n_c \times X, Y).$$
It is furthermore tensored and powered over $\ncat{sSet}$. Indeed for every $K \in \ncat{sSet}$ and $X \in \sPre(\cat{C})$, let $K \otimes X \coloneqq K_c \times X$ and define $X^K$ to be the simplicial presheaf defined objectwise by $(X^K)(U) = \u{\ncat{sSet}}(K, X(U))$. There are isomorphisms
\begin{equation} \label{eq (co)tensoring isos}
    \u{\sPre}(\cat{C})(K \otimes X, Y) \cong \u{\ncat{sSet}}(K, \u{\sPre}(\cat{C})(X,Y)) \cong \u{\sPre}(\cat{C})(X, Y^K),
\end{equation}
and we call $K \otimes X$ the \textbf{tensoring} of $X$ by $K$, and $X^K$ the \textbf{powering} of $X$ by $K$. With the simplicially enriched hom, there is an enriched version of the Yoneda lemma.

\begin{Lemma}
Given $U \in \cat{C}$ and a simplicial presheaf $X$ on $\cat{C}$,
\begin{equation*}
    \u{\sPre}(\cat{C})(y(U), X) \cong X(U).
\end{equation*}
\end{Lemma}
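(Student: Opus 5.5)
We compute degreewise, unwinding the definition of the simplicially enriched hom and then applying the ordinary Yoneda lemma for presheaves of sets. By definition, the $n$-simplices of the left-hand side are
\begin{equation*}
    \u{\sPre}(\cat{C})(y(U), X)_n = \sPre(\cat{C})(\Delta^n_c \times y(U), X),
\end{equation*}
where $y(U)$ is regarded as a discrete simplicial presheaf. So the first step is to identify $\Delta^n_c \times y(U)$ explicitly. It is the simplicial presheaf whose value on $V$ is the simplicial set $\Delta^n \times \cat{C}(V,U)$, that is, a disjoint union of copies of $\Delta^n$ indexed by the maps $V \to U$.

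Next we show
\begin{equation*}
    \sPre(\cat{C})(\Delta^n_c \times y(U), X) \cong X(U)_n,
\end{equation*}
naturally in $[n] \in \mathbf{\Delta}$. We do this in two steps.

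First, we use the adjunction between the tensoring and the powering, coming from \eqref{eq (co)tensoring isos} at the level of underlying sets. This gives
\begin{equation*}
    \sPre(\cat{C})(\Delta^n_c \times y(U), X) \cong \sPre(\cat{C})(y(U), X^{\Delta^n}).
\end{equation*}
A map from a discrete simplicial presheaf ${}^c F$ into $Y$ is the same as a map of presheaves $F \to Y_0$, by the adjunction ${}^c(-) \dashv (-)_0$ in \eqref{eq adjunctions for simplicial presheaves}. Applying this with $F = y(U)$ and $Y = X^{\Delta^n}$, we get
\begin{equation*}
    \sPre(\cat{C})(y(U), X^{\Delta^n}) \cong \Pre(\cat{C})\bigl(y(U), (X^{\Delta^n})_0\bigr).
\end{equation*}

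Second, we apply the ordinary Yoneda lemma to the right-hand side, which gives $(X^{\Delta^n})_0(U)$. By definition of the powering,
\begin{equation*}
    (X^{\Delta^n})_0(U) = \u{\ncat{sSet}}(\Delta^n, X(U))_0 = \ncat{sSet}(\Delta^n, X(U)) \cong X(U)_n,
\end{equation*}
where the last isomorphism is the Yoneda lemma in $\ncat{sSet}$. Concretely, this whole chain sends a map $\phi : \Delta^n_c \times y(U) \to X$ to the $n$-simplex $\phi_U(\iota_n, 1_U)$, where $\iota_n$ is the top simplex of $\Delta^n$.

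The step requiring the most care is checking that these bijections are natural in $[n]$, so that they assemble into an isomorphism of simplicial sets rather than just a levelwise bijection of sets. Each bijection in the chain is natural in $\Delta^n$ covariantly. The simplicial structure on both sides is induced by precomposition with maps $\Delta^m \to \Delta^n$. Therefore naturality follows by checking it on the explicit formula $\phi \mapsto \phi_U(\iota_n, 1_U)$, and this check is routine.
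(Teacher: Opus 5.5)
Your proof is correct, but it takes a longer route than the paper. The paper views $\sPre(\cat{C})$ as the presheaf category $\ncat{Fun}((\cat{C}\times\mathsf{\Delta})^\op,\ncat{Set})$. In that category, $\Delta^n_c\times y(U)$ is exactly the representable presheaf at $(U,[n])$, since it sends $(V,[m])$ to $\cat{C}(V,U)\times\mathsf{\Delta}([m],[n])$. A single application of the ordinary Yoneda lemma then gives $\sPre(\cat{C})(\Delta^n_c\times y(U),X)\cong X(U)_n$. Compatibility with the simplicial operators comes for free, because the Yoneda bijection is natural in the representing object $(U,[n])$.

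You instead split this one Yoneda step into two, Yoneda in $\Pre(\cat{C})$ and Yoneda in $\ncat{sSet}$. You glue them together with the tensor/power adjunction and the adjunction ${}^c(-)\dashv(-)_0$. Your version uses only structure the paper sets up in Section 3 and the two most familiar forms of Yoneda, and it yields the explicit formula $\phi\mapsto\phi_U(\iota_n,1_U)$. The cost is that you must check naturality in $[n]$ separately rather than inheriting it.

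That check is as routine as you say. For $\theta:[m]\to[n]$, both the precomposition $\theta^*\phi$ and the action of $\theta$ on $\phi_U(\iota_n,1_U)$ give $\phi_U(\theta,1_U)$. This uses that $\theta^*\iota_n=\theta$ in $\Delta^n$ and that $1_U$ is fixed by every simplicial operator in the discrete simplicial set $\cat{C}(U,U)$.
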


\begin{proof}
First note that $\sPre(\cat{C})$ is itself a presheaf category $\sPre(\cat{C}) = \ncat{Fun}((\cat{C} \times \mathsf{\Delta})^\op, \ncat{Set})$, and hence for every $n \geq 0$,
\begin{equation*}
    \u{\sPre}(\cat{C})(y(U), X)_n \cong \sPre(\cat{C})(y(U) \times \Delta^n, X) \cong X(U)_n.
\end{equation*}
\end{proof}

Note that $\ncat{sPre}(\cat{C})$ is also internally enriched. Indeed, let $\Map_{\ncat{sPre}(\cat{C})}(X,Y)$ denote the simplicial presheaf defined objectwise by
\begin{equation*}
    \Map_{\ncat{sPre}(\cat{C})}(X, Y)(U) = \u{\sPre}(\cat{C})(X \times y(U), Y).
\end{equation*}
In particular if $\cat{C}$ has a terminal object $*$ then
\begin{equation*}
\Gamma(\Map_{\ncat{sPre}(\cat{C})}(X, Y)) \cong \u{\sPre}(\cat{C})(X,Y).
\end{equation*}
This internal hom also reduces to the cotensoring by simplicial sets
\begin{equation*}
\Map_{\sPre(\cat{C})}(K_c, Y) \cong Y^K.
\end{equation*}

We say a map $f: X \to Y$ of simplicial presheaves is an \textbf{objectwise weak equivalence} if $f_U: X(U) \to Y(U)$ is a weak equivalence of simplicial sets for every $U \in \cat{C}$. Similarly an \textbf{objectwise fibration} is a map $f: X \to Y$ of simplicial presheaves such that $f_U: X(U) \to Y(U)$ is a Kan fibration of simplicial sets for every $U \in \cat{C}$. 

\begin{Prop}[{\cite[Page 314]{Bousfield1972}, \cite[Section A.2.6]{Lurie2009}}] \label{prop projective model structure}
Given a small category $\cat{C}$, there is a combinatorial, proper, simplicial model structure on $\sPre(\cat{C})$, which we call the \textbf{projective model structure} or \textbf{Bousfield-Kan model structure}, whose weak equivalences are the objectwise weak equivalences, and whose fibrations are the objectwise fibrations. Furthermore
\begin{equation*}
\begin{aligned}
    I_{\text{proj}} & = \{ \partial \Delta^n \otimes y(U) \to \Delta^n \otimes y(U) \, | \, U \in \cat{C}, n \geq 1 \} \cup \{\varnothing \to \Delta^0 \otimes y(U) \} \\
       J_{\text{proj}} & = \{ \Lambda^n_k \otimes y(U) \to \Delta^n \otimes y(U) \, | \, U \in \cat{C}, 0 \leq k \leq n, n \geq 1 \},
\end{aligned}
\end{equation*}
are sets of generating projective cofibrations and projective trivial cofibrations, respectively. The fibrant objects in this model structure are precisely those simplicial presheaves that are objectwise Kan complexes, which we call \textbf{projective fibrant} simplicial presheaves. Similarly we refer to the cofibrant objects as \textbf{projective cofibrant} simplicial presheaves.
\end{Prop}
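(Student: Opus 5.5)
The plan is to build the model structure as a cofibrantly generated one from the two sets $I_{\text{proj}}$ and $J_{\text{proj}}$, using Kan's recognition theorem (e.g.\ \cite[Theorem 11.3.1]{Hirschhorn}). Equivalently, this is the structure transferred from $\prod_{U \in \cat{C}} \ncat{sSet}$ along the adjunction whose left adjoint is $(K_U)_U \mapsto \coprod_U K_U \otimes y(U)$ and whose right adjoint is evaluation at all objects. Afterwards I will check combinatoriality, simplicial enrichment and properness separately.

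\textbf{Reduction to simplicial sets.} The key observation is an adjunction/Yoneda computation. By (\ref{eq (co)tensoring isos}) and the enriched Yoneda lemma, a square from $K \otimes y(U) \to L \otimes y(U)$ to a map $f : X \to Y$ is the same as a square from $K \to L$ to $f_U : X(U) \to Y(U)$. The same holds for lifts. Hence $f$ has the right lifting property with respect to $J_{\text{proj}}$ iff every $f_U$ is a Kan fibration. Likewise $f$ has the right lifting property with respect to $I_{\text{proj}}$ iff every $f_U$ is a trivial Kan fibration, i.e.\ an objectwise fibration that is an objectwise weak equivalence. The element $\varnothing \to \Delta^0 \otimes y(U)$ is what handles surjectivity on vertices.

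\textbf{Verifying Kan's conditions.}
\begin{enumerate}
\item Smallness holds because $\sPre(\cat{C})$ is a presheaf category, hence locally presentable, so every object is small.
\item The class of objectwise weak equivalences satisfies 2-out-of-3 and is closed under retracts.
\item $J_{\text{proj}}$-cell complexes are objectwise weak equivalences. Colimits are computed objectwise, and
\begin{equation*}
(\Lambda^n_k \otimes y(U))(V) \to (\Delta^n \otimes y(U))(V)
\end{equation*}
is a coproduct over $\cat{C}(V,U)$ of horn inclusions. Such a coproduct is an anodyne extension, and anodyne extensions in $\ncat{sSet}$ are closed under pushouts and transfinite composition.
\item The injectivity classes match, by the reduction paragraph above.
\end{enumerate}
Together these give the model structure, with the stated generators and with fibrations and weak equivalences both objectwise. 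Fibrant objects are then objectwise Kan complexes, and combinatoriality follows from local presentability together with cofibrant generation.

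\textbf{Remaining properties.} For SM7 it suffices to check pushout-products of generators with generating (trivial) cofibrations of $\ncat{sSet}$. Since
\begin{equation*}
(K \otimes y(U)) \times_c L \cong (K \times L) \otimes y(U),
\end{equation*}
the pushout-product of $(K \to K') \otimes y(U)$ with $L \to L'$ is $\bigl((K \to K') \,\square\, (L \to L')\bigr) \otimes y(U)$. The left adjoint $-\otimes y(U)$ preserves (trivial) cofibrations by the lifting characterization above, so SM7 for $\sPre(\cat{C})$ reduces to SM7 for $\ncat{sSet}$.

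Right properness is immediate, since pullbacks, fibrations and weak equivalences are all objectwise and $\ncat{sSet}$ is right proper. For left properness, I will first show that projective cofibrations are objectwise monomorphisms. The generators in $I_{\text{proj}}$ are objectwise monomorphisms, and monomorphisms of presheaves are closed under pushout, transfinite composition and retract. Then left properness follows from left properness of $\ncat{sSet}$, where every object is cofibrant, again because pushouts are objectwise.

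I expect the only genuinely delicate points to be item (3) and the objectwise-mono claim, i.e.\ controlling what the cell attachments look like objectwise. Both are handled by the formula for $(K \otimes y(U))(V)$ as a coproduct of copies of $K$.
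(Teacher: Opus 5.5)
The paper gives no proof of this proposition; it only cites Bousfield--Kan and Lurie. Your proposal is a correct, self-contained version of the standard argument those sources rest on, namely the projective structure on a diagram category (cf.\ \cite[Theorem 11.6.1]{Hirschhorn2009}), proved via Kan's recognition theorem.

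The individual steps all hold:
\begin{itemize}
\item The adjunction $-\otimes y(U) \dashv \mathrm{ev}_U$ correctly identifies the $I_{\text{proj}}$-injectives with the objectwise trivial fibrations and the $J_{\text{proj}}$-injectives with the objectwise fibrations.
\item The cell-complex check works, because objectwise a relative $J_{\text{proj}}$-cell complex is an anodyne extension.
\item SM7 follows from $(K\otimes y(U))\times L_c \cong (K\times L)\otimes y(U)$ together with the fact that $-\otimes y(U)$ is left Quillen.
\item Right properness holds objectwise.
\item Left properness follows once you know projective cofibrations are objectwise monomorphisms.
\end{itemize}

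One hypothesis of Kan's theorem is left unstated: relative $J_{\text{proj}}$-cell complexes must also be $I_{\text{proj}}$-cofibrations. This follows immediately from your item (4). Every $I_{\text{proj}}$-injective is in particular $J_{\text{proj}}$-injective, so every $J_{\text{proj}}$-cofibration is an $I_{\text{proj}}$-cofibration.

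Compared with the paper's bare citation, your route has a concrete advantage. It produces exactly the generating sets $I_{\text{proj}}$ and $J_{\text{proj}}$ asserted in the statement, and each claimed property is verified directly rather than inherited from a general theorem about $\mathrm{Fun}(\cat{C},\cat{M})$.
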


\begin{Rem}
There is a Quillen equivalent model structure on simplicial presheaves where the cofibrations and weak equivalences are objectwise, which is called the injective or Heller model structure. The fibrations for this model structure have historically been hard to characterize, but recently there has been a characterization given in \cite[Theorem 8.22]{Shulman2009}. See \cite{Blander2001} for an overview of the different model structures on simplicial presheaves.
\end{Rem}

In what follows, if $\cat{C}$ is a small category, then we will let $\H(\cat{C})$ denote\footnote{This particular notation is inspired by the corresponding notation for $\infty$-toposes in \cite{Schreiber2013}. Given two objects $X, A$ in an $\infty$-topos $\H$, the corresponding derived mapping space is denoted $\H(X,A)$, and one of the driving philosophies of \cite{Schreiber2013} is that $H^0(X,A) = \pi_0\H(X,A)$ is the $0$th cohomology of $X$ with coefficients in $A$.} the category $\ncat{sPre}(\cat{C})$ of simplicial presheaves equipped with the projective model structure.

When $\cat{C}$ has a terminal object $*$, it is easy to see that the adjunction
\begin{equation*}
\begin{tikzcd}
	{\ncat{sSet}} && {\H(\cat{C})}
	\arrow[""{name=0, anchor=center, inner sep=0}, "{(-)_c}", shift left=2, from=1-1, to=1-3]
	\arrow[""{name=1, anchor=center, inner sep=0}, "\Gamma", shift left=2, from=1-3, to=1-1]
	\arrow["\dashv"{anchor=center, rotate=-90}, draw=none, from=0, to=1]
\end{tikzcd} 
\end{equation*}
is Quillen, since $\Gamma$ preserves fibrations and trivial fibrations.

\subsection{Split Simplicial Presheaves}

We now wish to characterize the conditions that make a simplicial presheaf projective cofibrant.

\begin{Def}[{\cite[Definition 4.13]{Dugger2004}}] \label{def split}
Given a category $\cat{C}$ with finite coproducts, a simplicial object $X : \mathsf{\Delta}^\op \to \cat{C}$ is \textbf{split} if there exist subobjects $N_k \hookrightarrow X_k$ such that the canonical map
\begin{equation*}
   \sum_{\sigma : [n] \twoheadrightarrow [m]} N_m \to X_n  
\end{equation*}
where the coproduct is indexed by all surjections in $\mathsf{\Delta}$ with domain $[n]$, is an isomorphism for every $n \geq 0$. We call such a collection of isomorphisms a \textbf{splitting}.
\end{Def}

\begin{Lemma} \label{lem simplicial sets are split}
If we consider simplicial sets as simplicial objects in $\ncat{Set}$, then every simplicial set $X$ is split.
\end{Lemma}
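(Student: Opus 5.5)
The plan is to take $N_k \subseteq X_k$ to be the set of \emph{nondegenerate} $k$-simplices of $X$. The goal is then to show that the canonical map
\begin{equation*}
    \sum_{\sigma : [n] \twoheadrightarrow [m]} N_m \to X_n, \qquad (\sigma, x) \mapsto \sigma^*(x) = X(\sigma)(x),
\end{equation*}
is a bijection for every $n \geq 0$. In $\ncat{Set}$, finite coproducts are disjoint unions and subobjects are subsets, so this bijectivity is exactly what Definition \ref{def split} asks for.

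The whole argument rests on the Eilenberg--Zilber lemma. It says that every $x \in X_n$ can be written as $x = \sigma^*(y)$ with $\sigma : [n] \twoheadrightarrow [m]$ a surjection and $y \in X_m$ nondegenerate, and that the pair $(\sigma, y)$ is unique.

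Existence gives surjectivity. Choose $m$ minimal such that $x = \sigma^*(y)$ for some surjection $\sigma : [n] \twoheadrightarrow [m]$ and some $y \in X_m$; the pair $\sigma = \mathrm{id}$, $y = x$ shows such an $m$ exists. Then $y$ must be nondegenerate. Otherwise $y = \tau^*(z)$ for a surjection $\tau$ onto some $[m'] $ with $m' < m$, and $x = (\tau\sigma)^*(z)$ with $\tau\sigma$ a surjection to $[m']$, contradicting the minimality of $m$.

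Uniqueness gives injectivity, and this is the main step. Suppose $\sigma^*(y) = \tau^*(z)$ with $\sigma : [n] \twoheadrightarrow [m]$, $\tau : [n] \twoheadrightarrow [p]$, and $y, z$ nondegenerate. Every surjection in $\mathsf{\Delta}$ has a section, so pick $\delta$ with $\sigma\delta = \mathrm{id}_{[m]}$. Then
\begin{equation*}
    y = \delta^*\sigma^*(y) = \delta^*\tau^*(z) = (\tau\delta)^*(z).
\end{equation*}
Factor $\tau\delta = \iota\pi$ with $\pi$ a surjection and $\iota$ an injection. Then $y = \pi^*(\iota^*(z))$, and since $y$ is nondegenerate, $\pi$ must be the identity. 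Hence $\tau\delta$ is injective, which gives $m \leq p$. The symmetric argument gives $p \leq m$, so $m = p$. Now $\tau\delta$ is an injective map $[m] \to [m]$, so $\tau\delta = \mathrm{id}$, and therefore $y = z$.

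It remains to show $\sigma = \tau$. By the previous step, every section $\delta$ of $\sigma$ is also a section of $\tau$. A surjection in $\mathsf{\Delta}$ is determined by its set of sections: for each $i \in [n]$ there is a section $\delta$ of $\sigma$ with $\delta(\sigma(i)) = i$, namely the one that picks $i$ from the fibre over $\sigma(i)$. Applying $\tau\delta = \mathrm{id}$ to $\sigma(i)$ then gives $\tau(i) = \sigma(i)$ for all $i$, so $\sigma = \tau$.

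I expect this injectivity step to be the only real obstacle, specifically the bookkeeping with sections. Everything else is formal. Alternatively, the lemma can be cited directly, as in Goerss--Jardine, and the proof reduced to the identification above.
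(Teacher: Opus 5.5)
Your proof is correct and follows the same route as the paper: you take $N_k$ to be the nondegenerate $k$-simplices and invoke the Eilenberg--Zilber lemma. The paper simply cites that lemma from Gabriel--Zisman, whereas you prove both its existence and uniqueness parts directly, and so you also make explicit what the paper leaves implicit (that the images $X(\sigma)(N_m)$ for distinct surjections $\sigma$ are disjoint and jointly exhaust $X_n$, not merely that each $X(\sigma)$ is injective on $N_m$).
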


\begin{proof}
Given a simplicial set $X$, let $N_k$ denote the set of non-degenerate $k$-simplices of $X$. Then by the Eilenberg-Zilber lemma \cite[Page 26]{Gabriel1967}, for every surjection $\sigma: [n] \to [m]$, the corresponding map $X_m \to X_n$ is injective when restricted to $N_m$. In other words, $X_n$ can be written as a coproduct $X_n \cong N_n + \sum_{\sigma: [n] \twoheadrightarrow [m]} X(\sigma)(N_m)$.
\end{proof}

Let $\ncat{sSet}_{\nd}$ denote the category whose objects are simplicial sets and whose morphisms are those maps of simplicial sets that send non-degenerate simplices to non-degenerate simplices.

\begin{Lemma} \label{lem split simplicial presheaf iff factors through nondegen}
Given a category $\cat{C}$, a simplicial presheaf $X$ over $\cat{C}$ is split if and only if as a functor $X : \cat{C}^\op \to \ncat{sSet}$ it factors through $\ncat{sSet}_{\nd} \hookrightarrow \ncat{sSet}$.
\end{Lemma}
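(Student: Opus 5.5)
We prove the two directions separately, using the non-degenerate splitting of Lemma \ref{lem simplicial sets are split} objectwise. Throughout, "split" for a simplicial presheaf means split as a simplicial object in $\Pre(\cat{C})$. The coproducts there are objectwise disjoint unions, and a subobject $N_k \hookrightarrow X_k$ is an objectwise injection of presheaves.

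($\Leftarrow$) Suppose each restriction $X(f)$, for $f : U \to V$, preserves non-degenerate simplices. Define $N_k(U) \subseteq X(U)_k$ to be the set of non-degenerate $k$-simplices. The hypothesis is exactly that these sets are closed under the restriction maps $X(f)$, so $N_k$ is a subpresheaf of $X_k$. By Lemma \ref{lem simplicial sets are split} (Eilenberg--Zilber), for each $U$ the canonical map
\[
\sum_{\sigma : [n] \twoheadrightarrow [m]} N_m(U) \to X(U)_n
\]
is a bijection. This map is natural in $U$, since it is built from the degeneracy operators $X(\sigma)$, which commute with restrictions. A natural bijection is an isomorphism of presheaves, so $X$ is split.

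($\Rightarrow$) Suppose $X$ is split, with subobjects $N_k \hookrightarrow X_k$. The key step is to show that for each $U$, $N_k(U)$ is exactly the set of non-degenerate simplices of $X(U)$.

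First, degenerate simplices are not in $N_n(U)$. A degenerate $x \in X(U)_n$ has the form $x = s_i y$. By Eilenberg--Zilber, $y$ decomposes uniquely as $y = X(\tau)(z)$ with $\tau : [n-1] \twoheadrightarrow [m]$ a surjection and $z \in N_m(U)$; this uses the splitting in degree $n-1$. Then $x = X(\tau \circ s^i)(z)$ with $\tau \circ s^i : [n] \to [m]$ a surjection, $m < n$, so $x$ lies in a summand indexed by a non-identity surjection. Since the coproduct decomposition of $X(U)_n$ is disjoint, $x$ is not in the $\mathrm{id}_{[n]}$-summand $N_n(U)$.

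Second, every simplex outside $N_n(U)$ is degenerate. Any such simplex lies in a summand with $\sigma \neq \mathrm{id}$, so it equals $X(\sigma)(z)$ for some surjection $\sigma : [n] \to [m]$ with $m < n$, and is therefore degenerate.

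Hence $N_n(U)$ is exactly the set of non-degenerate $n$-simplices of $X(U)$. Since $N_n$ is a subpresheaf, restriction maps send non-degenerate simplices to non-degenerate ones, i.e. $X$ factors through $\ncat{sSet}_{\nd}$.

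The main obstacle is this identification of $N_n(U)$ with the non-degenerate simplices. The argument depends on disjointness of coproducts in $\ncat{Set}$ together with uniqueness in the Eilenberg--Zilber lemma; everything else is bookkeeping.
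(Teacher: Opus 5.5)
Your proof is correct and follows the same route as the paper's: split each $X(U)$ by its non-degenerate simplices via Eilenberg--Zilber, and observe that the splitting being natural in $U$ is exactly the statement that the restriction maps preserve non-degenerate simplices. In the $(\Rightarrow)$ direction you also check explicitly that any given splitting $N_k \hookrightarrow X_k$ must coincide objectwise with the non-degenerate simplices. The paper leaves this step implicit, and your argument for it is sound.
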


\begin{proof}
By Lemma \ref{lem simplicial sets are split}, we know that for every object $U \in \cat{C}$, $X(U)$ splits appropriately. We need only know then that for a map $f: V \to U$ in $\cat{C}$, the corresponding morphism $X(U) \to X(V)$ preserves the splitting. This is precisely what $X$ factoring through $\ncat{sSet}_{\nd}$ guarantees.
\end{proof}

\begin{Rem}
Note that this means that if $X$ is a split simplicial presheaf and $x \in X_n(U)$ is a nondegenerate $n$-simplex, then it may still be the case that some faces $d_i x$ are degenerate. In other words, the face maps may not respect the splitting, but the induced maps $X(f) : X(U) \to X(V)$ coming from the category $\cat{C}$ must respect the splitting.
\end{Rem}

\begin{Ex}[{\cite[Lemma 14.18.6]{Stacksprojectauthors2025}}]
In an abelian category $\cat{A}$, every simplicial object $X$ has a canonical splitting with $N_0 = X_0$ and
\begin{equation*}
   N_m = \cap_{i = 0}^{m-1}  \; \text{ker}(d_i)
\end{equation*}
where each $d_i : X_m \to X_{m-1}$ is a face map.
\end{Ex}

\begin{Def}
We say an object $P$ in a category $\cat{C}$ is \textbf{projective} if there is a lift in every diagram of the form
\begin{equation*}
    \begin{tikzcd}
	& X \\
	P & Y
	\arrow["f", from=1-2, to=2-2]
	\arrow["h", dashed, from=2-1, to=1-2]
	\arrow["g"', from=2-1, to=2-2]
\end{tikzcd}
\end{equation*}
where $f$ is an epimorphism. Equivalently $P$ is projective if $\cat{C}(P, -) : \cat{C} \to \ncat{Set}$ preserves epimorphisms.
\end{Def}

\begin{Def} \label{def semi-representable presheaf}
Given a small category $\cat{C}$, we say that a presheaf $X$ on $\cat{C}$ is \textbf{semi-representable} if it is isomorphic to a coproduct of representable presheaves $X \cong \sum_{i \in I} y(U_i)$. We call $I$ the \textbf{index set} of $X$.
\end{Def}

\begin{Lemma} \label{lem projective presheaves}
The projective objects in $\ncat{Pre}(\cat{C})$ are precisely those presheaves $X$ which are retracts of semi-representable presheaves, equivalently are coproducts of retracts of representable presheaves.
\end{Lemma}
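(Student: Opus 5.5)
The strategy is to prove the two implications separately, reducing everything to two facts: representables are projective, and epimorphisms in $\Pre(\cat{C})$ are computed objectwise.

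\textbf{Semi-representables are projective.} Epimorphisms of presheaves are exactly the objectwise surjections, since colimits, and in particular cokernel pairs, are computed objectwise. By the Yoneda lemma, $\Pre(\cat{C})(y(U), -) \cong \text{ev}_U$, and this sends objectwise surjections to surjections. Hence every representable is projective. Next, for a coproduct $\sum_i y(U_i)$ we have
\[
\Pre(\cat{C})\Big(\sum_i y(U_i), -\Big) \cong \prod_i \text{ev}_{U_i},
\]
and a product of surjections is again a surjection. We choose lifts componentwise, which uses the axiom of choice. So semi-representables are projective.

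\textbf{Retracts of projectives are projective.} If $P \xrightarrow{s} Q \xrightarrow{p} P$ with $ps = 1$ and $Q$ projective, take a map $g : P \to Y$ and an epimorphism $f : X \to Y$. Lift $gp : Q \to Y$ to some $h : Q \to X$ with $fh = gp$. Then $hs$ lifts $g$, because $fhs = gps = g$. This proves one direction of the lemma.

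\textbf{Projectives are retracts of semi-representables.} Given a projective $X$, form the canonical map
\[
\epsilon : \sum_{(U, x),\; x \in X(U)} y(U) \to X,
\]
indexed by the elements of $X$. It is objectwise surjective, since $x$ is the image of $1_U$ in the summand indexed by $(U,x)$, so it is an epimorphism. Since $X$ is projective, the identity $1_X$ lifts through $\epsilon$, giving a section $s$ with $\epsilon s = 1_X$. Thus $X$ is a retract of a semi-representable.

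\textbf{The reformulation as coproducts of retracts of representables.} This is the step I expect to be the main obstacle, and I would decompose $X$ into connected components. A presheaf is a coproduct of its connected components, that is, of the components of its category of elements. A retract of a representable $y(U)$ is connected, since it receives an epimorphism from the connected presheaf $y(U)$.

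Conversely, let $X$ be projective and write $X = \sum_k X_k$ as a coproduct of connected presheaves. Each $X_k$ is again projective: it is a retract of $X$ once we choose some map $X \to X_k$, or more simply, lifts for $X_k$ are obtained by restricting a lift for $X$ of a suitably extended map. So it suffices to treat a connected projective $X$, shown to be a retract of $\sum_{(U,x)} y(U)$ via a section $s$.

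The key observation is that a map from a connected presheaf into a coproduct factors through a single summand. Applying this to $s$, it factors through some $y(U)$, giving
\[
X \xrightarrow{s} y(U) \xrightarrow{\epsilon|_{y(U)}} X, \qquad \epsilon|_{y(U)} \circ s = 1_X.
\]
Hence $X$ is a retract of a representable. The only remaining care is the edge case of the empty presheaf, which is the empty coproduct and poses no difficulty.
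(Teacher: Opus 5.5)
Your first three steps are sound. The direction ``projective $\Rightarrow$ retract of a semi-representable'' is exactly the paper's argument: the canonical epimorphism from the coproduct of representables indexed by elements of $X$, followed by lifting $1_X$. You additionally prove the converse, that semi-representables and their retracts are projective, which the paper's proof does not address.

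For the reformulation you take a genuinely different route. The paper intersects $X$ with the summands of the ambient $\sum_i y(U_i)$, setting $X_i = X \cap y(U_i)$, and asserts that the retraction $r$ restricts to retractions $X_i \hookrightarrow y(U_i) \to X_i$. You instead decompose $X$ into its own connected components and use that a map from a connected presheaf into a coproduct factors through one summand. Your connectedness observation is precisely what justifies the paper's unproved assertion. Since $y(U_i)$ is connected, $r|_{y(U_i)}$ lands in a single connected component of $X$. As $r$ is the identity on the nonempty summand $X_i$, that component meets $X_i$, and so lies in $X_i$. Your route identifies the summands intrinsically as the components of $X$ and makes this key fact explicit; the paper's route is shorter but, as written, relies on that fact without stating it.

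One sub-step needs repair: your reasons for why each connected component $X_k$ of a projective $X$ is projective. The first reason, that $X_k$ is a retract of $X$ ``once we choose some map $X \to X_k$,'' fails in general, because such a map need not exist. If $\cat{C}$ is the discrete category on two objects $a,b$, then $X = y(a)+y(b)$ is projective but admits no map to $y(a)$. The alternative, ``restricting a lift of a suitably extended map,'' also needs care, since $g : X_k \to Y$ need not extend to $X$. It works if you replace the epimorphism $f : E \to Y$ by $f + 1 : E + X' \to Y + X'$ with $X' = \sum_{k' \neq k} X_{k'}$, lift $g + 1$, and use disjointness of coproducts to see that the restriction to $X_k$ lands in $E$.

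Simpler still, skip the reduction entirely. Restrict the global section $s$ to $X_k$ and factor it through a single summand $y(U)$, using that $X_k$ is connected. Then $\epsilon|_{y(U)}$ lands in a single component, which must be $X_k$ because $\epsilon s$ is the identity on the nonempty $X_k$. This exhibits $X_k$ as a retract of $y(U)$ without needing to know that $X_k$ is projective.
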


\begin{proof}
By the Density Lemma/coYoneda Lemma \cite[Lemma A.75]{Minichiello2025}, $X$ can be written as a colimit
\begin{equation*}
   X \cong \ncolim{y(U) \to X} \, y(U). 
\end{equation*}
But every colimit can be written as a coequalizer of coproducts
\begin{equation*}
\sum_{y(V) \to y(U) \to X} y(V) \rightrightarrows \sum_{y(U) \to X} y(U) \xrightarrow{p} X. 
\end{equation*}
But then $p$ is an epimorphism, and $X$ is projective, so there is a lift in the diagram
\begin{equation*}
    \begin{tikzcd}
	& {\sum_{y(U) \to X} y(U)} \\
	X & X
	\arrow["p", from=1-2, to=2-2]
	\arrow["i", dashed, from=2-1, to=1-2]
	\arrow[equals, from=2-1, to=2-2]
\end{tikzcd}
\end{equation*}
Therefore $pi = 1_X$, so $X$ is a retract of a coproduct of representables.

We can also consider $X$ as a coproduct of retracts of representables. Indeed, since $X$ is a sub-object of a semi-representable presheaf $\sum_i y(U_i)$, we know that $X(U) \subseteq \sum_i \cat{C}(U_i, U)$. Hence we can write $X(U) = \sum_i X_i(U)$, where $X_i(U) = X(U) \cap \cat{C}(U_i, U)$. Since $X$ is a sub-object, this decomposition is functorial, i.e. $X \cong \sum_i X_i$. The retraction 
\begin{equation*}
X \xhookrightarrow{\iota} \sum_i y(U_i) \xrightarrow{r} X   
\end{equation*}
induces retractions
\begin{equation*}
    X_i \xhookrightarrow{\iota} y(U_i) \xrightarrow{r_i} X_i.
\end{equation*}
Hence $X$ is also a coproduct of retracts of representables.
\end{proof}

\begin{Prop}[{\cite{Garner2013}}] \label{prop projective cofibrant}
Given a small category $\cat{C}$, a simplicial presheaf $X$ on $\cat{C}$ is projective cofibrant if and only if the following conditions hold:
\begin{enumerate}
    \item every $X_n$ is a projective object in $\Pre(\cat{C})$, i.e. a  retract of a semi-representable presheaf and
    \item $X$ is split.
\end{enumerate}
\end{Prop}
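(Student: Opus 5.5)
We prove both directions using the cofibrantly generated projective model structure (Proposition \ref{prop projective model structure}). The generating cofibrations are $\partial \Delta^n \otimes y(U) \to \Delta^n \otimes y(U)$ together with $\varnothing \to y(U)$. Hence a simplicial presheaf $X$ is projective cofibrant if and only if $X$ is a retract of an $I_{\text{proj}}$-cell complex. The strategy is to identify the $I_{\text{proj}}$-cell complexes with the split simplicial presheaves whose nondegenerate pieces $N_m$ are semi-representable. We then check that the class described by (1) and (2) is closed under retracts and contains every retract of such a cell complex.

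\emph{Sufficiency.} Suppose first that $X$ is split with each $N_m \cong \sum_{i} y(U_i)$ semi-representable. Then $X$ is the colimit of its skeleta $\text{sk}_m X$. Just as for simplicial sets, each skeleton is obtained from the previous one by a pushout
\begin{equation*}
\partial \Delta^m \otimes N_m \to \Delta^m \otimes N_m, \qquad \text{along} \quad \partial\Delta^m \otimes N_m \to \text{sk}_{m-1}X.
\end{equation*}
The splitting is exactly what ensures that the new simplices $\sigma^*N_m$ are freely adjoined and never identified with lower ones, so the square is a pushout computed levelwise. Since $N_m$ is a coproduct of representables, the top map is a coproduct of generating cofibrations. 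Hence $X$ is a cell complex, and in particular cofibrant.

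For the general case of (1)+(2), each $N_m$ is a retract of a semi-representable presheaf $Q_m$, with maps $N_m \to Q_m \to N_m$. We will try to build a split simplicial presheaf $\tilde X$ whose nondegenerate pieces are $N_m + Q_m'$ for a complement with $N_m + Q_m' \cong Q_m + (\text{free})$. This is an Eilenberg-swindle-type trick, where we add the extra summand as contractible cells $\Delta^m\otimes(\dots)$, so that $X$ becomes a retract of a cell complex. Alternatively, and more cleanly, we can check directly that $\varnothing \to X$ has the left lifting property against trivial fibrations. We construct the lift skeleton by skeleton: at stage $m$ we must lift $N_m \to$ a matching-object map that is an objectwise surjection. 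This works because $N_m$ is projective in $\Pre(\cat{C})$ and the relevant map $Y_m \to Y^{\partial\Delta^m}\times_{\dots}$ is an epimorphism of presheaves, since trivial Kan fibrations are levelwise surjective on these relative matching maps. The splitting guarantees that a lift defined on $N_m$ extends uniquely and compatibly to all degeneracies. I would use this direct lifting argument as the main route.

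\emph{Necessity.} Conversely, the class of $X$ satisfying (1) and (2) contains all $I_{\text{proj}}$-cell complexes by the pushout description above, read in reverse: cell attachments add semi-representable nondegenerate pieces, and transfinite composites preserve both splitness and the summand structure. Retracts of projective presheaves are projective, so (1) passes to retracts. The main obstacle is showing that a retract of a split simplicial presheaf is split, with nondegenerate part a retract of the original. By Lemma \ref{lem split simplicial presheaf iff factors through nondegen}, we must show that for a retract $X \to Z \to X$ with $Z$ split, the restriction maps $X(f)$ send nondegenerate simplices to nondegenerate simplices. Take $x$ nondegenerate in $X(U)$ and suppose $X(f)x = s_i y$ is degenerate. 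Its image $i(x)$ in $Z(U)$ is nondegenerate, since $ri(x)=x$ and maps of simplicial sets preserve degeneracy. By the splitting of $Z$, $Z(f)i(x) = i(X(f)x)$ is nondegenerate, contradicting $i(s_iy)=s_i i(y)$. So $X$ is split, with $N_m(X)$ the preimage of $N_m(Z)$ under $i$, and it is a retract of $N_m(Z)$ via the Eilenberg–Zilber decomposition. Hence $N_m(X)$ is projective by Lemma \ref{lem projective presheaves}. Finally $X_n \cong \sum_\sigma N_m$ is a coproduct of projectives, hence projective, which gives (1).
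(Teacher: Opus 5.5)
Your proof is correct. The necessity direction is essentially the paper's: both show that $I_{\text{proj}}$-cell complexes are split with semi-representable nondegenerate parts, and that splitness passes to a retract. The paper phrases this as ``a subobject of a split simplicial presheaf is split''; you prove it directly for retracts.

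For sufficiency you take a genuinely different route. The paper builds a cell complex $X'$ of which $X$ is a retract, attaching cells along the splitting. That is immediate when the $N_m$ are semi-representable. For merely projective levels, however, the existence of a split $X'$ retracting onto $X$ needs an argument the paper does not spell out. Your skeletal lifting argument needs no auxiliary $X'$. Since $\text{sk}_m X = \text{sk}_{m-1}X \cup_{\partial\Delta^m \otimes N_m} \Delta^m \otimes N_m$, with naturality in $U$ being exactly splitness, each extension step is a lifting problem of $N_m$ against the objectwise surjection $Y_m \to W_m \times_{M_m W} M_m Y$. Projectivity of $N_m$ therefore suffices, and you get the full statement. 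What the paper's route would buy is an explicit cell structure on $X$. That is only available in the semi-representable case anyway, since a cell complex is degreewise semi-representable.

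Two small repairs are needed.

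First, in sufficiency you need $N_m$ to be projective, but you only assert it. The summand inclusion $N_m \hookrightarrow X_m \cong N_m + L_m X$ need not split in $\Pre(\cat{C})$, because there may be no map $L_m X \to N_m$. Argue instead that a coproduct summand of a projective presheaf is projective. Given an epimorphism $g : Y \to Z$ and a map $a : N_m \to Z$, lift $a + 1 : N_m + L_m X \to Z + L_m X$ along the epimorphism $g + 1 : Y + L_m X \to Z + L_m X$. Because coproducts in $\Pre(\cat{C})$ are disjoint and pullback-stable, the restriction of the lift to $N_m$ factors through $Y$.

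Second, in necessity the claim that $N_m(X)$ is a retract of $N_m(Z)$ is false in general, because $r$ need not preserve nondegenerate simplices. For example, over $\cat{C} = *$ the simplicial set $\Delta^0$ is a retract of $\Delta^1$, but there is no map from $N_1(\Delta^1) \neq \varnothing$ to $N_1(\Delta^0) = \varnothing$. The claim is also unnecessary: your earlier remark that $X_n$ is a retract of the semi-representable $Z_n$ already gives (1).
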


\begin{proof}
$(\Rightarrow)$ By definition, a projective cofibrant simplicial presheaf $X$ is a retract of a relative $I_{\text{proj}}$-cell complex $X'$. Hence $X'$ is a transfinite composition of coproducts of pushouts along maps in $I_{\text{proj}}$, see \cite[Definition 10.5.8]{Hirschhorn2009}. Note that for any simplicial set $K$ and any representable presheaf $y(U)$, the simplicial presheaf $K \otimes y(U)$ is degreewise a coproduct of representables: 
\begin{equation*}
(K \otimes y(U))_n = K_n \times y(U)_n = \sum_{x \in K_n} y(U).
\end{equation*}
Note that splitness is a condition that can be checked degreewise. So let us prove by induction that $X'$ is degreewise semi-representable and is split. Splitness is a vacuous condition in degree $0$, so we wish only to show $X'_0$ is a coproduct of representables. Since $X'$ is a $I_{\text{proj}}$-cell complex, $X'_0$ must be a pushout of the form
\begin{equation*}
    \begin{tikzcd}
	\varnothing & \varnothing \\
	{\sum_{i \in I_0} \Delta^0 \otimes y(U_i)} & {X'_0 = \text{sk}_0 X'}
	\arrow[from=1-1, to=1-2]
	\arrow[from=1-1, to=2-1]
	\arrow[from=1-2, to=2-2]
	\arrow[from=2-1, to=2-2]
	\arrow["\lrcorner"{anchor=center, pos=0.125, rotate=180}, draw=none, from=2-2, to=1-1]
\end{tikzcd}
\end{equation*}
in other words $X'_0 \cong \sum_{i \in I_0} y(U_i)$.

Now suppose that $\text{sk}_{n-1} X'$ is semi-representable and split. 

The $n$-skeleton $\text{sk}_n X'$ is a pushout of the form
\begin{equation*}
    \begin{tikzcd}
	{\sum_{i \in I_n} \partial \Delta^n \otimes y(U_i)} & {\text{sk}_{n-1} X'} \\
	{\sum_{i \in I_n} \Delta^n \otimes y(U_i)} & {\text{sk}_n X'}
	\arrow[from=1-1, to=1-2]
	\arrow[from=1-1, to=2-1]
	\arrow[from=1-2, to=2-2]
	\arrow[from=2-1, to=2-2]
	\arrow["\lrcorner"{anchor=center, pos=0.125, rotate=180}, draw=none, from=2-2, to=1-1]
\end{tikzcd}
\end{equation*}
So it is easy to see that $\text{sk}_n X'$ is degreewise a coproduct of representables. Now each of the above simplicial presheaves factor through $\ncat{sSet}_{\nd}$, hence so does their pushout $\text{sk}_n$. Thus by Lemma \ref{lem split simplicial presheaf iff factors through nondegen}, $\text{sk}_n X'$ is split.

Hence, we have proven that $X'$ is degreewise semi-representable and split. Since $X$ is a retract of $X'$, it is degreewise semi-representable and also split, as a subobject of a  simplicial presheaf factoring through $\ncat{sSet}_{\nd}$ must also factor through $\ncat{sSet}_{\nd}$.

$(\Leftarrow)$ Now suppose that $X$ is degreewise semi-representable and split. Then by definition, there exists a simplicial presheaf $X'$ that $X$ is a retract of, and since $X$ is split, $X'$ is as well. Then it is easy to see that we can construct $X'$ as a $I_{\text{proj}}$-cell complex using its splitting as the attaching maps.
\end{proof}

\begin{Cor} \label{cor representables are cofibrant}
If $U \in \cat{C}$, then $y(U)$ is projective cofibrant. Furthemore if $X$ is a retract of a representable, then it is projective cofibrant.
\end{Cor}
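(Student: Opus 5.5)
The plan is to deduce both parts of Corollary \ref{cor representables are cofibrant} from the characterization of projective cofibrant objects in Proposition \ref{prop projective cofibrant}. For the first claim, I view $y(U)$ as a discrete simplicial presheaf ${}^c y(U)$. Every level is $y(U)_n = y(U)$, which is representable and hence semi-representable; in particular it is a projective object of $\Pre(\cat{C})$ by Lemma \ref{lem projective presheaves}. For splitness I will take $N_0 = y(U)$ and $N_k = \varnothing$ for $k \geq 1$. Then for each $n$, the coproduct $\sum_{\sigma : [n] \twoheadrightarrow [m]} N_m$ reduces to the single summand indexed by the unique surjection $[n] \twoheadrightarrow [0]$. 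The canonical map to $X_n$ is the degeneracy $y(U) \to y(U)_n$, which is the identity and so an isomorphism. Alternatively, one can invoke Lemma \ref{lem split simplicial presheaf iff factors through nondegen}: each $y(U)(V)$ is a discrete simplicial set whose only nondegenerate simplices are its vertices, and every map of discrete simplicial sets sends vertices to vertices, so the functor factors through $\ncat{sSet}_{\nd}$. An even more direct route is to note that $\varnothing \to \Delta^0 \otimes y(U) = y(U)$ lies in $I_{\text{proj}}$, so $y(U)$ is cofibrant by definition. I would record this as the cleanest argument.

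For the second claim, suppose $X$ is a retract of $y(U)$ in $\Pre(\cat{C})$, regarded as a discrete simplicial presheaf. Since ${}^c(-)$ is a functor, a retraction $X \to y(U) \to X$ of presheaves yields a retraction of discrete simplicial presheaves. Cofibrant objects in any model category are closed under retracts, because cofibrations are. Combined with the first part, this shows that $X$ is projective cofibrant. As a cross-check through Proposition \ref{prop projective cofibrant}: each level $X_n = X$ is a retract of a representable and hence projective by Lemma \ref{lem projective presheaves}, and $X$ is split by the same splitting $N_0 = X$, $N_k = \varnothing$.

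I do not expect a real obstacle here. The only subtlety is the case where $X$ is a retract of a representable at the level of \emph{simplicial} presheaves rather than presheaves. Even then, the retract-closure argument in the model category settles it immediately, so I would phrase the proof using retract closure to cover both readings.
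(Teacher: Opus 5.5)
Your proof is correct. The paper gives no separate argument for this statement. It is placed as an immediate consequence of Proposition \ref{prop projective cofibrant}: one checks that the discrete simplicial presheaf on $y(U)$, or on a retract $X$ of it, is degreewise a projective presheaf and is split. That is exactly what you do in your cross-checks.

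Your preferred argument is genuinely different and more elementary. Since $\varnothing \to \Delta^0 \otimes y(U) \cong y(U)$ belongs to $I_{\text{proj}}$ (Proposition \ref{prop projective model structure}), $y(U)$ is cofibrant by definition. The second claim then follows from closure of cofibrations under retracts, with no need to mention splittings or projectivity.

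What this buys you is independence from the $(\Leftarrow)$ direction of Proposition \ref{prop projective cofibrant}. That is precisely the direction needed here, and it is the one whose proof in the paper is only sketched. Your argument also treats retracts taken in $\Pre(\cat{C})$ and in $\sPre(\cat{C})$ uniformly.

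The paper's route, in turn, shows the corollary as a special case of the general characterization, and it makes the ``furthermore'' clause look natural. A retract of a representable is a projective presheaf, so condition (1) holds, and discreteness supplies the splitting in condition (2).
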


\begin{Lemma}[{\cite[Lemma 6.5.6]{borceux1994handbook}}] \label{rem projective presheaves}
If $\cat{C}$ is a Cauchy complete category, i.e. all of its idempotents split, then a presheaf on $\cat{C}$ is projective if and only if it is semi-representable.
\end{Lemma}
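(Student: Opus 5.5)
Since every retract of a semi-representable presheaf is projective (Lemma \ref{lem projective presheaves}), only one direction needs work: if $\cat{C}$ is Cauchy complete, then a projective presheaf $X$ is semi-representable. The plan is to apply Lemma \ref{lem projective presheaves} and then show that, under Cauchy completeness, each retract of a representable is itself representable.

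First, by Lemma \ref{lem projective presheaves}, a projective presheaf $X$ decomposes as $X \cong \sum_i X_i$. Each $X_i$ is a retract of a representable presheaf, via maps
\[
X_i \xhookrightarrow{\iota_i} y(U_i) \xrightarrow{r_i} X_i, \qquad r_i \iota_i = 1_{X_i}.
\]
It therefore suffices to prove that each $X_i$ is representable.

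Next, fix $i$ and set $e = \iota_i r_i : y(U_i) \to y(U_i)$. This map satisfies $e^2 = \iota_i r_i \iota_i r_i = \iota_i r_i = e$, so $e$ is an idempotent. By the Yoneda lemma, $e = y(\epsilon)$ for a unique idempotent $\epsilon : U_i \to U_i$ in $\cat{C}$. Since $\cat{C}$ is Cauchy complete, $\epsilon$ splits: there are an object $V$ and maps $s : V \to U_i$, $p : U_i \to V$ with $ps = 1_V$ and $sp = \epsilon$.

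Finally, identify $X_i$ with $y(V)$. Applying $y$ shows that $y(V)$, with $y(s)$ and $y(p)$, is a splitting of the idempotent $e$. The presheaf $X_i$, with $\iota_i$ and $r_i$, is also a splitting of $e$. Splittings of an idempotent are unique up to unique isomorphism, because each is the equalizer of $e$ and $1$. Concretely, the maps $y(p)\iota_i : X_i \to y(V)$ and $r_i y(s) : y(V) \to X_i$ are mutually inverse:
\[
r_i y(s) y(p) \iota_i = r_i e \iota_i = r_i\iota_i r_i \iota_i = 1_{X_i},
\]
and symmetrically in the other order. Hence $X_i \cong y(V)$, and therefore $X \cong \sum_i y(V_i)$ is semi-representable.

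The only point needing care is the decomposition of $X$ into the summands $X_i$. This decomposition is already supplied by Lemma \ref{lem projective presheaves}, so the proof reduces to the idempotent-splitting argument above, which is routine.
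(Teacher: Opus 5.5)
Your proof is correct: Lemma \ref{lem projective presheaves} reduces the problem to a retract $X_i$ of a representable $y(U_i)$. The idempotent $\iota_i r_i = y(\epsilon)$ then splits as $\epsilon = sp$ by Cauchy completeness, and uniqueness of splittings gives $X_i \cong y(V_i)$. The paper gives no argument of its own, only the citation to Borceux, and your idempotent-splitting argument is essentially the standard proof behind that reference.
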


\begin{Cor}
If $\cat{C}$ is a small Cauchy complete category and $X$ is a simplicial presheaf on $\cat{C}$, then $X$ is projective cofibrant if and only if $X$ is degreewise semi-representable and split.
\end{Cor}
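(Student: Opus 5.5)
The statement is an immediate combination of Proposition \ref{prop projective cofibrant} and Lemma \ref{rem projective presheaves}, so the plan is to chain them. By Proposition \ref{prop projective cofibrant}, $X$ is projective cofibrant if and only if $X$ is split and each $X_n$ is a projective object of $\Pre(\cat{C})$. It therefore suffices to show that, when $\cat{C}$ is Cauchy complete, condition (1) of that proposition is equivalent to each $X_n$ being semi-representable. Lemma \ref{rem projective presheaves} gives exactly this, applied degreewise for each $n \geq 0$.

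For the direction ($\Rightarrow$), suppose $X$ is projective cofibrant. Proposition \ref{prop projective cofibrant} shows that $X$ is split and that each $X_n$ is projective. Lemma \ref{rem projective presheaves} then upgrades each $X_n$ to a semi-representable presheaf.

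For the direction ($\Leftarrow$), suppose $X$ is degreewise semi-representable and split. Any semi-representable presheaf $\sum_i y(U_i)$ is projective, since it is trivially a retract of itself, so Lemma \ref{lem projective presheaves} applies. This direction does not even need Cauchy completeness. Proposition \ref{prop projective cofibrant} then shows that $X$ is projective cofibrant.

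I do not expect any real obstacle here. The only step worth checking is that the notion of ``projective'' in Lemma \ref{rem projective presheaves} is the same as the one in Proposition \ref{prop projective cofibrant}, namely projectivity with respect to epimorphisms in $\Pre(\cat{C})$. If one wanted to justify Lemma \ref{rem projective presheaves} directly, the key point would be the one already used in Lemma \ref{lem projective presheaves}: a projective presheaf is a coproduct of retracts of representables. A retract of $y(U)$ corresponds to an idempotent $e$ on $U$, and when $e$ splits through some $V$, that retract is isomorphic to $y(V)$.
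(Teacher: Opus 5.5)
Your proposal is correct and follows the paper's route exactly: the paper states this corollary without a separate proof, as the immediate combination of Proposition \ref{prop projective cofibrant} with Lemma \ref{rem projective presheaves}. You also correctly observe that the converse direction uses only that semi-representable presheaves are projective, so it needs no Cauchy completeness.
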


\begin{Rem}
Note in particular that all posets are Cauchy complete, as the only idempotents are the identity maps. Furthermore, the category $\ncat{Man}$ is Cauchy complete\footnote{In fact, $\ncat{Man}$ is the Cauchy completion of the category $\ncat{Open}$ whose objects are open subsets of some $\R^n$ with smooth maps. The map $p : \R^2 \setminus \{0 \} \to \R^2 \setminus \{0 \}$ given by $p(x) = \frac{x}{|x|}$ is idempotent, but not split in this category. But obviously it is split in $\ncat{Man}$.}, but the proof of this fact is nontrivial \cite{nlab:karoubi_envelope}.
\end{Rem}

\section{The \v{C}ech and Local Model Structures}

\subsection{The \v{C}ech Model Structure}

\begin{Def} \label{def cech nerve}
Given an object $U \in \cat{C}$ and a family of morphisms $r = \{r_i : U_i \to U \}_{i \in I}$ over $U$, let $\check{C}(r)$ denote the simplicial presheaf defined degreewise by
\begin{equation*}
    \check{C}(r)_n = \sum_{i_0, \dots, i_{n-1} \in I} y(U_{i_0}) \times_{y(U)} \dots \times_{y(U)} y(U_{i_{n-1}}),
\end{equation*}
with face maps given by projection and degeneracies given by diagonal maps. We call this the \textbf{\v{C}ech nerve} of the family $r$. There is a map $p_r : \check{C}(r) \to y(U)$ given by $r_i$ on each component of $\check{C}(r)_0 = \sum_{i \in I} y(U_i)$. Note that 
\begin{equation} \label{eq pi0 of cech nerve}
    \pi_0 \check{C}(r) \cong \overline{r} \cong \text{coeq} \left( \sum_{i,j \in I} y(U_i) \times_{y(U)} y(U_j) \rightrightarrows \sum_i y(U_i) \right).
\end{equation}
where $\overline{r}$ is the sifted closure (Definition \ref{def sifted closure}) of $r$.
\end{Def}

\begin{Lemma} \label{lem cech nerves are split}
For any family of morphisms $r = \{r_i : U_i \to U \}$, the \v{C}ech nerve is a split simplicial presheaf.
\end{Lemma}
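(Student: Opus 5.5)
To show that $\check{C}(r)$ is split, I would write down the nondegenerate summands explicitly and then check the splitting degreewise. Everything reduces to simplicial sets, because fiber products and coproducts of presheaves are computed objectwise. For each $V \in \cat{C}$,
\begin{equation*}
\check{C}(r)_n(V) \cong \{ (i_0, \dots, i_n, f_0, \dots, f_n) \mid f_k : V \to U_{i_k},\ r_{i_0} f_0 = \dots = r_{i_n} f_n \}.
\end{equation*}
Here I index the $n$-th level by $(n+1)$-tuples of indices; the displayed formula in Definition \ref{def cech nerve} is off by one. In these terms, the face maps delete an entry of the tuple and the degeneracies repeat an entry. The point to notice is that a simplex is just a word $((i_0,f_0), \dots, (i_n,f_n))$ in the alphabet $A(V) = \{(i,f) \mid f : V \to U_i\}$, with all letters lying over the same map $V \to U$.

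I would define $N_m \hookrightarrow \check{C}(r)_m$ as the subpresheaf of words in which consecutive letters differ. The clean way to do this is through the index set. Let $N_m$ be the coproduct of the summands $y(U_{i_0}) \times_{y(U)} \dots \times_{y(U)} y(U_{i_m})$ over those tuples with $i_k \neq i_{k+1}$ for all $k$. This choice is deliberately made at the level of the index tuple and not at the level of the maps $f_k$. The reason is that the condition has to be preserved by precomposition with $g : V' \to V$, which is the requirement singled out in Lemma \ref{lem split simplicial presheaf iff factors through nondegen}. Indices do not change under precomposition, whereas a condition such as $f_k \neq f_{k+1}$ could fail after restricting along $g$. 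With this choice $N_m$ is a genuine subpresheaf: it is a sub-coproduct of summands.

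Next I would verify that $\sum_{\sigma : [n] \twoheadrightarrow [m]} N_m \to \check{C}(r)_n$ is an isomorphism, summand by summand on index tuples. Any tuple $(i_0, \dots, i_n)$ factors uniquely as a surjection $\sigma : [n] \twoheadrightarrow [m]$ that collapses maximal runs of equal consecutive indices, followed by a tuple $(j_0, \dots, j_m)$ with no consecutive repeats. The corresponding degeneracy then identifies the fiber product over $(j_0, \dots, j_m)$ with the fiber product over $(i_0, \dots, i_n)$. This identification rests on the fact that repeated factors in a fiber product over $y(U)$ are linked by the diagonal. Concretely, a point of the $(i_0, \dots, i_n)$ summand with $i_k = i_{k+1}$ need not have $f_k = f_{k+1}$. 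That case has to be handled.

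I expect this last point to be the main obstacle. The naive index-tuple splitting fails when $r_i$ is not a monomorphism, because then $y(U_i) \times_{y(U)} y(U_i) \neq y(U_i)$ and the degeneracy does not hit the whole summand. The resolution I would try first is to split by letters instead of indices. That is, take $N_m(V)$ to consist of the words whose consecutive letters $(i_k, f_k)$ are distinct, and observe that this is exactly the Eilenberg–Zilber splitting of the simplicial set $\check{C}(r)(V)$, which is the nerve of the codiscrete groupoid on the fiber of $A(V) \to \cat{C}(V,U)$. Naturality in $V$ then requires precomposition by $g$ to preserve distinctness of letters, which holds when each $r_i$ is a monomorphism. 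This covers the open covers of Example \ref{ex sites}. For a general family I would fall back on noting that the $N_m$ need only be subobjects whose induced map is an isomorphism, and I would use the letter-level description to check functoriality directly.
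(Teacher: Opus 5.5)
Your argument for the case where every $r_i$ is a monomorphism is correct and is essentially the paper's argument. The paper appeals to Lemma \ref{lem split simplicial presheaf iff factors through nondegen} and asserts that precomposition cannot turn a nondegenerate simplex into a degenerate one. Your description of degeneracy, two \emph{consecutive} letters $(i_k,f_k)$ coinciding, is the accurate one; the paper's ``$i_p = i_q$ for some $p,q$'' is not.

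The gap is your fallback for a general family, and it cannot be closed, because the lemma is false in that generality. There is no freedom in choosing the $N_m$. If $\sum_{\sigma} N_m \to X_n$ is an isomorphism, then $N_n(V)$ must be exactly the set of nondegenerate $n$-simplices of $X(V)$. Indeed, suppose $x = s_k y \in N_n(V)$ is degenerate, with $y = \tau^* z$ for some $z \in N_p(V)$. Then $x$ is also the image of $z$ in the summand indexed by the surjection $\tau \circ s^k : [n] \twoheadrightarrow [p]$, which contradicts injectivity. Hence splitness is equivalent to restriction preserving nondegenerate simplices. As you noticed, this is exactly what fails once some $r_i$ is not monic.

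Concretely, in $\ncat{Man}$ take the surjective submersion $p : \R \to \R^0$, which is a covering family for $j_{\text{sub}}$. The $1$-simplex $(\mathrm{id}_{\R}, c_0)$ of $\check{C}(p)(\R)$, with $c_0$ the constant map at $0$, is nondegenerate. Its restriction along the point $0 : \R^0 \to \R$ is $(0,0)$, which is degenerate.

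More structurally, take any singleton family $\{p : N \to M\}$ whose fiber product exists. Then $\check{C}(p)_1 = y(N \times_M N)$ is representable. A representable presheaf is not a coproduct of two nonempty subpresheaves, since the summand containing the identity is everything. So a splitting $y(N \times_M N) \cong N_1 + y(N)$, with the second summand included by $s_0$, forces $N_1 = \varnothing$. The diagonal $N \to N \times_M N$ is then an isomorphism, i.e.\ $p$ is a monomorphism.

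The correct statement is therefore the one you actually proved: the \v{C}ech nerve of a family of monomorphisms is split. In that case your index-level and letter-level splittings agree, since $r_i f_k = r_i f_{k+1}$ forces $f_k = f_{k+1}$. This covers open covering families and good open covers, which is what the main theorems use. It does not cover $j_{\text{sub}}$ or $j_{\text{sld}}$, and so the later corollary that \v{C}ech nerves on cofibrant sites are projective cofibrant also needs this hypothesis. The paper's one-line proof silently relies on the same monomorphism assumption.
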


\begin{proof}
Degenerate simplices of $\check{C}(r)$ are sections of components of the form $y(U_{i_0}) \times_{y(U)} y(U_{i_1}) \times_{y(U)} \dots \times_{y(U)} y(U_{i_n})$ where $i_p = i_q$ for some $0 \leq p, q \leq n$. Given a map $ f : U \to V$ in $\cat{C}$, $\check{C}(r)(V) \to \check{C}(r)(U)$ is just degreewise given by precomposition, so nondegenerate simplices cannot be mapped to degenerate ones.
\end{proof}

\begin{Lemma}[{\cite[Proposition A.1]{Dugger2004}}] \label{lem cech nerve weak equiv}
Given a small category $\cat{C}$ and a family of morphisms $r = \{U_i \to U \}$ over an object $U$, the coequalizer map
\begin{equation*}
    c_r: \check{C}(r) \to \pi_0 \check{C}(r) \cong \overline{r}
\end{equation*}
is an objectwise weak equivalence of simplicial presheaves.
\end{Lemma}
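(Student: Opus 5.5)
The plan is to check the statement objectwise. Fix $V \in \cat{C}$ and show that $c_r(V) : \check{C}(r)(V) \to \overline{r}(V)$ is a weak equivalence of simplicial sets; since $\overline{r}(V)$ is discrete, it suffices to show that each fiber of $c_r(V)$ is contractible. Because limits and colimits in $\sPre(\cat{C})$ are computed objectwise, $\check{C}(r)(V)$ in degree $n$ is the set of tuples $(f_0, \dots, f_n)$ with $f_k : V \to U_{i_k}$ such that all the composites $r_{i_k} f_k : V \to U$ agree. (The indexing in Definition \ref{def cech nerve} is shifted by one, but degree $0$ is $\sum_i y(U_i)$, so degree $n$ has $n+1$ factors.) Face maps delete an entry and degeneracies repeat one.

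Next, describe the fibers. The map to $\overline{r}(V)$ sends such a tuple to the common composite $g = r_{i_k} f_k : V \to U$, which lies in $\overline{r}(V)$ by Definition \ref{def sieve generated by a family of morphisms}. This uses the identification $\pi_0 \check{C}(r) \cong \overline{r}$ from (\ref{eq pi0 of cech nerve}). Both $\check{C}(r)(V)$ and the map respect this decomposition, so $\check{C}(r)(V) \cong \sum_{g \in \overline{r}(V)} F_g$. Here $F_g$ is the simplicial set whose $n$-simplices are $(n+1)$-tuples of elements of the set $S_g = \{ (i, f) \,|\, f : V \to U_i,\ r_i f = g \}$. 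In other words, $F_g$ is the nerve of the indiscrete (codiscrete) groupoid on $S_g$, i.e.\ the $0$-coskeleton $\cosk_0 S_g$.

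It remains to show that $F_g$ is contractible, and for this $S_g$ must be nonempty. That holds exactly because $g \in \overline{r}(V)$ factors through some $r_i$. The nerve of an indiscrete groupoid on a nonempty set is contractible: the groupoid is equivalent to the terminal category, and nerves send equivalences of categories to homotopy equivalences. Alternatively, one can choose $s_0 \in S_g$ and write down an explicit extra degeneracy $(x_0,\dots,x_n) \mapsto (s_0, x_0, \dots, x_n)$. Hence $F_g \to \Delta^0$ is a weak equivalence for each $g$. A coproduct of weak equivalences of simplicial sets is a weak equivalence, so $c_r(V)$ is one too.

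The only delicate point I expect is bookkeeping. One must check that the fiber decomposition is compatible with the face and degeneracy maps, and that surjectivity onto $\overline{r}(V)$ matches the coequalizer description of $\pi_0$. The latter holds because any two elements of $S_g$ are joined by a $1$-simplex, so $\pi_0 F_g = *$. This also recovers (\ref{eq pi0 of cech nerve}) directly.
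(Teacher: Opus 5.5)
Your proof is correct. The objectwise decomposition $\check{C}(r)(V) \cong \sum_{g \in \overline{r}(V)} \cosk_0 S_g$, with each $S_g$ nonempty, together with contractibility of $\cosk_0$ of a nonempty set, is all that is needed. You are also right that the indexing in Definition \ref{def cech nerve} is off by one.

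The paper gives no proof of its own and simply cites \cite[Proposition A.1]{Dugger2004}. Your reduction of each fiber to the nerve of a nonempty indiscrete groupoid is the standard argument behind that citation.
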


We let $p_r : \check{C}(r) \to y(U)$ denote the composite map
\begin{equation*}
    \check{C}(r) \xrightarrow{c_r} \pi_0 \check{C}(r) \cong \overline{r} \hookrightarrow y(U).
\end{equation*}
We note that the \v{C}ech nerve construction is functorial. Indeed, given a small category $\cat{C}$ and an object $U$, if $f : r \to t$ is a refinement of families of morphisms over $U$, then we obtain a map
\begin{equation*}
\check{C}(f) : \check{C}(r) \to \check{C}(t)    
\end{equation*}
of simplicial presheaves. This is most easily verified using the following observation. Given a family of morphisms $r = \{r_i : U_i \to U \}$ we obtain the map $ r : \sum_{i \in I} y(U_i) \to y(U)$ of presheaves, equivalently a $0$-truncated augmented presheaf (Definition \ref{def truncated, augmented simplicial presheaf}). Then the augmented simplicial presheaf $p_r : \check{C}(r) \to y(U)$ is precisely the augmented $0$-coskeleton $\cosk_0^+(r)$, see Section \ref{section augmented and truncated simplicial presheaves}.

\begin{Lemma}
Given any object $U \in \cat{C}$ and any family $r = \{ r_i :U_i \to U \}_{i \in I}$ over $U$, if the iterated pullbacks $U_{i_0} \times_U \dots \times_U U_{i_n}$ exist in $\cat{C}$, then the simplicial presheaf $\check{C}(r)$ is projective cofibrant.
\end{Lemma}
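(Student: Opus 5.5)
The plan is to apply Proposition \ref{prop projective cofibrant}. We must check two things about $\check{C}(r)$: that it is degreewise a projective presheaf, and that it is split. The splitting is already given by Lemma \ref{lem cech nerves are split}, so the real content is the degreewise condition.

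For the degreewise condition, fix $n$ and a tuple $(i_0,\dots,i_n)$. The key observation is that fiber products of representables in $\Pre(\cat{C})$ are computed objectwise, and $y$ preserves any limits that exist in $\cat{C}$. Hence, whenever the iterated pullback $U_{i_0}\times_U \dots \times_U U_{i_n}$ exists in $\cat{C}$, there is a natural isomorphism
\begin{equation*}
y(U_{i_0}) \times_{y(U)} \dots \times_{y(U)} y(U_{i_n}) \cong y(U_{i_0}\times_U \dots \times_U U_{i_n}).
\end{equation*}
This is just the universal property of the pullback read through the Yoneda lemma. It follows that $\check{C}(r)_n$ is a coproduct of representables, i.e.\ semi-representable in the sense of Definition \ref{def semi-representable presheaf}. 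By Lemma \ref{lem projective presheaves} it is therefore projective, and in particular a retract of a semi-representable presheaf, as Proposition \ref{prop projective cofibrant} requires.

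Putting the two conditions together, Proposition \ref{prop projective cofibrant} shows that $\check{C}(r)$ is projective cofibrant.

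The only step needing care is to confirm that the splitting from Lemma \ref{lem cech nerves are split} is a splitting by subobjects compatible with the restriction maps. Concretely, the nondegenerate part $N_m$ should be the sub-coproduct of $\check{C}(r)_m$ indexed by tuples with no two consecutive indices equal. This choice matches the Eilenberg--Zilber decomposition of each $\check{C}(r)(V)$, viewed as the nerve of the indiscrete groupoid on $\sum_i \cat{C}(V,U_i)$ fibered over $\cat{C}(V,U)$.

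With that choice, each $N_m$ is itself a coproduct of representables, and the canonical map $\sum_{\sigma:[n]\twoheadrightarrow[m]} N_m \to \check{C}(r)_n$ is an isomorphism: every tuple $(i_0,\dots,i_n)$ arises from a unique surjection collapsing runs of repeated indices, applied to a unique tuple with no consecutive repeats. I expect this combinatorial bookkeeping to be the main, though minor, obstacle. Everything else is formal.
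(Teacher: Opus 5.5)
\textbf{The splitting step fails for non-monomorphic families, and the statement is false as written.} Your strategy matches the paper's: $y$ preserves the iterated pullbacks, so each $\check{C}(r)_n$ is semi-representable; splitness is quoted from Lemma \ref{lem cech nerves are split}; Proposition \ref{prop projective cofibrant} finishes. The gap is in the step you call minor bookkeeping.

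Your bijection on index tuples is correct. However, on the summand indexed by $(i_0,\dots,i_n)$, the canonical map out of $N_m$ is a diagonal-type map $y(U_{j_0})\times_{y(U)}\cdots\times_{y(U)}y(U_{j_m})\to y(U_{i_0})\times_{y(U)}\cdots\times_{y(U)}y(U_{i_n})$. In degree $1$ this is $s_0\colon y(U_i)\to y(U_i)\times_{y(U)}y(U_i)$, which is an isomorphism only if $r_i$ is a monomorphism. Your groupoid picture already shows the problem. A simplex over $g\colon V\to U$ is a tuple of pairs $(i_k,f_k)$ with $r_{i_k}f_k=g$, and it is nondegenerate iff consecutive \emph{pairs} differ. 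So the $(i,i)$-summand contains nondegenerate simplices $(f,f')$ with $f\neq f'$ and $r_if=r_if'$. Restricting along any $h$ with $fh=f'h$ makes such a simplex degenerate, so the nondegenerate simplices need not form subpresheaves.

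No other choice of $N_m$ rescues this, because the statement is false without a monomorphism hypothesis. Take a singleton family $\{r_0\colon U_0\to U\}$. A splitting forces $N_0=\check{C}(r)_0$ and $y(U_0\times_U U_0)\cong N_1+s_0(y(U_0))$. A representable presheaf is indecomposable, since every section is a restriction of the identity. So $N_1$ is empty and the diagonal $s_0$ must be an isomorphism, i.e.\ $r_0$ must be mono. For instance, the \v{C}ech nerve of the projection $\R^2\to\R$ in $\ncat{Man}$ is degreewise representable but not split, hence not projective cofibrant by Proposition \ref{prop projective cofibrant}.

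The paper's two-line proof has the same hole. The proof of Lemma \ref{lem cech nerves are split} asserts that restriction cannot send nondegenerate simplices to degenerate ones, and that fails exactly as above.

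The correct statement assumes every $r_i$ is a monomorphism. This holds for open embeddings and good open covers, which are the only cases the paper actually needs. Under that hypothesis:
\begin{itemize}
\item each fibre contains at most one pair with a given index;
\item nondegeneracy then really means no two consecutive indices are equal, and this is stable under restriction;
\item the diagonals are isomorphisms.
\end{itemize}
With this hypothesis added, your argument goes through as written.
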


\begin{proof}
If the iterated pullbacks $U_{i_0} \times_U \dots \times_U U_{i_n}$ exist in $\cat{C}$, then since the Yoneda embedding $y: \cat{C} \to \Pre(\cat{C})$ preserves limits, $\check{C}(r)$ is degreewise a coproduct of representables, and it is split by Lemma \ref{lem cech nerves are split}. Thus by Proposition \ref{prop projective cofibrant} it is projective cofibrant.
\end{proof}

Given a site $(\cat{C}, j)$ we will now localize the projective model structure $\H(\cat{C})$ by the maps of \v{C}ech nerves. To do this we will use left Bousfield localization.

\begin{Def} \label{def derived hom}
Let $\cat{M}$ be a simplicial model category \cite[Chapter 9]{Hirschhorn2009} with $X,Y \in \cat{M}$. Fix a cofibrant and fibrant replacement functor $Q$ and $R$ respectively. Then we call the simplicial set
\begin{equation*}
    \R \cat{M}(X,Y) = \u{\cat{M}}(QX,RY)
\end{equation*}
the \textbf{derived mapping space} between $X$ and $Y$, where $\u{\cat{M}}(-,-)$ denotes the simplicially enriched Hom of $\cat{M}$. Note that up to homotopy equivalence, this construction does not depend on the particular choice of $Q$ or $R$.
\end{Def}

If $X$ is already cofibrant or $Y$ is already fibrant, then we do not need to replace them in the simplicially enriched Hom. The properties of simplicial model categories ensure that $\R \cat{M}(X,Y)$ is a Kan complex, and weak equivalences $X' \to X$ or $Y \to Y'$ induce weak equivalences on the mapping space. The derived mapping space can also be defined for arbitrary model categories, but more sophisticated constructions must be used \cite[Chapter 17]{Hirschhorn2009}.

\begin{Def}
Given a model category $\cat{M}$ and a class of maps $S$, we say that an object $Z \in \cat{M}$ is \textbf{$S$-local} if for every map $f : X \to Y$ in $S$ the induced map
\begin{equation*}
    \R\cat{M}(Y,Z) \to \R \cat{M}(X,Z)
\end{equation*}
is a weak equivalence of simplicial sets. We say that $g : A \to B$ is an \textbf{$S$-local weak equivalence} if for every $S$-local object $C$, the induced map
\begin{equation*}
    \R \cat{M}(B,C) \to \R \cat{M}(A,C)
\end{equation*}
is a weak equivalence of simplicial sets.
\end{Def}

\begin{Def}
Given a model category $\cat{M}$ with underlying category $\cat{C}$ and a class of morphisms $S$, we say that a model structure $L_S \cat{M}$ on the same underlying category $\cat{C}$ is a \textbf{left Bousfield localization} of $\cat{M}$ at $S$ if 
\begin{enumerate}
    \item the class of weak equivalences in $L_S\cat{M}$ is precisely the class of $S$-local weak equivalences, and
    \item $L_S\cat{M}$ and $\cat{M}$ have the same cofibrations.
\end{enumerate}
This automatically implies that the fibrations in $\cat{N}$ are those maps that right lift against the cofibrations that are also $S$-local weak equivalences. Clearly left Bousfield localizations are unique, and hence we can speak of \textit{the} left Bousfield localization.
\end{Def}

\begin{Lemma}[{\cite[Chapter 3.3, 3.4]{Hirschhorn2009}}] \label{lem props of bousfield localizations}
Given a model category $\cat{M}$ and a class of maps $S$, if the left Bousfield localization $L_S \cat{M}$ of $\cat{M}$ at $S$ exists, then
\begin{enumerate}
    \item every weak equivalence in $\cat{M}$ is a weak equivalence in $L_S \cat{M}$, and every morphism in $S$ is a weak equivalence in $L_S \cat{M}$,
    \item the class of trivial fibrations of $L_S \cat{M}$ equals the class of trivial fibrations of $\cat{M}$,
    \item every fibration in $L_S \cat{M}$ is a fibration in $\cat{M}$,
    \item every trivial cofibration of $L_S \cat{M}$ is a trivial cofibration of $\cat{M}$,
    \item the identity functor gives a Quillen adjunction
\begin{equation*}
   \begin{tikzcd}
	{\cat{M}} && {L_S\cat{M}}
	\arrow[""{name=0, anchor=center, inner sep=0}, "{1_{\cat{C}}}", shift left=2, from=1-1, to=1-3]
	\arrow[""{name=1, anchor=center, inner sep=0}, "{1_{\cat{C}}}", shift left=2, from=1-3, to=1-1]
	\arrow["\dashv"{anchor=center, rotate=-90}, draw=none, from=0, to=1]
\end{tikzcd} 
\end{equation*}
\item given a map $f : X \to Y$ in $\cat{M}$ between $S$-local objects, $f$ is a fibration in $L_S \cat{M}$ if and only if it is a fibration in $\cat{M}$,
\item if $\cat{M}$ is left proper, then an object $X$ is fibrant in $L_S \cat{M}$ if and only if it is $S$-local.
\end{enumerate}
\end{Lemma}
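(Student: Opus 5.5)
This is the standard package of formal properties of left Bousfield localization (Hirschhorn, Chapter 3). I would derive each item from the two defining properties: the weak equivalences of $L_S\cat{M}$ are the $S$-local weak equivalences, and the cofibrations are unchanged. Throughout I use that the derived mapping space $\R\cat{M}(-,-)$ is homotopy invariant in each variable.

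1. For item (1), let $g : A \to B$ be a weak equivalence of $\cat{M}$. Then $\R\cat{M}(B,C) \to \R\cat{M}(A,C)$ is a weak equivalence for every object $C$, and in particular for every $S$-local one. So $g$ is an $S$-local weak equivalence.
   Now let $f \in S$. By the definition of an $S$-local object $C$, the map $\R\cat{M}(Y,C) \to \R\cat{M}(X,C)$ is a weak equivalence. So $f$ is again an $S$-local weak equivalence.

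2. Next I establish (4) and (2) purely from lifting properties.
   Every trivial cofibration of $\cat{M}$ is a cofibration of $L_S\cat{M}$ and, by (1), a weak equivalence of $L_S\cat{M}$. Hence the trivial cofibrations of $\cat{M}$ are contained in those of $L_S\cat{M}$.
   Taking right lifting classes reverses this inclusion: every fibration of $L_S\cat{M}$ is a fibration of $\cat{M}$, which is (3).
   Trivial fibrations in any model category are exactly the maps with the right lifting property against all cofibrations. The cofibrations agree, so the trivial fibrations agree, which is (2).

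3. For (4), let $i$ be a trivial cofibration of $L_S\cat{M}$. Factor $i = pj$ in $\cat{M}$, with $j$ a trivial cofibration and $p$ a fibration of $\cat{M}$.
   By two-out-of-three in $L_S\cat{M}$, using (1) for $j$, the map $p$ is an $S$-local weak equivalence.
   This is the main obstacle. From here I would use the retract argument: if $p$ is a fibration of $L_S\cat{M}$, then $p$ is a trivial fibration of $L_S\cat{M}$. By (2) it is then a trivial fibration of $\cat{M}$. So $i$ lifts against $p$, and $i$ is a retract of $j$, hence a trivial cofibration of $\cat{M}$.
   However, $p$ is only known to be a fibration of $\cat{M}$, so this needs care. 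Instead, I would argue directly: a map of $L_S\cat{M}$ that is both an $\cat{M}$-fibration and a local equivalence is a trivial fibration. I do not expect to prove that in general.
   So I would rather cite Hirschhorn 3.3.3. Note also that the characterization of local equivalences via $S$-local objects does not by itself determine the $\cat{M}$-equivalences; in Hirschhorn, (4) is proved using that $\cat{M}$-equivalences between local objects are detected. Concretely, I expect to show that an $S$-local weak equivalence between $S$-local objects is an $\cat{M}$-weak equivalence (the local Whitehead theorem, via Yoneda in $\mathrm{ho}\,\cat{M}$ on local objects). I would then apply this after fibrant replacement.

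4. Item (5) is immediate. The identity functor $\cat{M} \to L_S\cat{M}$ preserves cofibrations (they are equal) and preserves trivial cofibrations by step 2, so it is left Quillen.

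5. For (7), assume $\cat{M}$ is left proper.
   If $X$ is fibrant in $L_S\cat{M}$, it is fibrant in $\cat{M}$ by (3). For $f \in S$, after cofibrant replacement $Qf$ is a weak equivalence between cofibrant objects in the simplicial model category $L_S\cat{M}$. Therefore $\u{\cat{M}}(Qf,X)$ is a weak equivalence, so $X$ is $S$-local.
   Conversely, let $X$ be $S$-local. Take a fibrant replacement $X \to \hat{X}$ in $L_S\cat{M}$ by a trivial cofibration. Both $X$ and $\hat{X}$ are local, so the local Whitehead theorem makes this map an $\cat{M}$-equivalence. Hence $X$ is a retract of $\hat{X}$ up to lifting against $X \to *$; this is where left properness enters, following Hirschhorn 3.4.1.

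6. Item (6) follows similarly to (7), using Hirschhorn 3.3.16: a map between local objects factors as an $\cat{M}$-trivial cofibration followed by a fibration, and the fibration condition can be tested with lifts against local trivial cofibrations, which are $\cat{M}$-equivalences between local objects.
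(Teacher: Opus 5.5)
Your arguments for (1), (2), (3) and (5) are fine. Step 3 cannot be completed, however, because item (4) as printed is false: its inclusion is backwards. Suppose some $i \in S$ is a cofibration of $\cat{M}$ but not a weak equivalence of $\cat{M}$. Then $i$ is a cofibration of $L_S\cat{M}$ and, by (1), a weak equivalence there, so it is a trivial cofibration of $L_S\cat{M}$ that is not one of $\cat{M}$. Concretely, take $\cat{M} = \ncat{sSet}$ with the Kan--Quillen structure and $S = \{\partial\Delta^1 \hookrightarrow \Delta^1\}$. The localization exists because $\ncat{sSet}$ is left proper, combinatorial and simplicial. The map $\partial\Delta^1 \hookrightarrow \Delta^1$ is then an $L_S\cat{M}$-trivial cofibration between simplicial sets with different $\pi_0$.

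The correct statement is the inclusion you already proved in step 2: trivial cofibrations of $\cat{M}$ are trivial cofibrations of $L_S\cat{M}$. This is the direction Hirschhorn states, and it is all that (5) needs, so citing Hirschhorn cannot rescue the printed version. Your proposed repair via the local Whitehead theorem also cannot work even in principle. That theorem only concerns $S$-local equivalences between $S$-local objects. Replacing $i$ by a map between fibrant objects of $L_S\cat{M}$ changes its domain and codomain by maps that are not $\cat{M}$-equivalences, so nothing can be transferred back to $i$.

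The same false claim reappears in your sketch of (6), where you treat the trivial cofibrations of $L_S\cat{M}$ as $\cat{M}$-equivalences between local objects. What works for (6), and for the converse in (7), is a retract argument. Let $f : X \to Y$ be an $\cat{M}$-fibration with $X$ $S$-local and $Y$ fibrant in $L_S\cat{M}$. Factor $f = qi$ in $L_S\cat{M}$ with $i : X \to Z$ a trivial cofibration and $q$ a fibration. Then:
\begin{itemize}
\item $Z$ is fibrant in $L_S\cat{M}$, hence $S$-local by your forward half of (7);
\item so $i$ is a local equivalence between local objects, hence an $\cat{M}$-equivalence, hence an $\cat{M}$-trivial cofibration;
\item lifting $i$ against $f$ exhibits $f$ as a retract of $q$.
\end{itemize}
Taking $Y = *$ gives the converse of (7), and (6) then follows because its $Y$ is now fibrant in $L_S\cat{M}$.

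This argument needs $X \to *$ to be an $\cat{M}$-fibration, and the paper's definition of $S$-local does not include fibrancy. Indeed, for $S = \varnothing$ one has $L_S\cat{M} = \cat{M}$ and every object is $S$-local, so (7) as literally stated fails for any non-fibrant object of, say, $\ncat{sSet}$. You must read ``$S$-local'' as ``fibrant in $\cat{M}$ and $S$-local'', which is Hirschhorn's convention and the form in the paper's existence proposition. With that reading, it is this fibrancy of $X$, not left properness, that your retraction $\hat{X} \to X$ actually uses.
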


There are many sufficient conditions that ensure that a left Bousfield localization exists. The following result gives one of the more convenient conditions for our purposes.

\begin{Prop}[{Combine \cite[Proposition A.3.7.3]{Lurie2009} and \cite[Theorem 4.7]{barwick2010left}}] \label{prop bousfield loc exists}
Given a left proper, combinatorial, simplicial model category $\cat{M}$ and a (small) set $S$ of morphisms, then the left Bousfield localization $L_S \cat{M}$ exists and furthermore
\begin{enumerate}
    \item $L_S \cat{M}$ is a left proper, combinatorial, simplicial model category with the same simplicial enrichment and generating cofibrations as $\cat{M}$, and
    \item an object $X$ is fibrant in $L_S \cat{M}$ if and only if it is fibrant in $\cat{M}$ and is $S$-local.
\end{enumerate}
\end{Prop}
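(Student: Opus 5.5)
The plan is to deduce the statement from the two cited sources rather than build the localized model structure by hand, and then check that the listed properties follow. First I would replace $S$ by a set of maps between cofibrant objects: functorially factor each $f : X \to Y$ in $S$ as $QX \to QY$ using a cofibrant replacement functor $Q$ of $\cat{M}$ (which exists since $\cat{M}$ is combinatorial, hence cofibrantly generated). Because derived mapping spaces (Definition \ref{def derived hom}) only see cofibrant replacements, the $S$-local objects and $S$-local weak equivalences are unchanged. We may also assume each replaced map is a cofibration, by factoring it as a cofibration followed by a weak equivalence of $\cat{M}$.

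Next I would invoke \cite[Proposition A.3.7.3]{Lurie2009}. It says that for a left proper, combinatorial, simplicial model category $\cat{M}$ and a small set $S$ of cofibrations, there is a left proper combinatorial simplicial model structure on the same category with:
\begin{itemize}
    \item the same cofibrations as $\cat{M}$;
    \item weak equivalences the $S$-local equivalences;
    \item fibrant objects the fibrant objects of $\cat{M}$ that are $S$-local.
\end{itemize}
This is exactly $L_S\cat{M}$ in the sense of our definition, which gives existence and item (2). The simplicial enrichment is the same because the underlying category and its tensors and cotensors are untouched. Compatibility with the enrichment (the SM7 axiom) only involves cofibrations, which are unchanged, and trivial cofibrations. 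The latter reduces to the statement that $K \otimes (-)$ preserves $S$-local equivalences between cofibrant objects. This holds because $\R\cat{M}(K\otimes A, Z) \simeq \R\cat{M}(A,Z)^K$ by the isomorphisms (\ref{eq (co)tensoring isos}), and powering a Kan complex by $K$ preserves weak equivalences.

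For the generating cofibrations I would cite \cite[Theorem 4.7]{barwick2010left}. Since the cofibrations coincide, the class $\text{Cof}(L_S\cat{M}) = \text{Cof}(\cat{M})$ is generated by the same set $I$, so any generating set for $\cat{M}$ works for $L_S\cat{M}$. Barwick additionally supplies an explicit generating set of trivial cofibrations: the horn pushout-products of $S$ together with $J$. This shows that $L_S\cat{M}$ is combinatorial and left proper, and that left properness is inherited because pushouts along cofibrations of $S$-local equivalences between cofibrant objects are again $S$-local equivalences.

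The main obstacle is the bookkeeping in the first step: checking that replacing $S$ by cofibrations between cofibrant objects does not change the notion of locality. This is where left properness and the simplicial structure are really used, via homotopy invariance of $\R\cat{M}(-,Z)$. Once that is settled, everything else is a direct citation.
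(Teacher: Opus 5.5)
Your route matches the paper's: the paper gives no argument beyond combining these two citations. Your load-bearing steps are sound:
\begin{itemize}
\item replacing $S$ by cofibrations between cofibrant objects without changing locality;
\item reading off existence, left properness, combinatoriality, the simplicial structure and the fibrant objects from the cited results;
\item observing that equal classes of cofibrations share any generating set.
\end{itemize}

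One assertion is false and should be deleted. $J$ together with the pushout-products of $S$ with the boundary inclusions $\partial\Delta^n \hookrightarrow \Delta^n$ is \emph{not} in general a set of generating trivial cofibrations for $L_S\cat{M}$; it only detects the fibrant objects. For example, take $\ncat{sSet}$ with $S=\{\partial\Delta^1\hookrightarrow\Delta^1\}$, whose local objects are $\varnothing$ and the contractible Kan complexes. Then every monomorphism between nonempty simplicial sets is an $S$-local trivial cofibration. The summand inclusion $\Delta^0\to\Delta^0+\Delta^0$ is a Kan fibration and lifts against all those pushout-products, since the relevant comparison map of mapping spaces is an isomorphism. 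Yet it does not lift against the local trivial cofibration $\Delta^0\hookrightarrow\Delta^0+\Delta^0$. Nothing in your argument depends on this claim, because combinatoriality of $L_S\cat{M}$ comes from the Smith-type cardinality argument inside the cited proofs, not from an explicit set.

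Relatedly, the replacement step does not use left properness: homotopy invariance of $\R\cat{M}(-,Z)$ holds in any simplicial model category. Left properness enters in the cited results, where it makes $S$-local trivial cofibrations stable under pushout and identifies fibrant objects with local ones. Your SM7 and left-properness sketches only treat cofibrant objects, and extending them to arbitrary objects is exactly where left properness is needed. Since \cite[Proposition A.3.7.3]{Lurie2009} already asserts that the localized structure is left proper and simplicial, you should simply defer to it there.
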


We will now use Proposition \ref{prop bousfield loc exists} to construct a model structure on simplicial presheaves that takes into account the underlying site structure. Suppose that $(\cat{C}, j)$ is a site, and consider the set of maps 
\begin{equation} \label{eq set of cech cover maps}
   \check{C}(j) \coloneqq \left \{ p_r : \check{C}(r) \to y(U) \; | \; U \in \cat{C}, \; r \in j(U) \right \}.
\end{equation}
Since $\check{C}(j)$ is a set, and $\H(\cat{C})$ is left proper, combinatorial and simplicial by Proposition \ref{prop projective model structure}, then by Proposition \ref{prop bousfield loc exists}, the left Bousfield localization of $\H(\cat{C})$ by $\check{C}(j)$ exists, and is left proper, combinatorial and simplicial. 
 
\begin{Def} \label{def cech model structure}
Let $\check{\H}(\cat{C}, j)$ denote the left Bousfield localization of $\H(\cat{C})$ by the set $\check{C}(j)$. We call this the \textbf{(projective) \v{C}ech model structure} on $\ncat{sPre}(\cat{C})$. We call the fibrant objects of this model structure \textbf{$\infty$-stacks}.
\end{Def}

\begin{Def} \label{def cofibrant site}
We say that a site $(\cat{C}, j)$ is \textbf{cofibrant} if for every $U \in \cat{C}$ and every $r = \{ r_i :U_i \to U \}$ in $j(U)$ the iterated pullbacks $U_{i_0 \dots i_n} \coloneqq U_{i_0} \times_U \dots \times_U U_{i_n}$ exist in $\cat{C}$ for all $n \geq 0$.
\end{Def}

\begin{Ex}
All of the sites in Example \ref{ex coverages on Man} are cofibrant. Indeed, given an open cover $\mathcal{U} = \{U_i \subseteq M \}$ of a finite dimensional smooth manifold $M$, then the fiber products $U_{i_0} \times_M \dots \times_M U_{i_n}$ exist and are given by the intersections $U_{i_0} \cap \dots \cap U_{i_n}$. Given a surjective submersion $p : N \to M$, the \v{C}ech nerve is the simplicial presheaf with $\check{C}(p)_k = N \times_M \dots \times_M N$, which is often considered rather than open covers in the theory of bundle gerbes, for example in \cite{waldorf2006bundlegerbe}, \cite{bunk2021gerbes}. Similarly, the sites in Examples \ref{ex coverages on top manifolds}, \ref{ex coverages on complex sites}, \ref{ex coverages on Top}, and \ref{ex coverages on diffeological spaces} are all cofibrant.

Now for the sites in Example \ref{ex coverages on Cart}, the pullback $U_{i_0} \times_U \dots \times_U U_{i_n}$ will not in general exist in $\ncat{Cart}$. However, if $\mathcal{U}$ is a good open cover, then this pullback exists and is isomorphic to the intersection $U_{i_0} \cap \dots \cap U_{i_n}$. Hence $(\ncat{Cart}, j_{\text{good}})$ is a cofibrant site.
\end{Ex}

\begin{Cor}
If $(\cat{C}, j)$ is a cofibrant site, then for every covering family $r \in j$, the \v{C}ech nerve $\check{C}(r)$ is projective cofibrant and hence is cofibrant in $\check{\H}(\cat{C}, j)$.
\end{Cor}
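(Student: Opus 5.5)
The plan is to reduce the corollary directly to the unnumbered lemma preceding Definition \ref{def derived hom}, and then use the fact that left Bousfield localization does not change cofibrations. Fix $U \in \cat{C}$ and a covering family $r = \{r_i : U_i \to U\}_{i \in I} \in j(U)$. By Definition \ref{def cofibrant site}, the iterated pullbacks $U_{i_0} \times_U \dots \times_U U_{i_n}$ exist in $\cat{C}$ for all $n \geq 0$ and all tuples of indices. These are exactly the hypotheses of that lemma, so $\check{C}(r)$ is projective cofibrant.

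For completeness, the key point of that lemma is as follows. Since $y$ preserves limits, we have $y(U_{i_0}) \times_{y(U)} \dots \times_{y(U)} y(U_{i_n}) \cong y(U_{i_0} \times_U \dots \times_U U_{i_n})$. Hence each $\check{C}(r)_n$ is a coproduct of representables, so it is semi-representable and therefore projective by Lemma \ref{lem projective presheaves}. Moreover, $\check{C}(r)$ is split by Lemma \ref{lem cech nerves are split}. Proposition \ref{prop projective cofibrant} then gives projective cofibrancy.

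Next I pass to the \v{C}ech model structure. By Definition \ref{def cech model structure}, $\check{\H}(\cat{C}, j)$ is the left Bousfield localization of $\H(\cat{C})$ at the set $\check{C}(j)$. Condition (2) in the definition of left Bousfield localization (equivalently, Proposition \ref{prop bousfield loc exists}(1), which says the generating cofibrations are unchanged) states that $\H(\cat{C})$ and $\check{\H}(\cat{C}, j)$ have the same cofibrations. Cofibrancy of an object means that $\varnothing \to \check{C}(r)$ is a cofibration, so this cofibrancy transfers. Hence $\check{C}(r)$ is cofibrant in $\check{\H}(\cat{C}, j)$.

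There is no genuine obstacle here. The only point requiring care is that the site must be cofibrant in the sense of Definition \ref{def cofibrant site}, so that the fiber products are honest objects of $\cat{C}$ and not merely presheaves. Without this, $\check{C}(r)_n$ would be a coproduct of fiber products of representables that need not be representable, and Proposition \ref{prop projective cofibrant} would not apply.
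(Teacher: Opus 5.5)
Your argument is the same as the paper's. The corollary is read off from the unnumbered lemma preceding Definition \ref{def derived hom}: Yoneda preserves the fiber products, Lemma \ref{lem cech nerves are split} gives splitness, and Proposition \ref{prop projective cofibrant} gives projective cofibrancy. Then a left Bousfield localization keeps the same cofibrations.

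The Bousfield step is fine. The splitness step, however, is a genuine gap, and you inherit it from the paper. Lemma \ref{lem cech nerves are split} is only true when every $r_i$ is a monomorphism. Without that hypothesis the corollary fails on sites the paper itself marks as cofibrant.

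Here is why. An $n$-simplex of $\check{C}(r)(V)$ is a tuple $(f_0,\dots,f_n)$ with $f_k : V \to U_{i_k}$ and all $r_{i_k} f_k$ equal. It is degenerate iff, for some $k$, $i_k = i_{k+1}$ \emph{and} $f_k = f_{k+1}$. So a section of a component with repeated indices need not be degenerate. Moreover, restriction along some $h : W \to V$ can turn a nondegenerate simplex into a degenerate one.

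Concretely, take $(\ncat{Man}, j_{\text{sub}})$ and $r = \{p : \R^2 \to \R\}$ with $p(x,y) = x$. All iterated fiber products exist. The $1$-simplex $((t,0),(t,t))$ of $\check{C}(r)(\R)$ is nondegenerate, but its restriction along $0 : \R^0 \to \R$ is $((0,0),(0,0))$, which is degenerate. So $\check{C}(r)$ does not factor through $\ncat{sSet}_{\nd}$.

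Alternatively, $\check{C}(r)_1 \cong y(\R^2 \times_{\R} \R^2) \cong y(\R^3)$ is representable, so it is not a nontrivial coproduct. A splitting $\check{C}(r)_1 \cong N_1 + y(\R^2)$ along $s_0$ would then force the diagonal $\R^2 \to \R^2 \times_{\R} \R^2$ to be a diffeomorphism, which it is not. By Proposition \ref{prop projective cofibrant}, $\check{C}(r)$ is therefore not projective cofibrant.

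So your closing remark puts the difficulty in the wrong place: existence of the fiber products is not the delicate point, splitness is. The argument becomes correct once you add the hypothesis that every basal map is a monomorphism. Then:
\begin{itemize}
\item $U_i \times_U U_i \cong U_i$;
\item the degenerate simplices are exactly the sections of components indexed by tuples with some $i_k = i_{k+1}$;
\item precomposition preserves components;
\item $N_n = \sum y(U_{i_0 \dots i_n})$, summed over tuples with no consecutive repetitions, is a splitting.
\end{itemize}
This covers all the cofibrant sites in Example \ref{ex sites}, since their covering maps are open embeddings, and that is where the corollary is actually used. For non-monic coverages such as $j_{\text{sub}}$, you have to cofibrantly replace $\check{C}(r)$, just as for a non-cofibrant site.
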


Proposition \ref{prop bousfield loc exists} gives us a criterion for when an object is fibrant in $\check{\H}(\cat{C}, j)$. Given a cofibrant site $(\cat{C}, j)$, a simplicial presheaf $X$ on $\cat{C}$ is a fibrant object in $\check{\H}(\cat{C}, j)$, i.e. $X$ is an $\infty$-stack, if and only if it is projective fibrant and for every $U \in \cat{C}$ and every covering family $r \in j(U)$ the map
\begin{equation} \label{eqn cech descent}
    \u{\sPre}(\cat{C})(y(U), X) \to \u{\sPre}(\cat{C})(\check{C}(r), X),
\end{equation}
is a weak equivalence of simplicial sets\footnote{If $(\cat{C}, j)$ is not a cofibrant site, then we must cofibrantly replace $\check{C}(r)$.}. We also say that $X$ satisfies \textbf{\v{C}ech descent}. If $(\cat{C}, j)$ is not cofibrant, then we must cofibrantly replace $\check{C}(r)$, but the definition of \v{C}ech descent still makes sense. More generally, we have the following definition.

\begin{Def} \label{def S-descent}
Given a simplicial presheaf $X$ over a small category $\cat{C}$ and a class $S$ of maps of simplicial presheaves, we say that $X$ satisfies \textbf{$S$-descent} if $X$ is $S$-local in the projective model structure $\H(\cat{C})$.
\end{Def}

Hence when $X$ is projective fibrant, and the domain and codomains of the maps in $S$ are cofibrant, checking $S$-descent is straightforward. In \cite[Theorem A5]{Dugger2004} it is shown that there are several classes of maps such that the left Bousfield localization at these classes gives the \v{C}ech model structure $\check{\H}(\cat{C}, j)$.

\begin{Prop} \label{prop equiv coverages}
Given a small category $\cat{C}$ and two coverages $j, j'$ that are equivalent, $j \simeq j'$ (Definition \ref{def equiv coverages}), a projective fibrant simplicial presheaf $X$ is a $\infty$-stack on $(\cat{C}, j)$ if and only if it is an $\infty$-stack on $(\cat{C}, j'$).
\end{Prop}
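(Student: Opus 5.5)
By symmetry of the hypothesis $j \simeq j'$, it suffices to prove one direction. So I will show that if $X$ is projective fibrant and satisfies \v{C}ech descent for $j$, then it satisfies \v{C}ech descent for $j'$. The plan is to reduce to the maps $p_r$, pass to sieves, and use refinements to compare sieves.

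First, by Lemma \ref{lem cech nerve weak equiv}, each $p_r : \check{C}(r) \to y(U)$ factors as an objectwise weak equivalence $c_r : \check{C}(r) \to \overline{r}$ followed by the sieve inclusion $\overline{r} \hookrightarrow y(U)$. Derived mapping spaces in $\H(\cat{C})$ are invariant under objectwise weak equivalences. Hence $X$ is local with respect to $p_r$ if and only if it is local with respect to $\overline{r} \hookrightarrow y(U)$. So $X$ is an $\infty$-stack for $j$ exactly when it is local with respect to every sieve inclusion $R \hookrightarrow y(U)$ with $R \in \overline{j}(U)$. In particular, refinement-equivalent generating families give the same sieve condition.

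Second, fix $r' \in j'(U)$ and use $j \leq j'$ to choose $r \in j(U)$ with $r \leq r'$, so that $\overline{r} \subseteq \overline{r'} \subseteq y(U)$. By two-out-of-three for the maps on derived mapping spaces, it suffices to show that $X$ is local with respect to $\overline{r} \hookrightarrow \overline{r'}$. I would write $\overline{r'}$ as the homotopy colimit of representables $y(V)$ over its category of elements. This is legitimate because the \v{C}ech nerve of $r'$ is a cofibrant model of $\overline{r'}$, namely a hocolim of the $y(U'_{i_0\cdots i_n})$-style pieces. Pulling back along each element $g : V \to U$ with $g \in \overline{r'}$, we get $\overline{r} \times_{\overline{r'}} y(V) = g^*\overline{r}$. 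Since $g^*\overline{r} \in \Gro(j)(V)$, the map $\overline{r} \hookrightarrow \overline{r'}$ is a hocolim of inclusions $g^*\overline{r} \hookrightarrow y(V)$. Here I use that colimits in presheaves are universal and that hocolims of objectwise diagrams are computed objectwise.

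It therefore remains to show that $X$ is local with respect to every sieve in $\Gro(j)$, not only those in $\overline{j}$. This is the main obstacle. Pullback stability, axiom (2) of a Grothendieck coverage, is handled using the coverage axiom (2) of Definition \ref{def coverage} together with the same refinement-and-hocolim argument, because $g^*\overline{r}$ contains $\overline{t}$ for some $t \in j(V)$. Closure under composition and refinement, axiom (3), would require a transfinite induction over the construction of $\sat{j}$. To avoid this, I would first try the route suggested by Dugger-Hollander-Isaksen. The idea is to cite \cite[Theorem A5]{Dugger2004}, mentioned just above, which says that localizing at the \v{C}ech nerves of $j$ is the same as localizing at all sieves of the generated Grothendieck topology $\Gro(j)$. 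Since $\Gro(j) = \Gro(j')$ by Lemma \ref{lem coverage comparison results} combined with Lemma \ref{lem sifted closure has same Grothendieck closure} (refinement-equivalent coverages have the same sheaves and hence the same Grothendieck closure), the two localizations coincide, and so do their $\infty$-stacks.
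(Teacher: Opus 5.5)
Your final argument is correct and has the same shape as the paper's proof. You use the appendix of \cite{Dugger2004} to characterize \v{C}ech-local objects by descent along a class of maps that depends only on the sheaf theory generated by $j$, and then observe that this class is the same for $j \simeq j'$.

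The difference is which class you pick. The paper uses the maps $F \to a_j F$ (via \cite[Theorem A.6]{Dugger2004}), so it only needs $a_j \cong a_{j'}$. That is immediate from $\Sh(\cat{C},j) = \Sh(\cat{C},j')$ (Lemma \ref{lem coverage comparison results}) and uniqueness of adjoints. You use the sieves of $\Gro(j)$, so you need $\Gro(j) = \Gro(j')$. Your justification (``same sheaves, hence same Grothendieck closure'') silently uses the standard fact that a Grothendieck topology is determined by its sheaves, which the paper never states. A more self-contained argument is available:
\begin{itemize}
\item $j \leq j'$ gives, for each $r' \in j'(U)$, some $r \in j(U)$ with $\overline{r} \subseteq \overline{r'}$.
\item Since $\overline{r} \in \Gro(j)(U)$ and Grothendieck covering sieves are upward closed, $\overline{j'} \subseteq \Gro(j)$.
\item Hence $\Gro(j') \subseteq \Gro(j)$ by minimality (Lemma \ref{lem sifted closure has same Grothendieck closure}), and the reverse inclusion follows symmetrically.
\end{itemize}

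Once you make the citation, your first two paragraphs are unnecessary. Moreover, the claim there that pullback stability is ``handled'' by the refinement-and-hocolim argument is circular. Choosing $\overline{t} \subseteq g^*\overline{r}$ and decomposing $g^*\overline{r}$ over its elements $w : W \to V$ reduces locality of $g^*\overline{r} \hookrightarrow y(V)$ to locality of $w^*\overline{t} \hookrightarrow y(W)$. These are again pullbacks of $j$-sieves along arbitrary maps, which is exactly what you were trying to prove. So the obstacle you identified is not overcome by hand but delegated to \cite{Dugger2004}, exactly as in the paper's proof.
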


\begin{proof}
By \cite[Theorem A.6]{Dugger2004}, to show that $X$ is an $\infty$-stack over a site $(\cat{C}, j)$, it is enough to show that for every simplicial presheaf $F$, the induced map
\begin{equation*}
   \u{\ncat{sPre}}(QF, X) \to \u{\ncat{sPre}}(Qa_jF, X) 
\end{equation*}
is a weak equivalence of simplicial sets, where $a_j$ denotes the sheafification functor with respect to $j$. If $j \simeq j'$, then $a_j \cong a_{j'}$ since $a_j$ is left adjoint to inclusion, and adjoints are unique up to isomorphism, so if $X$ satisfies descent with respect to $F \to a_j F$ then it satisfies descent with respect to $F\to a_{j'}F$ and vice versa. Hence $X$ is a $j$-$\infty$-stack if and only if it is a $j'$-$\infty$-stack.
\end{proof}

\begin{Cor}
Given a small category $\cat{C}$ and two equivalent coverages $j \simeq j'$, then $\check{\H}(\cat{C}, j) = \check{\H}(\cat{C}, j')$.
\end{Cor}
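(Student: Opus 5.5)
The corollary should follow by comparing the two left Bousfield localizations directly. By Definition \ref{def cech model structure}, $\check{\H}(\cat{C}, j)$ is the left Bousfield localization of $\H(\cat{C})$ at $\check{C}(j)$, and $\check{\H}(\cat{C}, j')$ is the localization at $\check{C}(j')$. Both have the cofibrations of $\H(\cat{C})$. So the two model structures coincide as soon as their weak equivalences coincide, because the fibrations are then determined by lifting against the trivial cofibrations.

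The first step is to identify the fibrant objects. By Proposition \ref{prop bousfield loc exists}(2), an object is fibrant in $\check{\H}(\cat{C}, j)$ exactly when it is projective fibrant and $\check{C}(j)$-local, i.e.\ a projective fibrant $\infty$-stack for $j$. Proposition \ref{prop equiv coverages} says that a projective fibrant $X$ is a $j$-$\infty$-stack if and only if it is a $j'$-$\infty$-stack. Hence the two localizations have the same fibrant objects. Equivalently, the $\check{C}(j)$-local objects and the $\check{C}(j')$-local objects agree, at least among projective fibrant objects. Locality is invariant under objectwise weak equivalence, since the derived mapping space only sees a fibrant replacement. So the two classes of local objects agree outright.

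Next, compare weak equivalences. A map $g : A \to B$ is a weak equivalence in $L_S\H(\cat{C})$ exactly when $\R\H(\cat{C})(B,C) \to \R\H(\cat{C})(A,C)$ is a weak equivalence for every $S$-local $C$. This condition depends only on the class of $S$-local objects. Since that class is the same for $S = \check{C}(j)$ and $S = \check{C}(j')$, the two localizations have the same weak equivalences. Together with the equal cofibrations, this gives $\check{\H}(\cat{C}, j) = \check{\H}(\cat{C}, j')$.

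The only point needing care is that Proposition \ref{prop equiv coverages} is phrased for projective fibrant $X$. One must make sure that testing locality on fibrant objects suffices in the definition of $S$-local weak equivalence. This is routine: the derived mapping space $\R\cat{M}(-,C)$ is computed after fibrant replacement $RC$, and $C$ is $S$-local iff $RC$ is. I expect no real obstacle beyond this bookkeeping.
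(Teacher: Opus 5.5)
Your proof is correct. It shares the paper's first two steps: equal fibrant objects via Proposition \ref{prop equiv coverages}, and equal cofibrations because both are left Bousfield localizations of $\H(\cat{C})$. It differs in how it concludes.

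The paper cites the general fact \cite[Proposition 1.38]{joyal2020modelstructure} that a model structure is determined by its cofibrations together with its fibrant objects. That fact needs no information about where the weak equivalences come from.

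You instead use the defining property of a left Bousfield localization. Its weak equivalences are the $S$-local weak equivalences, and these depend only on the class of $S$-local objects. You then extend agreement on fibrant objects to agreement on all local objects, using that locality of $C$ depends only on a fibrant replacement $RC$.

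Your route is more self-contained given the definitions already in the paper. In effect it is Joyal's result in the special case where weak equivalences are already known to be detected by maps into fibrant local objects. The cost is the small bookkeeping step you flag. The paper's route is shorter and applies to any two model structures with the same cofibrations and fibrant objects.

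You can also drop the bookkeeping entirely. In the definition of $S$-local weak equivalence it suffices to test against fibrant $S$-local objects, since $\R\H(\cat{C})(-,C) \simeq \R\H(\cat{C})(-,RC)$.
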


\begin{proof}
The previous result shows that $\check{\H}(\cat{C}, j)$ and $\check{\H}(\cat{C}, j')$ have the same fibrant objects. Since they are both left bousfield localizations of $\H(\cat{C})$, they have the same cofibrations. Hence by \cite[Proposition 1.38]{joyal2020modelstructure}, they are equal model structures.
\end{proof}

\begin{Prop}
Given a site $(\cat{C}, j)$
\begin{equation*}
    \check{\H}(\cat{C}, j) = \check{\H}(\cat{C}, \text{sat}(j)) = \check{\H}(\cat{C}, \text{Gro}(j)).
\end{equation*}
\end{Prop}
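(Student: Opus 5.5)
The plan is to imitate the proof of the preceding Corollary. Both $\check{\H}(\cat{C}, j)$ and $\check{\H}(\cat{C}, \text{sat}(j))$ (and likewise for $\text{Gro}(j)$) are left Bousfield localizations of $\H(\cat{C})$, so they have the same cofibrations. By \cite[Proposition 1.38]{joyal2020modelstructure}, it therefore suffices to show that they have the same fibrant objects. By Proposition \ref{prop bousfield loc exists}, these fibrant objects are exactly the projective fibrant simplicial presheaves that are local with respect to the corresponding sets of \v{C}ech nerve maps. So the whole problem reduces to showing that a projective fibrant $X$ satisfies \v{C}ech descent for $j$ if and only if it does so for $\text{sat}(j)$, and if and only if it does so for $\text{Gro}(j)$.

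To get this equivalence I would use the same criterion as in Proposition \ref{prop equiv coverages}. By \cite[Theorem A.6]{Dugger2004}, $X$ is an $\infty$-stack for a coverage $j$ exactly when $X$ is local with respect to the maps $QF \to Qa_jF$ for all simplicial presheaves $F$, where $a_j$ is sheafification. Since this condition depends on $j$ only through the sheafification functor, it is enough to observe that
\begin{equation*}
\Sh(\cat{C}, j) = \Sh(\cat{C}, \text{sat}(j)) = \Sh(\cat{C}, \text{Gro}(j)).
\end{equation*}
The first equality is the Proposition on the saturated closure, and the second is Lemma \ref{lem sifted closure has same Grothendieck closure}. Each sheafification functor is left adjoint to the same full inclusion into $\Pre(\cat{C})$, so uniqueness of adjoints gives $a_j \cong a_{\text{sat}(j)} \cong a_{\text{Gro}(j)}$. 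The argument of Proposition \ref{prop equiv coverages} then applies verbatim.

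There is a technical wrinkle here: $\text{Gro}(j)$ is a sifted collection, and it need not satisfy the isomorphism condition (1) of Definition \ref{def coverage}. I would either note that the \v{C}ech model structure and Dugger–Hollander–Isaksen's theorem make sense for any precoverage or Grothendieck topology, or replace $\text{Gro}(j)$ by its interior coverage $\text{Gro}(j)^\circ$. Using the adjoint isomorphism of posets, $\text{Gro}(j)^\circ = \text{sat}(j)$, so this second option reduces the Grothendieck case to the saturated one.

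The main obstacle I expect is checking that \cite[Theorem A.6]{Dugger2004} genuinely applies to a general coverage in the sense of this paper, including non-cofibrant sites, where the \v{C}ech nerves must be cofibrantly replaced. If that citation proves too coarse, I would fall back on a direct argument. A sieve $\overline{r}$ receives an objectwise weak equivalence from $\check{C}(r)$ by Lemma \ref{lem cech nerve weak equiv}, so locality with respect to $\check{C}(r)\to y(U)$ is the same as locality with respect to $\overline{r}\hookrightarrow y(U)$. One would then show that the class of sieves $R$ for which $X$ is $\{R\hookrightarrow y(U)\}$-local is closed under pullback, under refinement (upward closure), and under the local-character axiom (3) of Definition \ref{def Grothendieck coverage}. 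The last closure property is the genuinely hard step. It requires writing $R'$ as a homotopy colimit over $R$ of the pullbacks $g^*R'$ and using the two-out-of-three property for local weak equivalences.
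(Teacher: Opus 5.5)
Your proposal is correct and follows the paper's own proof. The paper likewise notes that $\Sh(\cat{C}, j) = \Sh(\cat{C}, \text{sat}(j)) = \Sh(\cat{C}, \text{Gro}(j))$, so the sheafification functors are naturally isomorphic, and then reruns the argument of Proposition \ref{prop equiv coverages} (via \cite[Theorem A.6]{Dugger2004}) together with the equal-cofibrations argument of the preceding corollary. Your remarks on $\text{Gro}(j)$ being a sifted collection, and your sieve-based fallback, are extra care the paper does not take and are not needed for the main argument.
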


\begin{proof}
By \cite{Minichiello2025} $\ncat{Sh}(\cat{C}, j) = \ncat{Sh}(\cat{C}, \text{sat}(j)) = \ncat{Sh}(\cat{C}, \text{Gro}(j))$ and hence their corresponding sheafification functors are naturally isomorphic, so the same argument as in Proposition \ref{prop equiv coverages} gives the result.
\end{proof}

\subsection{The Local Model Structure}
In this section, we introduce the notion of local weak equivalence, first introduced by Joyal \cite{Joyal1984} and Jardine \cite{jardine1987simplical} as a notion of weak equivalence that blends homotopical information with site-theoretic information. We then define several variants of the main objects of study for this paper: hypercovers.

\begin{Def} \label{def lifting problem}
Let $\cat{C}$ be a category, and let $i : K \to L$ and $p : X \to Y$ be morphisms in $\cat{C}$. A \textbf{lifting problem} between $i$ and $p$, consists of maps $u : K \to X$ and $v : L \to Y$ such that the following diagram commutes
\begin{equation*}
    \begin{tikzcd}
	K & X \\
	L & Y
	\arrow["u", from=1-1, to=1-2]
	\arrow["i"', from=1-1, to=2-1]
	\arrow["p", from=1-2, to=2-2]
	\arrow["v"', from=2-1, to=2-2]
\end{tikzcd}
\end{equation*}
A solution of the above lifting problem is a map $h : L \to X$, which we often denote with a dashed line as below
\begin{equation*}
    \begin{tikzcd}
	K & X \\
	L & Y
	\arrow["u", from=1-1, to=1-2]
	\arrow["i"', from=1-1, to=2-1]
	\arrow["p", from=1-2, to=2-2]
	\arrow["h"{description}, dashed, from=2-1, to=1-2]
	\arrow["v"', from=2-1, to=2-2]
\end{tikzcd}
\end{equation*}
such that $ph = v$ and $hi = u$. If such a solution exists we say that $i$ left lifts against $p$ and $p$ right lifts against $i$. We also say that $i$ and $p$ lift against each other.
\end{Def}

\begin{Rem} \label{rem space of lifting problems}
Note that for maps $i : K \to L$ and $p : X \to Y$ in a category $\cat{C}$, then $i$ and $p$ lift against each other if and only if the function 
\begin{equation} \label{eq map for solution of lifting problems}
   \pi(i,p) : \cat{C}(L, X) \to \cat{C}(L, Y) \times_{\cat{C}(K,Y)} \cat{C}(K,X)
\end{equation}
defined by $\pi(i,p)(h) = (hi, ph)$ has a section, i.e. is surjective.

More generally, if $i : K \to L$ is a map of simplicial sets and $p : X \to Y$ is a map of simplicial presheaves, and there is a lifting problem of the form
\begin{equation*}
\begin{tikzcd}
	K & X \\
	L & Y
	\arrow["u", from=1-1, to=1-2]
	\arrow["i"', from=1-1, to=2-1]
	\arrow["p", from=1-2, to=2-2]
	\arrow["v"', from=2-1, to=2-2]
\end{tikzcd}    
\end{equation*}
then we obtain a map
\begin{equation*}
    \u{\pi}(i, p) : X^L \to Y^L \times_{Y^K} X^K
\end{equation*}
induced by the universal property of the pullback, which on vertices is precisely the map $(\u{\pi}(i, p))_0 = \pi(i,p)$ from (\ref{eq map for solution of lifting problems}). Thus a solution to any lifting problem between $i$ and $p$ is equivalent to a section on the vertices of the map $\u{\pi}(i,p)$.
\end{Rem}

\begin{Def} \label{def local lifting problem}
Given a site $(\cat{C}, j)$ and a lifting problem
\begin{equation*}
    \begin{tikzcd}
	K & X \\
	L & Y
	\arrow["u", from=1-1, to=1-2]
	\arrow["i"', from=1-1, to=2-1]
	\arrow["p", from=1-2, to=2-2]
	\arrow["v"', from=2-1, to=2-2]
\end{tikzcd}    
\end{equation*}
where $i : K \to L$ is a map of simplicial sets and $p : X \to Y$ a map of simplicial presheaves, we say that $f$ has the \textbf{local right lifting property} against $i$ if for every object $U \in \cat{C}$ there exists a $j$-covering family $r = \{r_i : U_i \to U \}_{i \in I}$ and a solution to the following lifting problem
\begin{equation*}
\begin{tikzcd}
	K & {X(U)} & {X(U_i)} \\
	L & {Y(U)} & {Y(U_i)}
	\arrow[from=1-1, to=1-2]
	\arrow["i"', from=1-1, to=2-1]
	\arrow[from=1-2, to=1-3]
	\arrow["{{p(U_i)}}", from=1-3, to=2-3]
	\arrow[ dashed, from=2-1, to=1-3]
	\arrow[from=2-1, to=2-2]
	\arrow[from=2-2, to=2-3]
\end{tikzcd}
\end{equation*}
of simplicial sets, where the solution is shown as a dashed line above. We say that there is a \textbf{local solution} to the original lifting problem if there is a solution as above for every $U \in \cat{C}$.
\end{Def}

\begin{Rem}
Note that if $i : K \to L$ is a map of simplicial sets, then a map $p : X \to Y$ of simplicial presheaves has the local right lifting property with respect to $i : K \to L$ if for every $U \in \cat{C}$ and every lifting problem of the form
\begin{equation*}
    \begin{tikzcd}
	{K \otimes y(U)} & X \\
	{L \otimes y(U)} & Y
	\arrow[from=1-1, to=1-2]
	\arrow["{i \otimes 1_{y(U)}}"', from=1-1, to=2-1]
	\arrow["p", from=1-2, to=2-2]
	\arrow[from=2-1, to=2-2]
\end{tikzcd}
\end{equation*}
there is a $j$-covering family $r = \{r_i : U_i \to U \}_{i \in I}$ such that for every $i \in I$ there is a solution of the following lifting problem
\begin{equation*}
   \begin{tikzcd}
	{K \otimes y(U_i)} & {K \otimes y(U)} & X \\
	{L \otimes y(U_i)} & {L \otimes y(U)} & Y
	\arrow["{K \otimes r_i}", from=1-1, to=1-2]
	\arrow["{i \otimes 1_{y(U_i)}}"', from=1-1, to=2-1]
	\arrow[from=1-2, to=1-3]
	\arrow["p", from=1-3, to=2-3]
	\arrow[dashed, from=2-1, to=1-3]
	\arrow["{L \otimes r_i}"', from=2-1, to=2-2]
	\arrow[from=2-2, to=2-3]
\end{tikzcd} 
\end{equation*}
\end{Rem}

\begin{Def} \label{def j local fibration}
Let $(\cat{C}, j)$ be a site. A map $p: X \to Y$ of simplicial presheaves over $\cat{C}$ is a \textbf{local fibration} if every lifting problem of the form
\begin{equation*}
\begin{tikzcd}
	{\Lambda^n_k} & X \\
	{\Delta^n} & Y
	\arrow["u", from=1-1, to=1-2]
	\arrow["{i}"', from=1-1, to=2-1]
	\arrow["p", from=1-2, to=2-2]
	\arrow["v"', from=2-1, to=2-2]
\end{tikzcd}
\end{equation*}
has a local solution. We say a simplicial presheaf $X$ over a site $(\cat{C}, j)$ is \textbf{locally fibrant} if the unique map $X \to *$ is a local fibration.
\end{Def}

\begin{Def}[{\cite[Definition 3.1]{Dugger2002}}]
A lifting problem of simplicial sets
\begin{equation} \label{eq comm square for RLHP}
    \begin{tikzcd}
	K & X \\
	L & Y
	\arrow["u", from=1-1, to=1-2]
	\arrow["i"', from=1-1, to=2-1]
	\arrow["p", from=1-2, to=2-2]
	\arrow["v"', from=2-1, to=2-2]
\end{tikzcd}
\end{equation}
is said to have a \textbf{relative homotopy solution} if there exists a map $h : L \to X$ such that $hi = u$ and such that there exists a homotopy $ph \sim v$ rel $K$.

This can equivalently be expressed as asking for dashed maps making the following diagram commute
\begin{equation*}
\begin{tikzcd}
	& K \\
	L & L & X \\
	{L +_K K \times \Delta^1} && Y
	\arrow["i"', from=1-2, to=2-1]
	\arrow["i"', from=1-2, to=2-2]
	\arrow["u", from=1-2, to=2-3]
	\arrow[dashed, from=2-1, to=2-3]
	\arrow["j"', from=2-1, to=3-1]
	\arrow["j", from=2-2, to=3-1]
	\arrow["v", from=2-2, to=3-3]
	\arrow["p", from=2-3, to=3-3]
	\arrow[dashed, from=3-1, to=3-3]
\end{tikzcd}
\end{equation*}
where $j : L \to L +_K K \times \Delta^1$ is the inclusion map in the pushout
\begin{equation*}
    \begin{tikzcd}
	{K \times \Delta^0 \cong K} & {K \times \Delta^1} \\
	{L \times \Delta^0 \cong L} & {L+_K K \times \Delta^1}
	\arrow["{\iota_0}", from=1-1, to=1-2]
	\arrow["i"', from=1-1, to=2-1]
	\arrow[from=1-2, to=2-2]
	\arrow["j"', from=2-1, to=2-2]
	\arrow["\lrcorner"{anchor=center, pos=0.125, rotate=180}, draw=none, from=2-2, to=1-1]
\end{tikzcd}
\end{equation*}
\end{Def}

\begin{Lemma}[{\cite[Proposition 4.1]{Dugger2002}}]
A map $p : X \to Y$ between Kan complexes is a weak equivalence if and only if every lifting problem of the form
\begin{equation*}
    \begin{tikzcd}
	{\partial \Delta^n} & X \\
	{\Delta^n} & Y
	\arrow["u", from=1-1, to=1-2]
	\arrow["i"', from=1-1, to=2-1]
	\arrow["p", from=1-2, to=2-2]
	\arrow["v"', from=2-1, to=2-2]
\end{tikzcd}
\end{equation*}
has a relative homotopy solution.
\end{Lemma}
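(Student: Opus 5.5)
We prove the two implications separately, working with homotopy groups of Kan complexes. We use the standard facts that for Kan complexes, maps $\Delta^n \to X$ agreeing on $\partial\Delta^n$ are homotopic rel $\partial\Delta^n$ exactly when they represent the same element, and that homotopies rel boundary can be realized by fillers.

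For ($\Rightarrow$), suppose $p$ is a weak equivalence of Kan complexes. First factor $p = q \circ j$ with $j : X \to Z$ a trivial cofibration and $q : Z \to Y$ a Kan fibration. Then $q$ is a trivial fibration by two-out-of-three. Because $X$ is Kan, $j$ admits a retraction $r : Z \to X$ with $rj = 1_X$, and $jr$ is homotopic to $1_Z$ rel $X$; this is the standard fact that $X$ is a strong deformation retract of $Z$. Given a square $(u,v)$, lift in the square $(ju, v)$ against the trivial fibration $q$ to obtain $\tilde h : \Delta^n \to Z$ with $\tilde h i = ju$ and $q\tilde h = v$. Set $h = r\tilde h$. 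Then $hi = rju = u$. Applying $q$ to the homotopy $jr \simeq 1_Z$ rel $X$ gives $ph = qjr\tilde h \simeq q\tilde h = v$. This homotopy is rel $\partial\Delta^n$ because $\tilde h$ maps $\partial\Delta^n$ into $j(X)$, where the homotopy is constant. Hence $(h, \text{homotopy})$ is a relative homotopy solution.

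For ($\Leftarrow$), we show that $p$ induces bijections on $\pi_0$ and on all $\pi_n(X,x)\to\pi_n(Y,px)$.

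\emph{Surjectivity.} Take $n=0$ with $\partial\Delta^0=\varnothing$. A relative homotopy solution says every vertex of $Y$ is connected by an edge to $p$ of some vertex, so $\pi_0$ is surjective. For $n\ge1$, represent a class in $\pi_n(Y,px)$ by $v:\Delta^n\to Y$ sending $\partial\Delta^n$ to $px$, and take $u$ constant at $x$. A relative homotopy solution $h$ then represents an element of $\pi_n(X,x)$ with $ph\simeq v$ rel boundary, so $[ph]=[v]$.

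\emph{Injectivity.} Use the square with $\partial\Delta^{n+1}$. For $n=0$, two vertices $x_0,x_1$ with $px_0,px_1$ joined by an edge give a square with $K=\partial\Delta^1$. The solution $h$ is an edge joining $x_0$ and $x_1$, so $\pi_0$ is injective. For $n\ge1$, let $a,b:\Delta^n\to X$ represent classes in $\pi_n(X,x)$ with $pa\simeq pb$ rel boundary. Encode $a$, $b$ and degenerate faces as a map $u:\partial\Delta^{n+1}\to X$, for example $d_0=a$, $d_1=b$ and all other faces constant at $x$. The homotopy gives $v:\Delta^{n+1}\to Y$ extending $pu$. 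A solution $h:\Delta^{n+1}\to X$ extending $u$ then exhibits $[a]=[b]$ in $\pi_n(X,x)$.

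The main obstacle is the base-point bookkeeping: $\pi_n$ must be bijective at every basepoint, including those in components of $X$ other than the one containing a chosen vertex. This is handled by choosing an arbitrary vertex $x$ of $X$ in each argument above, since the square can be based anywhere. One also needs the simplicial-homotopy identification of $\pi_n$ via $\partial\Delta^{n+1}$ fillers, which is standard for Kan complexes; see for instance Goerss–Jardine.
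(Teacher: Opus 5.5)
Your proof is correct. The paper does not prove this lemma itself; it quotes it from Dugger--Isaksen \cite[Proposition 4.1]{Dugger2002}. So there is no internal argument to compare against, and what you give is the standard proof. In the forward direction you lift against a trivial fibration and push a deformation down. In the converse you detect bijectivity on $\pi_0$ and on every $\pi_n(X,x)$ using squares with boundary $\partial\Delta^n$ (surjectivity) and $\partial\Delta^{n+1}$ (injectivity).

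One step should be made explicit: the strong deformation retraction $jr \simeq 1_Z$ rel $X$. You obtain it by extending the map given by $(jr, 1_Z)$ on $Z \times \partial\Delta^1$ and by $j\circ \mathrm{pr}$ on $X\times\Delta^1$ along the anodyne inclusion $Z\times\partial\Delta^1 \cup X\times\Delta^1 \hookrightarrow Z\times\Delta^1$. This extension requires $Z$ to be Kan. That holds because $q$ is a Kan fibration and $Y$ is Kan, but it is not automatic for an arbitrary anodyne extension of a Kan complex.

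If you prefer to skip this step, factor $p$ through the mapping path space $P = X\times_Y Y^{\Delta^1}$ instead, built with $\mathrm{ev}_0$ and mapping to $Y$ by $\mathrm{ev}_1$. The section $s : X \to P$ is a weak equivalence, so $P \to Y$ is a trivial fibration exactly when $p$ is a weak equivalence. A lift of the square $(su, v)$ against $P\to Y$ is then literally a map $h$ with $hi = u$ together with a homotopy $ph \simeq v$ that is constant on $\partial\Delta^n$, i.e.\ a relative homotopy solution, with no retraction argument needed.
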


\begin{Def}[{\cite[Theorem 6.15]{Dugger2002}}] 
\label{def j local weak equiv}
A map $p: X \to Y$ of objectwise fibrant simplicial presheaves\footnote{We can extend this to arbitrary simplicial presheaves by applying a fibrant replacement functor objectwise.} over a site $(\cat{C}, j)$ is a \textbf{local weak equivalence} if for every lifting problem of the form
\begin{equation*}
        \begin{tikzcd}
	{\partial \Delta^n} & X \\
	{\Delta^n} & Y
	\arrow[from=1-1, to=1-2]
	\arrow["{i}"', from=1-1, to=2-1]
	\arrow["p", from=1-2, to=2-2]
	\arrow[from=2-1, to=2-2]
\end{tikzcd}
\end{equation*}
and for every $U \in \cat{C}$, there exists a covering family $r = \{r_i : U_i \to U \}$ and a relative homotopy solution of the associated lifting problem
\begin{equation*}
\begin{tikzcd}
	{\partial \Delta^n} & {X(U)} & {X(U_i)} \\
	{\Delta^n} & {Y(U)} & {Y(U_i)}
	\arrow["u", from=1-1, to=1-2]
	\arrow["i"', from=1-1, to=2-1]
	\arrow[from=1-2, to=1-3]
	\arrow["p", from=1-3, to=2-3]
	\arrow[dashed, from=2-1, to=1-3]
	\arrow["v"', from=2-1, to=2-2]
	\arrow[from=2-2, to=2-3]
\end{tikzcd}    
\end{equation*}
We call such a relative homotopy solution a \textbf{local relative homotopy solution}.
\end{Def}

We can define local weak equivalences equivalently using homotopy sheaves by \cite{Dugger2002}.

\begin{Def}
Given a site $(\cat{C}, j)$ and a simplicial presheaf $X$ on $\cat{C}$, let $\pi_0(X)$ denote the presheaf on $\cat{C}$ defined objectwise by $\pi_0(X)(U) = \pi_0(X(U))$, equivalently the coequalizer of the two face maps $d_i : X_1(U) \to X_0(U)$. For $n \geq 1$, let $\pi_n(X)$ denote the presheaf on $\cat{C}$ defined objectwise by
\begin{equation*}
    \pi_n(X)(U) = \sum_{x \in X_0(U)} \pi_n(X(U),x)
\end{equation*}
where here $\pi_n(X(U),x)$ is the $n$th-homotopy group of the geometric realization of $X(U)$ or equivalently of some Kan fibrant replacement of $X(U)$. There is a canonical map $\pi_n(X) \to X_0$.

We say that a map $f : X \to Y$ of simplicial presheaves is a \textbf{local weak equivalence} if the induced dashed map in the following commutative diagram
\begin{equation*}
\begin{tikzcd}
	{\pi_n(X)} \\
	& {X_0 \times_{Y_0} \pi_n(Y)} & {\pi_n(Y)} \\
	& {X_0} & {Y_0}
	\arrow[dashed, from=1-1, to=2-2]
	\arrow["{\pi_n(f)}", curve={height=-12pt}, from=1-1, to=2-3]
	\arrow[curve={height=12pt}, from=1-1, to=3-2]
	\arrow[from=2-2, to=2-3]
	\arrow[from=2-2, to=3-2]
	\arrow[from=2-3, to=3-3]
	\arrow["{f_0}"', from=3-2, to=3-3]
\end{tikzcd}
\end{equation*}
is a local isomorphism of presheaves.
\end{Def}

\begin{Rem}
Originally, the notion of local weak equivalence of simplicial presheaves was given by Brown in his thesis \cite{Brown1973}, where he defines a map $f : X \to Y$ of simplicial presheaves on the site $(\mathcal{O}(T), j_T)$ underlying a topological space $T$ to be a local weak equivalence if the induced map on stalks $f_x : X_x \to Y_x$ is a weak equivalence for every point $x \in T$.

Over time this definition was generalized by Joyal \cite{Joyal1984} and Jardine \cite{jardine1986grothendieck} to sites that may not have a good notion of stalks/points. Hence, now there are many equivalent ways of defining local weak equivalences of simplicial presheaves: using homotopy group sheaves as in \cite[Section 4.1]{jardine2015local} and above, topological/combinatorial weak equivalences \cite[Page 66]{jardine2015local}, stalkwise weak equivalences in the case of sites with enough points \cite{Brown1973}, Boolean localization \cite[Section 4.3]{jardine2015local} or local liftings \cite{Dugger2002}.
\end{Rem}

\begin{Def} \label{def j local trivial fibration}
We say a map $p: X \to Y$ is a \textbf{local trivial fibration} if it is both a local fibration and a local weak equivalence.
\end{Def}

\begin{Lemma} [{\cite[Proposition 7.2]{Dugger2002}}]
A map $p : X \to Y$ of simplicial presheaves on a site $(\cat{C}, j)$ is a local trivial fibration if for every $U \in \cat{C}$, there is a local solution to any lifting problem of the form
\begin{equation*}
    \begin{tikzcd}
	{\partial \Delta^n} & X \\
	{\Delta^n} & Y
	\arrow[from=1-1, to=1-2]
	\arrow["{i}"', from=1-1, to=2-1]
	\arrow["p", from=1-2, to=2-2]
	\arrow[from=2-1, to=2-2]
\end{tikzcd}
\end{equation*}
\end{Lemma}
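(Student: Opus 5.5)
The idea is to reduce both halves of the definition of a local trivial fibration (Definition \ref{def j local trivial fibration}) to one observation. If $p$ has local solutions against every boundary inclusion $\partial\Delta^m \to \Delta^m$, then it has local solutions against every map $K \to L$ of simplicial sets that is a \emph{finite} relative cell complex built from boundary inclusions. Local fibrancy and local weak equivalence then both follow by applying this to suitable finite cell complexes.

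\emph{Step 1 (finite cell complexes).} Let $K = L_0 \subseteq L_1 \subseteq \dots \subseteq L_N = L$, where each $L_{k} \to L_{k+1}$ is a pushout of some $\partial\Delta^{m_k} \to \Delta^{m_k}$. Take a lifting problem $(u,v)$ for $p$ over $U$.

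First solve the problem for the first cell. By hypothesis there is a covering family $\{U_i \to U\}$ and, for each $i$, an extension of $u|_{U_i}$ over $L_1$ compatible with $v|_{U_i}$.

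Next, for each $i$, treat this as a new lifting problem over $U_i$ for the second cell. This gives covering families $\{U_{i\alpha} \to U_i\}$ and lifts over $L_2$, and we continue in this way. After $N$ steps the composite family $\{U_{i\alpha\cdots} \to U\}$ carries a solution of the original problem.

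The point needing care is that this composite family must again be a covering family. This is automatic when $j$ is composition closed. In general I would pass to $\text{sat}(j)$ and argue that local lifting properties and local weak equivalences are unchanged by this replacement. The reason is that every $\text{sat}(j)$-cover is refined by an iterated $j$-cover, and a local solution restricts along refinements. This is in the spirit of Remark \ref{rem local epis are problematic}, and I would state the result for composition closed sites if this comparison becomes awkward.

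\emph{Step 2 (local fibration).} The horn inclusion $\Lambda^n_k \to \Delta^n$ is a two-step cell complex. First attach the missing face along $\partial\Delta^{n-1} \to \Delta^{n-1}$; then attach the top simplex along $\partial\Delta^n \to \Delta^n$. Step 1 therefore gives local solutions to all horn lifting problems, so $p$ is a local fibration in the sense of Definition \ref{def j local fibration}.

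\emph{Step 3 (local weak equivalence).} If $X$ and $Y$ are objectwise fibrant, a genuine local solution of a $\partial\Delta^n$ problem is in particular a local relative homotopy solution, with the constant homotopy. Definition \ref{def j local weak equiv} is then satisfied directly.

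In general the definition uses an objectwise fibrant replacement, and I would take $\text{Ex}^\infty$. Because $\partial\Delta^n$ and $\Delta^n$ are finite, a lifting problem $\partial\Delta^n \to \text{Ex}^\infty X(U)$ over $\Delta^n \to \text{Ex}^\infty Y(U)$ factors through some $\text{Ex}^k$. By adjunction it becomes a lifting problem of $\text{sd}^k\partial\Delta^n \to \text{sd}^k\Delta^n$ against $p$. This inclusion is a finite relative cell complex of boundary inclusions, obtained by attaching the interior nondegenerate simplices of the subdivision in order of dimension. Step 1 then provides a local strict solution, which gives a local solution for $\text{Ex}^\infty p$ and hence a local relative homotopy solution. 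So $p$ is a local weak equivalence.

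I expect the main obstacle to be the composition issue in Step 1: ensuring that iterated covers stay covering, or that nothing is lost in passing to $\text{sat}(j)$. The $\text{Ex}^\infty$ bookkeeping in Step 3 is routine by comparison.
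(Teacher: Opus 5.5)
Your Steps 1--3 are the standard argument behind the result the paper cites (Dugger--Isaksen, Proposition 7.2); the paper gives no proof of its own. The ingredients are finite relative cell complexes built from boundary inclusions, the two-cell decomposition of $\Lambda^n_k \hookrightarrow \Delta^n$, strict local lifts read as local relative homotopy lifts, and $\text{sd}^k \dashv \text{Ex}^k$ for non-fibrant $X, Y$. All of this is correct as soon as iterated covers are covers. In the cited setting that is automatic: it is the transitivity axiom for covering sieves of a Grothendieck topology.

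The gap is the general-coverage case of Step 1. Step 3 depends on it too, since $\text{sd}^k\partial\Delta^n \hookrightarrow \text{sd}^k\Delta^n$ has many cells.

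\begin{itemize}
\item Your justification for passing to $\text{sat}(j)$ runs in the wrong direction. Refining a $\text{sat}(j)$-cover by an iterated $j$-cover and restricting gives a solution over an \emph{iterated} $j$-cover.
\item Definitions \ref{def local lifting problem} and \ref{def j local fibration} demand a solution over a single $j$-covering family.
\item To get one, you would need every composite of $j$-covering families to be refined by a $j$-covering family, i.e.\ $j \simeq \text{sat}(j)$.
\end{itemize}

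Without some such hypothesis the statement is false. Here is a counterexample.

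\begin{itemize}
\item The site: on $\mathcal{O}(\R)$, let $j(U)$ consist of the covers of $U$ by at most two open sets. This is a coverage: intersect with $V \subseteq U$.
\item The opens: put $A_1 = (-\infty,1)$, $A_2 = (0,\infty)$, $C_1 = (-\infty,0.6)$, $C_2 = (0.4,1)$, $C_3 = (0,1.6)$ and $C_4 = (1.4,\infty)$. Then $A_1 = C_1 \cup C_2$ and $A_2 = C_3 \cup C_4$.
\item The map: let $Y$ be the constant simplicial presheaf $\Delta^1$, and $X \subseteq Y$ the subpresheaf defined as follows. $X(U)$ always contains the vertex $0$. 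It contains the vertex $1$ iff $U$ lies in $A_1$ or in $A_2$. It contains the edge iff $U$ lies in some $C_k$.
\item The hypothesis holds. Boundary problems for $X \hookrightarrow Y$ with $n \geq 2$ have global solutions, because every edge of $\Delta^n$ lies in $\partial\Delta^n$. For $n = 0,1$ the covers $\{U\cap A_1, U\cap A_2\}$, $\{U\cap C_1, U\cap C_2\}$ and $\{U\cap C_3, U \cap C_4\}$ give local solutions.
\item The conclusion fails. Take the horn problem given by the vertex $0 \in X(\R)$ and the identity $\Delta^1 \to Y(\R)$. It has no solution over any $j$-cover of $\R$, since no two of the $C_k$ cover $\R$.
\end{itemize}

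So you must add the hypothesis explicitly, as your fallback anticipates: $j$ composition closed, or more generally $j \simeq \text{sat}(j)$. The latter holds for $(\ncat{Cart}, j_{\text{good}})$, because every open cover of a cartesian space has a good refinement. With it, your proof covers every site in Example \ref{ex sites}.
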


\begin{Lemma}
Give a site $(\cat{C}, j)$, a map $p : X \to Y$ of simplicial presheaves on $\cat{C}$ is a local trivial fibration if and only if for every $n \geq 0$, the canonical map
\begin{equation} \label{eq matching object condition}
X_n \to Y_n \times_{M_n Y} M_n X
\end{equation}
is a local epimorphism of presheaves (Definition \ref{def local epi}), where $M_n X$ is the $n$-matching object (Definition \ref{def latching and matching objects}).
\end{Lemma}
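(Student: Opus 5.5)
We prove the two implications separately, using the local-lifting characterization of local trivial fibrations (\cite[Proposition 7.2]{Dugger2002}, stated just above). A map $p : X \to Y$ is a local trivial fibration if and only if every lifting problem of $\partial\Delta^n \to \Delta^n$ against $p$ has a local solution. The whole proof then consists of translating lifting problems against boundary inclusions into sections of presheaves, degree by degree.

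\emph{The translation.} By the Yoneda lemma and the tensoring isomorphisms (\ref{eq (co)tensoring isos}), a lifting problem over $U$ is the same as the following data:
\begin{equation*}
\partial\Delta^n \otimes y(U) \to X, \qquad \Delta^n \otimes y(U) \to Y .
\end{equation*}
Equivalently, it is a section of $(Y^{\Delta^n} \times_{Y^{\partial\Delta^n}} X^{\partial\Delta^n})(U)$ taken in simplicial degree $0$. In degree $0$ we have $(X^{\partial\Delta^n})_0 = M_nX$ and $(Y^{\Delta^n})_0 = Y_n$, where $M_nX(U) = \ncat{sSet}(\partial\Delta^n, X(U))$ is the matching object of Definition \ref{def latching and matching objects}. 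So a lifting problem over $U$ is exactly a section
\begin{equation*}
s \in (Y_n \times_{M_nY} M_nX)(U).
\end{equation*}
Similarly, a solution is an $n$-simplex $x \in X_n(U)$ mapping to $s$ under the canonical map $X_n \to Y_n \times_{M_nY} M_nX$. I would check that this identification is natural in $U$, i.e.\ that restricting along $r_i : U_i \to U$ corresponds to restricting the section $s$ to $U_i$. This holds because each of $X_n$, $M_nX$, and $Y_n$ is a presheaf and the identifications come from Yoneda.

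\emph{Concluding both directions.} With the translation in hand, a local solution over $U$ means: there is a covering family $\{r_i : U_i \to U\}$ and elements $x_i \in X_n(U_i)$ mapping to $s|_{U_i}$. Read in Yoneda terms, $s$ is a map $y(U) \to Y_n \times_{M_nY} M_nX$, and we need lifts $y(U_i) \to X_n$ making the squares of Definition \ref{def local epi} commute. That is literally the statement that the canonical map $X_n \to Y_n \times_{M_nY} M_nX$ is a local epimorphism. Since this holds for every $n \geq 0$ and every $U$, both directions of the lemma follow.

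\emph{Points to check.} The main place needing care is that the $n=0$ case and the "if" direction of \cite[Proposition 7.2]{Dugger2002} are stated compatibly with our convention that local epimorphisms use a single covering family, rather than a tree of coverings (Remark \ref{rem local epis are problematic}). I would check that Dugger's proposition is an "if and only if" for the stronger notion. If it is only stated for the tree notion, I would reprove the relevant direction directly: a local trivial fibration is a local fibration and a local weak equivalence, and a local relative homotopy solution can be upgraded to an honest local solution using the local horn-lifting property together with a homotopy-extension argument. This would be done one covering at a time, refining once more if necessary. That upgrading step is where I expect the real obstacle lies, since for non-composition-closed sites such as $(\ncat{Cart}, j_{\text{good}})$ the two refinements might not compose into a single covering family. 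I would handle this by noting that for every site in Example \ref{ex sites}, any composite of coverings can be refined by a single covering.
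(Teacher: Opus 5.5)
Your proposal is correct and is essentially the paper's own argument: both invoke the Dugger--Isaksen characterization of local trivial fibrations by local lifting against $\partial\Delta^n \to \Delta^n$. Both then identify a lifting problem over $U$ with a section of $(Y^{\Delta^n}\times_{Y^{\partial\Delta^n}}X^{\partial\Delta^n})_0 = Y_n\times_{M_nY}M_nX$, and a local solution with local lifts through $X_n$, i.e.\ with this map being a local epimorphism. Your caveat about whether \cite[Proposition 7.2]{Dugger2002} holds as an ``if and only if'' for the single-covering-family notion of locality is a real subtlety that the paper's proof passes over (the paper even states that result only as an ``if''); note, though, that your proposed fix only covers sites where composites of coverings can be refined by a single covering, such as those in Example \ref{ex sites}, and not arbitrary sites as in the statement.
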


\begin{proof}
Let $i : \partial \Delta^n \to \Delta^n$ denote the inclusion map. The map $p : X \to Y$ right lifts against $i \otimes 1_{y(U)}$ if and only if the map
\begin{equation*}
   \u{\pi}(i,  p) : X^{\Delta^n} \to Y^{\Delta^n } \times_{Y^{\partial \Delta^n }} X^{\partial \Delta^n } 
\end{equation*}
has a section on its vertices objectwise as in Remark \ref{rem space of lifting problems}. It is not hard to see that for $p$ to have the local right lifting property against $i$, then there needs to be a section on the vertices of $\u{\pi}(i, p)$ when restricting to a covering family. In other words, $p$ has the local right lifting property if and only if when we apply $(-)_0 : \sPre(\cat{C}) \to \Pre(\cat{C})$ to $\u{\pi}(i,p)$, we obtain a local epimorphism.
\end{proof}

\begin{Lemma} \label{lem local triv fib iff matching map loc epi}
Given a site $(\cat{C}, j)$, an augmented simplicial presheaf of the form $p : X \to {}^c Y$ is a local trivial fibration if and only if for every $n \geq 0$, the augmented matching map
\begin{equation} \label{eq augmented matching map}
    m^+_n : X_n \to M_n^+ X,
\end{equation}
is a local epimorphism.
\end{Lemma}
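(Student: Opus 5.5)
The plan is to reduce the statement to the preceding lemma, which says that $p : X \to Y$ is a local trivial fibration if and only if for every $n \geq 0$ the relative matching map $X_n \to Y_n \times_{M_n Y} M_n X$ is a local epimorphism. The whole argument is then an identification of this relative matching object with the augmented matching object $M^+_n X$ (Definition \ref{def latching and matching objects}, Appendix \ref{section augmented and truncated simplicial presheaves}) in the special case where the target is a discrete simplicial presheaf ${}^c Y$.

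\textbf{Step 1: the matching objects of ${}^c Y$.} Since ${}^c Y$ is constant in the simplicial direction, $({}^cY)_n = Y$ for all $n$, and all face and degeneracy maps are identities. The matching object $M_n({}^cY)$ is the limit of $({}^cY)_m$ over the face maps $[m] \hookrightarrow [n]$ with $m<n$. For $n \geq 2$ this indexing diagram is connected, so $M_n({}^c Y) \cong Y$ and the map $({}^cY)_n \to M_n({}^cY)$ is an isomorphism. For $n=1$ the diagram is discrete with two points, so $M_1({}^cY) = Y \times Y$ and the map is the diagonal. For $n=0$ we have $M_0 = *$.

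\textbf{Step 2: compare the pullbacks.} Substitute Step 1 into $({}^cY)_n \times_{M_n({}^cY)} M_n X$.
\begin{itemize}
\item For $n=0$ the pullback is $Y \times * = Y$, and the map is $p_0 : X_0 \to Y$. This is exactly $m_0^+ : X_0 \to M^+_0 X = Y$, where the augmentation plays the role of the $(-1)$-simplices.
\item For $n=1$ the pullback is $Y \times_{Y\times Y} (X_0 \times X_0) \cong X_0 \times_Y X_0 = M^+_1 X$.
\item For $n \geq 2$ the pullback is $Y \times_Y M_n X \cong M_n X$. This agrees with $M^+_n X$, because the augmented matching diagram adds only the terminal vertex $[-1]$. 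That vertex is connected through the $0$-faces, so it contributes no new constraint; one should compare the limit over injections into $[n]$ from $[m]$ with $-1 \le m < n$ against the limit over $0 \le m < n$.
\end{itemize}
In every case the canonical maps from $X_n$ agree under these isomorphisms, since both are induced by the faces and the augmentation.

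\textbf{Step 3: conclude.} Local epimorphisms are invariant under isomorphism of the codomain, so the condition of the preceding lemma for $p : X \to {}^cY$ is equivalent to $m^+_n$ being a local epimorphism for all $n$.

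The only delicate point is Step 2 for $n \ge 2$: one has to check, with the paper's conventions for augmented matching objects, that adding the augmentation vertex does not change the limit. This holds because every map out of $X_0$ to $Y$ in that diagram factors through the face maps compatibly, which is finality or connectedness of the relevant subcategory inclusion. I would verify this by a short cofinality argument on the category of injections into $[n]$.
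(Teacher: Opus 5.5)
Your proposal is correct and takes the same route as the paper. Like the paper, you reduce to the preceding lemma on the relative matching maps $X_n \to Y_n \times_{M_nY} M_nX$ and identify these codomains with $M_n^+X$ degree by degree, using $M_n^+X \cong M_nX$ and $M_n({}^cY)\cong Y$ for $n\geq 2$. Your degree-$1$ computation, $Y\times_{Y\times Y}(X_0\times X_0)\cong X_0\times_Y X_0$, is stated more accurately than the paper's version, which writes $M_1X$ as $X_0+X_0$ and $M_1({}^cY)$ as $Y$.
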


\begin{proof}
When $n = 0$, then $\partial \Delta^0 = \varnothing$, so $M_0 {}^cY = M_0 X = *$. But
\begin{equation*}
M_0^+X(U) = \ncat{sSet}_+(\varnothing, X(U)) = \ncat{Set}(*, X_{-1}(U)) \cong \ncat{Set}(*,Y(U)) \cong Y(U).
\end{equation*}
In other words $M_0^+ X \cong Y$. When $n = 1$, then $\partial \Delta^1 = \Delta^0 + \Delta^0$, and $M_1^+X \cong X_0 \times_Y X_0$. But (\ref{eq matching object condition}) becomes $Y \times_Y M_1 X \cong Y \times_Y (X_0 + X_0) \cong X_0 \times_Y X_0$. So in degree $0$ and $1$, (\ref{eq augmented matching map}) is a local epi if and only if (\ref{eq matching object condition}) is. For $n > 1$, $M_n^+X \cong M_nX$, and $Y \cong M_n Y$, so the equivalence holds.
\end{proof}

\begin{Def}
Given a site $(\cat{C}, j)$ and $U \in \cat{C}$, a \textbf{hypercover} of $U$, with respect to $j$ is a local trivial fibration $p : H \to y(U)$.
\end{Def}

The notion of a hypercover becomes more useful the more one can connect it to the underlying category $\cat{C}$. We next introduce further convenient conditions for a hypercover to satisfy.

\begin{Def} \label{def semi-representable map}
Recall the notion of a semi-representable presheaf (Definition \ref{def semi-representable presheaf}).
We say that a map $f : X \to Y$ of presheaves is semi-representable if both $X$ and $Y$ are semi-representable.
\end{Def}

\begin{Lemma}
Suppose that $f : X \to Y$ is a semi-representable map of presheaves, with $X \cong \sum_{i \in I} y(U_i)$ and $Y \cong \sum_{i' \in I'} y(V_{i'})$. Then there exists a function $\alpha : I \to I'$ and a family of morphisms $f_i : U_i \to V_{\alpha(i)}$ such that the following diagram commutes
\begin{equation} \label{eq basal map}
\begin{tikzcd}[ampersand replacement=\&]
	X \&\& Y \\
	{\sum_{i \in I} y(U_i)} \&\& {\sum_{i' \in I'} y(V_{i'})}
	\arrow["f", from=1-1, to=1-3]
	\arrow["\cong"', from=1-1, to=2-1]
	\arrow["\cong", from=1-3, to=2-3]
	\arrow["{\sum_{i \in I} y(f_i)}"', from=2-1, to=2-3]
\end{tikzcd}    
\end{equation}    
\end{Lemma}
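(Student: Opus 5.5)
The plan is to reduce everything to the Yoneda lemma and the behaviour of maps out of representables into coproducts in $\Pre(\cat{C})$. Fix the isomorphisms $X \cong \sum_{i \in I} y(U_i)$ and $Y \cong \sum_{i' \in I'} y(V_{i'})$, and let $g : \sum_{i \in I} y(U_i) \to \sum_{i' \in I'} y(V_{i'})$ be the map obtained by conjugating $f$ with these isomorphisms. By the universal property of the coproduct, $g$ is determined by its restrictions $g \circ \iota_i : y(U_i) \to \sum_{i'} y(V_{i'})$, where $\iota_i$ denotes the coprojection. So it suffices to factor each such map through a single summand.

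By the Yoneda lemma, a map $y(U_i) \to \sum_{i'} y(V_{i'})$ corresponds to an element of $\left(\sum_{i'} y(V_{i'})\right)(U_i)$. Since coproducts of presheaves are computed objectwise, this set is
\begin{equation*}
\sum_{i' \in I'} \cat{C}(U_i, V_{i'}).
\end{equation*}
Hence the element corresponding to $g \circ \iota_i$ lies in exactly one summand. Define $\alpha(i) \in I'$ to be that index, and $f_i : U_i \to V_{\alpha(i)}$ to be the corresponding morphism. Naturality of the Yoneda correspondence, applied to the coprojection $\iota_{\alpha(i)}$, then gives $g \circ \iota_i = \iota_{\alpha(i)} \circ y(f_i)$.

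Assembling these over $i \in I$ via the coproduct universal property yields $g = \sum_{i} y(f_i)$, where $\sum_i y(f_i)$ is understood as the map whose $i$-th component is $\iota_{\alpha(i)} \circ y(f_i)$. This is precisely the commutativity of (\ref{eq basal map}).

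The only point needing care, and it is a mild one, is that representables are connected: a map out of $y(U)$ into a coproduct factors through one summand. This follows from the explicit objectwise description of coproducts together with Yoneda, rather than from any hypothesis on $\cat{C}$. Everything else is routine bookkeeping of the chosen isomorphisms.
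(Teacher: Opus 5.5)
Your proof is correct and takes the same route as the paper: Yoneda plus the objectwise computation of coproducts gives $\Pre(\cat{C})(X,Y) \cong \prod_{i \in I} \sum_{i' \in I'} \cat{C}(U_i, V_{i'})$, and $\alpha$ and the $f_i$ are read off from this. You are slightly more explicit than the paper in checking that the assembled map equals $\sum_i y(f_i)$, via naturality of the Yoneda correspondence along the coprojections $\iota_{\alpha(i)}$.
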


\begin{proof}
This follows from the Yoneda Lemma. Indeed, we have
\begin{equation*}
\begin{aligned}
    \ncat{Pre}(\cat{C})(X,Y) &\cong \prod_{i \in I} \Pre(\cat{C})(y(U_i), \sum_{i' \in I'} y(V_{i'})) \\
    & \cong \prod_{i \in I} \sum_{i' \in I'} \cat{C}(U_i, V_{i'}).
\end{aligned}
\end{equation*}
Therefore an element of $\Pre(\cat{C})(X,Y)$ consists of an assignment to every $i \in I$ some map $f_i : U_i \to V_{i'}$. Letting $\alpha : I \to I'$ denote the corresponding function on the index sets gives the result.
\end{proof}

Thus if $f : X \to Y$ is a semi-representable map of presheaves, then there is a corresponding set $\mathbf{f} = \{f_i : U_i \to V_{\alpha(i)} \}$ of families of morphisms in $\cat{C}$. We call this the \textbf{family of morphisms induced by $f$}. This family can be written as a collection of (possibly empty) families of morphisms $\mathbf{f}(i') = \{f_i : U_i \to V_{i'} \}_{i \in \alpha^{-1}(i')}$ for each fixed $i' \in I'$.

\begin{Def} \label{def basal and covering basal maps}
Let $(\cat{C}, j)$ be a site, define the following
\begin{itemize}
    \item a \textbf{basal morphism} $f : U \to V$ in $\cat{C}$ is a morphism such that there exists a covering family $r \in j(V)$ such that $f \in r$,
    \item a \textbf{basal map of presheaves} $f : X \to Y$ on $\cat{C}$ is either an initial map, or it is semi-representable, and furthermore each induced map $f_i : U_i \to V_{\alpha(i)}$ is basal,
    \item a \textbf{covering basal map of presheaves} $f : X \to Y$ on $(\cat{C}, j)$ is a basal map of presheaves such that for each fixed $i'$ index of $Y$, if the family $\mathbf{f}(i') = \{f_i : U_i \to V_{i'}\}_{i \in \alpha^{-1}(i')}$ is nonempty, then it is $j$-covering.
\end{itemize}
\end{Def}

\begin{Def} \label{def DHI-hypercover, basal and covering basal}
We say that a map $p : H \to y(U)$ of simplicial presheaves on a site $(\cat{C}, j)$ is a
\begin{itemize}
    \item \textbf{DHI-hypercover} if it is a local trivial fibration and each $H_n$ is semi-representable,
    \item \textbf{basal hypercover} if it is a DHI-hypercover and such that each augmented matching map $H_n \to M_n^+ H$ is a basal map, and
    \item \textbf{Verdier hypercover} if it is a DHI-hypercover such that each augmented matching map $H_n \to M_n^+ H$ is a covering basal map.
\end{itemize}
We let $\ncat{Hyp}$, $\ncat{DHIHyp}$, $\ncat{BHyp}$ and $\ncat{VHyp}$ denote the classes of hypercovers, DHI-hypercovers, basal hypercovers and Verdier hypercovers respectively. If $S$ is one of the classes of maps defined above, and $y(U)$ is a representable, then we let $S_{U}$ denote the full subcategory of $\ncat{sPre}(\cat{C})_{/y(U)}$ on the maps in $S$.
\end{Def}

\begin{Rem}
The notion of DHI-hypercover and basal hypercover is due to \cite{Dugger2004}. There, the basal hypercover defintion sufficed, because they only used Grothendieck coverages, on which Verdier hypercovers and basal hypercovers agree (Lemma \ref{lem hierarchy of hypercovers}). In \cite[Section 01FZ]{Stacksprojectauthors2025}, covering basal maps of presheaves are called coverings. Furthermore, our Verdier hypercovers are precisely the Stacks Project's notion of hypercover. Ultimately this definition is due to Verdier \cite{Verdier1972}.
\end{Rem}

We have inclusions
\begin{equation*}
\ncat{VHyp}_U \subseteq \ncat{BHyp}_U \subseteq \ncat{DHIHyp}_U \subseteq \ncat{Hyp}_U
\end{equation*}
each of which is strict in general. In certain cases there are reverse inclusions.

\begin{Lemma} \label{lem basal on saturated site is covering basal}
Let $f : X \to Y$ be a basal map of presheaves on a saturated site $(\cat{C}, j)$ that is also a local epimorphism. Then $f$ is covering basal.
\end{Lemma}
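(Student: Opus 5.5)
Write $X \cong \sum_{i \in I} y(U_i)$ and $Y \cong \sum_{i' \in I'} y(V_{i'})$, with $f$ corresponding to $\alpha : I \to I'$ and basal morphisms $f_i : U_i \to V_{\alpha(i)}$. If $f$ is the initial map there is nothing to check. Otherwise we must show that for each $i' \in I'$ with $\alpha^{-1}(i')$ nonempty, the family $\mathbf{f}(i') = \{f_i : U_i \to V_{i'}\}_{i \in \alpha^{-1}(i')}$ is $j$-covering.

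We use the local epimorphism hypothesis on the coprojection $s : y(V_{i'}) \to Y$. Definition \ref{def local epi} gives a covering family $t = \{t_k : W_k \to V_{i'}\}_{k \in K} \in j(V_{i'})$ and maps $s_k : y(W_k) \to X$ with $f s_k = s\, y(t_k)$. By Yoneda, each $s_k$ lands in a single summand, so it is a morphism $g_k : W_k \to U_{\beta(k)}$ for some $\beta(k) \in I$. The commutativity $f s_k = s\, y(t_k)$ says that $f \circ s_k$ lands in the summand $y(V_{i'})$, so $\alpha(\beta(k)) = i'$. It also gives $f_{\beta(k)} g_k = t_k$. Hence $(\beta, \{g_k\})$ is a refinement $t \to \mathbf{f}(i')$ in the sense of Definition \ref{def family of morphisms}.

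Since $j$ is saturated, it is refinement closed, so $\mathbf{f}(i') \in j(V_{i'})$. This argument does not even use that the $f_i$ are basal, only that $f$ is semi-representable; note also that it covers every $i'$ regardless of whether $\alpha^{-1}(i')$ was assumed nonempty, since a covering family over a nonempty object forces a nonempty preimage.

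The only delicate step is the bookkeeping that the lifts $s_k$ land in summands over the correct index $i'$. This rests on coproducts of representables being disjoint and on each representable being connected, i.e. maps $y(W) \to \sum y(U_i)$ factor through a unique summand. That holds because coproducts in presheaf categories are computed objectwise, so $\left(\sum_i y(U_i)\right)(W) = \sum_i \cat{C}(W, U_i)$.
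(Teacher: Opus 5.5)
Your proof is correct. It rests on the same idea as the paper's proof: isolate the part of $f$ over a single $V_{j_0}$, use the local epimorphism property there, and conclude by saturation. Your route is more direct and self-contained, though.

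The paper pulls $f$ back along the coprojection $y(V_{j_0}) \hookrightarrow Y$ and identifies the pullback with $\sum_{i \in \alpha^{-1}(j_0)} y(U_i) \to y(V_{j_0})$. It observes that this map is still a local epimorphism, and then cites \cite[Lemma 6.18]{Minichiello2025} to conclude that $\mathbf{f}(j_0)$ is covering.

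You skip the pullback by applying Definition \ref{def local epi} to the coprojection itself. You also inline the cited lemma: Yoneda plus disjointness of coproducts turns the local lifts into an explicit refinement $t \to \mathbf{f}(i')$ by a covering family, and refinement closure finishes the argument.

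The paper's version is shorter and modular. Yours shows that only refinement closure, not composition closure, is actually used, and that basality of the $f_i$ follows automatically. It also avoids translating between this paper's local epimorphisms and the $j$-tree epimorphisms of the companion paper (Remark \ref{rem local epis are problematic}).

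One small aside: the parenthetical justification ``a covering family over a nonempty object forces a nonempty preimage'' has no meaning on an abstract site. There is no general notion of a nonempty object, and in $\mathcal{O}(X)$ the empty family covers $\varnothing$. You do not need it, because your argument already shows $\mathbf{f}(i') \in j(V_{i'})$ for every $i'$, whether $\alpha^{-1}(i')$ is empty or not.
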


\begin{proof}
Since $f$ is basal, we can write it as a collection of maps $\sum_i f_i : \sum_i y(U_i) \to \sum_{j} y(V_j)$. Thus for every fixed $j_0 \in J$, we have a commutative diagram
\begin{equation*}
    \begin{tikzcd}[ampersand replacement=\&]
	{\sum_{i \in \alpha^{-1}(j_0)} y(U_i)} \&\& {y(V_{j_0})} \\
	{\sum_i y(U_i)} \&\& {\sum_j y(V_j)}
	\arrow["{\sum_{i \in \alpha^{-1}(j_0)} f_i}", from=1-1, to=1-3]
	\arrow[hook, from=1-1, to=2-1]
	\arrow[hook, from=1-3, to=2-3]
	\arrow[""{name=0, anchor=center, inner sep=0}, "f"', from=2-1, to=2-3]
	\arrow["\lrcorner"{anchor=center, pos=0.125}, draw=none, from=1-1, to=0]
\end{tikzcd}
\end{equation*}
which is a pullback. Since $f$ is a local epimorphism, so is the top horizontal map. Thus by \cite[Lemma 6.18]{Minichiello2025} the family $\mathbf{f}(j_0)$ is $j$-covering.
\end{proof}

\begin{Lemma} \label{lem hierarchy of hypercovers}
Suppose that $(\cat{C}, j)$ is a site with $U \in \cat{C}$. If
\begin{enumerate}
    \item every map in $\cat{C}$ is basal, then $\ncat{BHyp}_U = \ncat{DHIHyp}_U$,
    \item the coverage $j$ is saturated or a Grothendieck coverage, then $\ncat{BHyp}_U = \ncat{VHyp}_U$.
\end{enumerate}
\end{Lemma}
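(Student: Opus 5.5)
Both parts compare hypercovers with the same underlying data, so in each case I would show that the extra condition in the smaller class holds automatically under the stated hypothesis. The inclusions $\ncat{VHyp}_U \subseteq \ncat{BHyp}_U \subseteq \ncat{DHIHyp}_U$ are already noted, so only the reverse inclusions remain.

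For (1), take a DHI-hypercover $p : H \to y(U)$. Each $H_n$ is semi-representable by definition. The augmented matching object $M_n^+ H$ is a finite limit of the $H_k$ ($k<n$) over $y(U)$, and for the matching map to be a basal map of presheaves it must itself be semi-representable. This is the step that needs care. In the intended reading, $M_n^+H$ is semi-representable whenever the relevant fiber products of representables are representable. Granting that, the matching map $H_n \to M_n^+H$ decomposes by the Lemma on semi-representable maps into components $f_i : U_i \to V_{\alpha(i)}$ in $\cat{C}$. By hypothesis every morphism of $\cat{C}$ is basal, so each $f_i$ is basal, and the matching map is a basal map of presheaves. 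Hence $p \in \ncat{BHyp}_U$.

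I expect the main obstacle to be the semi-representability of $M_n^+H$. I would handle it either by noting that it is built into the definition of a basal hypercover (the condition "is a basal map" implicitly asks the codomain to be semi-representable), or by reading the statement as concerning those DHI-hypercovers whose matching objects are semi-representable. If necessary I would make this explicit as a standing convention, as in \cite{Dugger2004}.

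For (2), take a basal hypercover $p : H \to y(U)$. By Lemma \ref{lem local triv fib iff matching map loc epi}, since $p$ is a local trivial fibration, each augmented matching map $m_n^+ : H_n \to M_n^+H$ is a local epimorphism. It is basal by assumption.
\begin{itemize}
\item If $j$ is saturated, Lemma \ref{lem basal on saturated site is covering basal} applies directly: $m_n^+$ is covering basal, so $p$ is a Verdier hypercover.
\item If $j$ is a Grothendieck coverage, I would run the same argument as that lemma. Pulling back along the inclusion $y(V_{i'}) \hookrightarrow M_n^+H$ shows that $\sum_{i\in\alpha^{-1}(i')} y(U_i) \to y(V_{i'})$ is a local epimorphism. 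Hence the sieve generated by $\mathbf{f}(i')$ contains a covering sieve, and it is therefore covering by the upward-closure consequence of axiom (3) of Definition \ref{def Grothendieck coverage}. So $\mathbf{f}(i') \in J$, where a family is understood as covering when its generated sieve is.
\end{itemize}
Alternatively, for the Grothendieck case I would pass to $J^\circ$, which is saturated by the adjoint isomorphism, and reduce to the saturated case.
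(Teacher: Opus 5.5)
Your part (2) is the paper's argument. For saturated $j$ the paper applies Lemma \ref{lem basal on saturated site is covering basal} exactly as you do, with the local-epimorphism input coming from Lemma \ref{lem local triv fib iff matching map loc epi}. For a Grothendieck coverage it says ``the same argument'' together with \cite[Lemma 6.28]{Minichiello2025}. Your pullback to $y(V_{i'})$ plus upward closure of covering sieves fills in that step correctly, and so does your reduction to the saturated coverage $J^\circ$, provided ``covering'' for a family is read as ``generates a covering sieve''.

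For part (1) the paper only says the claim is obvious, and the point you flag is a real gap that the paper passes over. A basal map of presheaves must have semi-representable codomain (unless its domain is initial), and the hypothesis that every morphism is basal says nothing about $M_n^+H$. In fact (1) fails as literally stated. Here is a counterexample:
\begin{itemize}
\item Let $\cat{C}$ be the poset in which $Q$ lies below $P_1$ and $P_2$, each $P_k$ lies below both $A$ and $B$, and $A,B$ lie below $U$.
\item Let $j(W)$ consist of all families over $W$, including the empty one. This is a coverage (use the empty family in axiom (2)).
\item Every morphism is basal, and every map of presheaves is a local epimorphism. So by Lemma \ref{lem local triv fib iff matching map loc epi} the constant simplicial presheaf on $H_0=y(A)+y(B)$, augmented to $y(U)$, is a DHI-hypercover.
\item But $M_1^+H\cong H_0\times_{y(U)}H_0$ has $y(A)\times_{y(U)}y(B)$ as a summand. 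This is the down-set $\{Q,P_1,P_2\}$, which is connected but not representable, so it cannot be a summand of a coproduct of representables.
\item Hence $M_1^+H$ is not semi-representable, and $H\notin\ncat{BHyp}_U$.
\end{itemize}

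Consequently your first way out, that semi-representability is ``built into'' the definition of a basal hypercover, is circular: it is exactly what must be verified for a DHI-hypercover, so drop it. Your second option is the right one, and some such extra hypothesis is unavoidable. For example, assume $\cat{C}$ has pullbacks. Then each limit $H(a)$ in Lemma \ref{lem characterization of matching object} is representable, so $M_n^+H$ is semi-representable, and your componentwise argument finishes the proof. This hypothesis holds for $(\mathcal{O}(X),j_X)$, the only site on which the paper actually uses (1) (Corollary \ref{cor DHI-hypercovers are Verdier hypercovers on top spaces} and the section on truncated hypercovers).
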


\begin{proof}
The first claim is obvious, and the first part of the second claim follows from Lemma \ref{lem basal on saturated site is covering basal}. The second part of the second claim follows from the same argument as Lemma \ref{lem basal on saturated site is covering basal} along with \cite[Lemma 6.28]{Minichiello2025}.
\end{proof}

\begin{Cor} \label{cor DHI-hypercovers are Verdier hypercovers on top spaces}
Let $X$ be a topological space and let $p : H \to y(U)$ be a DHI-hypercover on the site $(\mathcal{O}(X), j_X)$, then $p$ is a Verdier hypercover.
\end{Cor}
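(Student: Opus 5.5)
The plan is to combine the two parts of Lemma \ref{lem hierarchy of hypercovers}. Fix a DHI-hypercover $p : H \to y(U)$ on $(\mathcal{O}(X), j_X)$. By definition each $H_n$ is semi-representable and $p$ is a local trivial fibration. The goal is to show that every augmented matching map $H_n \to M_n^+ H$ is a covering basal map of presheaves. I would do this in two steps: first show these maps are basal, so that $p$ lies in $\ncat{BHyp}_U$, and then upgrade to covering basal.

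For the first step I would use part (1) of Lemma \ref{lem hierarchy of hypercovers}. It suffices to check that every morphism of $\mathcal{O}(X)$ is basal. Such a morphism is an inclusion $V \subseteq W$ of open sets. The family $\{V \subseteq W, W \subseteq W\}$ is an open cover of $W$, so it lies in $j_X(W)$, and it contains $V \subseteq W$. Hence every map is basal, and $\ncat{BHyp}_U = \ncat{DHIHyp}_U$.

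One point needs care here. Basality of a map of presheaves is defined only for semi-representable maps (or initial maps), so I must check that $M_n^+ H$ is semi-representable. The plan is to note that $\mathcal{O}(X)$ has all finite limits, namely intersections together with the terminal object $X$. Finite limits of coproducts of representables in a presheaf category on a poset with meets are again coproducts of representables, because fibre products distribute over coproducts and $y(V)\times_{y(W)} y(V') \cong y(V\cap V')$. Since $M_n^+ H$ is a finite limit built from $H_k$ for $k<n$ and $y(U)$, it is semi-representable. I expect this to be the main technical point, though it is routine. An empty intersection gives $y(\varnothing)$; in $\mathcal{O}(X)$ the empty set is an object, so this causes no trouble.

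For the second step I would use Example \ref{ex site of top space}, which records that $j_X$ is saturated. Part (2) of Lemma \ref{lem hierarchy of hypercovers} then gives $\ncat{BHyp}_U = \ncat{VHyp}_U$. Concretely, each augmented matching map is basal, and by Lemma \ref{lem local triv fib iff matching map loc epi} it is a local epimorphism, so Lemma \ref{lem basal on saturated site is covering basal} makes it covering basal. Combining the two steps, $p$ is a Verdier hypercover.
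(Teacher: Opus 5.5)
Your proposal is correct and follows the paper's proof, which simply invokes both parts of Lemma \ref{lem hierarchy of hypercovers}, using that every inclusion in $\mathcal{O}(X)$ is basal and that $j_X$ is saturated. Your extra check that $M_n^+ H$ is semi-representable is right to include: it holds because $\mathcal{O}(X)$ has finite intersections and a terminal object, and $y(V)\times_{y(W)} y(V') \cong y(V\cap V')$. The paper leaves this point implicit when it calls part (1) of that lemma obvious.
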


\begin{proof}
This follows from Lemma \ref{lem hierarchy of hypercovers}, as all maps in $\mathcal{O}(X)$ are basal, and $(\mathcal{O}(X), j_X)$ is a saturated site.
\end{proof}

The \v{C}ech model structure $\check{\H}(\cat{C}, j)$ on simplicial presheaves over a site $(\cat{C}, j)$ (Definition \ref{def cech model structure}) mixes together the homotopy theory of simplicial sets and the sheaf theory of the underlying site. However, as discussed in Section \ref{section history}, it was not the first model structure considered for simplicial presheaves that encorporated the underlying site structure. That was obtained by Jardine in \cite{Jardine1986}. He constructed what is now called the local injective model structure on simplicial presheaves. Here we will consider the Quillen equivalent local projective model structure.

\begin{Th}[{\cite[Theorem 1.6]{Blander2001}}] \label{th existence of local projective model structure}
Given a site $(\cat{C}, j)$, there is a model structure on $\ncat{sPre}(\cat{C}, j)$ with the weak equivalences being the local weak equivalences and the cofibrations being the projective cofibrations. We call this the \textbf{local projective model structure}, and denote it by $\hat{\H}(\cat{C}, j)$. We call the fibrant objects in this model structure \textbf{$\infty$-hyperstacks}.
\end{Th}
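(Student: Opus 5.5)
The plan is to obtain $\hat{\H}(\cat{C}, j)$ from Jeff Smith's recognition theorem for combinatorial model structures, taking Jardine's local injective model structure \cite{Jardine1986} as a black box for the homotopical input. The underlying category $\ncat{sPre}(\cat{C})$ is locally presentable. The generating cofibrations will be the set $I_{\text{proj}}$ of Proposition \ref{prop projective model structure}, and the weak equivalences $W$ will be the local weak equivalences of Definition \ref{def j local weak equiv}. Smith's theorem then requires four checks:
\begin{enumerate}
    \item $W$ satisfies two-out-of-three and is closed under retracts;
    \item $W$ is an accessible, accessibly embedded subcategory of the arrow category;
    \item every map with the right lifting property against $I_{\text{proj}}$ lies in $W$;
    \item the class $\text{cof}(I_{\text{proj}}) \cap W$ is closed under pushouts and transfinite composition.
\end{enumerate}

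Step (1) is routine. By the homotopy sheaf description, a map is in $W$ exactly when certain induced maps of presheaves are local isomorphisms. Equivalently, $W$ consists of the maps sent to isomorphisms by the functor to the homotopy category of Jardine's local injective model structure, and so it has these closure properties. For step (3), a map with the right lifting property against $I_{\text{proj}}$ is an objectwise trivial fibration. Such a map is in particular an objectwise weak equivalence, and every objectwise weak equivalence is trivially a local weak equivalence: the relative homotopy solutions already exist with the trivial covering family $\{1_U\}$.

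For step (2), I would again use Jardine's structure. It is a combinatorial model category on the same underlying category, with weak equivalences exactly $W$, and the weak equivalences of any combinatorial model category form an accessible, accessibly embedded class (Dugger, Lurie A.2.6.6).

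The main obstacle is step (4), which is the left-properness-type statement that pushouts of projective cofibrations which are local weak equivalences remain local weak equivalences. My approach is to reduce it to the local injective structure. Every projective cofibration is a monomorphism, by Proposition \ref{prop projective cofibrant}: cell complexes built from $\partial\Delta^n \otimes y(U) \to \Delta^n \otimes y(U)$ are degreewise injective. Hence $\text{cof}(I_{\text{proj}}) \cap W$ is contained in the class of local injective trivial cofibrations, which is closed under pushouts and transfinite compositions. Since $\text{cof}(I_{\text{proj}})$ has the same closure properties, so does the intersection.

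Smith's theorem then yields a combinatorial model structure whose cofibrations are the projective cofibrations and whose weak equivalences are the local weak equivalences, as claimed. If one wanted to avoid relying on Jardine's structure, the hard part would instead be a direct proof that local weak equivalences are stable under pushout along monomorphisms. I would approach that through Boolean localization or stalks, where it reduces to left properness of $\ncat{sSet}$, and this is where I expect most of the work to lie.
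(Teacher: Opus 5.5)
Your argument is correct, and it takes a different route from the paper, which gives no proof here. The paper cites \cite{Blander2001}, and later identifies $\hat{\H}(\cat{C},j)$ with the left Bousfield localization of $\H(\cat{C})$ at the local weak equivalences (Proposition \ref{prop local proj is localization at local weak equivs}) or at the DHI-hypercovers (Theorem \ref{th bousfield localization at DHI hypercovers}).

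Your proof is the standard ``shrink the cofibrations of a combinatorial model category'' argument. Once Jardine's local injective structure is granted, everything is formal, because $\text{cof}(I_{\text{proj}})$ consists of monomorphisms and $\text{inj}(I_{\text{proj}})$ of objectwise trivial fibrations. The same observation gives you more than you claim:
\begin{itemize}
\item Left properness is inherited, since a pushout along a projective cofibration is a pushout along a monomorphism.
\item SM7 holds, since a pushout-product of a projective cofibration with a monomorphism of simplicial sets is a projective cofibration, and it is an injective trivial cofibration whenever a factor is trivial.
\item The identity is a Quillen equivalence with the injective structure.
\end{itemize}
What your route does not give is any description of the fibrant objects. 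The localization route (Proposition \ref{prop bousfield loc exists} applied to a size-bounded set of hypercovers) identifies them as projective-fibrant presheaves with hyperdescent. That description is what the proof of Theorem \ref{th many sites are strictly hypercomplete} relies on. Its cost is the substantial DHI theorem that hypercover-local equivalences are exactly the local weak equivalences.

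Two smaller points. First, the reason projective cofibrations are monomorphisms is not Proposition \ref{prop projective cofibrant}, which is about cofibrant objects rather than cofibrations. The reason is that the maps in $I_{\text{proj}}$ are monomorphisms, and monomorphisms of presheaves are closed under pushout, transfinite composition and retract.

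Second, Jardine's structure is defined for a Grothendieck topology. You are therefore tacitly identifying the paper's $j$-local weak equivalences with those of $\text{Gro}(j)$; the paper's notion is built from $j$-covering families and the strong local epimorphisms of Remark \ref{rem local epis are problematic}. This identification is valid when $j$ is composition closed, because every $\text{Gro}(j)$-covering sieve then contains the sieve generated by a $j$-family. It also holds for $(\ncat{Cart}, j_{\text{good}})$, by refining composite covers by good covers. It can fail for an arbitrary coverage. The paper's appeal to Blander carries the same caveat, since Blander's theorem is stated for Grothendieck sites.
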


\begin{Prop}[{\cite[Theorem 10.3.27]{Low2014Notes}}] \label{prop local proj is localization at local weak equivs}
The local projective model structure $\hat{\H}(\cat{C},j)$ is equal to the left Bousfield localization of $\H(\cat{C},j)$ at the class of local weak equivalences.
\end{Prop}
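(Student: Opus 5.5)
The strategy is to compare the two model structures through their defining data. The local projective model structure $\hat{\H}(\cat{C},j)$ has local weak equivalences as weak equivalences and projective cofibrations as cofibrations (Theorem \ref{th existence of local projective model structure}). The left Bousfield localization $L_W \H(\cat{C})$ at the class $W$ of local weak equivalences has exactly the same cofibrations. I will show that its weak equivalences, the $W$-local equivalences, coincide with $W$. Then the two model structures agree, since both the cofibrations and the weak equivalences are the same.

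For the inclusion $W \subseteq W$-local equivalences, I first note that every fibrant object $Z$ of $\hat{\H}(\cat{C},j)$ is projective fibrant. This holds because $\hat{\H}(\cat{C},j)$ has the same cofibrations as $\H(\cat{C})$ and more weak equivalences, so its trivial cofibrations contain the projective ones. Next I take $Z$ to be $W$-local and $f : A \to B$ in $W$. I compute the derived mapping spaces in the simplicial model category $\H(\cat{C})$ using cofibrant replacements $QA \to QB$. Because $Z$ is local with respect to $W$ itself, the map $\R\H(\cat{C})(B,Z) \to \R\H(\cat{C})(A,Z)$ is a weak equivalence by definition. So $f$ is a $W$-local equivalence.

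For the converse inclusion, the key step is to identify the $W$-local objects with the fibrant objects of $\hat{\H}(\cat{C},j)$, after projective fibrant replacement. One direction works as follows. Since $\hat{\H}(\cat{C},j)$ is simplicial with the projective enrichment, a fibrant $Z$ in $\hat{\H}$ sends local weak equivalences between cofibrant objects to weak equivalences of mapping spaces, by Ken Brown's lemma. Hence $Z$ is $W$-local. For the other direction, suppose $g : A \to B$ is a $W$-local equivalence. I factor $QB \to *$ in $\hat{\H}(\cat{C},j)$ and use fibrant replacements $R$ there. The map $[RB,RB] \to [RA,RB]$ in $\operatorname{ho}\H(\cat{C})$ is a bijection, so there is a map $s : RB \to RA$ with $s \circ Rg \simeq 1$ up to homotopy. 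The usual two-out-of-six argument then shows that $Rg$ is a homotopy equivalence between bifibrant objects, hence a weak equivalence. So $g \in W$ by two-out-of-three. This is the standard Yoneda-in-the-homotopy-category argument, of the same form as \cite[Theorem 3.2.13]{Hirschhorn2009}.

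The main obstacle is that $W$ is a proper class rather than a set, so Proposition \ref{prop bousfield loc exists} does not immediately give existence of $L_W\H(\cat{C})$. The identification of weak equivalences above does not need existence a priori, however. Once we know the $W$-local equivalences equal $W$, the model structure $\hat{\H}(\cat{C},j)$ itself serves as the required localization, since it has the correct cofibrations and weak equivalences. Uniqueness of left Bousfield localizations then yields the equality. If a smallness argument is preferred instead, I would invoke combinatoriality of $\hat{\H}(\cat{C},j)$, which guarantees a generating set of trivial cofibrations $J_{\text{loc}}$. I would then check that $L_{J_{\text{loc}}}\H(\cat{C})$ has the same local objects as $L_W\H(\cat{C})$.
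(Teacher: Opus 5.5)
Your argument is correct, but it is not the paper's route. The paper gives no proof of its own and simply defers to \cite[Theorem 10.3.27]{Low2014Notes}.

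What you actually prove is a general principle. If a model structure on $\ncat{sPre}(\cat{C})$ has the same cofibrations as $\H(\cat{C})$ and a larger class $W$ of weak equivalences, then it \emph{is} the left Bousfield localization of $\H(\cat{C})$ at $W$. The inclusion of $W$ in the $W$-local equivalences is tautological from the definitions, so your remark about projective fibrancy in that step is unnecessary. The reverse inclusion follows from two facts: $\hat{\H}(\cat{C},j)$-fibrant objects are $W$-local, and the local Whitehead theorem \cite[Theorem 3.2.13]{Hirschhorn2009} applies to a bifibrant replacement $Rg$ taken in $\hat{\H}(\cat{C},j)$.

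You are right that existence is not at issue. The paper's definition of left Bousfield localization is a characterization, and $\hat{\H}(\cat{C},j)$ is supplied by Theorem \ref{th existence of local projective model structure}. So the smallness detour through $J_{\text{loc}}$ is superfluous.

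Two small repairs are needed.
\begin{enumerate}
\item The section $s$ comes from the bijection $[RB,RA] \to [RA,RA]$, using that $RA$ is $W$-local. Only then does $[RB,RB] \to [RA,RB]$ give $Rg \circ s \simeq 1$. This is the left-inverse-equals-right-inverse argument, not two-out-of-six.
\item You use that $\hat{\H}(\cat{C},j)$ is simplicial with the projective enrichment. This is true (Blander), but it is not recorded in the paper's statement of Theorem \ref{th existence of local projective model structure}. You can avoid it by using homotopy function complexes instead. For an $\hat{\H}(\cat{C},j)$-fibrant target these agree in the two model structures, because cosimplicial resolutions in $\H(\cat{C})$ remain resolutions in $\hat{\H}(\cat{C},j)$ when the cofibrations coincide.
\end{enumerate}

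Your route gives a self-contained argument that never passes through hypercovers. That is worth having, since the paper's Proposition \ref{prop different descriptions of local proj} combines this statement with Theorem \ref{th bousfield localization at DHI hypercovers}, so an independent proof rules out any circularity. The citation buys only brevity.
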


\begin{Rem} \label{rem local fibrations}
The local fibrations of Definition \ref{def j local trivial fibration} are \textit{not} the fibrations in the local projective model structure. Instead, the local weak equivalences and the local fibrations form a category of fibrant objects structure on $\ncat{sPre}(\cat{C},j)$, see Definition \ref{def cat of fibrant objects}.
\end{Rem}

\begin{Rem} \label{rem local projective fibrations}
We refer to the (trivial) fibrations in the local projective model structure $\hat{\H}(\cat{C}, j)$ as the local projective (trivial) fibrations. By Lemma \ref{lem props of bousfield localizations}, we know that every local projective fibration is a projective fibration (i.e. an objectwise fibration) and the local projective trivial fibrations are precisely the projective trivial fibrations (i.e. the objectwise trivial Kan fibrations).

In general, the local projective fibrations are not easily described. However, when the underlying site $(\cat{C}, j)$ arises from a complete, bounded, regular cd-structure on $\cat{C}$ in the sense of \cite{Voevodsky2010}, then the local projective fibrations are more easily characterized, see \cite[Section 4]{Blander2001}. In particular, given a topological space $X$, its site of open subsets $(\mathcal{O}(X), j_X)$ has a canonical complete, bounded, regular cd-structure whose distinguished squares are pullback squares in $\mathcal{O}(X)$ of the form
\begin{equation*}
\begin{tikzcd}
	{U \cap V} & V \\
	U & W
	\arrow[from=1-1, to=1-2]
	\arrow[from=1-1, to=2-1]
	\arrow[from=1-2, to=2-2]
	\arrow[from=2-1, to=2-2]
\end{tikzcd} 
\end{equation*}
where $U \cup V$ is a cover of $W$. In fact, it is enough to check the local projective fibration property on those distinguished squares of the form above when $V = W$, and $U \subseteq V$ is an inclusion. In that case, a local projective fibration is a map $f : X \to Y$ of simplicial presheaves such that for every map $g : y(U) \to y(V)$ the canonical map
\begin{equation*}
   X(V) \to X(U) \times_{Y(V)} Y(U)
\end{equation*}
is a Kan fibration, see \cite[Corollary 4.4]{Blander2001}. This is precisely what Brown-Gersten call global fibrations in \cite{Brown1973A}, and they are the fibrations for the local projective model structure on a fixed topological space. Also see \cite{Pavlov2022} for a more modern treatment of the Brown-Gersten property and its consequences for differential geometry.
\end{Rem} 

Our interest in hypercovers mostly stems from the following result.

\begin{Th}[{\cite[Theorem 6.2]{Dugger2004}}] \label{th bousfield localization at DHI hypercovers}
Given a site $(\cat{C}, j)$, the left Bousfield localization of the projective model structure $\H(\cat{C})$ by the class $\ncat{DHI}$ of DHI-hypercovers exists and is equal to the local projective model structure $\hat{\H}(\cat{C}, j)$.
\end{Th}

Let us note that we could also localize at the local trivial fibrations rather than the DHI-hypercovers.

\begin{Prop} \label{prop different descriptions of local proj}
Each of the following left Bousfield localizations on $\H(\cat{C}, j)$ are equal to the local projective model structure
\begin{equation*}
    L_{\ncat{DHIHyp}} \H(\cat{C}, j) = L_{\ncat{Hyp}} \H(\cat{C}, j) = L_{\ncat{LWeak}} \H(\cat{C}, j) = \hat{\H}(\cat{C}, j).
\end{equation*}
\end{Prop}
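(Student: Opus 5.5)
We show that the three left Bousfield localizations of $\H(\cat{C})$ coincide with $\hat{\H}(\cat{C}, j)$ by comparing them pairwise. All of them are left Bousfield localizations of the same model category, so they share the projective cofibrations. By \cite[Proposition 1.38]{joyal2020modelstructure}, as in the Corollary following Proposition \ref{prop equiv coverages}, it therefore suffices to show that they have the same weak equivalences, or equivalently the same fibrant objects, i.e.\ the same local objects. Theorem \ref{th bousfield localization at DHI hypercovers} gives $L_{\ncat{DHIHyp}} \H(\cat{C}) = \hat{\H}(\cat{C}, j)$, and Proposition \ref{prop local proj is localization at local weak equivs} gives $L_{\ncat{LWeak}} \H(\cat{C}) = \hat{\H}(\cat{C}, j)$. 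So the only new content is that $L_{\ncat{Hyp}} \H(\cat{C})$ exists and agrees with these.

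We will compare classes of local objects. Write $\ncat{LWeak}$ for the class of local weak equivalences. We have the inclusions
\begin{equation*}
\ncat{DHIHyp} \subseteq \ncat{Hyp} \subseteq \ncat{LWeak},
\end{equation*}
where the second holds because every local trivial fibration is in particular a local weak equivalence (Definition \ref{def j local trivial fibration}). Enlarging the class of maps shrinks the class of local objects, so
\begin{equation*}
\ncat{LWeak}\text{-local} \subseteq \ncat{Hyp}\text{-local} \subseteq \ncat{DHIHyp}\text{-local}.
\end{equation*}
Since $L_{\ncat{DHIHyp}}$ and $L_{\ncat{LWeak}}$ are the same model structure, Lemma \ref{lem props of bousfield localizations}(7) (using left properness of $\H(\cat{C})$) shows that the outer two classes of local objects agree. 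Hence all three classes agree.

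It follows that the $\ncat{Hyp}$-local weak equivalences, i.e.\ the maps inducing equivalences on derived mapping spaces into every $\ncat{Hyp}$-local object, are exactly the $\ncat{DHIHyp}$-local weak equivalences, namely the local weak equivalences. The model structure $\hat{\H}(\cat{C}, j)$ has these as weak equivalences and the projective cofibrations as cofibrations. By definition, it is therefore the left Bousfield localization $L_{\ncat{Hyp}}\H(\cat{C})$, which thus exists. The uniqueness of left Bousfield localizations then yields the chain of equalities.

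The main obstacle is existence: $\ncat{Hyp}$ is a proper class, so Proposition \ref{prop bousfield loc exists} does not apply directly. The argument above sidesteps this by exhibiting $\hat{\H}(\cat{C}, j)$ itself as a model structure with the required weak equivalences and cofibrations. The one point to check is that derived mapping spaces are used consistently, since hypercovers need not be projective cofibrant. This is harmless: locality is defined via $\R\cat{M}$, which cofibrantly replaces the source.
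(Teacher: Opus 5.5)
Your proof is correct and follows essentially the same route as the paper: you sandwich the $\ncat{Hyp}$-local objects between the $\ncat{LWeak}$-local and $\ncat{DHIHyp}$-local objects. These two classes coincide because both localizations equal $\hat{\H}(\cat{C}, j)$ by Theorem \ref{th bousfield localization at DHI hypercovers} and Proposition \ref{prop local proj is localization at local weak equivs}, so the $\ncat{Hyp}$-local weak equivalences are exactly the local weak equivalences, and $\hat{\H}(\cat{C}, j)$ itself witnesses that $L_{\ncat{Hyp}}\H(\cat{C})$ exists.

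Your explicit remark about existence adds precision the paper's proof leaves implicit. Because $\ncat{Hyp}$ is a proper class, Proposition \ref{prop bousfield loc exists} cannot be applied directly, and your argument correctly avoids needing it.
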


\begin{proof}
 Since $\H(\cat{C})$ is left proper \cite[Proposition A.3.7.3]{Lurie2009} and \cite[Theorem 4.7]{barwick2010left} imply that $S$-fibrant objects are precisely the $S$-local objects. Now we have
\begin{equation*}
    \ncat{DHIHyp} \subseteq \ncat{Hyp} \subseteq\ncat{LWeak}
\end{equation*}
which implies that 
\begin{equation*}
    \ncat{LWeak}-\text{local objects} \subseteq \ncat{Hyp}-\text{local objects} \subseteq \ncat{DHIHyp}-\text{local objects}.
\end{equation*}
However, we know that that 
\begin{equation*}
\ncat{LWeak}-\text{local objects} = \ncat{DHIHyp}-\text{local objects}
\end{equation*} 
since $\ncat{DHIHyp}$ and $\ncat{LWeak}$ have the same left Bousfield localization by Theorem \ref{th bousfield localization at DHI hypercovers} and Proposition \ref{prop local proj is localization at local weak equivs}. Hence a simplicial presheaf is $\ncat{Hyp}$-local if and only if it is an $\infty$-hyperstack, and the $\ncat{Hyp}$-local weak equivalences are precisely the local weak equivalences. Thus $L_{\ncat{Hyp}} \H(\cat{C}, j)$ exists and is equal to $\hat{\H}(\cat{C},j)$.
\end{proof}

Theorem \ref{th bousfield localization at DHI hypercovers} shows that we can obtain the local projective model structure $\hat{\H}(\cat{C}, j)$ on a site by localizing at the (DHI-)hypercovers. However, DHI-hypercovers are still rather unwieldy objects, and in some cases it can be convenient to work with the more concrete Verdier hypercovers.

In \cite[Section 9]{Dugger2004}, conditions are put on a Grothendieck pretopology $(\cat{C}, j)$--which is then called a Verdier site--that guarantee that localizing at basal hypercovers also produces the local projective model structure. Here we slightly augment these assumptions and the arguments to obtain the analogous statement for Verdier hypercovers.

\begin{Def}\label{def verdier site}
Given a small category $\cat{C}$, a \textbf{Verdier pretopology} $j$ on $\cat{C}$ is a Grothendieck pretopology (\cite[Definition 8.11]{Minichiello2025}) such that if $X \to y(U)$ is a covering basal map of presheaves, then so is the induced diagonal map
\begin{equation*}
    X \to X \times_{y(U)} X.
\end{equation*}
We call a site $(\cat{C}, j)$ a \textbf{Verdier site} if $j$ is a Verdier pretopology.
\end{Def}

\begin{Rem}
The only difference between the above definition and that of \cite[Definition 9.1]{Dugger2004} is that we replace the term basal with covering basal. This is because we allow for arbitrary coverages rather than just Grothendieck coverages. We provide full proofs of the results of this section in Appendix \ref{section verdier sites}.
\end{Rem}

\begin{Lemma}
All of the sites mentioned in Example \ref{ex sites} except for $(\ncat{Cart}, j_{\text{open}})$ and $(\ncat{Cart}, j_{\text{good}})$ are Verdier sites.
\end{Lemma}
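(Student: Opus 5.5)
The plan is to verify the definition of a Verdier pretopology site by site. This means showing two things for each of $(\ncat{TopMan}, j_{\text{open}})$, $(\mathcal{O}(X), j_X)$, $(\ncat{Man}, j_{\text{open}})$, $(\C\ncat{Man}, j_{\text{open}})$ and $(\ncat{Stein}, j_{\text{open}})$. First, the coverage is a Grothendieck pretopology. Second, for any covering basal map of presheaves $X \to y(U)$, the diagonal $X \to X \times_{y(U)} X$ is again covering basal.

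The pretopology part has essentially been done in the examples. For $\ncat{Man}$, $\ncat{TopMan}$ and $\C\ncat{Man}$, pullbacks of open embeddings along arbitrary maps exist and are the preimages $g^{-1}(r_i(U_i))$. Composites of open covers are again open covers. For $\ncat{Stein}$ I will use that an open subset of a Stein manifold is not Stein in general. Instead I take $j_{\text{open}}$ on $\ncat{Stein}$ to be covers by Stein open subsets, which requires checking pullback stability. The point is that $U_i \times_U V \cong g^{-1}(U_i)$ is the graph-type subset $\{(v,u) : g(v)=u\} \cap (V \times U_i)$. This is a closed submanifold of the Stein manifold $V \times U_i$, hence Stein. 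For $\mathcal{O}(X)$, pullbacks are intersections.

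For the diagonal condition, write $X = \sum_{i \in I} y(U_i)$ with $\{r_i : U_i \to U\}$ an open covering family; the empty case is trivial. Since fiber products along open embeddings exist and $y$ preserves them, $X \times_{y(U)} X \cong \sum_{(i,k) \in I \times I} y(U_i \times_U U_k)$. This is semi-representable with $U_i \times_U U_k \cong U_i \cap U_k$. The diagonal sends the summand $y(U_i)$ to the summand indexed by $(i,i)$ via the diagonal $U_i \to U_i \times_U U_i$. Because $r_i$ is a monomorphism (an injective open embedding), this diagonal is an isomorphism. Hence each nonempty fiber family $\mathbf{f}(i,k)$ is either empty (when $i \neq k$) or the singleton $\{U_i \xrightarrow{\cong} U_i \times_U U_i\}$. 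An isomorphism is a covering family by condition (1) of Definition \ref{def coverage}, so the diagonal is a basal and covering basal map. For $\mathcal{O}(X)$ the same argument is literally trivial, since all maps are inclusions.

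The main obstacle is the bookkeeping around what ``covering basal map of presheaves'' means when $X \to y(U)$ is given only up to isomorphism. One must also make sure that the index sets of $X \times_{y(U)} X$ are computed by the semi-representable decomposition above; this uses that coproducts in presheaves are disjoint and stable under pullback. A second, more genuine, issue is the Stein case. If $j_{\text{open}}$ on $\ncat{Stein}$ is meant as all open covers, then pullbacks need not stay in $\ncat{Stein}$, and the intended convention (Stein open covers) has to be fixed, as in \cite[Example 8.32]{Minichiello2025}. Finally, I will remark why $\ncat{Cart}$ is excluded: pairwise intersections of cartesian opens need not be cartesian, so pullbacks fail and $j$ is not a pretopology.
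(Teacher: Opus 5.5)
Your proposal is correct and follows the paper's own argument: the paper (in the appendix, treating $(\ncat{Man}, j_{\text{open}})$ as the representative case) also takes the pretopology axioms from the examples and checks the diagonal condition by observing that open embeddings are monomorphisms, so $X \to X \times_{y(U)} X$ lands only in the $(i,i)$ summands, where it is componentwise an isomorphism and hence covering basal. Your graph argument that pullbacks of Stein open covers stay Stein is a useful addition, since the paper only records that property in its table; one harmless point that both you and the paper gloss over is that a summand with $U_i \cap U_k = \varnothing$, written as a representable in both, is really the initial presheaf, because empty manifolds are excluded.
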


\begin{Prop} \label{prop can refine DHI-hypercover by verdier hypercover on Verdier site}
Let $p : H \to y(U)$ be a DHI-hypercover on a Verdier site $(\cat{C}, j)$. Then it can be refined by a split, Verdier hypercover.
\end{Prop}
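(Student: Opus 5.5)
We construct the refinement $H' \to H$ over $y(U)$ inductively, one simplicial degree at a time, following the strategy of \cite[Section 9]{Dugger2004}. The natural framework is the skeleton/coskeleton machinery for truncated augmented simplicial presheaves from Appendix \ref{section augmented and truncated simplicial presheaves}. A refinement here means a map $H' \to H$ over $y(U)$, with $H'$ a Verdier hypercover that is split (so projective cofibrant, by Proposition \ref{prop projective cofibrant}).

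\emph{Degree $0$.} The augmented matching map $H_0 \to M_0^+ H \cong y(U)$ is a local epimorphism out of a semi-representable presheaf $H_0 = \sum_i y(U_i)$. For each $i$ we have a map $y(U_i) \to y(U)$, but these need not be basal or form a covering family. Since the map is a local epimorphism, applying Definition \ref{def local epi} to $1_{y(U)}$ gives a covering family $r = \{r_k : V_k \to U\} \in j(U)$ with lifts $V_k \to U_i$ for suitable $i$. We set $H'_0 = \sum_k y(V_k)$. The map $H'_0 \to y(U)$ is then covering basal, and it factors through $H_0$.

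\emph{Inductive step.} Suppose we have an $n$-truncated augmented split simplicial presheaf $H'_{\le n}$ whose augmented matching maps in degrees $\le n$ are covering basal, together with a map $H'_{\le n} \to H_{\le n}$. Form $M^+_{n+1} H'$. Because $j$ is a Grothendieck pretopology (in particular pullback stable), $M^+_{n+1}H'$ is an iterated fibre product of semi-representables along covering basal maps, hence again semi-representable.

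This is exactly where the Verdier condition enters. The degenerate part of the matching data involves diagonals $X \to X \times_{y(V)} X$, and Definition \ref{def verdier site} guarantees these diagonal maps are covering basal. That is what lets us define $H'_{n+1}$ as a coproduct of a new summand $N_{n+1}$ with the latching object $L_{n+1} H'$, while keeping the whole map $H'_{n+1} \to M^+_{n+1}H'$ covering basal. The previous stages are split, and the map from the latching summand to the matching object is a composite of diagonals, which the Verdier condition controls.

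To define $N_{n+1}$, we pull back the local epimorphism $H_{n+1} \to M^+_{n+1}H$ along $M^+_{n+1}H' \to M^+_{n+1}H$; the result is still a local epimorphism. Then, for each representable summand $y(W)$ of $M^+_{n+1}H'$, we choose (as in degree $0$) a covering family $\{W_\ell \to W\}$ with lifts to $H_{n+1}$ compatible over $M^+_{n+1}H$. We set $N_{n+1} = \sum y(W_\ell)$, which gives a covering basal map $N_{n+1} \to M^+_{n+1}H'$ together with a map to $H_{n+1}$. Extending $H'_{\le n}$ by $N_{n+1}$ via the split skeletal extension then yields $H'_{\le n+1}$, with a compatible map to $H_{\le n+1}$.

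\emph{Conclusion and main obstacle.} Taking the colimit over $n$ gives a split, degreewise semi-representable $H'$ with covering basal augmented matching maps. By Lemma \ref{lem local triv fib iff matching map loc epi} it is a local trivial fibration, hence a Verdier hypercover, and it maps to $H$ over $y(U)$. The main obstacle is the inductive step: showing that the latching summand's map to $M^+_{n+1}H'$ is covering basal, and not merely that the new summand is. I would first handle this by writing the degenerate simplices as images of diagonals and induct on the number of collapsed indices, using the Verdier diagonal condition together with pullback stability and composition closure of $j$. I would also check carefully that coproducts of covering basal maps remain covering basal.
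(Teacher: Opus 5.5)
Your proposal is correct and follows essentially the same route as the paper. The shared steps are a degreewise induction, a new summand together with the latching object in each degree, semi-representability of $M^+_{n+1}H'$ from pullback stability, and the Verdier diagonal condition for the degenerate part. The step you flag as the main obstacle is exactly what the paper isolates as Lemma~\ref{lem building covering basal maps from maps of simplicial sets}: the class of maps $f$ of finite simplicial sets with $H^f_+$ covering basal contains boundary inclusions and codiagonals, and is closed under pushout and composition. The paper applies this to $s^i\iota\colon\partial\Delta^{n+1}\to\Delta^n$.

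Your construction of $N_{n+1}$ refines the pulled-back local epimorphism by covering families over each representable summand of $M^+_{n+1}H'$, as in Lemma~\ref{lem refine local epi by covering basal map}. That is actually the correct form of this step. The paper instead asserts that $m_{n+1}\colon H_{n+1}\to M^+_{n+1}H$ is covering basal, which is not available when $H$ is only a DHI-hypercover: there $m_{n+1}$ is only a local epimorphism, and $M^+_{n+1}H$ need not even be semi-representable.
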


\begin{Cor} \label{cor localizing at DHI and split Verdier is equiv}
Given a Verdier site $(\cat{C}, j)$, the local projective model structure $\hat{\H}(\cat{C}, j)$ is equal to the left Bousfield localizations of $\H(\cat{C})$ at the class of split, Verdier hypercovers.
\end{Cor}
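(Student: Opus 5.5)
The plan is to show that the two left Bousfield localizations of $\H(\cat{C})$ have the same local objects. Since they share cofibrations, they then coincide by \cite[Proposition 1.38]{joyal2020modelstructure}, as in the Corollary following Proposition \ref{prop equiv coverages}. Let $\mathcal{V}$ be the class of split Verdier hypercovers. By Proposition \ref{prop different descriptions of local proj} and Theorem \ref{th bousfield localization at DHI hypercovers}, $\hat{\H}(\cat{C}, j) = L_{\ncat{DHIHyp}}\H(\cat{C})$. Because $\mathcal{V} \subseteq \ncat{DHIHyp}$, every $\ncat{DHIHyp}$-local object is $\mathcal{V}$-local, so one inclusion of local objects is immediate.

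For the converse, let $Z$ be projective fibrant and $\mathcal{V}$-local, and let $p : H \to y(U)$ be a DHI-hypercover. By Proposition \ref{prop can refine DHI-hypercover by verdier hypercover on Verdier site} there is a split Verdier hypercover $q : H' \to y(U)$ together with a map $g : H' \to H$ over $y(U)$. Both $p$ and $q$ are local weak equivalences (local trivial fibrations), so by two-out-of-three for local weak equivalences $g$ is a local weak equivalence as well.

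The crux is to conclude that $\R\H(H, Z) \to \R\H(y(U), Z)$ is an equivalence. We know that $\R\H(y(U),Z) \to \R\H(H',Z)$ is an equivalence and that it factors through $\R\H(H,Z)$. This gives a retraction-type statement but not a two-out-of-three argument on its own. I would first try to strengthen the refinement: I expect the construction in Appendix \ref{section verdier sites} to be levelwise, so it can be applied to every DHI-hypercover of the form $H'\times_H H'' \to H$ and its iterates, not only over $y(U)$. Alternatively, one can argue as Dugger--Hollander--Isaksen do in \cite[Section 9]{Dugger2004}: their proof that the class of hypercovers generates the localization uses only the property that any hypercover over a representable admits a refinement from the chosen subclass, together with cofinality in the homotopy category of hypercovers. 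In that argument one shows that a $\mathcal{V}$-local object $Z$ satisfies $\pi_0\R\H(y(U),Z) \cong \ncolim{}\pi_0\R\H(H,Z)$ over the cofiltered category of hypercovers, and likewise for the higher homotopy groups using $Z^{\partial\Delta^n}$ and $Z^{\Delta^n}$, which are again $\mathcal{V}$-local. From this one deduces that $Z$ satisfies hyperdescent for all DHI-hypercovers. Verdier hypercovers are cofinal among DHI-hypercovers by Proposition \ref{prop can refine DHI-hypercover by verdier hypercover on Verdier site}, so the colimit may be computed over $\mathcal{V}$ alone.

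Finally, I need to check that the localization $L_{\mathcal{V}}\H(\cat{C})$ exists. Although $\mathcal{V}$ is a proper class, its local objects coincide with the $\ncat{DHIHyp}$-local objects. The $\ncat{DHIHyp}$-localization exists by Theorem \ref{th bousfield localization at DHI hypercovers}, and its weak equivalences are exactly the $\mathcal{V}$-local equivalences, so the two localizations are equal. I expect the main obstacle to be the previous step: passing from ``every DHI-hypercover is refined by a $\mathcal{V}$-hypercover'' to ``$\mathcal{V}$-local implies DHI-local.'' This requires either the homotopy-colimit/cofinality argument of \cite{Dugger2004} or a relative version of the refinement proposition applied to the maps $H' \to H$.
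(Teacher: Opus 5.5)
Your proposal follows the paper's route. The paper deduces the corollary directly from Proposition \ref{prop can refine DHI-hypercover by verdier hypercover on Verdier site} together with \cite[Theorem 6.2]{Dugger2004}, whose density criterion (localizing at any class of hypercovers that refines every hypercover of a representable already yields $\hat{\H}(\cat{C}, j)$) is exactly the cofinality argument you invoke for the step you flag as the crux. Your alternative of a relative refinement for maps like $H' \times_H H'' \to H$ is unnecessary, and the construction in Appendix \ref{section verdier sites} does not supply it, since it only treats hypercovers over representables.
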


\begin{proof}
This follows from Proposition \ref{prop can refine DHI-hypercover by verdier hypercover on Verdier site} and \cite[Theorem 6.2]{Dugger2004}.
\end{proof}

\section{Strict Hypercompleteness} \label{section strictly hypercomplete}

Given a site $(\cat{C}, j)$, since $\check{C}(j) \subseteq \ncat{DHIHyp}$ (\ref{eq set of cech cover maps}), we see that $\hat{\H}(\cat{C}, j)$ is both a left Bousfield localization of $\check{\H}(\cat{C}, j)$ and a left Bousfield localization of $\H(\cat{C})$ at $\ncat{DHIHyp}$. Hence we have Quillen adjunctions
\begin{equation*}
\begin{tikzcd}
	{\hat{\H}(\cat{C}, j)} & {\check{\H}(\cat{C}, j)} & {\H(\cat{C})}
	\arrow[shift right, from=1-1, to=1-2]
	\arrow[shift right, from=1-2, to=1-1]
	\arrow[shift right, from=1-2, to=1-3]
	\arrow[shift right, from=1-3, to=1-2]
\end{tikzcd}
\end{equation*}
where each functor is the identity on the underlying category $\ncat{sPre}(\cat{C})$.

\begin{Def}
We say that a site $(\cat{C}, j)$ is \textbf{hypercomplete} if the above Quillen adjunction between the \v{C}ech projective and local projective model structure is a Quillen equivalence, and \textbf{strictly hypercomplete} if they coincide
\begin{equation*}
    \check{\H}(\cat{C}, j) = \hat{\H}(\cat{C}, j).
\end{equation*}
\end{Def}

\begin{Rem}
The notion of hypercompleteness is also called $t$-completeness in \cite{Rezk2010} and \cite{Toen2004}.
\end{Rem}

One of the main results of this paper, Theorem \ref{th many sites are strictly hypercomplete} implies that many sites of interest in differential geometry are strictly hypercomplete. Our argument relies on the following results.

\begin{Lemma} \label{lem refine local epi by covering basal map}
Given a site $(\cat{C}, j)$, a local epimorphism $f : X \to Y$ of presheaves, a semi-representable presheaf $Z$ and a map $g : Z \to Y$ of presheaves, there exists a covering basal map $q : P \to Z$ of presheaves making the following diagram commute
\begin{equation}
\begin{tikzcd}[ampersand replacement=\&]
	{P} \& X \\
	Z \& Y
	\arrow["p", from=1-1, to=1-2]
	\arrow["q"', from=1-1, to=2-1]
	\arrow["f", from=1-2, to=2-2]
	\arrow["g"', from=2-1, to=2-2]
\end{tikzcd} 
\end{equation}
\end{Lemma}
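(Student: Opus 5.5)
The plan is to reduce to the case of a single representable by decomposing $Z$, and then to apply the definition of local epimorphism (Definition \ref{def local epi}) componentwise.

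First write $Z \cong \sum_{k \in K} y(W_k)$, using that $Z$ is semi-representable (Definition \ref{def semi-representable presheaf}). By the Yoneda lemma and the universal property of the coproduct, $g$ corresponds to a family of maps $g_k : y(W_k) \to Y$, one for each $k \in K$.

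For each $k$, since $f$ is a local epimorphism, Definition \ref{def local epi} applied to $g_k$ gives two things:
\begin{itemize}
\item a $j$-covering family $r^k = \{ r^k_i : W^k_i \to W_k \}_{i \in I_k}$;
\item maps $s^k_i : y(W^k_i) \to X$ with $f s^k_i = g_k \circ y(r^k_i)$ for every $i \in I_k$.
\end{itemize}
This uses the axiom of choice to pick one covering family and its lifts for every $k$.

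Now define
\begin{equation*}
P \coloneqq \sum_{k \in K} \sum_{i \in I_k} y(W^k_i).
\end{equation*}
Let $q : P \to Z$ be the map $\sum_{k,i} y(r^k_i)$, which sends the $(k,i)$ summand into the $k$ summand. Let $p : P \to X$ be the map induced by the $s^k_i$. Commutativity $fp = gq$ can be checked on each summand, where it is exactly the defining equation $f s^k_i = g_k \circ y(r^k_i)$.

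It remains to check that $q$ is a covering basal map of presheaves (Definition \ref{def basal and covering basal maps}).
\begin{itemize}
\item $P$ and $Z$ are semi-representable, so $q$ is semi-representable.
\item The index function is $\alpha : \coprod_k I_k \to K$, $(k,i) \mapsto k$. The induced morphisms are the $r^k_i$, each of which is basal because it belongs to the covering family $r^k$.
\item For a fixed index $k$, the fibre family is $\mathbf{q}(k) = r^k$, which is $j$-covering.
\end{itemize}

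There are two degenerate cases. If $Z$ is empty ($K = \varnothing$), take $P = \varnothing$; then $q$ is the initial map, which counts as basal. If some $r^k$ is the empty family (the empty family can be covering, e.g. for the empty set in $\mathcal{O}(X)$), then $\mathbf{q}(k)$ is empty. The definition of covering basal only imposes the covering condition on nonempty fibres, so this case is allowed.

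The only real subtlety is bookkeeping: the identification of the family $\mathbf{q}$ induced by $q$ must go through the chosen coproduct decompositions. The family is read off from (\ref{eq basal map}), and with our choices it is literally $\{r^k_i\}$. There is no genuine obstacle here.
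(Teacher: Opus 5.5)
Your proof is correct and follows the paper's argument. You decompose $Z \cong \sum_{k} y(W_k)$, apply the local epimorphism property to each component $g_k$ to get a covering family with lifts to $X$, and take $P$ to be the coproduct of all the covering objects, so that the family over each index $k$ is exactly the chosen covering family. Your labelling of $q : P \to Z$ and $p : P \to X$ matches the diagram in the statement, whereas the paper's written proof swaps these two names.
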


\begin{proof}
We can rewrite the righthand span as
\begin{equation*}
\begin{tikzcd}
	& X \\
	{\sum_{k \in K} y(W_k)} & Y
	\arrow["f", from=1-2, to=2-2]
	\arrow["{{\sum_k g_k}}"', from=2-1, to=2-2]
\end{tikzcd}
\end{equation*}
Since $f$ is a local epimorphism, this means that for every $k \in K$, there exists a covering family $a_k = \{A^k_\ell \to W_k \}_{\ell \in L_k}$ and maps $s^k_\ell : A^k_\ell \to X$ such that the following diagram commutes
\begin{equation*}
\begin{tikzcd}
	{y(A^k_\ell)} & X \\
	{y(W_k)} & Y
	\arrow["{{s^k_\ell}}", from=1-1, to=1-2]
	\arrow["{{a^k_\ell}}"', from=1-1, to=2-1]
	\arrow["f", from=1-2, to=2-2]
	\arrow[from=2-1, to=2-2]
\end{tikzcd}
\end{equation*}
Now let $P = \sum_{k \in K} \sum_{\ell \in L_k} y(A^k_\ell)$, and define the maps $p : P \to Z$ using the covering maps $a^k_\ell$ and $q : P \to X$ using the maps $s^k_\ell$. Then $q$ is clearly covering basal, and $fp = gq$.
\end{proof}

\begin{Def}[{\cite[Section 50]{Munkres2000}}]
Let $X$ be a topological space and $\mathcal{U} = \{U_i \subseteq X \}_{i \in I}$ an open cover of $X$. We say that $\mathcal{U}$ has \textbf{order} $n$ if $n$ is the smallest integer such that for every $(n+1)$-fold collection of distinct indices $\{ i_1, \dots, i_{n+1} \} \subseteq I$, it follows that $U_{i_1} \cap \dots \cap U_{i_{n+1}} = \varnothing$. The \textbf{covering dimension} of $X$ is the smallest $n$ such every open cover $\mathcal{U}$ of $X$ has a refinement $\mathcal{V} \leq \mathcal{U}$ by an open cover $\mathcal{V}$ of order $n+1$.
\end{Def}

\begin{Prop} \label{prop manifolds have covering dim n}
If $M$ is an $n$-dimensional topological manifold, then it has covering dimension $n$.
\end{Prop}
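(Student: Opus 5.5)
The plan is to prove the two inequalities separately: first that the covering dimension of $M$ is at most $n$, then that it is at least $n$. Throughout, the notion of order is the one in the definition above, so refining to order $n+1$ means that any $n+2$ distinct members have empty intersection.

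For the upper bound I would reduce to subsets of Euclidean space. Since $M$ is second countable and Hausdorff, and a manifold, it is paracompact and metrizable. By the Whitney-type embedding theorem for topological manifolds, \cite[Theorem 50.5]{Munkres2000} and its surrounding results, a compact $n$-manifold embeds in $\R^N$. More to the point, Munkres proves that every compact subspace of $\R^n$ has covering dimension at most $n$, and that an $n$-manifold has covering dimension at most $n$. I would invoke this directly for the upper bound.

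If one wants to avoid relying on that citation for noncompact $M$, the alternative route is as follows.
\begin{enumerate}
\item Write $M$ as a countable union of compact sets $K_m$, each of which is a finite union of closed coordinate balls. Each $K_m$ has dimension $\le n$, since it is a finite union of closed subsets homeomorphic to compact subsets of $\R^n$.
\item Apply Munkres' result that a space which is a countable union of closed subspaces of dimension $\le n$ has dimension $\le n$ \cite[Theorem 50.2]{Munkres2000}. Its proof uses normality and a shrinking argument, and $M$ is normal, being metrizable.
\end{enumerate}
The compact Euclidean case itself is the general-position argument: take a fine triangulation or a Lebesgue-number grid, and perturb the centers of a cover so that no $n+2$ sets meet.

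For the lower bound, I must show that some open cover of $M$ admits no refinement of order $n$, i.e. no refinement in which every $n+1$ members have empty intersection. Take a closed coordinate ball $B \subseteq M$ homeomorphic to the standard $n$-simplex $\Delta^n$. Covering dimension is monotone on closed subsets (\cite{Munkres2000}): restricting a refinement of a cover of $M$ to $B$ gives a refinement of the restricted cover of the same order. So it suffices to show that $\Delta^n$ has dimension $\geq n$. For that I use the Lebesgue covering theorem.
\begin{enumerate}
\item Cover $\Delta^n$ by the $n+1$ open sets $U_i = \Delta^n \setminus F_i$, where $F_i$ is the facet opposite vertex $i$, each thickened slightly.
\item Suppose a refinement $\mathcal{V}$ had every $n+1$ members with empty intersection. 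Grouping the members of $\mathcal{V}$ according to an assignment to the $U_i$, take a partition of unity subordinate to $\mathcal{V}$ and compose with the nerve map. This gives a continuous map $\Delta^n \to \partial\Delta^n$ which sends each face into itself.
\item Such a map is homotopic to the identity on $\partial \Delta^n$, which contradicts Brouwer/no-retraction, i.e. that $\partial\Delta^n \simeq S^{n-1}$ is not contractible.
\end{enumerate}

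The main obstacle I expect is bookkeeping rather than ideas: matching the paper's indexing of ``order'' with Munkres' convention, which is off by one. I would also need to check that the Brouwer/Lebesgue argument really produces a face-preserving map once the refinement is reindexed to $n+1$ sets. If time is short, the cleanest route is simply to cite \cite[Section 50]{Munkres2000} for the upper bound and the classical Lebesgue covering theorem for the lower bound, after reconciling the conventions.
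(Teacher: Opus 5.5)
Your argument is correct up to two small repairs (below), and it takes a genuinely different route from the paper.

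\textbf{Comparison with the paper.} The paper never works with covering dimension directly. It asserts that an $n$-manifold has small inductive dimension $\mathrm{ind}\,M = n$ and then quotes the coincidence $\mathrm{ind} = \dim$ from \cite{VanMill1988}. That coincidence is a theorem about separable metrizable spaces (which manifolds are, by second countability and Urysohn metrization), not about arbitrary spaces as the paper's wording says. Moreover, the inequality $\mathrm{ind}\,\R^n \geq n$ hidden behind the paper's ``easy to see'' is a Brouwer-type theorem of the same depth as your no-retraction argument. So the paper buys brevity by outsourcing precisely the content you make explicit.

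You stay with $\dim$, which is the notion the paper actually needs. You split into an upper bound (compact Euclidean pieces plus a sum theorem) and a lower bound (Lebesgue's covering theorem via the nerve map of a partition of unity and the non-contractibility of $\partial\Delta^n$). This also makes clear that only the elementary half, $\dim M \leq n$, is used later: Theorem \ref{th refine local trivs on top mfds by cech nerves} only needs a refinement in which all $(n+2)$-fold intersections of distinct members are empty.

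\textbf{First repair: the sum theorem.} The result you cite as \cite[Theorem 50.2]{Munkres2000} is the sum theorem for two (hence finitely many) closed subspaces. For a countable union you need the separate countable sum theorem for closed subsets of a normal space (due to \v{C}ech). Alternatively you can bypass it:
\begin{enumerate}
\item Exhaust $M$ by compacta $K_1 \subseteq \mathrm{int}\,K_2 \subseteq K_2 \subseteq \mathrm{int}\,K_3 \subseteq \cdots$, with $K_0 = \varnothing$.
\item The compact pieces $K_{m+1} \setminus \mathrm{int}\,K_m$ with $m$ of a fixed parity are pairwise disjoint and form a locally finite family.
\item So each parity class has closed union, equal to the topological sum of its pieces, and you apply the two-set theorem once.
\end{enumerate}

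\textbf{Second repair: do not thicken the $U_i$.} The open vertex stars $\Delta^n \setminus F_i$ already cover $\Delta^n$, since a point outside all of them would lie in every facet. It is exactly $U_j \cap F_j = \varnothing$ that forces the grouped function $\psi_j$ (the sum of the $\varphi_V$ over those $V$ assigned to $U_j$) to vanish on $F_j$. That is what makes $f = \sum_i \psi_i v_i$ carry each face into itself. This in turn keeps the straight-line homotopy from $\mathrm{id}_{\partial\Delta^n}$ to $f|_{\partial\Delta^n}$ inside $\partial\Delta^n$. A thickened $U_j$ meeting $F_j$ breaks this.

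\textbf{Order convention.} The off-by-one you anticipated does not occur. The paper's ``order'' and Munkres' both equal the largest number of members of the cover containing a common point.
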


\begin{proof}
From the definition of small inductive dimension in \cite[Section 1.4]{Robinson2010}, it is easy to see that any $n$-dimensional topological manifold has small inductive dimension $n$. Then \cite[Theorems 4.3.5, 4.5.8]{VanMill1988} prove that the small inductive dimension and the Lebesgue covering dimension of any topological space agree.
\end{proof}

The next result is a relatively involved point-set topology result that we call Lurie's lemma. It will be key to our argument.

\begin{Lemma}[{\cite[Lemma 7.2.3.5]{Lurie2009}}]
Let $X$ be a paracompact Hausdorff space with an open cover $\mathscr{U} = \{U_i \}_{i \in I}$. Fix $k \geq 1$, and suppose that for every $(k+1)$-subindex $I_0 \subseteq I$ there is an open cover $\mathscr{V}_{I_0} = \{ V_b \}_{b \in B(I_0)}$ of the intersection $U(I_0)$. Then there exists an open cover $\mathscr{W} = \{W_{\overline{i}}\}_{\overline{i} \in \overline{I}}$ of $X$, which is a refinement of $\mathscr{U}$ under $\pi: \overline{I} \to I$ satisfying the following property: If $\overline{I}_0$ is a $(k+1)$-subindex of $\overline{I}$ with $\pi(\overline{I}_0) = I_0$, then there exists a $b \in B(I_0)$ such that $W(\overline{I}_0) \subseteq V_b$.
\end{Lemma}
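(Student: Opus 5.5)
The plan is to reduce to a \emph{locally finite} situation and then choose, for each point, a single open set that is small enough for all the relevant $(k+1)$-fold intersections. Throughout, $k$-subindex means a subset of cardinality $k+1$ with nonempty intersection (for empty intersections the condition is vacuous).

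\textbf{Step 1 (shrinkings).} Since $X$ is paracompact Hausdorff, hence normal, I first replace $\mathscr{U}$ by a locally finite refinement. I then take two nested shrinkings $\{U''_i\}$ and $\{U'_i\}$ with $\overline{U''_i}\subseteq U'_i$ and $\overline{U'_i}\subseteq U_i$, still indexed by $I$ and still covering $X$. This preserves the hypotheses: a $(k+1)$-subindex of the refinement maps to a subindex of $I$. Passing from the refinement back to $I$ is absorbed into the map $\pi$. Local finiteness gives each $x\in X$ an open neighbourhood $O_x$ meeting only finitely many $U_i$, say those with $i\in F(x)$.

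\textbf{Step 2 (local choices).} For each $x$, consider the finitely many $(k+1)$-subsets $I_0\subseteq F(x)$ with $x\in \overline{U'(I_0)}$, which forces $x\in U(I_0)$. For each such $I_0$ choose $b(x,I_0)\in B(I_0)$ with $x\in V_{b(x,I_0)}$. Then define
\begin{equation*}
W_x = O_x \cap \bigcap_{I_0} V_{b(x,I_0)} \cap \bigcap_{i\in F(x),\, x\in U''_i} U''_i \cap \bigcap_{i\in F(x),\, x\notin \overline{U'_i}} (X\setminus \overline{U'_i}).
\end{equation*}
This is a finite intersection of open sets containing $x$, so it is open. Assign $\pi(x)$ to be some $i$ with $x\in U''_i$; then $W_x\subseteq U''_{\pi(x)}$, so $\mathscr{W}=\{W_x\}_{x\in X}$ refines $\mathscr{U}$.

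\textbf{Step 3 (main obstacle).} The hard part is to check the $(k+1)$-fold condition. Take distinct points $x_0,\dots,x_k$ with $\pi(\{x_j\})=I_0$ of size $k+1$ and $W:=\bigcap_j W_{x_j}\neq\varnothing$. The goal is to find a single $x_j$ such that $x_j\in\overline{U'(I_0)}$ and $I_0\subseteq F(x_j)$; then $W\subseteq W_{x_j}\subseteq V_{b(x_j,I_0)}$, as required.

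The natural candidate is a point, say $x_0$, and the plan is to show every $i\in I_0$ satisfies $x_0\in \overline{U'_i}$. Since $W\subseteq W_{x_j}\subseteq U''_{\pi(x_j)}$, the set $W$ meets $U''_i\subseteq U'_i$ for each $i\in I_0$. If instead $x_0\notin\overline{U'_i}$, then $W_{x_0}\cap U'_i=\varnothing$, contradicting that $W$ meets $U'_i$. This needs $i\in F(x_0)$, which holds because $O_{x_0}\supseteq W$ meets $U_i$.

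Hence $x_0\in\bigcap_{i\in I_0}\overline{U'_i}$, and I must reconcile this with the requirement $x_0\in \overline{U'(I_0)}$ used in the definition. To avoid the gap between a closure of an intersection and an intersection of closures, I would define the selection in Step 2 using the condition $x\in\bigcap_{i\in I_0}\overline{U'_i}$. That condition is still contained in $U(I_0)$, since $\overline{U'_i}\subseteq U_i$, so $V_{b}$ can still be chosen there. With this fix the bookkeeping closes, and this adjustment is where I expect to spend the most care.
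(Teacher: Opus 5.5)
Your proof is correct once the fix from your last paragraph is built into Step 2, with the selection made over $(k+1)$-subsets $I_0\subseteq F(x)$ with $x\in\bigcap_{i\in I_0}\overline{U'_i}$. With the original condition $x\in\overline{U'(I_0)}$, Step 3 would not close, because the separation argument only gives $x_0\in\bigcap_{i\in I_0}\overline{U'_i}$.

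Your route differs genuinely from the paper's. The paper takes one locally finite shrinking $\{U'_i\}$ and indexes the new cover by triples $(i,I_0,b)$, plus $I$. It sets $W_{(i,I_0,b)}=\bigl(U'_i-\bigcup_{I_1\ni i}\overline{U'}(I_1)\bigr)\cup(V_b\cap U'_i)$. The first piece vanishes on every $(k+1)$-fold intersection, so $W(\overline I_0)=\bigcap_\ell(V_{b_\ell}\cap U'_{i_\ell})$. You instead index by points, impose finitely many local constraints on each $W_x$ at once, and use $W_{x_0}\cap\overline{U'_i}=\varnothing$ for $x_0\notin\overline{U'_i}$ to place $x_0$ in all the relevant closures.

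What your route buys is that $W_{x_0}$ is subordinate to the covers of every $(k+1)$-fold intersection it must respect. In the paper's construction each index carries a single $b_\ell\in B(I^\ell_0)$. When $\pi(\overline I_0)=I_0$, nothing forces $I^\ell_0=I_0$, so the final inclusion $W(\overline I_0)\subseteq V_{b_{\ell_0}}$ does not produce an element of $B(I_0)$. For example, take $X=\mathbb{R}$, $U_0=U_1=U_2=U'_0=U'_1=U'_2=\mathbb{R}$, $k=1$, $\mathscr{V}_{\{0,1\}}=\{(-\infty,1),(0,\infty)\}$, and one-element covers of the other two intersections. Then $W_{(0,\{0,2\},b)}\cap W_{(1,\{1,2\},b')}=\mathbb{R}$, which lies in neither member of $\mathscr{V}_{\{0,1\}}$. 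So the paper's formula is more explicit, but your pointwise construction is the one that goes through.

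Two small remarks:
\begin{itemize}
\item A single application of Lemma \ref{lem shrinking lemma} suffices. Define $F(x)$ using the locally finite family $\{U'_i\}$ and take $U''=U'$, since Step 3 only uses $U''_i\subseteq U'_i$ and that $\{U''_i\}$ covers. This also removes the preliminary refinement and the re-indexing through $\pi$.
\item Your convention for empty intersections is genuinely needed. Otherwise $B(I_0)$ can be empty when $U(I_0)=\varnothing$, and the statement fails as written.
\end{itemize}
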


\begin{Def}
Given maps $f : X \to Y$ and $g : Z \to Y$, we say that $f$ \textbf{refines} $g$ if there exists a map $h : X \to Z$ such that $gh = f$.
\end{Def}

\begin{Th} \label{th refine local trivs on top mfds by cech nerves}
Let $(\cat{C}, j) = (\ncat{TopMan}, j_{\text{open}})$ and let $M$ be an $n$-dimensional topological manifold. Then any $j$-local trivial fibration $p : H \to y(M)$ can be refined by the \v{C}ech nerve $\pi : \check{C}(\cat{W}) \to y(U)$ of an open covering family $\cat{W}$ of $M$. 
\end{Th}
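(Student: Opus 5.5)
We want a map $h : \check{C}(\mathcal{W}) \to H$ with $p h = \pi$. Since $\check{C}(\mathcal{W})$ is the augmented $0$-coskeleton of $\sum_i y(W_i) \to y(M)$, a map of simplicial presheaves into $H$ over $y(M)$ amounts to compatible maps on each level. The plan is to construct these level by level, using Lurie's Lemma to refine the cover at each stage. Finite covering dimension (Proposition \ref{prop manifolds have covering dim n}) will let the construction stop after finitely many stages.

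\textbf{Step 1 (the $0$-level).} By Lemma \ref{lem local triv fib iff matching map loc epi}, $m_0^+ : H_0 \to M_0^+H \cong y(M)$ is a local epimorphism. Applying Lemma \ref{lem refine local epi by covering basal map} with $g = 1_{y(M)}$, we obtain an open cover $\mathcal{U}^0 = \{U_i\}$ of $M$ and maps $s_i : y(U_i) \to H_0$ over $y(M)$.

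\textbf{Step 2 (inductive step).} Suppose we have an open cover $\mathcal{U}^{k-1}$ of $M$ together with a map $h_{\le k-1} : \mathrm{sk}_{k-1}\check{C}(\mathcal{U}^{k-1}) \to H$ over $y(M)$ (equivalently, on the truncation). For each $(k+1)$-subindex $I_0$, the boundary data give a section of $M_k^+H$ over $U(I_0)$. Since $m_k^+ : H_k \to M_k^+ H$ is a local epimorphism, Lemma \ref{lem refine local epi by covering basal map} gives an open cover $\mathscr{V}_{I_0}$ of $U(I_0)$ together with lifts to $H_k$ on each $V_b$.

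We then apply Lurie's Lemma to obtain $\mathcal{U}^k = \mathscr{W}$ refining $\mathcal{U}^{k-1}$ via $\pi$. Whenever $\pi(\overline{I}_0) = I_0$ has $k+1$ elements, the intersection $W(\overline{I}_0)$ lies in some $V_b$. We choose such a $b$, restrict the lift, and precompose $h_{\le k-1}$ with the refinement map $\check{C}(\mathcal{W}) \to \check{C}(\mathcal{U}^{k-1})$; this extends $h$ to nondegenerate $k$-simplices.

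When $\pi(\overline{I}_0)$ has fewer than $k+1$ elements, the simplex maps to a degenerate simplex of $\check{C}(\mathcal{U}^{k-1})$ and is defined using the degeneracies of $H$. The choices must be made per simplex and shown compatible with the face maps. This works because lifts are made only on the nondegenerate part, and $\check{C}(\mathcal{W})$ is split (Lemma \ref{lem cech nerves are split}).

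\textbf{Step 3 (termination).} By Proposition \ref{prop manifolds have covering dim n}, we refine $\mathcal{U}^{n}$ to a cover $\mathcal{W}$ of order at most $n+1$, so that all $(n+2)$-fold intersections of distinct opens are empty. Then every simplex of $\check{C}(\mathcal{W})$ of dimension $>n$ with distinct indices vanishes, so $\check{C}(\mathcal{W})$ is generated by its $n$-skeleton under degeneracies. Consequently the map on the $n$-skeleton already determines $h : \check{C}(\mathcal{W}) \to H$ over $y(M)$.

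\textbf{Main obstacle.} The hardest part is the bookkeeping in Step 2. We must ensure that after the index map $\pi$ is non-injective on $\overline{I}_0$, the lower-level data remain consistent and the degenerate simplices still land correctly. This is exactly the role of the precise indexing condition in Lurie's Lemma, and verifying that the simplicial identities hold is where most of the care will be spent.
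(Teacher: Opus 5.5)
Your outline follows the paper's proof almost step for step: lift through the matching maps, refine with Lurie's Lemma, and stop using covering dimension. It also has the same gap, and the gap is fatal.

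In $\check{C}(\mathcal{W})$ a simplex $(a_0,\dots,a_m)$ is degenerate only when two \emph{consecutive} indices coincide, because $s_j$ repeats $a_j$ in place. So whenever two distinct members $W_a,W_b$ overlap, the tuples $(a,b,a)$, $(a,b,a,b)$, \dots\ are nondegenerate simplices over $W_a\cap W_b$ in every dimension. This breaks your argument in two places:
\begin{itemize}
\item Step 3 fails: a cover of order $n+1$ does \emph{not} make $\check{C}(\mathcal{W})$ generated by its $n$-skeleton.
\item Step 2 fails: a non-injective $\pi$ can send $(\bar a,\bar b,\bar c)$ to the nondegenerate simplex $(0,1,0)$. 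Lurie's Lemma says nothing about this case, since it only controls sets $\overline{I}_0$ mapped bijectively onto $(k+1)$-sets.
\end{itemize}
Each such simplex needs a new local lift through $H_m\to M_m^+H$, that is, a further refinement of the double intersections, in every degree. So the induction never terminates. The paper's last step rests on the same false assertion, namely that $(n+2)$-fold intersections with non-distinct indices are degenerate.

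\textbf{The statement as written is false.} Take $M=\R$, $U_0=(-\infty,1)$, $U_1=(-1,\infty)$, and $H_0=y(U_0)+y(U_1)$. Inductively, $M_k^+H$ is a coproduct of $y(V)$ with $V\subseteq\R$ open, since all maps are inclusions over $\R$. Let $H_k=N_k+L_k^+H$ be the split extension, where $N_k\to M_k^+H$ covers each summand $y(V)$ by opens of diameter at most $1/k$. All augmented matching maps are local epimorphisms, so $H\to y(\R)$ is a local trivial fibration by Lemma~\ref{lem local triv fib iff matching map loc epi}.

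Now suppose $h:\check{C}(\mathcal{W})\to H$ is a map over $y(\R)$.
\begin{itemize}
\item The map $h_0$ labels each $W_a$ by the summand of $H_0$ that it lands in.
\item Neither $U_i$ is all of $\R$, so both labels occur. Since $\R$ is connected, some $W_a$ and $W_b$ with different labels meet.
\item The image of $(a,b,a,b,\dots)\in\check{C}(\mathcal{W})_k$ has consecutive vertices in different summands of $H_0$. Hence it is nondegenerate and lies in $N_k$.
\item This forces $W_a\cap W_b$ into a set of diameter at most $1/k$ for every $k$, which is impossible.
\end{itemize}

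\textbf{What your argument does prove.} Totally order the index sets, and choose every refinement map $\pi$ order-preserving. Replace $\check{C}(\mathcal{W})$ by the ordered \v{C}ech nerve $\check{C}^{<}(\mathcal{W})$ of tuples $a_0\le\dots\le a_m$.
\begin{itemize}
\item If $\pi$ fails to be injective on a strictly increasing tuple, its image has consecutive repeats, so it is degenerate. Hence Lurie's Lemma supplies exactly the lifts required.
\item For a cover of order $n+1$, $\check{C}^{<}(\mathcal{W})$ genuinely is generated by its $n$-skeleton, so your Step 3 goes through.
\item The inclusion $\check{C}^{<}(\mathcal{W})\hookrightarrow\check{C}(\mathcal{W})$ is an objectwise weak equivalence over $y(M)$.
\end{itemize}
This corrected refinement statement is therefore the version to carry into the hypercompleteness argument.
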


\begin{proof}
In degree $0$, we have a cospan
\begin{equation*}
    \begin{tikzcd}
	& {H_0} \\
	{y(M)} & {y(M)}
	\arrow["{p_0}", from=1-2, to=2-2]
	\arrow[equals, from=2-1, to=2-2]
\end{tikzcd}
\end{equation*}
where $p_0$ is a local epimorphism. Hence there exists an open cover $\mathcal{U} = \{U_i \hookrightarrow M \}_{i \in I}$ of $M$ such that the diagram
\begin{equation*}
    \begin{tikzcd}
	{\sum_i y(U_i)} & {H_0} \\
	{y(M)} & {y(M)}
	\arrow[from=1-1, to=1-2]
	\arrow[from=1-1, to=2-1]
	\arrow["{p_0}", from=1-2, to=2-2]
	\arrow[equals, from=2-1, to=2-2]
\end{tikzcd}
\end{equation*}
commutes and we obtain a map $f_0 : \text{tr}_0\check{C}(\mathcal{U}) \to \text{tr}_0 H$ of simplicial presheaves over $y(M)$.

Now suppose that $\mathcal{U}_k = \{U_i \hookrightarrow M \}_{i \in I_k}$ is an open cover of $M$, for $0 \leq k \leq n-1$ and there is a map $f_k : \text{tr}_k \check{C}(\mathcal{U}_k) \to \text{tr}_k H$ over $y(M)$. We have a cospan
\begin{equation*}
    \begin{tikzcd}
	&& {H_{k+1}} \\
	{\check{C}(\mathcal{U}_k)_{k+1}} & {M_{k+1}^+\check{C}(\mathcal{U}_k)} & {M_{k+1}^+H}
	\arrow["{m_{k+1}}", from=1-3, to=2-3]
	\arrow[equals, from=2-1, to=2-2]
	\arrow["{M_{k+1}^+(f_k)}"', from=2-2, to=2-3]
\end{tikzcd}
\end{equation*}
where $m_{k+1}$ is a local epimorphism and $\check{C}(\mathcal{U}_k)_{k+1}$ is semi-representable. So from Lemma \ref{lem refine local epi by covering basal map} we obtain a covering basal map $q_{k+1} : \widetilde{H}_{k+1} \to \check{C}(\mathcal{U}_k)$ making the following diagram commute
\begin{equation*}
\begin{tikzcd}
	\widetilde{H}_{k+1} & {H_{k+1}} \\
	{\check{C}(\mathcal{U}_k)_{k+1}} & {M_{k+1}^+H}
	\arrow[from=1-1, to=1-2]
	\arrow[from=1-1, to=2-1]
	\arrow["{m_{k+1}}", from=1-2, to=2-2]
	\arrow[from=2-1, to=2-2]
\end{tikzcd}    
\end{equation*}
So for every $(k+1)$-subindex $J = \{i_0, \dots, i_k\} \subseteq I_k$, we obtain an open cover $\mathcal{V}^J = \{V^{i_0 \dots i_k}_\alpha \hookrightarrow U_{i_0 \dots i_k} \}_{\alpha \in I^J}$. Hence by Lurie's Lemma \ref{lurie's lemma}, there exists an open cover $\mathcal{U}_{k+1} = \{U_{\overline{i}} \hookrightarrow M \}_{\overline{i} \in \overline{I}_k}$ of $M$ and a refinement $\pi : \overline{I}_k \to I_k$, $\mathcal{U}_{k+1} \leq \mathcal{U}_k$ such that for every $(k+1)$-subindex $\overline{J} = \{\overline{i}_0, \dots, \overline{i}_k \} \subseteq \overline{I}_k$, where we let $\pi(\overline{i}) = i$ and $\pi(\overline{J}) = J$, $U_{\overline{i}_0 \dots \overline{i}_k} \subseteq V^{i_0 \dots i_k}_\alpha$ for some $\alpha \in I^J$. Hence we obtain a map $f_{k+1} : \text{tr}_{k+1} \check{C}(\mathcal{U}_{k+1}) \to \text{tr}_{k+1} H$ on the nondegenerate simplices via the map $\check{C}(\mathcal{U}_{k+1})_{k+1} \to \widetilde{H}_{k+1} \to H_{k+1}$, which automatically commutes with the face maps of $H$ since it is defined via the matching objects. On degenerate simplices it will already be determined by the previously defined maps since $\check{C}(\mathcal{U}_{k+1})$ is split.

So we have managed to construct an open cover $\mathcal{U}_n$ of $M$ and a map $f_n : \text{tr}_n \check{C}(\mathcal{U}_n) \to \text{tr}_n H$ over $y(M)$. Now since $M$ has covering dimension $n$ by Proposition \ref{prop manifolds have covering dim n}, this implies that $\mathcal{U}_n$ has a refinement by an open cover $\mathcal{W}$ which has order $n+1$. In other words every $(n+2)$-fold intersection of distinct elements in $\mathcal{W}$ is empty. This means that we can extend the composite map $\text{tr}_n \check{C}(\mathcal{W}) \to \text{tr}_n \check{C}(\mathcal{U}_n) \to \text{tr}_n H$ to a map $\phi : \check{C}(\mathcal{W}) \to H$ over $y(M)$. This is because the components of $\check{C}(\mathcal{W})_{n+1}$ corresponding to empty $(n+2)$-fold intersections are isomorphic to the initial presheaf and hence vanish. The elements in $\check{C}(\mathcal{W})_{n+1}$ that consist of an $(n+2)$-fold intersection with non-distinct indices are degenerate, and by construction $\phi$ is already completely determined on degenerate simplices.
\end{proof}

We note that the above proof applies \textit{mutatis mutandis} to the sites of Example \ref{ex sites}. We codify this in the following result.

\begin{Th} \label{th many sites are strictly hypercomplete}
Let $X$ be a paracompact Hausdorff topological space with finite covering dimension. Then all of the sites from Example \ref{ex sites} are strictly hypercomplete:
\begin{equation*}
    (\ncat{TopMan}, j_{\text{open}}), \quad (\mathcal{O}(X), j_X), \quad (\ncat{Man}, j_{\text{open}}), \quad (\ncat{Cart}, j_{\text{open}/\text{good}}), \quad (\C\ncat{Man}, j_{\text{open}}), \quad (\ncat{Stein}, j_{\text{open}}).
\end{equation*}
\end{Th}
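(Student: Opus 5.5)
The plan is to show that every hypercover is already a weak equivalence in $\check{\H}(\cat{C}, j)$. Since both $\check{\H}(\cat{C}, j)$ and $\hat{\H}(\cat{C}, j)$ are left Bousfield localizations of $\H(\cat{C})$ with the same cofibrations, it suffices (by \cite[Proposition 1.38]{joyal2020modelstructure}, as in the Corollary to Proposition \ref{prop equiv coverages}) to show they have the same fibrant objects. We already know $\check{C}(j) \subseteq \ncat{DHIHyp}$, so every $\infty$-hyperstack is an $\infty$-stack. For the converse, Proposition \ref{prop different descriptions of local proj} reduces us to showing that an $\infty$-stack $A$, i.e. a projective fibrant simplicial presheaf satisfying \v{C}ech descent, is local with respect to every hypercover $p : H \to y(M)$. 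For $p$ DHI we may additionally replace $H$ cofibrantly, or use Proposition \ref{prop can refine DHI-hypercover by verdier hypercover on Verdier site} on Verdier sites, so that $H$ is split and degreewise semi-representable.

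The key input is Theorem \ref{th refine local trivs on top mfds by cech nerves}, adapted to each site. For $(\ncat{Man}, j_{\text{open}})$, $(\C\ncat{Man}, j_{\text{open}})$ and $(\ncat{Stein}, j_{\text{open}})$ the proof goes through verbatim. Objects are paracompact Hausdorff of finite covering dimension, Lurie's Lemma applies to the underlying spaces, and open subsets of objects are again objects (Stein opens need an extra refinement step, but Stein opens form a basis, so any open cover can be refined by Stein opens before applying Lemma \ref{lem refine local epi by covering basal map}). For $(\mathcal{O}(X), j_X)$ we use the hypothesis that $X$ is paracompact Hausdorff of finite covering dimension; every open $U \subseteq X$ needs to be paracompact of dimension at most $\dim X$. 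This holds, for instance, if $X$ is hereditarily paracompact; otherwise I would argue on the closed-in-$X$ structure or restrict to $U$ with the subspace dimension bound. For $\ncat{Cart}$ with good or open covers, every open cover of $\R^n$ is refined by a good open cover, and intersections in a good cover are cartesian. So at each stage of the induction I refine $\mathcal{U}_{k+1}$ to a good open cover before continuing, and at the end refine to a good cover of order $n+1$ (possible since $\R^n$ admits good covers refining any cover with bounded order, e.g. via a triangulation's open stars).

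Given a refinement $\phi : \check{C}(\mathcal{W}) \to H$ over $y(M)$, I show that $A$ is $p$-local. Consider
\[
\u{\sPre}(\cat{C})(y(M),A) \xrightarrow{p^*} \u{\sPre}(\cat{C})(H,A) \xrightarrow{\phi^*} \u{\sPre}(\cat{C})(\check{C}(\mathcal{W}),A).
\]
The composite is a weak equivalence by \v{C}ech descent. This only gives that $p^*$ has a homotopy retraction. To get a 2-out-of-6 argument, I apply the same refinement theorem relatively: for each $U$ and each element of the derived mapping space, work in the slice, i.e. use the fact that $\phi$ itself and the restrictions $H\times_{y(M)} y(V) \to y(V)$ are again hypercovers which are refinable. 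A cleaner route is to show $\phi^*$ is injective on homotopy: any two maps $H \to A$ agreeing after $\phi$ are compared via the hypercover $H \times_{y(M)} \check{C}(\mathcal{W})$-type constructions. Alternatively, one can pass to $\pi_0$ and use Verdier-style colimit arguments, namely that hypercovers refined by \v{C}ech covers form a cofinal family in the homotopy category of hypercovers \cite[Theorem 6.2]{Dugger2004}. Then the localization at DHI-hypercovers equals localization at \v{C}ech nerves $\check{C}(\mathcal{W}) \to y(M)$, because each hypercover receives a map from a \v{C}ech nerve through which locality descends. I expect this step, upgrading "every hypercover is refined by a \v{C}ech nerve" to "every hypercover is a \v{C}ech-local equivalence", to be the main obstacle. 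My first attempt would be to iterate the refinement on the matching levels of $\check{C}(\mathcal{W}) \to H$ itself, which is a local trivial fibration after factorization, and so obtain the 2-out-of-6 configuration $\check{C}(\mathcal{W}') \to \check{C}(\mathcal{W}) \to H \to y(M)$.
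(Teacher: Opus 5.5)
Your key input is the same as the paper's. You refine every local trivial fibration $p : H \to y(M)$ by the \v{C}ech nerve of a covering family, using Lurie's Lemma and finite covering dimension, adapted site by site. The gap is exactly the step you flag, and the argument you propose for it does not work.

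Knowing that $(p\phi)^*$ is an equivalence only gives $p^*$ a left homotopy inverse. In the chain $\check{C}(\mathcal{W}') \to \check{C}(\mathcal{W}) \to H \to y(M)$, 2-out-of-6 would need $\check{C}(\mathcal{W}') \to H$ to be a \v{C}ech equivalence. Since $\check{C}(\mathcal{W}') \to y(M)$ already is one, that is equivalent to $p$ being one, so the argument is circular. Factoring $\phi$ produces a local trivial fibration over $H$, not over a representable, and Theorem \ref{th refine local trivs on top mfds by cech nerves} says nothing about those. The injectivity of $\phi^*$ you mention is precisely what is missing, and the ``$H \times_{y(M)} \check{C}(\mathcal{W})$-type constructions'' are not yet an argument.

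The paper settles this step by quoting \cite[Theorem 6.2]{Dugger2004}, in the same form it uses for Corollary \ref{cor localizing at DHI and split Verdier is equiv}. In that form, localizing $\H(\cat{C})$ at any class of hypercovers that refines every hypercover already gives $\hat{\H}(\cat{C},j)$. You name this citation in passing but then treat the step as open.

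If you want the step self-contained, use the refinement inside a lifting problem instead of testing $p^*$ one hypercover at a time. This needs only that refinements exist.
\begin{itemize}
\item Let $f : A \to B$ be a projective fibration between \v{C}ech-local projective fibrant objects that is a local weak equivalence, hence a local trivial fibration.
\item A lifting problem against $\partial\Delta^n \otimes y(M) \to \Delta^n \otimes y(M)$ is a vertex $x$ of $\u{\sPre}(\cat{C})(y(M), P)$, where $P = B^{\Delta^n} \times_{B^{\partial\Delta^n}} A^{\partial\Delta^n}$.
\item The map $A^{\Delta^n} \to P$ is again a local trivial fibration. So its pullback along $x$ is a hypercover $E \to y(M)$ carrying a solution $E \to A^{\Delta^n}$.
\item Refine $E$ by a cofibrant $\check{C}(\mathcal{W})$.
\item Both $A^{\Delta^n}$ and $P$ are \v{C}ech-local, being a cotensor and a homotopy pullback of local objects. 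Since $A^{\Delta^n} \to P$ is a projective fibration, the fibre over $x$ of $\u{\sPre}(\cat{C})(y(M), A^{\Delta^n}) \to \u{\sPre}(\cat{C})(y(M), P)$ is weakly equivalent to the corresponding fibre for $\check{C}(\mathcal{W})$.
\item That second fibre is nonempty, so the lifting problem has an actual solution. Hence $f$ is an objectwise trivial fibration.
\end{itemize}
Now take an $\infty$-stack $A$ and a fibrant replacement $A \to \hat{A}$ in $\hat{\H}(\cat{C},j)$. Factor it as a projective trivial cofibration followed by a projective fibration, and apply the above to the second map. This shows $A$ is objectwise equivalent to an $\infty$-hyperstack, hence is itself one, which is the converse you need.

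Separately, your concern about $(\mathcal{O}(X), j_X)$ is legitimate. The argument has to run on every open $U \subseteq X$, which needs each such $U$ to be paracompact of covering dimension at most $\dim X$. The paper passes over this silently, and your suggested workarounds are not yet an argument either.
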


\begin{proof}
Given one of the above sites $(\cat{C}, j)$ an object $U \in \cat{C}$ and a $j$-local trivial fibration $\pi : H \to y(U)$, then using the argument of Theorem \ref{th refine local trivs on top mfds by cech nerves}, we obtain an open cover $\cat{W}$ of the underlying topological space of $U$ that refines $H$.

In the case of $(\ncat{Cart}, j_\text{good})$ we need only perform one more step. We have an open cover $\mathcal{W}$ of a cartesian space $U$ refining the $j$-local trivial fibration $\pi : H \to y(U)$. Of course $\mathcal{W}$ is not guaranteed to be differentially good, but by \cite[Corollary 5.2]{Bott2013} there exists a differentiably good open cover $\mathcal{W}'$ refining $\mathcal{W}$. Hence we are able to refine $\pi$ by $\mathcal{W}'$.

Therefore in all of these sites, we are able to refine any $j$-local trivial fibration by the \v{C}ech nerve of a $j$-covering family. The result then follows from \cite[Theorem 6.2]{Dugger2004}.
\end{proof}

\section{The Plus Construction} \label{section plus construction}

Given a site $(\cat{C}, j)$ and a presheaf of sets $X$, the plus construction produces a presheaf $X^+$ defined objectwise by
\begin{equation}
X^+(U) = \ncolim{R \in \text{Gro}(j)(U)^\op} \ncat{Pre}(\cat{C})(R, X),
\end{equation}
where here $\text{Gro}(j)$ denotes the Grothendieck closure of $j$ (Definition \ref{def Grothendieck closure of a coverage}).

In general, one must apply the plus construction twice to a presheaf in order to obtain a sheaf \cite[Section 7.4]{Minichiello2025}. The plus construction defines a functor $(-)^+ : \ncat{Pre}(\cat{C}) \to \ncat{Pre}(\cat{C})$ and we call the composite functor $a = (-)^{++}$ \textbf{sheafification}.

However, there is a different formula for the sheafification functor, obtained by noticing that $\ncat{Sh}(\cat{C})(ay(U), aX) \cong aX(U)$ can be computed using the theory of calculi of fractions/left multiplicative systems \cite[Section 16.3]{Kashiwara2006}:
\begin{equation}
aX(U) \cong \ncolim{Y \to y(U)} \ncat{Pre}(\cat{C})(Y, X) \end{equation}
where the colimit is indexed over the category whose objects are local isomorphisms $f : Y \to y(U)$ and whose morphisms $g : f \to f'$ consist of arbitrary maps $g : Y \to Y'$ making the corresponding triangle commute.

There is an analogue of the above result obtained by Zhen Lin Low in the context of simplicial presheaves.

\begin{Prop}[{\cite[Proposition 6.6]{Low2015}}] \label{prop low's theorem}
Given a site $(\cat{C}, j)$ and locally fibrant simplicial presheaves $X$ and $Y$, then
\begin{equation} \label{eq low's formula 1}
\R \hat{\H}(\cat{C}, j)(X, Y) \simeq \nhocolim{H \in \ncat{Triv}_X^\op} \u{\ncat{sPre}}(\cat{C})(H, Y)
\end{equation}
where $\ncat{Triv}_X$ denotes the full subcategory of $\ncat{sPre}(\cat{C})_{/X}$ on the local trivial fibrations $p : H \to X$.
\end{Prop}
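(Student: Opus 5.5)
Since this is Low's result, the plan is to derive it from the category of fibrant objects structure on $\ncat{sPre}(\cat{C})$ given by local weak equivalences and local fibrations (Remark \ref{rem local fibrations}). Restrict to the full subcategory of locally fibrant simplicial presheaves. The local trivial fibrations are the acyclic fibrations of that structure, and the local projective model structure $\hat{\H}(\cat{C}, j)$ has the same homotopy theory. So the derived mapping space $\R\hat{\H}(\cat{C},j)(X,Y)$ can be computed inside this category of fibrant objects. For this I would use the Dwyer--Kan hammock localization, or Cisinski's description of mapping spaces via spans, as reviewed in Appendix \ref{section homotopical categories}.

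The key steps are as follows.
\begin{enumerate}
\item Show that $\ncat{Triv}_X^\op$ is homotopy cofiltered in the appropriate sense. Given two local trivial fibrations $H \to X$ and $H' \to X$, the pullback $H\times_X H' \to X$ is again one. Given two parallel maps $H \rightrightarrows H'$ over $X$, I would equalize them up to homotopy using path objects $(H')^{\Delta^1}$, which exist in the category of fibrant objects. This yields the local factorization data needed for a calculus of right fractions up to homotopy.
\item Invoke the standard theorem for categories of fibrant objects (Cisinski, or Low's own version): the derived mapping space is weakly equivalent to the nerve of the category of spans $X \xleftarrow{\sim} H \to Y$ with left leg an acyclic fibration. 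Rewrite that nerve, via a Grothendieck construction/Thomason argument, as $\hocolim_{H \in \ncat{Triv}_X^\op}$ of the mapping spaces $\u{\ncat{sPre}}(\cat{C})(H, Y)$.
\item Here one must replace $Y$ by a simplicial model. Because $Y$ is locally fibrant, the simplicial sets $\u{\ncat{sPre}}(\cat{C})(H,Y)$ may not be Kan. I would instead use the powered objects $Y^{\Delta^n}$, which are locally fibrant, and observe that varying $n$ assembles the span categories into a simplicial object whose diagonal is the stated hocolim.
\end{enumerate}

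The main obstacle is comparing this combinatorial hocolim with the genuine derived mapping space of $\hat{\H}$. This amounts to showing that a locally fibrant $Y$ and a fibrant replacement $\hat{Y}$ give equivalent hocolims. To handle it, I would show that $Y \to \hat{Y}$ induces a weak equivalence on the hocolims. The idea is that any map $H \to \hat{Y}$, after passing to a further local trivial fibration $H' \to H$ (obtained by pulling back along $Y\to\hat Y$ factored as local trivial fibration after trivial cofibration), lifts to $Y$ up to coherent homotopy. Combined with the fact that $\hat{Y}$ is local with respect to all hypercovers (Proposition \ref{prop different descriptions of local proj}), the diagram $H\mapsto \u{\ncat{sPre}}(\cat{C})(H,\hat Y)$ is homotopically constant after cofibrant replacement of $H$. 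Its hocolim is therefore $\u{\ncat{sPre}}(\cat{C})(QX,\hat Y)=\R\hat{\H}(\cat{C},j)(X,Y)$, using that $\ncat{Triv}_X^\op$ has contractible nerve, being cofiltered by step 1.
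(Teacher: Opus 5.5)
Your steps 2--3 follow the paper's sketch. Thomason's theorem identifies $\hocolim_{H\in\ncat{Triv}_X^\op}$ of the \emph{discrete} hom-sets ${}^c\ncat{sPre}(\cat{C})(H,Y)$ with $N\ncat{Cocycle}(X,Y)$. Replacing $Y$ by $Y^{\Delta^k}$, interchanging hocolims and taking the Bousfield--Kan diagonal then produces $\hocolim_{H}\u{\ncat{sPre}}(\cat{C})(H,Y)$. What you leave out is the fact that makes this close up. Each $Y^{\Delta^k}\to Y$ is a local trivial fibration, and $N\ncat{Cocycle}(X,-)$ sends local weak equivalences between locally fibrant objects to weak equivalences (Proposition \ref{prop cocycle homs are homotopical}). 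Hence $k\mapsto N\ncat{Cocycle}(X,Y^{\Delta^k})$ is homotopically constant, and its hocolim is $N\ncat{Cocycle}(X,Y)$. The paper then finishes by citation: $N\ncat{Cocycle}(X,Y)\simeq L_W(X,Y)\simeq\R\hat{\H}(\cat{C},j)(X,Y)$ (Proposition \ref{prop low's theorem for simplicial presheaves}). Your step 1 is not needed. It also would not give contractibility of $N\ncat{Triv}_X$, since it only makes the homotopy category cofiltered. Contractibility holds simply because $1_X$ is terminal in $\ncat{Triv}_X$.

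The genuine gap is your ``main obstacle'' paragraph, where the comparison of $Y$ with $\hat Y$ is asserted rather than proved.

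\begin{itemize}
\item The construction you indicate does not land in $Y$. Factoring $Y\to\hat Y$ as a trivial cofibration $Y\to Y'$ followed by a local trivial fibration $Y'\to\hat Y$, then pulling back, gives $H'\to Y'$. There is no retraction $Y'\to Y$, because $Y$ is only locally fibrant. Brown's factorization $Y\to Y\times_{\hat Y}\hat Y^{\Delta^1}\to\hat Y$ does lift each individual cocycle, but that is a $\pi_0$-level statement, not a weak equivalence of hocolims.
\item ``Lifts up to coherent homotopy'' is exactly the homotopy invariance of cocycle nerves. With it, your own steps 2--3, applied to $Y$ and to the locally fibrant $\hat Y$, give $\hocolim_H\u{\ncat{sPre}}(\cat{C})(H,Y)\simeq N\ncat{Cocycle}(X,Y)\simeq N\ncat{Cocycle}(X,\hat Y)\simeq\hocolim_H\u{\ncat{sPre}}(\cat{C})(H,\hat Y)$.
\item Your final step also needs repair. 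The functor $H\mapsto\u{\ncat{sPre}}(\cat{C})(H,\hat Y)$ is homotopically constant only on projective cofibrant $H$. For example, with $X=y(U)$ and $H=R$ a discrete covering sieve, $\hat Y(U)\to\u{\ncat{sPre}}(\cat{C})(R,\hat Y)=\lim_R\hat Y$ compares with a strict limit and need not be a weak equivalence. The hocolim, however, runs over all of $\ncat{Triv}_X^\op$. So you must first show that the full subcategory of cofibrant $H\to X$ is homotopy initial (hence also has contractible nerve). You can do this with the natural objectwise trivial fibrations $QH\to H$, as the paper does for $\ncat{QHyp}_U$.
\end{itemize}

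With these repairs your ending is a legitimate alternative to the paper's. It replaces the hammock-localization comparisons (Low's Theorem 3.12 and Dwyer--Kan) with an elementary fact: a homotopically constant diagram over a category with contractible nerve has hocolim equivalent to any of its values. Here that value is $\u{\ncat{sPre}}(\cat{C})(QX,\hat Y)=\R\hat{\H}(\cat{C},j)(X,Y)$.
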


\begin{Rem}
We give here a proof sketch of this result for the reader's convenience, skipping over some details about categories of fibrant objects, see Appendix \ref{section homotopical categories}. We encourage the reader to read Low's full argument.
\end{Rem}

\begin{proof}[Proof Sketch]
The first ingredient for the proof is Thomason's Homotopy Colimit Theorem \ref{th thomason homotopy colimit theorem}. Let $F: \cat{C}^{\op} \to \ncat{Cat}$ be a diagram of small categories. Consider its \textbf{Grothendieck construction} $\int F$, which is defined as follows. Its objects are pairs $(U, x)$ where $U \in \cat{C}$ and $x \in F(U)$, and morphisms $(U,x) \to (V,y)$ are pairs $(f,g)$ where $f : U \to V$ and $g: x \to F(f)(y)$ is a morphism in $F(U)$. There is a canonical functor $\int F \to \cat{C}$ given on objects by $(U,x) \mapsto U$.

\begin{Th}[{\cite[Theorem 1.2]{Thomason1979}}] \label{th thomason homotopy colimit theorem}
Let $F: \cat{C}^{\op} \to \ncat{Cat}$ be a small (strict) diagram of small categories. Then there is a weak equivalence of simplicial sets
\begin{equation}
    \underset{U \in \cat{C}^{\op}}{\text{hocolim}} \; NF(U) \xrightarrow{\simeq} N \int F
\end{equation}
where $N$ denotes the nerve functor.
\end{Th}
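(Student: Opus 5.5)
The plan is to prove the equivalence by a homotopy-colimit ``Fubini'' argument. First I rewrite $N\int F$ itself as a homotopy colimit over $\cat{C}^\op$, and then I compare the two homotopy colimits termwise using a natural adjunction.

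I model $\text{hocolim}$ by the Bousfield--Kan formula, i.e. the diagonal of the simplicial replacement. Its $n$-simplices are pairs consisting of a chain of $n$ composable morphisms in $\cat{C}$ (read in $\cat{C}^\op$), with endpoint some object $U$, together with an $n$-simplex $x_0 \to \dots \to x_n$ of $NF(U)$. Two standard facts will be used:
\begin{enumerate}
\item $N\cat{D} \cong \text{hocolim}_{\cat{D}}\, *$ up to weak equivalence, for any small category $\cat{D}$;
\item for a functor $\pi : \cat{D} \to \cat{E}$ there is a natural weak equivalence $\text{hocolim}_{\cat{D}}\, * \simeq \text{hocolim}_{e \in \cat{E}}\, N(\pi/e)$.
\end{enumerate}
The second fact is the homotopy left Kan extension along $\pi$, which can be checked directly on bisimplicial sets via the diagonal lemma. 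Applying it to the canonical projection $\pi : \int F \to \cat{C}$, with variance arranged so that the comma categories $\pi/U$ assemble into a $\cat{C}^\op$-indexed diagram, gives
\begin{equation*}
N\textstyle\int F \simeq \text{hocolim}_{U \in \cat{C}^\op} N(\pi/U).
\end{equation*}

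Next I compare the fiber $F(U)$ with the comma category $\pi/U$. There is an inclusion $F(U) \hookrightarrow \pi/U$ sending $x$ to $((U,x), 1_U)$. In the other direction, an object $((V,y), f)$ of $\pi/U$ is sent to the restriction or pushforward of $y$ along $f$ into $F(U)$. The point is that $\pi$ is a (split) fibration, because the Grothendieck construction has cartesian lifts given by $F(f)$. This should make the two functors adjoint, with the inclusion as one of the adjoints. Adjoint functors induce homotopy equivalences on nerves, since the unit and counit give simplicial homotopies. Hence $NF(U) \to N(\pi/U)$ is a weak equivalence. This map is natural in $U$, and homotopy invariance of $\text{hocolim}$ then yields $\text{hocolim}_{\cat{C}^\op} NF(U) \simeq N\int F$.

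Finally, I would check that this composite agrees with the explicit map. That map sends a simplex (chain in $\cat{C}$, simplex of $NF(U)$) to the $n$-simplex of $\int F$ obtained by transporting each $x_i$ along the chain. This identification gives the statement as written, with the arrow going from the homotopy colimit into $N\int F$.

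The main obstacle I expect is bookkeeping of variance. Because $F$ is contravariant, and because morphisms $(f,g)$ in $\int F$ have $g : x \to F(f)(y)$, I must determine whether the comma category $\pi/U$ or $U/\pi$ is the right one. Correspondingly, I must determine whether the inclusion of the fiber is a left or a right adjoint. I would settle this first, in the simplest case where $\cat{C}$ is the arrow category. Only after that would I fix the Fubini statement with matching conventions.
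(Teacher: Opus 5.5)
The paper does not prove this statement; it only cites Thomason, so your argument has to stand on its own. Its overall shape is a sound route: write $N\int F$ as a homotopy colimit of comma categories, then compare fibrewise through an adjunction. However, the step ``this map is natural in $U$'' fails as written.

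The inclusion $i_U : F(U) \to U/\pi$, $x \mapsto ((U,x),1_U)$, is only lax natural. For $h : U' \to U$ one gets $h^*i_U(x) = ((U,x),h)$ but $i_{U'}(F(h)x) = ((U',F(h)x),1_{U'})$, and these are only joined by the cartesian arrow $(h,1)$. Homotopy invariance of $\text{hocolim}$ needs a strict natural transformation of diagrams, so this does not give $\text{hocolim}\, NF(U) \simeq \text{hocolim}\, N(U/\pi)$.

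The repair is to use the other functor you mention, $R_U : U/\pi \to F(U)$, $((V,y),u) \mapsto F(u)(y)$. It is right adjoint to $i_U$, since $(U/\pi)\bigl(i_U x, ((V,y),u)\bigr) \cong F(U)\bigl(x, F(u)y\bigr)$. It is also strictly natural in $U$, because $F(uh) = F(h)F(u)$; this is exactly where strictness of $F$ enters the argument.

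This also settles the variance question you deferred. With the paper's convention $g : x \to F(f)(y)$, the projection $\pi$ is a Grothendieck fibration, with cartesian lifts $(u,1) : (V,F(u)x) \to (U,x)$. So the correct comma category is the under-category $U/\pi$, whose objects are $((V,y), u : U \to V)$. It is contravariant in $U$, hence already a $\cat{C}^{\op}$-diagram.

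The over-category genuinely fails. Take $\cat{C} = \{0 \to 1\}$, $F(1) = *$, $F(0)$ discrete on $\{p,q\}$, and $F(0\to 1)$ picking $p$. Then $\pi/1 = \int F$, whose nerve is an interval plus a point, while $F(1) = *$.

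So your fact (2) should be used in the form $N\int F \simeq \text{hocolim}_{U \in \cat{C}^{\op}} N(U/\pi)$. The same diagonal argument proves it: for a fixed simplex $d_0 \to \dots \to d_q$ of $N\int F$, the other simplicial direction is the nerve of the slice $\cat{C}/\pi(d_0)$ (up to reversing vertex order), which is contractible since it has a terminal object.

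The corrected argument then produces a zig-zag $N\int F \xleftarrow{\simeq} \text{hocolim}_{\cat{C}^{\op}} N(U/\pi) \xrightarrow{R_*} \text{hocolim}_{\cat{C}^{\op}} NF(U)$, not a composite you can directly compare with the transport map $\eta$. To get the arrow as stated, do two things:
\begin{enumerate}
\item Place the simplex at the $\cat{C}$-terminal end of the chain, as its last vertex. Then $(U_0 \to \dots \to U_n,\ x_\bullet \in NF(U_n)_n)$ goes to $(U_k, F(U_k \to U_n)x_k)_k$. With the reversed vertex order, the fibre arrows point the wrong way for the paper's $\int F$.
\item For a simplex of $\text{hocolim}_{\cat{C}^{\op}} N(U/\pi)$ with components $d_k = (V_k,y_k)$ and $u_k : U_n \to V_k$, set $w_k : U_k \to U_n \xrightarrow{u_k} V_k$. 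The cartesian arrows $(w_k,1) : (U_k, F(w_k)y_k) \to (V_k, y_k)$ assemble into a simplicial homotopy from $\eta \circ R_*$ to the forgetful map.
\end{enumerate}
Two-out-of-three then shows that $\eta$ is a weak equivalence.
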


Now applying this theorem with $F = {}^c \cat{C}(\pi(-),A)$, where $\pi: \ncat{Triv}_X \to \cat{C}$ is defined by $\pi(H \to X) = H$, and ${}^c \cat{C}(H,A)$ denotes the underlying hom set thought of as a discrete category, we have a weak equivalence
$$\underset{H \in \ncat{Triv}_X^{\op}}{\text{hocolim}} \; N {}^c\cat{C}(H,A) = \underset{H \in \ncat{Triv}_X^{\op}}{\text{hocolim}} \; {}^c \cat{C}(H,A) \xrightarrow{\simeq} N \int \cat{C}(\pi(-),A).$$

The objects in the category $\int \cat{C}(\pi(-), A)$ consist of pairs $(f, g)$ where $f: H \to X$ is a trivial fibration and $g : H \to A$ is an arbitrary map in $\cat{C}$, and a morphism in $\int \cat{C}(\pi(-),A)$ between $(f,g)$ and $(f',g')$ consists of a map $\alpha : H \to H'$ making the following diagram commute
\begin{equation}
    \begin{tikzcd}
	& H \\
	X && A \\
	& {H'}
	\arrow["f"', two heads, from=1-2, to=2-1]
	\arrow["{f'}", two heads, from=3-2, to=2-1]
	\arrow["g", from=1-2, to=2-3]
	\arrow["{g'}"', from=3-2, to=2-3]
	\arrow["\alpha"{description}, from=1-2, to=3-2]
\end{tikzcd}
\end{equation}
But this is precisely a description of the cocycle category $\ncat{Cocycle}(X,A)$ (Definition \ref{def cocycle category}). Hence we have a weak equivalence
\begin{equation}\label{eqn cocycle nerve is hocolim}
    \underset{H \in \ncat{Triv}_X^{\op}}{\text{hocolim}} \; N{}^c \cat{C}(H,A) \xrightarrow{\simeq} N \ncat{Cocycle}(X,A).
\end{equation}

Now notice that the inclusion map $\Delta^0 \hookrightarrow \Delta^k$ into the $0$th vertex is a trivial cofibration of simplicial sets. Thus, by the axioms of a simplicial category of fibrant objects \cite[Definition 5.1]{Low2015}, the map $A^{\Delta^k} \to A$ is a trivial fibration. By Proposition \ref{prop cocycle homs are homotopical}, we have a weak equivalence
$$N \ncat{Cocycle}(X,A^{\Delta^k}) \xrightarrow{\simeq} N \ncat{Cocycle}(X,A).$$
This implies that the induced map on homotopy colimits 
\begin{equation} \label{eqn hocolim of nerve of cocycle cat is cocycle cat}
\underset{k \in \mathsf{\Delta}^{\op}}{\text{hocolim}} \; N \ncat{Cocycle}(X,A^{\Delta^k}) \xrightarrow{\simeq} \underset{k \in \mathsf{\Delta}^{\op}}{\text{hocolim}} \; N \ncat{Cocycle}(X,A) 
\end{equation}
is a weak equivalence. Now by \cite[Corollary 1.9.6]{Low2014Notes}, we have
\begin{equation*}
 \underset{k \in \mathsf{\Delta}^{\op}}{\text{hocolim}} \; N \ncat{Cocycle}(X,A) \cong N \ncat{Cocycle}(X,A) \times N \mathsf{\Delta}^\op   
\end{equation*}
but $N\mathsf{\Delta}^\op$ is contractible, since $\mathsf{\Delta}^\op$ has a terminal object. Hence the projection map
\begin{equation*}
    \underset{k \in \mathsf{\Delta}^{\op}}{\text{hocolim}} \; N \ncat{Cocycle}(X,A) \xrightarrow{\simeq} N \ncat{Cocycle}(X,A)
\end{equation*}
is a weak equivalence.
Now since all bisimplicial sets are Reedy cofibrant \cite[Corollary 15.8.8]{Hirschhorn2009}, using the Bousfield-Kan theorem \cite[Theorem 18.7.4]{Hirschhorn2009} we can model the homotopy colimit of a simplicial diagram in $\ncat{sSet}$ using the diagonal of the corresponding bisimplicial set. Thus
\begin{equation*}
\underset{k \in \mathsf{\Delta}^{\op}}{\text{hocolim}} \; {}^c \cat{C}(H,A^{\Delta^k}) \cong \cat{C}(H,A^{\Delta^\bullet}), 
\end{equation*}
where the right hand side is the simplicial set defined degreewise by $(\cat{C}(H,A^{\Delta^\bullet}))_n \coloneqq \cat{C}(H, A^{\Delta^n})$.
Now $\u{\cat{C}}(H,A)_k = \cat{C}(H \otimes \Delta^k, A) \cong \cat{C}(H, A^{\Delta^k})$, and thus
\begin{equation} \label{eqn hocolim of mapping space is mapping space}
    \underset{k \in \mathsf{\Delta}^{\op}}{\text{hocolim}} \; {}^c \cat{C}(H,A^{\Delta^k}) \cong \u{\cat{C}}(H,A).
\end{equation}
Now replacing $A$ in (\ref{eqn cocycle nerve is hocolim}) with $A^{\Delta^k}$ and then taking the homotopy colimit of the resulting diagram over $k$ gives us the weak equivalence
\begin{equation}
    \underset{k \in \mathsf{\Delta}^{\op}}{\text{hocolim}} \,\underset{H \in \ncat{Triv}_X^{\op}}{\text{hocolim}} \; {}^c \cat{C}(H,A^{\Delta^k}) \xrightarrow{\simeq} \underset{k \in \mathsf{\Delta}^{\op}}{\text{hocolim}} \; N \ncat{Cocycle}(X,A^{\Delta^k}).
\end{equation}
Interchanging the homotopy colimits gives us by (\ref{eqn hocolim of mapping space is mapping space}) and (\ref{eqn hocolim of nerve of cocycle cat is cocycle cat}) a weak equivalence
\begin{equation}
\underset{H\in \ncat{Triv}_X^{\op}}{\text{hocolim}} \; \u{\cat{C}}(H,A) \xrightarrow{\simeq} \underset{k \in \mathsf{\Delta}^{\op}}{\text{hocolim}} \; N \ncat{Cocycle}(X,A^{\Delta^k}) \xrightarrow{\simeq} N \ncat{Cocycle}(X,A).
\end{equation}
Which gives us the formula (\ref{eq low's formula 1}).
\end{proof}

\begin{Prop} \label{prop low's theorem for simplicial presheaves}
Given a site $(\cat{C}, j)$ and projective fibrant simplicial presheaves $X,Y$ on $\cat{C}$, then there are weak equivalences of simplicial sets
\begin{equation*}
N \ncat{Cocycle}(X,Y) \xrightarrow{\simeq} L_W \cat{C}(X,Y) \simeq
\R \hat{\H}(\cat{C}, j)(X,Y).  
\end{equation*}
\end{Prop}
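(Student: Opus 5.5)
The plan is to obtain the statement by combining the proof of Proposition \ref{prop low's theorem} with the standard theory of categories of fibrant objects from Appendix \ref{section homotopical categories}. First, observe that projective fibrant simplicial presheaves are in particular locally fibrant: an objectwise Kan fibration to $*$ solves every horn-filling problem already on the trivial covering family $\{1_U\}$. Hence the full subcategory of projective fibrant simplicial presheaves, with local weak equivalences and local fibrations, sits inside the category of fibrant objects of Remark \ref{rem local fibrations}. We write $\cat{C}$ for this (simplicial) category of fibrant objects and $W$ for its local weak equivalences, as in the sketch of Proposition \ref{prop low's theorem}.

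The first map, $N\ncat{Cocycle}(X,Y) \to L_W\cat{C}(X,Y)$, is the one given by the cocycle theorem for categories of fibrant objects. A cocycle $X \twoheadleftarrow H \to Y$ with $H \to X$ a trivial fibration is sent to the zig-zag it represents in the hammock (Dwyer--Kan) localization. This is a weak equivalence by Jardine's/Low's result that in a category of fibrant objects the cocycle category computes the mapping spaces of the simplicial localization. Two ingredients are needed here. The factorization lemma (every map factors as a weak equivalence followed by a fibration) lets one replace an arbitrary zig-zag by one whose backward maps are trivial fibrations. The right properness of pullbacks along fibrations makes these replacements functorial up to homotopy. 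I would quote these from Appendix \ref{section homotopical categories} rather than reprove them.

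The second equivalence, $L_W\cat{C}(X,Y)\simeq \R\hat{\H}(\cat{C},j)(X,Y)$, comes from two facts. First, the hammock localization of a model category at its weak equivalences computes derived mapping spaces (Dwyer--Kan), and this is invariant under restricting to the full subcategory of fibrant objects, since fibrant replacement gives a homotopy equivalence of relative categories. Second, the relative category of projective fibrant objects with local weak equivalences has the same hammock localization as the full local projective model category $\hat{\H}(\cat{C},j)$. Its weak equivalences are exactly the local weak equivalences by Theorem \ref{th existence of local projective model structure}, and every object is weakly equivalent (objectwise, hence locally) to a projective fibrant one via the projective fibrant replacement functor. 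So the inclusion induces a DK-equivalence.

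The main obstacle is this last compatibility step. Localizing the projective fibrant objects at local weak equivalences mixes a category of fibrant objects structure (local fibrations) with a model structure in which those objects need not be fibrant. I would try first to argue purely at the level of relative categories: show that the inclusion of projective fibrant objects into $(\ncat{sPre}(\cat{C}),\ncat{LWeak})$ has a homotopy inverse given by objectwise $\text{Ex}^\infty$, which is natural and consists of objectwise weak equivalences. Then Dwyer--Kan invariance gives the equivalence of hammock localizations, and the identification of $L_W$ of $\hat{\H}(\cat{C},j)$ with $\R\hat{\H}(\cat{C},j)$ is standard.
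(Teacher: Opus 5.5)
Your proposal follows essentially the paper's route: Low's cocycle theorem \cite[Theorem 3.12]{Low2015} gives the first map; a comparison of the hammock localization of a full subcategory with that of all simplicial presheaves bridges to the model category, and your $\text{Ex}^\infty$ homotopy-inverse argument makes this step explicit where the paper handles it by citing \cite[Lemma 6.4]{Low2015}; and Dwyer--Kan \cite[Corollary 4.7]{Dwyer1980a} identifies the hammock localization of the simplicial model category $\hat{\H}(\cat{C},j)$ with $\R\hat{\H}(\cat{C},j)$. One bookkeeping fix is needed: apply the cocycle theorem in the category of \emph{locally} fibrant presheaves and run your $\text{Ex}^\infty$ comparison for that subcategory (it goes through verbatim), because projective fibrant presheaves with local fibrations are not closed under pullback and so do not themselves form a category of fibrant objects.
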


\begin{proof}
For the left hand equivalence
combine \cite[Theorem 3.12]{Low2015} with \cite[Lemma 6.4]{Low2015}. For the right hand equivalence see \cite[Corollary 4.7]{Dwyer1980a}, and note that the right-hand side is a zig-zag of weak equivalences.
\end{proof}

\begin{Rem} \label{rem size considerations for low's theorem}
In order for the above statements to be valid, we must really be careful about the size of the categories we are dealing with. To not belabor this point, we refer to \cite[Section 6]{Low2015} and \cite[Section 6.5]{Dugger2004}.
\end{Rem}

Given a site $(\cat{C}, j)$, the full subcategory $\sPre(\cat{C})_{j\text{fib}}$ of locally fibrant simplicial presheaves can be equipped with a simplicial category of fibrant objects structure as in \cite{Low2015}, where the weak equivalences are the local weak equivalences and the fibrations are the local fibrations. 

We note however that the map $y(U) \to *$ is a projective fibration since all discrete simplicial sets are Kan complexes, and hence a local trivial fibration. Thus if $p : H \to y(U)$ is a local fibration, then so is $H \to *$. Thus every local fibration to a representable has domain a locally fibrant simplicial presheaf.

\begin{Cor} \label{cor low's formula}
Given a site $(\cat{C}, j)$ with $U \in \cat{C}$, and a locally fibrant simplicial presheaf $A$, let $\hat{A}$ denote a fixed fibrant replacement of $A$ for the local projective model structure $\hat{\H}(\cat{C}, j)$. Then the following simplicial sets are weak equivalent
\begin{equation} \label{eq low's formula}
    \underset{H \in \ncat{Triv}^{\op}_{y(U)}}{\text{hocolim}} \, \u{\sPre}(\cat{C}, j)(H, A) \simeq \R \hat{\H}(\cat{C},j)(y(U), A) \simeq \hat{A}(U).
\end{equation}
\end{Cor}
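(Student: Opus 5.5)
The plan is to apply Proposition \ref{prop low's theorem} with $X = y(U)$ and $Y = A$, and then identify the right-hand side with $\hat{A}(U)$ using the enriched Yoneda lemma. There are three steps.

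\textbf{Step 1: check the hypotheses of Proposition \ref{prop low's theorem}.} The proposition requires both $X$ and $Y$ to be locally fibrant.
\begin{itemize}
\item $A$ is locally fibrant by assumption.
\item For $y(U)$, note that $y(U)(V)$ is a discrete simplicial set, hence a Kan complex, for every $V$. So $y(U) \to *$ is an objectwise fibration.
\item An objectwise fibration is a local fibration: take the trivial covering family $\{1_U\}$, which is in $j(U)$ by axiom (1) of Definition \ref{def coverage}. So $y(U)$ is locally fibrant.
\end{itemize}
Moreover, $\ncat{Triv}_{y(U)}$ is exactly the category of local trivial fibrations $H \to y(U)$, i.e.\ hypercovers of $U$. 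By the remark preceding the corollary, every such $H$ is itself locally fibrant, so the category of fibrant objects framework of \cite{Low2015} applies. Proposition \ref{prop low's theorem} then gives the first weak equivalence
\begin{equation*}
\nhocolim{H \in \ncat{Triv}^{\op}_{y(U)}} \u{\sPre}(\cat{C})(H,A) \simeq \R\hat{\H}(\cat{C},j)(y(U),A).
\end{equation*}

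\textbf{Step 2: compute the derived mapping space.} By Definition \ref{def derived hom},
\begin{equation*}
\R\hat{\H}(\cat{C},j)(y(U),A) \simeq \u{\sPre}(\cat{C})(Q y(U), \hat{A}),
\end{equation*}
using that $\hat{\H}(\cat{C},j)$ is simplicial. This holds because it is a left Bousfield localization of the simplicial model category $\H(\cat{C})$ (Proposition \ref{prop local proj is localization at local weak equivs} together with Proposition \ref{prop bousfield loc exists}).

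Next, $y(U)$ is projective cofibrant by Corollary \ref{cor representables are cofibrant}. Cofibrations are unchanged under left Bousfield localization, so $y(U)$ is also cofibrant in $\hat{\H}(\cat{C},j)$ and we may take $Q y(U) = y(U)$. Since $\hat{A}$ is fibrant, the enriched Yoneda lemma gives
\begin{equation*}
\u{\sPre}(\cat{C})(y(U),\hat{A}) \cong \hat{A}(U),
\end{equation*}
which is the second equivalence.

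\textbf{Step 3: independence of the choice of $\hat{A}$.} Any two fibrant replacements of $A$ are weakly equivalent fibrant objects. Mapping the cofibrant object $y(U)$ into them yields weakly equivalent Kan complexes, so the choice of $\hat{A}$ does not matter.

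The main obstacle is in Step 1. Low's result is stated in a category of fibrant objects whose weak equivalences are the local weak equivalences, and one has to be sure that the derived mapping space computed there agrees with the one computed in $\hat{\H}(\cat{C},j)$. I would resolve this by invoking Proposition \ref{prop low's theorem for simplicial presheaves}, which identifies $N\ncat{Cocycle}(X,A)$ with the hammock localization $L_W\cat{C}(X,A)$ and hence with $\R\hat{\H}(\cat{C},j)(X,A)$. This relies on the fact that the hammock localization of the full subcategory of locally fibrant objects agrees with that of the whole model category, since every object is locally weakly equivalent to its fibrant replacement (Dwyer--Kan). If that proposition is stated only for projective fibrant objects, I would first replace $A$ objectwise by a Kan fibrant replacement. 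That replacement is a local weak equivalence between locally fibrant objects, and it induces weak equivalences on both sides of the formula, so the reduction is harmless. Size issues are handled as in Remark \ref{rem size considerations for low's theorem}.
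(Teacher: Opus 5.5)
Your proposal is correct and follows the paper's own argument, which is left unwritten as an immediate corollary: specialize Proposition \ref{prop low's theorem} to $X = y(U)$, which is locally fibrant because it is objectwise discrete, and then use that $y(U)$ is projective cofibrant (hence cofibrant in $\hat{\H}(\cat{C},j)$) together with the enriched Yoneda lemma to identify $\R\hat{\H}(\cat{C},j)(y(U),A)$ with $\hat{A}(U)$. The only quibble is your justification that $\hat{\H}(\cat{C},j)$ is simplicial: Proposition \ref{prop bousfield loc exists} is stated for a small set of maps, not for the class of local weak equivalences, so you should cite \cite{Blander2001} or the set-sized localization at bounded hypercovers in \cite{Dugger2004} instead.
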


This implies the following result.

\begin{Cor}[{\cite[Proposition 6.6]{Low2015}}]
Given a site $(\cat{C}, j)$ with $U \in \cat{C}$ and a locally fibrant simplicial presheaf $X$, let $\hat{X}$ denote the fibrant replacement of $X$ in $\hat{\H}(\cat{C},j)$, then
\begin{equation}
    \hat{X}(U) \simeq \nhocolim{H \in \ncat{Hyp}_U^\op} \u{\ncat{sPre}}(\cat{C})(H, X).
\end{equation}
\end{Cor}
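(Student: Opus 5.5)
The plan is to read this off Corollary \ref{cor low's formula}, which already gives $\hat{X}(U) \simeq \operatorname{hocolim}_{H \in \ncat{Triv}^{\op}_{y(U)}} \u{\ncat{sPre}}(\cat{C})(H, X)$. The first step is to observe that $\ncat{Triv}_{y(U)}$ and $\ncat{Hyp}_U$ are literally the same category. By definition, a hypercover of $U$ is a local trivial fibration $p : H \to y(U)$, and both categories are full subcategories of $\ncat{sPre}(\cat{C})_{/y(U)}$ on this same class of objects.

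The only remaining bookkeeping is that Corollary \ref{cor low's formula} (through Proposition \ref{prop low's theorem}) is stated for locally fibrant objects. I would check that every $H \in \ncat{Hyp}_U$ is locally fibrant, using the remark just before Corollary \ref{cor low's formula}. There, $y(U) \to *$ is an objectwise Kan fibration, hence a local fibration. Local fibrations compose: given a horn in $H$ over $*$, first solve it locally in $y(U)$, then lift locally through $p$ after refining along a further cover. This uses composition of covering families, or for a non-composition-closed coverage just iterating the local-solution definition objectwise. So $H \to *$ is a local fibration, and the domains of the category over which we take the homotopy colimit lie in the locally fibrant subcategory on which Low's category of fibrant objects structure is defined.

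Given this, the hocolim diagrams $H \mapsto \u{\ncat{sPre}}(\cat{C})(H,X)$ on $\ncat{Triv}^{\op}_{y(U)}$ and on $\ncat{Hyp}^{\op}_U$ coincide. Composing with the equivalence $\R\hat{\H}(\cat{C},j)(y(U),X) \simeq \hat{X}(U)$ gives the claim. That equivalence is the enriched Yoneda lemma applied to the fibrant replacement $\hat{X}$, since $y(U)$ is projective cofibrant by Corollary \ref{cor representables are cofibrant}.

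I expect the main obstacle to be not mathematical but set-theoretic. $\ncat{Hyp}_U$ is a large category, so the homotopy colimit must be interpreted in a larger universe, or restricted to a small cofinal subcategory, as in Remark \ref{rem size considerations for low's theorem}. I would handle this by deferring to \cite[Section 6]{Low2015}, exactly as Corollary \ref{cor low's formula} does.
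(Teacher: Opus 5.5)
Your proposal is correct and is essentially the paper's argument: the paper reads this directly off Corollary \ref{cor low's formula}, since $\ncat{Triv}_{y(U)}$ and $\ncat{Hyp}_U$ are by definition the same full subcategory of $\ncat{sPre}(\cat{C})_{/y(U)}$, and it cites the remark preceding that corollary to guarantee that every such $H$ is locally fibrant. One small simplification is available, and it also corrects your aside: $y(U) \to *$ is an objectwise Kan fibration, because horns in the discrete simplicial sets $y(U)(V)$ extend on the nose, so the local solutions for $H \to *$ come from a single covering family for $p$ and no composition of covers is needed (the iteration argument you sketch would not, as stated, produce a $j$-covering family for a coverage that is not composition closed).
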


This result seems less known than it ought to be. In this section we will show that it implies, for example, that on strictly hypercomplete sites, the plus construction applied once to a presheaf already results in a sheaf.

Recall the adjunction $\pi_0 : \ncat{sPre}(\cat{C}) \rightleftarrows \ncat{Pre}(\cat{C}) : {}^c (-)$ from Section \ref{section simplicial presheaves}. It is easy to see that this is a simplicially-enriched adjunction. If $(\cat{C}, j)$ is a site, and $A$ is a presheaf of sets (which implies that it is locally fibrant, since discrete simplicial sets are Kan complexes), then
\begin{equation*}
\u{\ncat{sPre}}(\cat{C})(H, {}^c A) \cong {}^c \ncat{Pre}(\cat{C})(\pi_0 H, A).
\end{equation*}
Hence
\begin{equation} \label{eq Low's formula for discrete}
     \hat{A}(U) \simeq \nhocolim{H \in \ncat{Hyp}^\op_{U}} \u{\ncat{sPre}}(\cat{C})(H, {}^c A) \cong \nhocolim{H \in \ncat{Hyp}^\op_{U}} {}^c \ncat{Pre}(\cat{C})(\pi_0 H, A).
\end{equation}

Now given $U \in \cat{C}$, the category $\ncat{Hyp}_U$ is canonically simplicially enriched. Let $[\ncat{Hyp}_U]$ denote the homotopy category of $\ncat{Hyp}_U$, i.e. $[\ncat{Hyp}_U](f,g) = \pi_0 \u{\ncat{Hyp}_U}(f,g)$. Let $q: \ncat{Hyp}_U \to [\ncat{Hyp}_U]$ denote the quotient functor. In other words, the objects of $[\ncat{Hyp}_U]$ are local trivial fibrations over $X$, and the morphisms are simplicial homotopy classes of morphisms between local trivial fibrations over $X$.

\begin{Prop}[{\cite[Proposition 8.5]{Dugger2004}}] \label{prop homotopy cat of hypercovers is cofiltered}
Given a site $(\cat{C}, j)$, the categories $[\ncat{Hyp}_U]$ and $[\ncat{DHIHyp}_U]$ are finitely cofiltered\footnote{By cofiltered here we mean that every finite diagram has a cone.}.
\end{Prop}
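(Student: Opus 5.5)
To show $[\ncat{Hyp}_U]$ is finitely cofiltered, I will check the three standard conditions: nonemptiness, cones over pairs of objects, and equalizing cones over parallel pairs of morphisms, where the last is needed only up to simplicial homotopy. For each condition I will build the required cone by pulling back along local trivial fibrations. The key tool is that local trivial fibrations are stable under pullback and composition. This holds because they are characterized by a local right lifting property against $\partial\Delta^n \to \Delta^n$ (the Lemma citing \cite[Proposition 7.2]{Dugger2002}), and local solutions pull back along maps and compose.

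\textbf{Nonempty and cones over pairs.} The identity $y(U) \to y(U)$ is a local trivial fibration, so $[\ncat{Hyp}_U]$ is nonempty. Given $H \to y(U)$ and $H' \to y(U)$, the projection $H \times_{y(U)} H' \to H$ is a pullback of a local trivial fibration. Composing it with $H \to y(U)$ shows that $H\times_{y(U)}H'$ is again a hypercover, and it maps to both $H$ and $H'$.

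\textbf{Equalizing parallel pairs.} Let $f,g : H \rightrightarrows H'$ be maps over $y(U)$. I form the path-object construction relative to $y(U)$, namely $P = (H')^{\Delta^1} \times_{(H')^{\partial\Delta^1}} H$, where $H \to (H')^{\partial\Delta^1} = H'\times H'$ is given by $(f,g)$. I then restrict $P$ to the fibre over the constant path in $y(U)$; since $y(U)$ is discrete, $y(U)^{\Delta^1} = y(U)$, so this restriction is automatic. The projection $P \to H$ is a pullback of $(H')^{\Delta^1} \to (H')^{\partial\Delta^1}\times_{y(U)^{\partial\Delta^1}} y(U)^{\Delta^1}$. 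That map is a local trivial fibration because $H' \to y(U)$ is one: the pushout-product of $\partial\Delta^n\to\Delta^n$ with $\partial\Delta^1\to\Delta^1$ is a monomorphism, and local lifting against monomorphisms of finite simplicial sets follows from local lifting against boundary inclusions by induction on cells, refining covers at each step. Hence $P \to H \to y(U)$ is a hypercover, and the tautological map $P \to (H')^{\Delta^1}$ gives a simplicial homotopy between $f\circ\mathrm{pr}$ and $g\circ\mathrm{pr}$ over $y(U)$. So $f$ and $g$ become equal in $[\ncat{Hyp}_U]$ after precomposing with $P \to H$. These three conditions together give cones over all finite diagrams.

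\textbf{The case $[\ncat{DHIHyp}_U]$.} Here the constructed $P$ and $H\times_{y(U)}H'$ need not be degreewise semi-representable. I would fix this by refining any hypercover by a DHI-hypercover, building the refinement inductively level by level with Lemma \ref{lem refine local epi by covering basal map}. At level $k$, I apply that lemma to the local epimorphism $m^+_k$ for $P$ composed with the already-built matching object, as in the proof of Theorem \ref{th refine local trivs on top mfds by cech nerves}, and I use splitness to define the degeneracies. Precomposing cones with this refinement gives the cofiltered property for $[\ncat{DHIHyp}_U]$.

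\textbf{Main obstacle.} I expect the hardest step to be verifying that the map $(H')^{\Delta^1}\to(H')^{\partial\Delta^1}\times_{y(U)^{\partial\Delta^1}} y(U)^{\Delta^1}$ is a local trivial fibration, that is, the local pushout-product argument with its repeated cover refinements. If composition of covers is not available, I would handle this by working with the Grothendieck closure of $j$ and passing through local epimorphisms.
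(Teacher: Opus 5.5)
Your argument for $[\ncat{Hyp}_U]$ is the standard one behind \cite[Proposition 8.5]{Dugger2004}; the paper gives no proof beyond this citation. You take fibre products over $y(U)$ for pairs, and you pull back $(H')^{\Delta^1}\to H'\times_{y(U)}H'$, a local trivial fibration by the pushout-product argument, to equalize parallel maps up to simplicial homotopy. The reduction to $[\ncat{DHIHyp}_U]$, however, contains a step that fails as stated. Lemma \ref{lem refine local epi by covering basal map} requires the object $Z$ to be semi-representable, and you apply it with $Z=M^+_k\widetilde{P}$, the matching object of the partially built refinement $\widetilde{P}$ of the hypercover $P$ being refined. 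By Lemma \ref{lem characterization of matching object} this is in general only a coproduct of limits of representables. Already $M^+_1\widetilde{P}=\sum_{i,i'}y(U_i)\times_{y(U)}y(U_{i'})$ fails: on $(\ncat{Cart},j_{\text{open}})$ it is not semi-representable once some $U_i\cap U_{i'}$ is not a disjoint union of cartesian spaces (two star-shaped open subsets of $\R^3$ can meet in the complement of a line). In the proof of Theorem \ref{th refine local trivs on top mfds by cech nerves}, the analogous object $\check{C}(\mathcal{U}_k)_{k+1}$ is semi-representable only because $\ncat{TopMan}$ has the relevant intersections. This is also why the paper imposes condition (1) of Definition \ref{def matching verdier}. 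The repair is easy: set $Q=M^+_k\widetilde{P}\times_{M^+_kP}P_k$ and $\widetilde{P}'_k=\sum_{y(V)\to Q}y(V)$. The canonical map $\widetilde{P}'_k\to Q$ is an objectwise epimorphism. Hence its composite with the pullback $Q\to M^+_k\widetilde{P}$ of $m^+_k$ is a local epimorphism, using the same covers. Then $\widetilde{P}_k=\widetilde{P}'_k+L^+_k\widetilde{P}$ is semi-representable and carries the degeneracies. Alternatively, cite the refinement by split DHI-hypercovers that the paper invokes at the start of the proof of Proposition \ref{prop can refine hypercovers by Verdier hypercovers on matching Verdier site}.

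There is also a smaller caveat. Two of your steps need every composite of $j$-covering families to be refined by a $j$-covering family. The first is composing local trivial fibrations, to see that $H\times_{y(U)}H'\to y(U)$ and $P\to y(U)$ are hypercovers. The second is the cell-by-cell lifting against the pushout-product. This condition holds for Grothendieck topologies, which is the setting of \cite{Dugger2004}. It also holds for all sites of Example \ref{ex sites}; for $j_{\text{good}}$ this is because open covers of cartesian spaces admit good refinements. So it is best stated as a hypothesis. Your fallback of passing to $\text{Gro}(j)$ does not work. By Remark \ref{rem local epis are problematic}, $\text{Gro}(j)$-local epimorphisms can be strictly more general than $j$-local epimorphisms, so this would change the category $\ncat{Hyp}_U$ itself.
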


\begin{Rem}
The above result is quite old, proved in \cite[Expos\'{e} V, 7.3.2]{Grothendieck1960SGA} and \cite[Corollary 8.13]{Artin1969}. Further proofs are given in \cite[Proposition 2.16]{Beke2004} and \cite[Proposition 10.5.7]{Low2014Notes}.
\end{Rem}

The functor $G = \ncat{Pre}(\cat{C})(\pi_0(-), A) : \ncat{Hyp}_U^\op \to \ncat{Set}$ clearly factors through this homotopy category, so we obtain a unique functor $G'$ making the following diagram commute
\begin{equation} \label{eq G factors through hocat}
    \begin{tikzcd}
	{\ncat{Hyp}_U^\op} && {\ncat{sSet}} \\
	& {[\ncat{Hyp}_U^\op]}
	\arrow["G", from=1-1, to=1-3]
	\arrow["q^\op"', from=1-1, to=2-2]
	\arrow["{G'}"', from=2-2, to=1-3]
\end{tikzcd}
\end{equation}

\begin{Def}
We say that a functor $F: \cat{C} \to \cat{D}$ is \textbf{homotopy final} if for every $V \in \cat{D}$, the nerve of the category $(V \downarrow F)$ is weakly contractible. We say it is \textbf{homotopy initial} if $F^\op$ is homotopy final, i.e. if the nerve of $(F \downarrow V)$ is weakly contractible.
\end{Def}

\begin{Prop}[{\cite[Theorem 1.10.26]{Low2014Notes}}] \label{prop homotopy final iff induces weak equiv on hocolim}
A functor $F : \cat{C} \to \cat{D}$ is homotopy final if and only if for every diagram $G : \cat{D} \to \ncat{sSet}$ the canonical map
\begin{equation*}
\hocolim \,G \to \hocolim \,GF
\end{equation*}
is a weak equivalence.
\end{Prop}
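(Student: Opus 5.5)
The natural approach is to treat both sides as homotopical functors of $G$ and reduce to free (corepresentable) diagrams, where the comparison becomes exactly the comma-category condition. Throughout, $\hocolim$ is computed by the Bousfield--Kan formula, so it preserves objectwise weak equivalences. Consider the two functors $\ncat{Fun}(\cat{D}, \ncat{sSet}) \to \ncat{sSet}$ given by
\begin{equation*}
\Phi(G) = \nhocolim{\cat{C}} GF, \qquad \Psi(G) = \nhocolim{\cat{D}} G,
\end{equation*}
together with the canonical natural transformation $\Phi \Rightarrow \Psi$ induced by $F$. Both functors send objectwise weak equivalences to weak equivalences, because precomposition with $F$ preserves objectwise weak equivalences. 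Both also commute, up to natural weak equivalence, with homotopy pushouts, transfinite compositions of cofibrations, coproducts and retracts. The reason is that homotopy colimits commute with homotopy colimits, and $F^*$ preserves all colimits and projective cofibrations objectwise.

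The first key step is the computation on a corepresentable $G = \cat{D}(V,-)$ for $V \in \cat{D}$. Since $\cat{D}(V,-)$ is projective cofibrant, $\Psi(\cat{D}(V,-)) \simeq \colim_{\cat{D}} \cat{D}(V,-) = *$. On the other side, $\Phi(\cat{D}(V,-)) = \hocolim_{\cat{C}} \cat{D}(V, F(-))$ is the homotopy colimit of a diagram of discrete sets. By Thomason's Theorem \ref{th thomason homotopy colimit theorem} (in its covariant form), this is weakly equivalent to the nerve of the Grothendieck construction of $\cat{D}(V,F(-))$. That Grothendieck construction is precisely the comma category $(V \downarrow F)$. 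Hence, for corepresentables, $\Phi \Rightarrow \Psi$ is a weak equivalence if and only if $N(V\downarrow F)$ is weakly contractible. This already gives the converse implication: applying the hypothesis to $G = \cat{D}(V,-)$ for each $V$ shows that $F$ is homotopy final.

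For the forward implication, assume every $N(V \downarrow F)$ is contractible. By the above, $\Phi(G) \to \Psi(G)$ is then a weak equivalence for every $G = \cat{D}(V,-)$. Tensoring with a simplicial set $K$ keeps it a weak equivalence, since both sides become $K \times (-)$ of the previous ones. So the comparison is a weak equivalence on the domains and codomains of the generating projective cofibrations $\cat{D}(V,-) \times \partial\Delta^n \to \cat{D}(V,-)\times\Delta^n$.

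Now take an arbitrary $G$ and replace it by a projective cofibrant $QG \xrightarrow{\simeq} G$; both $\Phi$ and $\Psi$ are invariant under this replacement. The diagram $QG$ is a retract of a cell complex built from these generators. Induct along the cell structure. At each pushout along a generator, both $\Phi$ and $\Psi$ take the pushout square to a homotopy pushout, and the gluing lemma gives a weak equivalence. At transfinite stages, use that filtered colimits along cofibrations are homotopy colimits and that weak equivalences are stable under them. Retracts are immediate.

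The main obstacle is making the claim that $\Phi$ and $\Psi$ preserve these homotopy pushouts and transfinite composites rigorous, in a way compatible with the natural map. I would do this via the coend description
\begin{equation*}
\hocolim_{\cat{D}} G = \int^{V} N(\cat{D}\downarrow V)^{\op}\times G(V),
\end{equation*}
which is a left Quillen functor on the projective model structure, and similarly for $\cat{C}$ composed with the left Quillen functor $F^*$. Alternatively, I would cite the corresponding statement \cite[Theorem 19.6.7]{Hirschhorn2009}.
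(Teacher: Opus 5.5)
Your proof is correct, and it takes a different route from the paper, which gives no argument for this statement and only cites \cite[Theorem 1.10.26]{Low2014Notes}. You also silently correct the direction of the comparison map: it should be $\hocolim_{\cat{C}} GF \to \hocolim_{\cat{D}} G$, not the direction displayed in the statement.

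Your argument varies the diagram $G$. Both $\Phi = \hocolim_{\cat{C}} \circ F^*$ and $\Psi = \hocolim_{\cat{D}}$ preserve colimits, objectwise monomorphisms and objectwise weak equivalences. So a cell induction over a projective cofibrant replacement of $G$ reduces everything to the free diagrams $\cat{D}(V,-)\times K$; the induction uses the gluing lemma at pushouts, stability under transfinite composites of monomorphisms, and closure under retracts. On a free diagram, the Bousfield--Kan formula for a set-valued diagram (or Thomason's theorem) identifies the source with $N(V\downarrow F)\times K$ and the target with a contractible space times $K$. Testing on $G=\cat{D}(V,-)$ then gives the converse at once.

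The more common textbook route varies the weight instead. One rewrites $\hocolim_{\cat{C}} F^*G \cong G\otimes_{\cat{D}} N(-\downarrow F)^{\op}$, so the comparison map is induced by the map of weights $N(-\downarrow F)^{\op}\to N(-\downarrow \cat{D})^{\op}$. Both are projective cofibrant $\cat{D}^{\op}$-diagrams, and homotopy finality says exactly that this map is an objectwise weak equivalence. Homotopy invariance of $G\otimes_{\cat{D}}(-)$ then gives the forward direction in one step, with no cell induction on $G$. Your version needs a bit more bookkeeping, but it uses only elementary facts and makes the link to the comma-category condition very transparent.

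Two small slips to clean up:
\begin{enumerate}
\item The Bousfield--Kan weight has to be contravariant in $V$. So the coend is $\int^V N(V\downarrow\cat{D})^{\op}\times G(V)$, not one built from $(\cat{D}\downarrow V)$.
\item $F^*$ is right Quillen, not left Quillen, between the projective structures. What you actually need, and what is true, is that $F^*$ preserves colimits, objectwise monomorphisms and objectwise weak equivalences. Then the composite $\Phi$ is still left Quillen out of the projective structure.
\end{enumerate}
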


\begin{Lemma} \label{lem quotient functor of trivial fibrations is right aspherical}
Given a site $(\cat{C}, j)$ with $U \in \cat{C}$, the functor $q : \ncat{Hyp}_{U} \to [\ncat{Hyp}_{U}]$ is homotopy initial.
\end{Lemma}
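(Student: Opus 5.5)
To prove the statement I will use the definition directly. By definition of homotopy initial, I must show that for every object $p_K : K \to y(U)$ of $[\ncat{Hyp}_U]$ the comma category $(q \downarrow K)$ has weakly contractible nerve. An object of $(q \downarrow K)$ is a pair $(H, [f])$, where $H \to y(U)$ is a local trivial fibration and $[f]$ is a simplicial homotopy class over $y(U)$ of maps $f : H \to K$. A morphism $(H,[f]) \to (H',[f'])$ is a strict map $\alpha : H \to H'$ over $y(U)$ with $[f'\alpha] = [f]$.

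The plan is to show that $(q\downarrow K)$ is cofiltered in the strict sense: it is nonempty, every pair of objects admits a cone, and every parallel pair of morphisms is equalized by some morphism. A cofiltered category has contractible nerve, which would finish the proof. Nonemptiness is clear, since $(K,[1_K])$ is an object.

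Throughout I will use the following closure properties of local trivial fibrations over $y(U)$. They are stable under pullback and composition, since they are characterized by the matching maps being local epimorphisms (Lemma \ref{lem local triv fib iff matching map loc epi}) and local epimorphisms are stable under base change and composition. Moreover, for a local trivial fibration $K \to y(U)$, the map
\begin{equation*}
K^{\Delta^1} \to K \times_{y(U)} K
\end{equation*}
is again a local trivial fibration. It is a local fibration by the local analogue of the corner map axiom for $\partial\Delta^1 \hookrightarrow \Delta^1$, and it is a local weak equivalence by two-out-of-three, because $K \to y(U)$ is a local weak equivalence with discrete target.

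\textbf{Cones on pairs.} Given $(H_1,[f_1])$ and $(H_2,[f_2])$, I form
\begin{equation*}
H_{12} := H_1 \times_K K^{\Delta^1} \times_K H_2,
\end{equation*}
using the representatives $f_1, f_2$ and the two evaluation maps of $K^{\Delta^1}$. Then $H_{12} \to H_1 \times_{y(U)} H_2$ is a pullback of $K^{\Delta^1} \to K \times_{y(U)} K$, so it is a local trivial fibration. Composing with $H_1 \times_{y(U)} H_2 \to y(U)$, which is a local trivial fibration by base change, shows that $H_{12} \to y(U)$ is a hypercover. The tautological homotopy gives $[f_1\pi_1] = [f_2\pi_2]$, so the two projections form a cone in $(q\downarrow K)$.

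\textbf{Equalizing parallel pairs.} Given $\alpha,\beta : (H,[f]) \to (H',[f'])$, the strict equalizer of $\alpha$ and $\beta$ need not be a hypercover, so I will not use it. Instead I will pass to the homotopy equalizer
\begin{equation*}
E := H \times_{H' \times_{y(U)} H'} H'^{\Delta^1},
\end{equation*}
which is again a hypercover over $y(U)$ by the same pullback argument. On $E$, the maps $\alpha$ and $\beta$ become homotopic. The difficulty is that they are still not strictly equal, so this produces a cone only up to homotopy.

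I expect this equalizing step to be the main obstacle. Two routes are available, and I will try the first before the second.

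The first route is to prove contractibility without strict cofilteredness. Here I would replace the nerve of $(q\downarrow K)$ by the nerve of a cofiltered ``homotopy coherent'' variant built from $E$, and compare the two using Quillen's Theorem A together with Proposition \ref{prop homotopy cat of hypercovers is cofiltered}.

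The second route is to exploit that in $(q\downarrow K)$ we additionally know $[f'\alpha] = [f'\beta]$. The idea is to modify the cone map $E \to H$ so that its composites with $\alpha$ and $\beta$ agree on the nose. One possibility is to use a split hypercover $E$ and build the equalizing map skeleton by skeleton, extending across each $\partial\Delta^n \otimes y(V) \to \Delta^n \otimes y(V)$ using local lifting against the matching-map local epimorphisms, in the style of the proof of Theorem \ref{th refine local trivs on top mfds by cech nerves}.
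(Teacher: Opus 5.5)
Your unpacking of homotopy initiality is correct, and the cones on pairs are fine. You have also located the crux correctly. But the proposal stops exactly there: neither route for equalizing a parallel pair is carried out. In fact the plan cannot be completed, because $(q\downarrow K)$ is in general \emph{not} cofiltered.

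Take $\cat{C}$ to be the terminal category, with $\{1\}$ as its only covering family. Then $\ncat{Hyp}_U$ is the category of contractible Kan complexes. Every mapping space into such a complex is contractible, so $[\ncat{Hyp}_U]$ is indiscrete and $(q\downarrow K)\cong\ncat{Hyp}_U$ for every $K$. Let $E$ be the nerve of the indiscrete groupoid on $\{0,1\}$ and $\sigma$ its swap. Then $1_E$ and $\sigma$ are parallel morphisms in $(q\downarrow K)$. A cone over them needs a hypercover $H$ and $\gamma:H\to E$ with $\sigma\gamma=\gamma$, and since $\sigma$ fixes no vertex this forces $H=\varnothing$, which is not a hypercover. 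So your second route (mapping a hypercover into the strict equalizer) is impossible, and your first route is only a hope.

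The same objection applies to the paper's own argument. It lifts a cone from $[\ncat{Hyp}_U]$ and remarks that the diagram only has to commute up to homotopy. But morphisms of $(q\downarrow K)$ are strict maps, so cones over parallel pairs must commute on the nose. In the example the nerve of $(q\downarrow K)$ is still contractible, since $\ncat{Hyp}_U$ has the terminal object $1_{y(U)}$. So it is the cofilteredness route, not the statement, that breaks.

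Contractibility has to come from a different mechanism, and the tools you set up suffice.

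\textbf{Reduction via Theorem A.} Let $\Phi:\ncat{Hyp}_U/K\to(q\downarrow K)$ send $(H,g)$ to $(H,[g])$. The strict slice has the terminal object $(K,1_K)$, so by Quillen's Theorem A it is enough that each fibre $\Phi/(H,[g])$ is contractible.

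\textbf{Identifying the fibre.} The fibre is the full subcategory of $\ncat{Hyp}_U/(H\times_{y(U)}K)$ on the pairs $(\alpha,g')$ with $[g']=[g\alpha]$. This is the category of elements of a sieve, in presheaves on $\ncat{Hyp}_U$, generated by maps $\rho_Z : H\times_K K^{Z}\to H\times_{y(U)}K$. Here $Z$ runs over finite zigzags of $1$-simplices, the pullback is taken along $g$ and evaluation at the first vertex, and $\rho_Z$ is the projection to $H$ paired with evaluation at the last vertex. A pair $(\alpha,g')$ factors through $\rho_Z$ exactly when a $Z$-shaped chain of elementary homotopies joins $g\alpha$ to $g'$.

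\textbf{Contractibility.} Each $\rho_Z$ is a pullback of the corner map $K^{Z}\to K\times_{y(U)}K$, hence a local trivial fibration. So all iterated fibre products of the $\rho_Z$ over $H\times_{y(U)}K$ are again hypercovers. By Lemma \ref{lem cech nerve weak equiv}, applied to presheaves on $\ncat{Hyp}_U$, the sieve is objectwise weakly equivalent to its \v{C}ech nerve. That nerve is levelwise a coproduct of representables indexed by $I^{k+1}$ with $I\neq\varnothing$. Its homotopy colimit, which is weakly equivalent to the nerve of the fibre, is therefore the realization of $k\mapsto I^{k+1}$, hence contractible.

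Note that this uses no cofilteredness of $[\ncat{Hyp}_U]$; the relevant input is that homotopies are witnessed by maps into hypercovers. It does use, as your closure properties already do, that composites of covering families can be refined by covering families. This holds for the sites of Theorem \ref{th many sites are strictly hypercomplete} but is not automatic for an arbitrary coverage.
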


\begin{proof}
If $f : H \to y(U)$ is a local trivial fibration, then the category $(q \downarrow  f)$ is finitely cofiltered. Indeed, if $d : I \to (q \downarrow f)$ is a finite diagram, then we obtain a finite diagram $[d]$ in $[\ncat{Hyp}_U]$. By Proposition \ref{prop homotopy cat of hypercovers is cofiltered}, since $[\ncat{Hyp}_U]$ is cofiltered, then $[d]$ has a cone. The diagram $d$ gives us chosen representatives of all morphisms in the cone over $[d]$, hence we obtain a cone over $d$ in $(q \downarrow f)$, as the diagram only has to commute up to homotopy. Hence $(q \downarrow f)$ is cofiltered. Hence by \cite[Corollary 2 page 9]{Quillen1972}, $(q \downarrow f)$ is weakly contractible. Therefore $q$ is homotopy initial.
\end{proof} 

Applying Proposition \ref{prop homotopy final iff induces weak equiv on hocolim} and Lemma \ref{lem quotient functor of trivial fibrations is right aspherical} to (\ref{eq G factors through hocat}), we obtain a weak equivalence of simplicial sets
\begin{equation} \label{eq homotopy initial functor}
    \nhocolim{H \in [\ncat{Hyp}_U^\op]} {}^c \ncat{Pre}(\pi_0 H, A) \simeq \hocolim \, G' \to \hocolim\, G'q = \hocolim \, G \simeq \nhocolim{H \in \ncat{Hyp}_U^\op} {}^c \ncat{Pre}(\pi_0 H, A).
\end{equation}
We have managed to reduce the indexing category from $\ncat{Hyp}_U$ to $[\ncat{Hyp}_U]$. Now, since $[\ncat{Hyp}_U]^\op$ is finitely cofiltered, then by \cite[Proposition 7.3]{Dugger2001a}, it follows from (\ref{eq homotopy initial functor}) that
\begin{equation} \label{eq simplified hypersheafification for presheaves}
    \hat{A}(U) \simeq \nhocolim{H \in [\ncat{Hyp}_U^\op]} {}^c \ncat{Pre}(\cat{C})(\pi_0 H, A) \simeq \ncolim{H \in [\ncat{Hyp}_U^\op]}{}^c \ncat{Pre}(\cat{C})(\pi_0 H, A).
\end{equation}

Given a site $(\cat{C}, j)$ and $U \in \cat{C}$, let $[\ncat{\check{C}ech}_U]$ denote the category whose objects are \v{C}ech nerves $\check{C}(r)$, with $r \in j(U)$ and whose morphisms are simplicial homotopy classes of maps.

\begin{Prop} \label{prop on mfd cech to triv initial}
If $M$ is a finite dimensional topological manifold, considered as an object in the site $(\ncat{TopMan}, j_{\text{open}})$, then the inclusion functor $i : [\ncat{\check{C}ech}_M] \to [\ncat{Hyp}_M]$ is initial. 
\end{Prop}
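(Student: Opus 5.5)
We need to show that for every hypercover $f : H \to y(M)$ the comma category $(i \downarrow f)$ is nonempty and connected. Finiteness of diagrams in the cofiltered sense is not needed: initiality of a functor between ordinary categories only requires these two properties. The plan splits accordingly.

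\emph{Nonemptiness.} Theorem \ref{th refine local trivs on top mfds by cech nerves} directly produces an open cover $\mathcal{W}$ of $M$ and a map $\phi : \check{C}(\mathcal{W}) \to H$ over $y(M)$. Its homotopy class is an object of $(i \downarrow f)$.

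\emph{Connectedness.} Suppose we have two objects $[\phi_1] : \check{C}(\mathcal{U}) \to H$ and $[\phi_2] : \check{C}(\mathcal{V}) \to H$. We first pass to the common refinement $\mathcal{U} \cap \mathcal{V} = \{U_a \cap V_b\}$. It maps to both \v{C}ech nerves by refinement maps, so we may replace the pair by two maps $\psi_1, \psi_2 : \check{C}(\mathcal{T}) \to H$ out of a single \v{C}ech nerve. We then want a further refinement $\mathcal{T}' \leq \mathcal{T}$ on which the two composites become simplicially homotopic over $y(M)$. That gives a zigzag
\[
[\phi_1] \leftarrow [\check{C}(\mathcal{T}')] \rightarrow [\phi_2]
\]
in $(i\downarrow f)$, which proves connectedness.

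To find $\mathcal{T}'$, we use the path-object for the local trivial fibration $H \to y(M)$. Form
\[
P = H^{\Delta^1} \times_{(H\times_{y(M)}H)^{\phantom{|}}} (H \times_{y(M)} H)
\]
with $y(M)$ constant, so that $P \to H \times_{y(M)} H$ is a local trivial fibration. In the category-of-fibrant-objects structure of Remark \ref{rem local fibrations}, it is the pullback of $H^{\Delta^1}\to H\times H$ along the inclusion. Next pull $P$ back along
\[
(\psi_1,\psi_2) : \check{C}(\mathcal{T}) \to H \times_{y(M)} H.
\]
This gives a local trivial fibration $Q \to \check{C}(\mathcal{T})$. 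Composing with the objectwise weak equivalence $\check{C}(\mathcal{T}) \to y(M)$ of Lemma \ref{lem cech nerve weak equiv}, the map $Q \to y(M)$ is a local weak equivalence. It is also a local fibration, since $\check{C}(\mathcal{T}) \to y(M)$ is a local trivial fibration; one checks this on augmented matching maps, which are isomorphisms. Hence $Q \to y(M)$ is a hypercover. Applying Theorem \ref{th refine local trivs on top mfds by cech nerves} to $Q$ yields $\check{C}(\mathcal{T}'') \to Q$ over $y(M)$. Composing with $Q \to P$ gives a homotopy $\check{C}(\mathcal{T}'') \otimes \Delta^1 \to H$ over $y(M)$ between $\psi_1 g$ and $\psi_2 g$, where $g : \check{C}(\mathcal{T}'') \to Q \to \check{C}(\mathcal{T})$.

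\emph{The main obstacle.} The map $g$ need not be a refinement map of covers, so the zigzag must be built with care. Fortunately morphisms in $[\ncat{\check{C}ech}_M]$ are arbitrary homotopy classes of simplicial maps over $y(M)$, so $g$ itself is a legitimate morphism and the zigzag
\[
[\phi_1] \leftarrow [g] \rightarrow [\phi_2]
\]
works once the refinement maps to $\check{C}(\mathcal{U})$ and $\check{C}(\mathcal{V})$ are precomposed.

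What still needs checking is that Theorem \ref{th refine local trivs on top mfds by cech nerves} really applies to $Q \to y(M)$, that is, that $Q \to y(M)$ is a local trivial fibration. I expect this to be the most delicate step. I would verify it via Lemma \ref{lem local triv fib iff matching map loc epi}: composites of maps whose relative matching maps are local epimorphisms again have this property. Alternatively one can argue directly: local trivial fibrations compose, and $\check{C}(\mathcal{T}) \to y(M)$ is one because it is a \v{C}ech nerve of a cover.

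Finally, to be safe one can also note that since $[\ncat{Hyp}_M]$ is cofiltered (Proposition \ref{prop homotopy cat of hypercovers is cofiltered}), the comma categories are cofiltered, which gives a slightly stronger initiality statement if needed.
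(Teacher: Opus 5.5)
Your proof is correct, and it is more complete than the paper's. The paper proves nonemptiness as you do. For connectedness, though, it only asserts that $\check{C}(\mathcal{U}\cap\mathcal{V})$ gives a cone, which tacitly assumes $[\phi_1 r_1]=[\phi_2 r_2]$ in $[\ncat{Hyp}_M]$. That is not automatic.

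Since $[\ncat{\check{C}ech}_M]$ is thin and codirected, the set of components of $(i\downarrow p)$ is the filtered colimit $\colim_{[\ncat{\check{C}ech}_M]^{\op}}[\ncat{Hyp}_M](-,H)$. So connectedness is exactly your equalization statement: two maps $\check{C}(\mathcal{T})\rightrightarrows H$ over $y(M)$ become homotopic after refinement. For a concrete failure of the paper's shortcut, take $M=\R$ and $H_0=y(M)+y(M)$. Index the nondegenerate $1$-simplices joining the two sheets, in either direction, by a proper open cover of $M$, and let $H$ be $\cosk^+_1$ of this $1$-truncated hypercover. Then the two sections $y(M)\rightrightarrows H$ are not homotopic over $y(M)$, although they become homotopic after refining to that cover.

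Your relative path object $Q$, together with a second application of Theorem \ref{th refine local trivs on top mfds by cech nerves}, supplies exactly this step and in fact yields a cone. An equivalent shortcut is to take an equalizing hypercover from Proposition \ref{prop homotopy cat of hypercovers is cofiltered} and refine it by a \v{C}ech nerve. Your construction is a hands-on proof of that equalizer step; the paper's version is shorter but, as written, produces only a common source, not a cone.

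A few justifications need repair.
\begin{itemize}
\item Lemma \ref{lem cech nerve weak equiv} concerns $\check{C}(\mathcal{T})\to\overline{\mathcal{T}}$, not $\check{C}(\mathcal{T})\to y(M)$; the latter is only a local weak equivalence. Your alternative is the right argument: $\check{C}(\mathcal{T})\to y(M)$ is a local trivial fibration, and local trivial fibrations compose here because $j_{\text{open}}$ is composition closed.
\item The path-object axiom only makes $P\to H\times_{y(M)}H$ a local fibration. Its local triviality comes from $H\to y(M)$ being a local trivial fibration. A lifting problem for $P\to H\times_{y(M)}H$ against $\partial\Delta^m\hookrightarrow\Delta^m$ is a lifting problem for $H\to y(M)$ against the pushout-product with $\partial\Delta^1\hookrightarrow\Delta^1$. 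That can be solved locally cell by cell, with finitely many refinements.
\item Since $y(M)$ is discrete, $H^{\Delta^1}\to H\times H$ already lands in $H\times_{y(M)}H$, so $P=H^{\Delta^1}$; your formula for $P$ is garbled.
\item Your $g$ is in fact induced by a refinement, because $\check{C}(\mathcal{T})$ is $0$-coskeletal over $y(M)$. This is harmless for the argument.
\end{itemize}
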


\begin{proof}
We wish to show that for every trivial fibration $p : H \to y(M)$, the category $(i \downarrow p)$ is connected. First off it is nonempty by Theorem \ref{th refine local trivs on top mfds by cech nerves}. Then given any pair of refinements $\check{C}(\mathcal{U}) \to p \leftarrow \check{C}(V)$, we can always obtain a cone by considering the cover $\check{C}(\mathcal{U} \cap \mathcal{V})$.
\end{proof}

Now we come to the main result of this section.

\begin{Th} \label{th only need plus construction once on man}
If $A$ is a presheaf of sets on $(\cat{C}, j) = (\ncat{TopMan}, j_{\text{open}})$, then $A^+$ is a sheaf of sets.
\end{Th}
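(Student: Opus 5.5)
Since $(\ncat{TopMan}, j_{\text{open}})$ is strictly hypercomplete, the local projective fibrant replacement $\hat{A}$ of the discrete simplicial presheaf ${}^cA$ is an $\infty$-stack. I will identify $\pi_0\hat{A}(U)$ with $A^+(U)$ and then show that $\pi_0 \hat A$ is a sheaf. Both steps rest on (\ref{eq simplified hypersheafification for presheaves}), which says
\begin{equation*}
\hat{A}(U) \simeq \ncolim{H \in [\ncat{Hyp}_U^\op]} {}^c\ncat{Pre}(\cat{C})(\pi_0 H, A).
\end{equation*}
The right-hand side is discrete, so $\hat{A}(U)$ is homotopically discrete.

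First I reduce the indexing category. By Proposition \ref{prop on mfd cech to triv initial}, the inclusion $i : [\ncat{\check{C}ech}_U] \to [\ncat{Hyp}_U]$ is initial, so $i^\op$ is final and the ordinary colimit may be computed over $[\ncat{\check{C}ech}_U]^\op$. For a \v{C}ech nerve we have $\pi_0 \check{C}(r) \cong \overline{r}$ by (\ref{eq pi0 of cech nerve}), so the colimit becomes $\colim_{r} \ncat{Pre}(\cat{C})(\overline{r}, A)$ over open covers. Morphisms in $[\ncat{\check{C}ech}_U]$ are homotopy classes, but any two maps $\check{C}(r)\to\check{C}(t)$ over $y(U)$ induce the same map $\overline{r} \hookrightarrow \overline{t}$, because both are subobjects of $y(U)$. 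Hence this diagram factors through the poset of covering sieves generated by open covers. That poset is cofinal in $\Gro(j)(U)$, since every covering sieve contains one generated by an open covering family. So the colimit equals $A^+(U)$, naturally in $U$. This gives $\pi_0\hat A \cong A^+$ as presheaves, once I check that the equivalences in Corollary \ref{cor low's formula} are natural in $U$. I would handle that naturality by appealing to the functoriality of Low's construction in $y(U)$.

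Second, I show that a $0$-truncated $\infty$-stack has $\pi_0$ a sheaf. The object $\hat A$ is projective fibrant and objectwise homotopy discrete, so $\hat A \to {}^c\pi_0\hat A$ is an objectwise weak equivalence. Local objects are closed under objectwise weak equivalence, and ${}^c\pi_0\hat A$ is projective fibrant, so ${}^c\pi_0 \hat A$ satisfies \v{C}ech descent. For a discrete simplicial presheaf $B$ and a cover $r$, the space $\u{\ncat{sPre}}(\cat{C})(\check{C}(r), {}^cB) \cong {}^c\ncat{Pre}(\cat{C})(\overline{r}, B) = {}^c\text{Match}(r,B)$ is discrete. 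Descent along $p_r$ therefore says exactly that $B(U) \to \text{Match}(r,B)$ is a bijection. Since the site is cofibrant, no cofibrant replacement of $\check{C}(r)$ is needed. Thus $A^+ \cong \pi_0\hat A$ is a sheaf.

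The main obstacle is the first step. I need to check carefully that the colimit over $[\ncat{\check{C}ech}_U]$ really is the plus construction: the morphisms of the homotopy category must act compatibly on $\ncat{Pre}(\cat{C})(\pi_0 H, A)$, and the zig-zag of equivalences must be natural in $U$, so that the restriction maps of $\pi_0\hat A$ agree with those of $A^+$. If naturality in $U$ proves awkward, I would fall back on a more direct argument. The canonical map $A^+ \to \pi_0 \hat A$, obtained by $\pi_0$ of the cone maps, is a map of presheaves. The computation above shows it is a bijection on each $U$. This avoids needing naturality of the abstract equivalences.
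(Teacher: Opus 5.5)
Your proposal is correct and follows essentially the paper's proof: formula (\ref{eq simplified hypersheafification for presheaves}), restriction to \v{C}ech nerves via Proposition \ref{prop on mfd cech to triv initial}, identification of the resulting colimit with $A^+(U)$, and then \v{C}ech descent of the homotopically discrete $\hat{A}$ forcing the sheaf condition. Your cofinality argument for open-generated sieves in $\Gro(j)(U)$ and your explicit passage through ${}^c\pi_0\hat{A}$ spell out steps that the paper covers by citing an equivalence $J(M)\simeq[\ncat{\check{C}ech}_M]$ and by a one-line descent remark, and the paper also leaves implicit the naturality in $U$ that you flag (for your fallback, you must still check that the canonical map $A^+(U)\to\pi_0\hat{A}(U)$ agrees on $\pi_0$ with Low's zigzag, i.e.\ use the canonical form of Verdier's hypercovering theorem, since knowing that some bijection exists does not make a given map bijective).
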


\begin{proof}
As a discrete simplicial presheaf $A$, following from (\ref{eq simplified hypersheafification for presheaves}) and Proposition \ref{prop on mfd cech to triv initial}, we have a weak equivalence
\begin{equation}
    \hat{A}(U) \simeq \ncolim{\check{C}(\mathcal{U}) \in [\ncat{\check{C}ech}_M^\op]} \ncat{Pre}(\cat{C})(\pi_0 \check{C}(\mathcal{U}), A).
\end{equation}
Now by \cite[Proposition 3.4]{Beke2004} there is an equivalence of categories $J(M) \simeq [\ncat{\check{C}ech}_{y(M)}]$, where $J(M)$ is the poset of $\text{Gro}(j_{\text{open}})$-covering sieves (Definition \ref{def Grothendieck closure of a coverage}). Furthermore, (\ref{eq pi0 of cech nerve}) shows that $R = \pi_0 \check{C}(\mathcal{U})$ is the $J(M)$-covering sieve generated by the covering family $\mathcal{U}$. Hence we have a weak equivalence of simplicial sets
\begin{equation}
    \hat{A}(U) \simeq \ncolim{R \in J(M)^\op} \ncat{Pre}(\cat{C})(R, A)
\end{equation}
and the right-hand side is precisely the plus construction $A^+(U)$. Hence we have an objectwise weak equivalence $\hat{A} \simeq A^+$, which implies that $A^+$ satisfies descent with respect to \v{C}ech covers, and hence is a sheaf.
\end{proof}

Again, the same argument can be used to prove the following result.

\begin{Th} \label{th plus construction is sheafification}
Given a presheaf of sets $A$ over any of the sites mentioned in Theorem \ref{th many sites are strictly hypercomplete}, the presheaf $A^+$ is a sheaf.
\end{Th}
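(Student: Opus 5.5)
The plan is to rerun the argument of Theorem \ref{th only need plus construction once on man} site by site, isolating the two site-specific inputs it uses. Fix a site $(\cat{C}, j)$ from Theorem \ref{th many sites are strictly hypercomplete}, an object $U \in \cat{C}$, and a presheaf of sets $A$, viewed as the discrete simplicial presheaf ${}^cA$; this is locally fibrant, so Low's formula applies. By (\ref{eq Low's formula for discrete}), Lemma \ref{lem quotient functor of trivial fibrations is right aspherical}, and the cofilteredness of $[\ncat{Hyp}_U]$ (Proposition \ref{prop homotopy cat of hypercovers is cofiltered}), we obtain (\ref{eq simplified hypersheafification for presheaves}), namely $\hat{A}(U) \simeq \colim_{H \in [\ncat{Hyp}_U]^\op} \ncat{Pre}(\cat{C})(\pi_0 H, A)$. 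This holds for any site, so all the work lies in replacing $[\ncat{Hyp}_U]$ by \v{C}ech nerves and then by covering sieves.

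First, I would prove the analogue of Proposition \ref{prop on mfd cech to triv initial}: the inclusion $i : [\ncat{\check{C}ech}_U] \to [\ncat{Hyp}_U]$ is initial for each listed site.
\begin{itemize}
\item Nonemptiness of $(i \downarrow p)$ is exactly what the proof of Theorem \ref{th many sites are strictly hypercomplete} supplies: every local trivial fibration $p : H \to y(U)$ is refined by $\check{C}(\mathcal{W})$ for some $j$-covering family $\mathcal{W}$. For $(\ncat{Cart}, j_{\text{good}})$ this includes the extra step of refining to a differentiably good cover via \cite[Corollary 5.2]{Bott2013}. For $(\mathcal{O}(X), j_X)$ it uses paracompactness and finite covering dimension of $X$ in place of Proposition \ref{prop manifolds have covering dim n}.
\item For connectedness, given two refinements $\check{C}(\mathcal{U}) \to H \leftarrow \check{C}(\mathcal{V})$, I would take a common refinement $\mathcal{U}\cap\mathcal{V}$, re-refining by a good cover in the $j_{\text{good}}$ case. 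The two composites $\check{C}(\mathcal{U}\cap\mathcal{V}) \to H$ must then agree in $[\ncat{Hyp}_U]$. I would obtain this by showing that any two maps from a \v{C}ech nerve into $H$ over $y(U)$ become simplicially homotopic after further refinement. One lifts against the local trivial fibration $H^{\Delta^1} \to H \times_{y(U)} H$, and Lurie's Lemma \ref{lurie's lemma} together with the dimension bound lets the same inductive construction produce a homotopy.
\end{itemize}
Finality then gives $\hat{A}(U) \simeq \colim_{[\ncat{\check{C}ech}_U]^\op} \ncat{Pre}(\cat{C})(\pi_0\check{C}(r), A)$.

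Second, I would identify this colimit with $A^+(U)$. By (\ref{eq pi0 of cech nerve}), $\pi_0 \check{C}(r) \cong \overline{r}$, and \cite[Proposition 3.4]{Beke2004} identifies $[\ncat{\check{C}ech}_U]$ with a poset of generated sieves. For $j_{\text{good}}$ on $\ncat{Cart}$, and for coverages that are not saturated, I need the sieves $\overline{r}$ with $r \in j(U)$ to be cofinal in $\text{Gro}(j)(U)$. This holds because every $\text{Gro}(j)$-covering sieve on $U$ contains one generated by an open (respectively good open) cover, since each listed site is refinement equivalent to an open-cover coverage. Thus the colimit equals $\colim_{R \in \text{Gro}(j)(U)^\op}\ncat{Pre}(\cat{C})(R,A) = A^+(U)$, naturally in $U$.

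Finally, $\hat{A}$ is fibrant in $\hat{\H}(\cat{C},j)$, which equals $\check{\H}(\cat{C},j)$ by Theorem \ref{th many sites are strictly hypercomplete}. Hence $\hat{A}$ satisfies \v{C}ech descent, and so does the objectwise-equivalent discrete presheaf $A^+$. A discrete simplicial presheaf satisfying \v{C}ech descent is a sheaf, so $A^+$ is a sheaf.

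The main obstacle is the connectedness step: homotopy classes of maps from \v{C}ech nerves into $H$ must be equalized on a refinement. This needs the refinement induction to work relative to a given boundary, and the same care is needed for good covers on $\ncat{Cart}$.
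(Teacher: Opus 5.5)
Your proposal is correct and is essentially the paper's argument. The paper proves this theorem by rerunning the proof of Theorem \ref{th only need plus construction once on man}: reduce Low's formula to $[\ncat{Hyp}_U]$, show that $[\ncat{\check{C}ech}_U] \to [\ncat{Hyp}_U]$ is initial, and identify the result with covering sieves via Beke. Your two extra points are real details that the paper's one-line arguments gloss over. First, the two composites from a common refinement into $H$ must agree up to homotopy. Second, the sieves $\overline{r}$ with $r \in j(U)$ are only coinitial in $\text{Gro}(j)(U)$.

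The connectedness step is easier than you expect. Proposition \ref{prop homotopy cat of hypercovers is cofiltered} already gives a cone $K$ in $[\ncat{Hyp}_U]$ over the cospan $\check{C}(\mathcal{U}) \to H \leftarrow \check{C}(\mathcal{V})$. Refining $K$ by a \v{C}ech nerve via your nonemptiness step then connects the two objects of $(i \downarrow p)$. So no relative version of the refinement induction, and no further use of Lurie's Lemma, is needed.
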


\begin{Rem}
One might reasonably wonder if we could have simply derived the above result using the Verdier Hypercovering Theorem (\ref{eq verdier hypercovering theorem}). However, this would only allow us to obtain an isomorphism
\begin{equation*}
    \pi_0 \hat{A}(U) \cong A^+(U).
\end{equation*}
\end{Rem}

\section{Applications} \label{section applications}

The main advantage of a site $(\cat{C}, j)$ being strictly hypercomplete is the fact that one gets the best aspects of both $\check{\H}(\cat{C}, j)$ and $\hat{\H}(\cat{C}, j)$, namely one has simple and explicit descriptions of both the fibrant objects (the $\infty$-stacks) and the weak equivalences (the local weak equivalences). Besides this obvious improvement over the not strictly hypercomplete case, in which one has to choose between nice fibrant objects or nice weak equivalences, in this section we list some other interesting consequences of strict hypercompleteness.

\subsection{Deriving Cocycles in Higher Differential Geometry} \label{section higher differential geometry}

In this section we will explain how the \v{C}ech and local model structures show up in differential geometry. The main goal of this homotopical-sheaf machinery is to compute cocycles in various kinds of cohomology. Given any site $(\cat{C}, j)$ and a pair $X,A$ of simplicial presheaves on $(\cat{C}, j)$, we define the $0$th cohomology of $X$ with values in $A$ by
\begin{equation*}
    \check{H}^0_\infty(X,A) = \pi_0 \R \check{\H}(\cat{C},j)(X, A).
\end{equation*}
To obtain higher cohomology, we must assume that $A$ is $n$-fold deloopable, and then we define
\begin{equation*}
    \check{H}^n_\infty(X,A) = \pi_0 \R \check{\H}(\cat{C},j)(X,\overline{W}^n A)
\end{equation*}
see \cite[Section 5.3]{Minichiello2024} for more. With various coefficient $\infty$-stacks $A$, we obtain a huge variety of cohomology theories such as differential and nonabelian cohomology. For many examples, see \cite{Minichiello2024Obstruction}, \cite{Fiorenza2011} and \cite{Schreiber2013}.

Now let $M$ be a finite dimensional smooth manifold. We wish to set up a convenient framework to study various cocycles of $M$. Let us first consider $M$ as a representable sheaf on $(\ncat{Man}, j_{\text{open}})$. Then as above, given an $\infty$-stack $A$ on $\ncat{Man}$, we have
\begin{equation*}
    \check{H}^0_\infty(M, A) = \pi_0 \R \check{\H}(\ncat{Man}, j_{\text{open}})(M, A) = \pi_0 \u{\sPre}(\ncat{Man})({}^c M, A).
\end{equation*}
Now for concreteness, let $A$ be the $\infty$-stack of $U(1)$-bundle gerbes. More precisely, let $A$ be the $2$-nerve of the presheaf of $2$-categories $\text{Grb}_{U(1)}$ that assigns to every manifold $N$ its $2$-category of $U(1)$-bundle gerbes \cite{bunk2021gerbes}. Since $M$ is representable, it is projective cofibrant, so in order to compute $\check{H}^0(M, \text{Grb}_{U(1)})$, we must ask if $\text{Grb}_{U(1)}$ is fibrant. This is not such an easy thing to check. 

In \cite{Nikolaus2011} they first construct a bicategory of trivial $U(1)$-bundle gerbes, and then apply a plus construction for presheaves of bicategories by hand. This gives a presheaf of bicategories weak equivalent to $\text{Grb}_{U(1)}$, and allows for computation of the above cohomology. Unfortunately, this kind of strategy can not be easily generalized to higher stacks, as the complexity of the resulting higher categories explodes. Furthermore, this construction is already quite cumbersome even for presheaves of bicategories.

A different strategy, emblematic of \cite{Fiorenza2011}, is the following. Think of $M$ as a sheaf not on $(\ncat{Man}, j_{\text{open}})$, but on $(\ncat{Cart}, j_{\text{good}})$. The simplicial presheaf $\text{Grb}_{U(1)}$, considered now on $\ncat{Cart}$, now has a useful property: it is objectwise weak equivalent to a significantly simpler simplicial presheaf $\B^2 U(1)$, which can be described as applying the Dold-Kan correspodence to the presheaf of chain complexes $[U(1) \to 0 \to 0]$, see \cite[Example 6.6]{Minichiello2024Obstruction}. This is best understood using the cocycle description of $U(1)$-bundle gerbes \cite{bunk2021gerbes}.

So now we wish to compute
\begin{equation*}
    \check{H}^0_\infty(M, \text{Grb}_{U(1)}) = \pi_0 \R \check{\H}(\ncat{Cart}, j_{\text{good}})(M, \text{Grb}_{U(1)}).
\end{equation*}

Since we have an objectwise weak equivalence $\text{Grb}_{U(1)} \to \B^2 U(1)$ on $\ncat{Cart}$, then
\begin{equation*}
    \check{H}^0_{\infty}(M, \text{Grb}_{U(1)}) \cong \pi_0 \R \check{\H}(\ncat{Cart}, j_{\text{good}})(M, \B^2U(1)).
\end{equation*}

So now we ask again, is $\B^2 U(1)$ an $\infty$-stack, i.e. is it \v{C}ech fibrant? Now here we have the recent, very powerful theorem \cite[Proposition 4.13]{Pavlov2022}, \cite[Proposition 3.3.29]{sati2022equivariant} which tells us that whenever $A$ is a sheaf of abelian groups on $(\ncat{Cart}, j_{\text{good}})$, then its $n$-fold delooping, $\overline{W}^n A$--more commonly denoted $\mathbf{B}^n A$--is an $\infty$-stack on $(\ncat{Cart}, j_{\text{good}})$. Furthermore, \cite[Proposition 3.3.30]{sati2022equivariant} implies that if $A$ is a presheaf of simplicial groups which is furthermore an $\infty$-stack on $(\ncat{Cart}, j_{\text{good}})$, then $\overline{W} A = \mathbf{B}A$ is also an $\infty$-stack on $(\ncat{Cart}, j_{\text{good}})$.

Hence to compute $\check{H}^0_\infty(M, \B^2 U(1))$ we need only to find a projectively cofibrant simplicial presheaf $X$ and a \v{C}ech weak equivalence $X \to M$. The obvious candidate is the \v{C}ech nerve $\pi : \check{C}(\mathcal{U}) \to M$ where $\mathcal{U}$ is a good cover. Clearly $\check{C}(\mathcal{U})$ is projectively cofibrant, and it is easy to see that the map $\pi$ is a local trivial fibration. However, local trivial fibrations are not \textit{a priori} \v{C}ech weak equivalences\footnote{It may seem like \cite[Proposition A.4]{Dugger2004} applies here, but this result requires the codomain to be representable, which arbitrary manifolds over $(\ncat{Cart}, j_{\text{good}})$ are not.}. 

However Theorem \ref{th many sites are strictly hypercomplete} implies that \v{C}ech weak equivalences are precisely the local weak equivalences on $(\ncat{Cart}, j_{\text{good}})$. So we can finally conclude that $ \pi : \check{C}(\mathcal{U}) \to M$ is a \v{C}ech weak equivalence, and thus
\begin{equation*}
    \check{H}^0(M, \text{Grb}_{U(1)}) \cong \pi_0 \u{\sPre}(\ncat{Cart})(\check{C}(\mathcal{U}), \B^2 U(1)).
\end{equation*}
Now using the methods of \cite[Appendix B]{Minichiello2024Obstruction}, we can use this description to obtain actual cocycle data on $\mathcal{U}$.

Obviously this pipeline for computing cohomology classes is not limited to bundle gerbes or even to smooth manifolds. We see that Theorem \ref{th many sites are strictly hypercomplete} can be used to obtain a concrete description of cohomology classes from $\infty$-stacks.

\subsection{Isomorphism between \v{C}ech and Sheaf Cohomology}

Given a topological space $X$ and a sheaf $A$ of abelian groups on $X$, the classical $n$th sheaf cohomology $H^n(X,A)$ of $X$ with values in $A$ is computed by finding an injective resolution of $A$, and taking the $n$th cohomology of its global sections. Alternatively, one can also compute the $n$th \v{C}ech cohomology $\check{H}^n(X,A)$ of $X$ with values in $A$ as explained in \cite[Chapter II]{Bott2013}. There is a natural map $\check{H}^n(X,A) \to H^n(X,A)$, and by an old theorem of Godemont \cite[Theorem 5.10.1]{Godement1958}\footnote{But see \cite[Section 13.3]{Gallier2022} for a modern, readable discussion and proof.}, if $X$ is a Hausdorff, paracompact topological space, then the above map is an isomorphism for all $n \geq 0$ and all abelian sheaves $A$.

However, it was proven by Brown in \cite{Brown1973} that \begin{equation*}
    H^n(X,A) \cong \pi_0 \R \hat{\H}(\mathcal{O}(X), j_X)(X, \B^n A),
\end{equation*}
see \cite{nlab:abelian_sheaf_cohomology} for more details.

Theorem \ref{th many sites are strictly hypercomplete} implies that
\begin{equation*}
    \check{H}^n(X,A) \cong \pi_0 \R \check{\H}(\mathcal{O}(X), j_X)(X, \B^n A) \cong \pi_0 \R \hat{\H}(\mathcal{O}(X), j_X)(X, \B^n A) \cong H^n(X,A)
\end{equation*}
when $X$ is Hausdorff, paracompact and has finite covering dimension. Of course this is weaker than Godemont's theorem, but we can extend the notion of \v{C}ech cohomology and abelian sheaf cohomology from topological spaces to all sites in the obvious way. Then for the sites in Theorem \ref{th many sites are strictly hypercomplete}, \v{C}ech and sheaf cohomology coincide.

\subsection{Verdier's Hypercovering Theorem}

Now let us show how to derive the Verdier Hypercovering Theorem--originally proven by Verdier in \cite[Expose V, Section 7.4]{Grothendieck1960SGA}--in the form given in \cite{Jardine2012}, from Low's formula (Proposition \ref{prop low's theorem}). Given locally fibrant simplicial presheaves $X$ and $A$, we wish to prove that there is an isomorphism
\begin{equation} \label{eq verdier hypercovering theorem}
   \ncolim{H \in [\ncat{Triv}_X]^\op} \pi_0 \u{\ncat{sPre}}(\cat{C})(H,A) \cong [X,A], 
\end{equation}
where $[X,A]$ denotes the set of morphisms from $X$ to $A$ in the homotopy category $\text{ho} \hat{\H}(\cat{C}, j)$.

By Proposition \ref{prop low's theorem}, there is a weak equivalence
\begin{equation*}
    \nhocolim{H \in \ncat{Triv}^\op_X} \u{\ncat{sPre}}(\cat{C})(H, A) \simeq \R \hat{\H}(X,A).
\end{equation*}
By \cite[Proposition 9.5.24]{Hirschhorn2009} we know that
\begin{equation*}
    \pi_0 \R \hat{\H}(X,A) \cong [X,A].
\end{equation*}

Now, as explained in the proof of \cite[Proposition 3.1.31]{Cisinski2019}, the functor $\pi_0 : \ncat{sSet} \to \ncat{Set}$ is left Quillen when $\ncat{sSet}$ is equipped with the Kan-Quillen model structure, and $\ncat{Set}$ is equipped with the model structure where every map is a cofibration and a fibration, and where the weak equivalences are the bijections\footnote{See \cite{Camarena2020} for more about model structures on $\ncat{Set}$.}. In $\ncat{Set}$, all diagrams are derived, and hence all (co)limits are homotopy (co)limits.

Hence we have
\begin{equation}
\pi_0 \nhocolim{H \in \ncat{Triv}_X^\op} \u{\ncat{sPre}}(\cat{C})(H,A) \simeq \nhocolim{H \in \ncat{Triv}_X^\op} \pi_0 \u{\ncat{sPre}}(\cat{C})(H,A) \cong \ncolim{H \in \ncat{Triv}^\op_X} \pi_0 \u{\ncat{sPre}}(\cat{C})(H,A) \cong [X,A].
\end{equation}
This is almost the form we want, except we wish to replace the category $\ncat{Triv}_X^\op$ with $[\ncat{Triv}_X^\op]$. Consider the quotient functor $\pi : \ncat{Triv}_X \to [\ncat{Triv}_X]$, and let $G: [\ncat{Triv}_X^\op] \to \ncat{sSet}$ be the functor $G = \pi_0 \u{\ncat{sPre}}(\cat{C})(-,A)$, which is well defined because if $f$ and $g$ are simplicially homotopic then $G(f) = G(g)$. There is a canonical map
\begin{equation*}
 \colim \, G \to \colim \, G \pi^\op,
\end{equation*}
which is an isomorphism by Lemma \ref{lem quotient functor of trivial fibrations is right aspherical}. Now
\begin{equation*}
    \ncolim{H \in \ncat{Triv}_X^\op} \pi_0 \u{\ncat{sPre}}(\cat{C})(H,A) \cong \colim G \pi^\op.
\end{equation*}
Hence we obtain the Verdier Hypercovering Theorem (\ref{eq verdier hypercovering theorem}).

\subsection{Truncated Hypercovers}

In this section we will consider the case of truncated simplicial presheaves and how we can reduce the homotopy colimit from (\ref{eq low's formula}) in this case. We consider this section to contain partial results rather than a fully fleshed-out theory.

See Definition \ref{def truncated hypercover} for the definition of a $n$-truncated hypercover. Given a site $(\cat{C}, j)$ and $U \in \cat{C}$, let $\ncat{DHI}^{\leq n}_{y(U)}$ denote the category of $n$-truncated DHI-hypercovers over $U$ and let $\ncat{VHyp}^{\leq n}_{y(U)}$ denote the category of $n$-truncated Verdier hypercovers over $U$.

Recall the skeleton-coskeleton adjoint triple for simplicial sets \cite[Section IV.3.2]{Goerss2009}. This extends objectwise to an adjoint triple for simplicial presheaves
\begin{equation*}
    \begin{tikzcd}
	{\ncat{sPre}_{\leq n}(\cat{C})} && {\ncat{sPre}(\cat{C})}
	\arrow[""{name=0, anchor=center, inner sep=0}, "{\text{sk}_n}", shift left=2, curve={height=-12pt}, from=1-1, to=1-3]
	\arrow[""{name=1, anchor=center, inner sep=0}, "{\text{cosk}_n}"', shift right=2, curve={height=12pt}, from=1-1, to=1-3]
	\arrow[""{name=2, anchor=center, inner sep=0}, "{\text{tr}_n}"{description}, from=1-3, to=1-1]
	\arrow["\dashv"{anchor=center, rotate=-90}, draw=none, from=0, to=2]
	\arrow["\dashv"{anchor=center, rotate=-90}, draw=none, from=2, to=1]
\end{tikzcd}
\end{equation*}

Now let $\text{Sk}_n = \text{sk}_n \text{tr}_n$ and $\text{Cosk}_n = \text{cosk}_n \text{tr}_n$. Then we obtain the adjunction
\begin{equation*}
\begin{tikzcd}
	{\ncat{sPre}(\cat{C})} && {\ncat{sPre}(\cat{C})}
	\arrow[""{name=0, anchor=center, inner sep=0}, "{\text{Sk}_n}", shift left=2, from=1-1, to=1-3]
	\arrow[""{name=1, anchor=center, inner sep=0}, "{\text{Cosk}_n}", shift left=2, from=1-3, to=1-1]
	\arrow["\dashv"{anchor=center, rotate=-90}, draw=none, from=0, to=1]
\end{tikzcd}    
\end{equation*}

Now let us suppose that $(\cat{C}, j)$ is a site and $A$ is an objectwise fibrant $(n-1)$-truncated simplicial presheaf. This means that for every $U \in \cat{C}$ and $x_0 \in X(U)_0$, $\pi_k(X(U), x_0) = 0$ for $k \geq n$. We want to simplify the homotopy colimit from (\ref{eq low's formula}) in this case.

Now since $A$ is $(n-1)$-truncated, the unit map $A \to \text{Cosk}_n A$ is an objectwise weak equivalence, and furthermore by \cite[Proposition 3.5.3.23]{Lurie2023} $\text{Cosk}_n A$ is objectwise fibrant. Hence the map
\begin{equation*}
   \nhocolim{H \in \ncat{Triv}_{y(U)}^\op} \u{\ncat{sPre}}(\cat{C})(H, A) \to \nhocolim{H \in \ncat{Triv}_{y(U)}^\op} \u{\ncat{sPre}}(\cat{C})(H, \text{Cosk}_n A) 
\end{equation*}
is a weak equivalence.

Now if the adjunction $\text{Sk}_n \dashv \text{Cosk}_n$ were simplicial, then we could move the $\text{Cosk}_n$ in the above to become a $\text{Sk}_n$. However, this adjunction is not simplicial. Hence we have to be more clever in how we manipulate the above expression.

Let us reduce to the case of simplicial sets first for the sake of clarity. 

\begin{Lemma}
Suppose that $A$ is an $n$-coskeletal\footnote{In other words, $A \cong \text{Cosk}_n(A)$.} simplicial set, and that $H$ is an arbitrary augmented simplicial set, then there is an isomorphism of simplicial sets
\begin{equation*}
    \u{\ncat{sSet}}(H, A) \cong \ncat{sSet}\left( \text{Sk}_n H,  A^{\text{Sk}_n \Delta^\bullet} \right).
\end{equation*}
\end{Lemma}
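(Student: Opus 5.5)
The plan is to prove the isomorphism one simplicial degree at a time, using only three facts. First, maps into an $n$-coskeletal object depend only on $n$-truncations. Second, $\text{tr}_n$ preserves finite products. Third, $\ncat{sSet}$ is cartesian closed. I read the right-hand side as the simplicial set whose $k$-simplices are the maps $\text{Sk}_n H \to A^{\text{Sk}_n \Delta^k}$. Its simplicial structure comes from functoriality of $k \mapsto \text{Sk}_n \Delta^k$: a map $[k] \to [l]$ induces $\text{Sk}_n\Delta^k \to \text{Sk}_n\Delta^l$, hence $A^{\text{Sk}_n\Delta^l} \to A^{\text{Sk}_n\Delta^k}$. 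I will first treat $H$ as an ordinary simplicial set. The augmentation only adds a compatibility with the $(-1)$-level, and this is carried along unchanged by every step; this is the same convention as in Appendix \ref{section augmented and truncated simplicial presheaves}.

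\textbf{Step 1 (mapping into a coskeletal object).} Since $A \cong \text{cosk}_n \text{tr}_n A$, the adjunction $\text{tr}_n \dashv \text{cosk}_n$ gives, for any simplicial set $K$, a natural bijection
\begin{equation*}
\ncat{sSet}(K, A) \cong \ncat{sSet}_{\leq n}(\text{tr}_n K, \text{tr}_n A).
\end{equation*}
In particular, if $K \to K'$ induces an isomorphism on $n$-truncations, then it induces a bijection $\ncat{sSet}(K',A) \cong \ncat{sSet}(K,A)$. Because $\text{tr}_n\text{sk}_n \cong 1$, this applies to the counit $\text{Sk}_n K \to K$.

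\textbf{Step 2 (degreewise identification).} The $k$-simplices of $\u{\ncat{sSet}}(H,A)$ are $\ncat{sSet}(H \times \Delta^k, A)$. Products of simplicial sets are computed levelwise, so
\begin{equation*}
\text{tr}_n(H\times \Delta^k) \cong \text{tr}_n H \times \text{tr}_n \Delta^k \cong \text{tr}_n(\text{Sk}_n H \times \text{Sk}_n \Delta^k).
\end{equation*}
Moreover, this isomorphism is induced by the genuine map $\text{Sk}_n H \times \text{Sk}_n\Delta^k \to H \times \Delta^k$, the product of the two counits. Step 1 then gives
\begin{equation*}
\ncat{sSet}(H\times\Delta^k, A) \cong \ncat{sSet}(\text{Sk}_n H \times \text{Sk}_n\Delta^k, A).
\end{equation*}
By cartesian closedness, the right-hand side is isomorphic to $\ncat{sSet}(\text{Sk}_n H, A^{\text{Sk}_n\Delta^k})$.

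\textbf{Step 3 (naturality in $k$).} Each bijection above is natural in $\Delta^k$: the counit $\text{Sk}_n \Rightarrow 1$ is natural, and so is the exponential adjunction. Hence the degreewise bijections commute with the face and degeneracy maps and assemble into an isomorphism of simplicial sets.

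I expect the main obstacle to be this bookkeeping in Step 3. The bijections must be induced by actual maps, namely restriction along the counits, and not merely by abstract isomorphisms of truncations; otherwise compatibility with the simplicial operators in $k$ is not automatic. It is also worth noting that one cannot simply write $\text{Sk}_n(H\times\Delta^k) \cong \text{Sk}_nH\times\text{Sk}_n\Delta^k$, since the latter product is not $n$-skeletal in general. This is exactly why the argument passes through $n$-truncations and the coskeletality of $A$. For the augmented case, I would rerun the same argument in augmented simplicial sets, where the $(-1)$-level is untouched by $\text{tr}_n$ for $n \geq 0$.
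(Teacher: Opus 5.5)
Your proof is correct and follows the same route as the paper. Both identify the $k$-simplices with $\ncat{sSet}(H\times\Delta^k,A)$, use the coskeletality of $A$ to replace $H\times\Delta^k$ by $\text{Sk}_nH\times\text{Sk}_n\Delta^k$, and finish with the exponential adjunction. The only differences are these: where the paper cites the natural isomorphism $\text{Sk}_n(X\times Y)\cong\text{Sk}_n(\text{Sk}_nX\times\text{Sk}_nY)$, you derive what is needed directly from $\text{tr}_n$ preserving products and from restriction along the counits, and you make explicit the naturality in $k$ that the paper leaves implicit.
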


\begin{proof}
First note that
 \begin{equation*}
    \begin{aligned}
\u{\ncat{sSet}}(H, A)_\ell & \cong \ncat{sSet}(\Delta^\ell \times H, A) \\
& \cong \ncat{sSet}(\Delta^\ell \times H, \text{Cosk}_n A) \\
& \cong \ncat{sSet}(\text{Sk}_n (\Delta^\ell \times H), A). 
    \end{aligned}
\end{equation*}
Now in general, $\text{Sk}_n$ does not preserve products. However, for any pair of simplicial sets $X$ and $Y$ we have
\begin{equation*}
    \text{Sk}_n(X \times Y) \cong \text{Sk}_n ( \text{Sk}_n(X) \times \text{Sk}_n(Y) ),
\end{equation*}
and furthermore this isomorphism is natural in $X$ and $Y$, see \cite{Wofsey2018} for instance.

Therefore we have
\begin{equation*}
    \begin{aligned}
        \ncat{sSet}(\text{Sk}_n(\Delta^\ell \times H), A) & \cong \ncat{sSet}(\text{Sk}_n(\text{Sk}_n(\Delta^\ell) \times \text{Sk}_n(H)), A) \\
        & \cong \ncat{sSet}(\text{Sk}_n(\Delta^\ell) \times \text{Sk}_n(H), \text{Cosk}_n A) \\
        & \cong \ncat{sSet}(\text{Sk}_n (\Delta^\ell) \times \text{Sk}_n(H), A) \\
        & \cong \ncat{sSet}(\text{Sk}_n(H), A^{\text{Sk}_n(\Delta^\ell)} ).
    \end{aligned}
\end{equation*}
\end{proof}

Since we are applying the (co)skeleton functors objectwise to simplicial presheaves, the above result translates to an isomorphism of simplicial presheaves: suppose that $A$ is an objectwise fibrant, $n$-coskeletal simplicial presheaf, then 
\begin{equation*}
    \u{\ncat{sPre}}(\cat{C})(H, A) \cong \ncat{sPre}(\cat{C})(\text{Sk}_n H, A^{\text{Sk}_n\Delta^\bullet} ).
\end{equation*}
Since this is true for arbitrary $H$, we obtain a natural isomorphism of functors $\ncat{Triv}_{y(U)}^{\op} \to \ncat{sSet}$
\begin{equation}
    \u{\ncat{sPre}}(\cat{C})(-, A) \cong \ncat{sPre}(\cat{C})(\text{Sk}_n(-), A^{\text{Sk}_n \Delta^\bullet} ).
\end{equation}
Hence we obtain an isomorphism
\begin{equation*}
    \nhocolim{H \in \ncat{Triv}_{y(U)}^\op} \u{\ncat{sPre}}(\cat{C})(H, A) \cong \nhocolim{H \in \ncat{Triv}_{y(U)}^\op} \ncat{sPre}(\cat{C})(\text{Sk}_n H, A^{\text{Sk}_n \Delta^\bullet}).
\end{equation*}

Now let us try and reduce the homotopy colimit of the functor 
\begin{equation*}
 \ncat{sPre}(\cat{C})(\text{Sk}_n(-), A^{\text{Sk}_n \Delta^\bullet}) : \ncat{Triv}_{y(U)}^\op \to \ncat{sSet}.   
\end{equation*}

Consider the skeleton-coskeleton adjunction from Section \ref{section augmented and truncated simplicial presheaves}
\begin{equation*}
    \begin{tikzcd}
	{\ncat{sPre}^+_{\leq n}(\cat{C})} && {\ncat{sPre}^+(\cat{C})}
	\arrow[""{name=0, anchor=center, inner sep=0}, "{\text{sk}_n^+}", curve={height=-12pt}, from=1-1, to=1-3]
	\arrow[""{name=1, anchor=center, inner sep=0}, "{\text{cosk}^+_n}"', curve={height=12pt}, from=1-1, to=1-3]
	\arrow[""{name=2, anchor=center, inner sep=0}, "{\text{tr}_n}"{description}, from=1-3, to=1-1]
	\arrow["\dashv"{anchor=center, rotate=-90}, draw=none, from=0, to=2]
	\arrow["\dashv"{anchor=center, rotate=-90}, draw=none, from=2, to=1]
\end{tikzcd}
\end{equation*}

\begin{Lemma} \label{lem cosk of truncated hypercover is hypercover}
If $p : H \to y(U)$ is an $n$-truncated hypercover for $n \geq 0$, then $\text{cosk}^+_n(p): \text{cosk}^+_n(H) \to y(U)$ is a hypercover.
\end{Lemma}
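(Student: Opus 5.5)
We will check the criterion of Lemma \ref{lem local triv fib iff matching map loc epi}. That lemma says an augmented simplicial presheaf over ${}^c y(U)$ is a local trivial fibration if and only if every augmented matching map $m^+_k : X_k \to M^+_k X$ is a local epimorphism. So it suffices to show that, for $G \coloneqq \text{cosk}^+_n(H)$, each map $m^+_k : G_k \to M^+_k G$ with $k \geq 0$ is a local epimorphism. We take the definition of an $n$-truncated hypercover (Definition \ref{def truncated hypercover}) to be that $m^+_k : H_k \to M^+_k H$ is a local epimorphism for $0 \leq k \leq n$. Here the augmented matching objects of a truncated object only involve degrees below $k$, so they make sense for $k\le n$.

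We then split into two ranges of $k$.

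\emph{Low degrees, $0 \leq k \leq n$.} Since $\text{tr}_n \text{cosk}^+_n \cong 1$, we have $\text{tr}_n G \cong H$. For $k \leq n$ the object $M^+_k G$ depends only on $\text{tr}_{k-1} G$, with the augmentation included. Hence $m^+_k$ for $G$ coincides with $m^+_k$ for $H$, and it is a local epimorphism by hypothesis.

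\emph{High degrees, $k \geq n+1$.} By the adjunction $\text{tr}_n \dashv \text{cosk}^+_n$, a $k$-simplex of $G$ over $V$ is a map $\text{tr}_n(\Delta^k \otimes y(V))$, suitably augmented, into $H$. Similarly, an element of $M^+_k G(V)$ is a map out of $\text{tr}_n(\partial\Delta^k \otimes y(V))$. For $k \geq n+1$ the inclusion $\partial\Delta^k \hookrightarrow \Delta^k$ is an isomorphism on $n$-truncations. It follows that $m^+_k : G_k \to M^+_k G$ is an isomorphism of presheaves. In particular it is a local epimorphism, since the identity families lie in $j$ by axiom (1) of Definition \ref{def coverage}.

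The step I expect to need the most care is the augmented bookkeeping. We must verify that $M^+_k$ of an augmented object is computed from the $(k-1)$-truncation together with the augmentation $X_{-1} = y(U)$, using the explicit description of $\text{cosk}^+_n$ in Appendix \ref{section augmented and truncated simplicial presheaves}. Once that is granted, both ranges follow formally, and so does the identification of the augmentation of $G$ with $p$. The edge case $n=0$ also needs checking: there $\text{cosk}^+_0$ is the Čech nerve of $H_0 \to y(U)$, and this is consistent with the general argument.
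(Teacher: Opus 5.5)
Your proposal is correct and follows the paper's proof. In both, the augmented matching maps of $\cosk^+_n H$ in degrees $\le n$ coincide with those of $H$, and in degrees $k \ge n+1$ the matching map is an isomorphism, hence a local epimorphism. The only difference is cosmetic: the paper gets the isomorphism from $M^+_k X \cong (\cosk^+_{k-1}X)_k$ together with $\cosk^+_{k-1}\cosk^+_n \cong \cosk^+_n$, whereas you read it off directly from $\text{tr}_n(\partial\Delta^k) = \text{tr}_n(\Delta^k)$ via the adjunction, which is the same underlying fact.
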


\begin{proof}
If $m \leq n$, then $\cosk^+_n(p)_m \cong p_m$. Hence to show that $\cosk_n(p)$ is a hypercover, we need only to show that the augmented matching map $(\cosk^+_n(H))_m \to M_m^+(\cosk^+_n(H))$ is a local epimorphism for all $m > n$. 

If $m > n$, then $(\cosk^+_n(H))_m \cong \ncat{sSet}^+(\text{sk}_n\Delta^m, H(-))$. From the discussion in \cite[Section 4.5]{Dugger2004}, we see that $M_n^+(H) \cong (\cosk^+_{n-1}(H))_n$. Thus 
\begin{equation*}
  M_m^+(\cosk^+_n(H)) \cong (\cosk^+_{m-1}(\cosk^+_n(H)))_m  
\end{equation*}
However, by \cite[Corollary 3.11]{Conrad2003}, $\cosk^+_{m-1}(\cosk^+_n(H)) \cong \cosk^+_n(H)$. It is then easy to see that the matching map
\begin{equation*}
(\cosk^+_n(H))_m \to M_m^+(\cosk^+_n(H)) \cong (\cosk^+_n(H))_m
\end{equation*}
is isomorphic to the identity, and is therefore a local epimorphism.
\end{proof}

Hence there is an adjunction
\begin{equation*}
    \begin{tikzcd}
	{\ncat{Hyp}_{U}^{\leq n}} && {\ncat{Hyp}_{U}}
	\arrow[""{name=0, anchor=center, inner sep=0}, "{\text{cosk}^+_n}"', shift right=2, from=1-1, to=1-3]
	\arrow[""{name=1, anchor=center, inner sep=0}, "{\text{tr}_n}"', shift right=2, from=1-3, to=1-1]
	\arrow["\dashv"{anchor=center, rotate=-90}, draw=none, from=1, to=0]
\end{tikzcd}
\end{equation*}

Now consider the functor
\begin{equation*}
   G = \ncat{sPre}(\cat{C})(\text{sk}_n(-), A^{\text{Sk}_n \Delta^\bullet}) : \ncat{Hyp}_{U}^{\leq n} \to \ncat{sSet}
\end{equation*}
We see that if $H = \text{tr}_n H'$ for some $H' \in \ncat{Hyp}_{U}$, then
\begin{equation*}
    \ncat{sPre}(\cat{C})(\text{sk}_n H, A^{\text{Sk}_n \Delta^\bullet}) \cong \ncat{sPre}(\cat{C})(\text{Sk}_n H', A^{\text{Sk}_n \Delta^\bullet}).
\end{equation*}
In other words if we let
\begin{equation*}
    G' = \ncat{sPre}(\cat{C})(\text{Sk}_n(-), A^{\text{Sk}_n \Delta^\bullet}) : \ncat{Hyp}_{U} \to \ncat{sSet}
\end{equation*}
then $G' = G \, \text{tr}_n$. By \cite[Corollary 1.11.18]{Low2014Notes}, the left adjoint functor $\text{tr}_n$ is homotopy initial. Therefore the canonical map
\begin{equation*}
    \nhocolim{H' \in \ncat{Hyp}_U^{\leq n, \,  \op}} \ncat{sPre}(\cat{C})(\text{sk}_n H', A^{\text{Sk}_n \Delta^\bullet}) \to \nhocolim{H \in \ncat{Hyp}_{U}^\op} \ncat{sPre}(\cat{C})(\text{Sk}_n H, A^{\text{Sk}_n \Delta^\bullet})
\end{equation*}
is a weak equivalence.

Using the small object argument, there exists a cofibrant replacement functor $Q : \ncat{sPre}(\cat{C}) \to \ncat{sPre}(\cat{C})$ for the projective model structure for simplicial presheaves\footnote{Note that here we are not assuming that $Q$ is Dugger's functor from \cite[Section 9]{Dugger2001}, as we need an objectwise trivial fibration $q$.}. This means that for any $X \in \ncat{sPre}(\cat{C})$ we have an objectwise trivial Kan fibration $q : QX \to X$ where $QX$ is projective cofibrant. Now suppose that $p : H \to y(U)$ is a local trivial fibration. We have an objectwise trivial fibration $q : QH \to H$ given by cofibrant replacement. Since objectwise trivial Kan fibrations are in particular local trivial fibrations, and local trivial fibrations compose, the composite map $pq : QH \to y(U)$ is a local trivial fibration. This defines a functor that we denote $ Q: \ncat{Hyp}_U \to \ncat{Hyp}_U$. We obtain a natural transformation $q : Q \Rightarrow 1_{\ncat{Hyp}_U}$. Let $\ncat{QHyp}_{U}$ denote the image of this functor. Then by \cite[Lemma 1.11.16]{Low2014Notes} the inclusion
\begin{equation*}
   \ncat{QHyp}_{U} \hookrightarrow \ncat{Hyp}_{U} 
\end{equation*}
is homotopy initial.

Now we can extend $Q$ to a functor $Q^{\leq n} : \ncat{Hyp}_{U}^{\leq n} \to \ncat{Hyp}_{U}^{\leq n}$ by first taking any $n$-truncated local trivial fibration $p : H \to y(U)$, extending it to a local trivial fibration $\text{cosk}^+_n(p) : \text{cosk}^+_n H \to y(U)$, applying $Q$ and then truncating again, i.e. $Q^{\leq n} = \text{tr}_n Q \cosk^+_n$. By the natural transformation $q$ we obtain a map $q_{\cosk^+_n H} : Q \cosk^+_n H \to \cosk^+_n H$, but $ (\cosk^+_n H)_m \cong H_m$ for $m \leq n$. Hence we still obtain a natural transformation $q^{\leq n} : Q^{\leq n} \Rightarrow 1_{\ncat{Hyp}_{U}^{\leq n}}$. By the same argument as above we obtain a homotopy initial inclusion
\begin{equation*}
    \ncat{QHyp}_{U}^{\leq n} \hookrightarrow \ncat{Hyp}_{U}^{\leq n},
\end{equation*}
where $\ncat{QHyp}_{U}^{\leq n}$ is the image of $Q^{\leq n}$.

Hence we see that
\begin{equation*}
\nhocolim{H \in \ncat{Hyp}_{U}^\op} \u{\ncat{sPre}}(\cat{C})(H, A) \simeq \nhocolim{H' \in \ncat{QHyp}_{U}^{\leq n}} \ncat{sPre}(\cat{C})(\text{sk}_n(H'), A^{\text{Sk}_n \Delta^\bullet}). 
\end{equation*}

Now note that every object in $\ncat{QHyp}_{U}$ is a projective cofibrant simplicial presheaf. These are characterized precisely in Proposition \ref{prop projective cofibrant}. Unfortunately in general, projective cofibrant simplicial presheaves are not semi-representable. 

However, if the underlying category $\cat{C}$ of the site $(\cat{C}, j)$ is Cauchy complete, then the objects in $\ncat{QHyp}_{U}$ are local trivial fibrations of the form $p : H \to y(U)$ where $H$ is split and a coproduct of representables (rather than just a retract of coproducts of representables). In other words, every object of $\ncat{QHyp}_U$ is a split, DHI-hypercover. By Remark \ref{rem projective presheaves}, for any topological space $X$, the poset $\mathcal{O}(X)$ of open subsets and the category $\ncat{Man}$ are Cauchy complete.

Furthermore, if we restrict the underlying site to be the site $(\mathcal{O}(X), j_X)$ of open subsets of a topological space $X$, then every split DHI-hypercover on $(\mathcal{O}(X), j_X)$ is a split Verdier hypercover by Lemma \ref{lem hierarchy of hypercovers}. Hence the inclusion factors
\begin{equation*}
    \ncat{QHyp}_{U}^{\leq n} \xhookrightarrow{i} \ncat{VHyp}_{U}^{\leq n} \xhookrightarrow{j} \ncat{Hyp}_{U}^{\leq n}.
\end{equation*}
But we have shown that $ji$ is homotopy initial, and $j$ is fully faithful, hence by \cite[Proposition 1.11.20]{Low2014Notes}, $i$ is homotopy initial.

Thus we have proven the following result.

\begin{Th}\label{th truncated result}
If $A$ is a objectwise fibrant, $(n-1)$-truncated, simplicial presheaf over a site $(\cat{C}, j)$, then
\begin{itemize}
    \item if $(\cat{C}, j) = (\ncat{Man}, j_\text{open})$ we have
\begin{equation}
    \hat{A}(U) \simeq \nhocolim{H \in \ncat{DHI}_{y(U)}^{\leq n, \op}} \ncat{sPre}(\ncat{Man})(\text{sk}_n H, A^{\text{Sk}_n \Delta^\bullet}).
\end{equation}
\item if $(\cat{C}, j) = (\mathcal{O}(X), j_X)$ we have 
\begin{equation}
     \hat{A}(U) \simeq \nhocolim{H \in \ncat{VHyp}_{y(U)}^{\leq n, \op}} \ncat{sPre}(\mathcal{O}(X))(\text{sk}_n H, A^{\text{Sk}_n \Delta^\bullet}).   
\end{equation}
\end{itemize}
\end{Th}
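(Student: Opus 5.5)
The strategy is to assemble the chain of reductions from the preceding discussion and to check that each step is valid for the two sites in question.

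We start from Corollary \ref{cor low's formula}, which expresses $\hat{A}(U)$ as the homotopy colimit over $\ncat{Triv}_{y(U)}^\op = \ncat{Hyp}_U^\op$ of $\u{\ncat{sPre}}(\cat{C})(H,A)$. Because $A$ is objectwise fibrant and $(n-1)$-truncated, the unit $A \to \text{Cosk}_n A$ is an objectwise weak equivalence between objectwise Kan complexes. Since every $H$ is locally fibrant, this unit induces a weak equivalence on each mapping space, so it also induces a weak equivalence on the homotopy colimit. We may therefore replace $A$ by the $n$-coskeletal presheaf $\text{Cosk}_n A$. Applying the lemma for $n$-coskeletal targets objectwise gives a natural isomorphism $\u{\ncat{sPre}}(\cat{C})(H, A) \cong \ncat{sPre}(\cat{C})(\text{Sk}_n H, A^{\text{Sk}_n \Delta^\bullet})$ of functors on $\ncat{Hyp}_U^\op$.

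The next step is to pass to truncated hypercovers. Lemma \ref{lem cosk of truncated hypercover is hypercover} gives the adjunction $\text{tr}_n \dashv \cosk^+_n$ between $\ncat{Hyp}_U$ and $\ncat{Hyp}^{\leq n}_U$. The functor $G'$ on $\ncat{Hyp}_U$ factors as $G\,\text{tr}_n$, since $\text{Sk}_n H' = \text{sk}_n \text{tr}_n H'$. A left adjoint is homotopy initial, so by Proposition \ref{prop homotopy final iff induces weak equiv on hocolim} the homotopy colimit can be computed over $\ncat{Hyp}^{\leq n, \op}_U$. We then use the cofibrant replacement $Q$, equipped with its natural objectwise trivial fibration $q : Q \Rightarrow 1$, and its truncated version $Q^{\leq n} = \text{tr}_n Q \cosk^+_n$. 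The natural transformation to the identity shows that the inclusion of the essential image $\ncat{QHyp}^{\leq n}_U \hookrightarrow \ncat{Hyp}^{\leq n}_U$ is homotopy initial. Hence the homotopy colimit can be taken over $\ncat{QHyp}_U^{\leq n,\op}$.

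The last step identifies this indexing category with DHI- or Verdier hypercovers, and this is where the two cases separate. By Proposition \ref{prop projective cofibrant}, every object of $\ncat{QHyp}_U$ is split and degreewise a retract of a semi-representable presheaf. Both $\ncat{Man}$ and $\mathcal{O}(X)$ are Cauchy complete, so Lemma \ref{rem projective presheaves} upgrades these retracts to genuinely semi-representable presheaves. It follows that $\ncat{QHyp}^{\leq n}_U$ lands inside $\ncat{DHI}^{\leq n}_{y(U)}$. For $\mathcal{O}(X)$ we additionally apply Corollary \ref{cor DHI-hypercovers are Verdier hypercovers on top spaces} (or Lemma \ref{lem hierarchy of hypercovers}), which shows it lands inside $\ncat{VHyp}^{\leq n}_{y(U)}$. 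We thus have a factorization $\ncat{QHyp}^{\leq n}_U \xrightarrow{i} \mathcal{K} \xrightarrow{j} \ncat{Hyp}^{\leq n}_U$, where $\mathcal{K}$ is the relevant full subcategory and $ji$ is homotopy initial. Since $j$ is fully faithful, the cancellation result \cite[Proposition 1.11.20]{Low2014Notes} shows $i$ is homotopy initial, and therefore so is $j$ by two-out-of-three on homotopy colimits. Restricting the functor along $j$ then gives the stated formula in each case.

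The main obstacle is justifying the homotopy-initiality claims in the second step. The difficulty is that $Q^{\leq n}$ is defined through $\cosk^+_n$, so we must check that it is a functor with a genuine natural transformation to the identity. We must also check that Low's cancellation lemma applies with $j$ fully faithful, because the factorization is really through an essential image rather than through a subcategory. The set-theoretic size caveats are handled as in Remark \ref{rem size considerations for low's theorem}.
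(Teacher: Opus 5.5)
Your plan is the paper's argument step for step: Low's formula, replacing $A$ by $\text{Cosk}_nA$ and applying the coskeletal mapping-space lemma, homotopy initiality of the left adjoint $\text{tr}_n$, the cofibrant-replacement subcategory $\ncat{QHyp}^{\leq n}_U$, Cauchy completeness, and Low's cancellation result. Your closing two-out-of-three step, showing that $j$ and not just $i$ is homotopy initial, makes explicit what the paper leaves implicit.

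One step, however, is justified incorrectly. You claim that $A\to\text{Cosk}_nA$ induces a weak equivalence $\u{\ncat{sPre}}(\cat{C})(H,A)\to\u{\ncat{sPre}}(\cat{C})(H,\text{Cosk}_nA)$ for every $H\in\ncat{Hyp}_U$ because $H$ is locally fibrant. Fibrancy of the source is irrelevant when mapping out of $H$. Homotopy invariance of $\u{\ncat{sPre}}(\cat{C})(H,-)$ on objectwise fibrant targets requires $H$ to be projective cofibrant (axiom SM7), and a general object of $\ncat{Hyp}_U$ is not. For instance, a covering sieve $R\hookrightarrow y(U)$, viewed as a discrete simplicial presheaf, is a local trivial fibration. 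But $\u{\ncat{sPre}}(\cat{C})(R,A)$ is a strict limit of the $A(V)$, e.g.\ the strict pullback $A(U_0)\times_{A(U_0\cap U_1)}A(U_1)$ when $R$ is generated by a two-element cover, and such strict limits are not invariant under objectwise weak equivalences.

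The conclusion you need is still true, for the reason the paper implicitly relies on. $\text{Cosk}_nA$ is objectwise fibrant, hence locally fibrant, so Corollary \ref{cor low's formula} applies to it directly. Moreover $\hat{A}(U)\simeq\widehat{\text{Cosk}_nA}(U)$, because $A\to\text{Cosk}_nA$ is an objectwise, hence local, weak equivalence. Alternatively, pass to $\ncat{QHyp}_U$ first: there every $H$ is projective cofibrant, and your per-$H$ argument becomes valid.

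The worries you list at the end are routine. Since $\cosk^+_n$ is fully faithful, $\text{tr}_n\cosk^+_n\cong 1$. So $q^{\leq n}_H$ is $\text{tr}_n(q_{\cosk^+_nH})$ followed by this isomorphism, and it is natural in $H$. Also, $\ncat{DHI}^{\leq n}_{y(U)}$ and $\ncat{VHyp}^{\leq n}_{y(U)}$ are full subcategories by definition, so Low's cancellation result applies as stated.

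Note also that, exactly as in the paper, the coskeletal lemma is applied to $\text{Cosk}_nA$. What the argument literally produces is therefore the formula with $(\text{Cosk}_nA)^{\text{Sk}_n\Delta^\bullet}$ in place of $A^{\text{Sk}_n\Delta^\bullet}$; the two agree when $A$ is already $n$-coskeletal.
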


For computations truncated DHI and Verdier hypercovers are significantly simpler to work with than arbitrary local trivial fibrations. For example, in \cite[Section 3]{Beke2004}, Beke introduces the notion of \v{C}ech $n$-cover for $0 \leq n \leq 3$. These are precisely our $n$-truncated Verdier hypercovers. On the site of a topological space $(\mathcal{O}(X), j_X)$, $n$-truncated Verdier hypercovers are especially simple to describe, being inductively defined by covers of intersections of covers.

\appendix

\section{Augmented and Truncated Simplicial Presheaves} \label{section augmented and truncated simplicial presheaves}

Let us introduce some variants of simplicial presheaves.

\begin{Def} \label{def truncated, augmented simplicial presheaf}
Recall the category $\mathsf{\Delta}$ of finite ordinals and order-preserving maps. Let $\cat{C}$ be a small category.
\begin{enumerate}
    \item Given $n \geq 0$, let $\mathsf{\Delta}_{\leq n}$ denote the full subcategory of $\mathsf{\Delta}$ on those finite ordinals $[k]$ for $k \leq n$. We call a functor $X : \mathsf{\Delta}_{\leq n}^\op \to \ncat{Pre}(\cat{C})$ an \textbf{$n$-truncated simplicial presheaf} on $\cat{C}$, and let $\ncat{sPre}_{\leq n}(\cat{C})$ denote the category of $n$-truncated simplicial presheaves on $\cat{C}$ with natural transformations as morphisms,
    \item Let $\mathsf{\Delta}^+$ denote the category $\mathsf{\Delta}$ of finite ordinals along with the empty set $[-1] = \varnothing$, which is a strictly initial object. We call a functor $X :(\mathsf{\Delta}^+)^\op \to \ncat{sPre}(\cat{C})$ an \textbf{augmented simplicial presheaf} on $\cat{C}$, and let $\ncat{sPre}^+(\cat{C})$ denote the category of augmented simplicial presheaves on $\cat{C}$ with natural transformations as morphisms,
    \item Given $n \geq 0$, let $\mathsf{\Delta}^+_{\leq n}$ denote the full subcategory of $\mathsf{\Delta}^+$ on those finite ordinals $[k]$ for $k \leq n$. We call a functor $X : (\mathsf{\Delta}^+)^\op_{\leq n} \to \ncat{Pre}(\cat{C})$ a \textbf{$n$-truncated, augmented simplicial presheaf} on $\cat{C}$, and let $\ncat{sPre}^+_{\leq n}(\cat{C})$ denote the category of $n$-truncated, augmented simplicial presheaves on $\cat{C}$ with natural transformations as morphisms.
\end{enumerate}
\end{Def}

There exist canonical inclusion functors
\begin{equation*}
    i^+: \mathsf{\Delta} \hookrightarrow \mathsf{\Delta}^+, \qquad i_n : \mathsf{\Delta}_{\leq n} \hookrightarrow \mathsf{\Delta}, \qquad i^+_n :\mathsf{\Delta}^+_{\leq n} \hookrightarrow \mathsf{\Delta}^+, 
\end{equation*}
which each induce functors on the respective categories of simplicial presheaves, each of which has a left and right adjoint.

If $X$ is an ($n$-truncated) augmented simplicial presheaf, then we call the unique map $X_0 \to X_{-1}$ the \textbf{augmentation}. Note that from the simplicial identities, an augmentation is equivalent to a map $X \to {}^cX_{-1}$, where $X$ is a ($n$-truncated) simplicial presheaf and ${}^c X_{-1}$ is a discrete simplicial presheaf.

From $i^+$ we obtain the following adjoint triple
\begin{equation*}
\begin{tikzcd}
	{\ncat{sPre}(\cat{C})} & {\ncat{sPre}(\cat{C})^+}
	\arrow[""{name=0, anchor=center, inner sep=0}, "{{\pi_0}}", curve={height=-24pt}, from=1-1, to=1-2]
	\arrow[""{name=1, anchor=center, inner sep=0}, "{{(-)_*}}"', curve={height=24pt}, from=1-1, to=1-2]
	\arrow[""{name=2, anchor=center, inner sep=0}, "{{U}}"{description}, from=1-2, to=1-1]
	\arrow["\dashv"{anchor=center, rotate=-93}, draw=none, from=0, to=2]
	\arrow["\dashv"{anchor=center, rotate=-87}, draw=none, from=2, to=1]
\end{tikzcd}
\end{equation*}
where $U = (i^+)^*$ forgets the augmentation, $\pi_0$ sets $X_{-1} = \pi_0 X$, and $(-)_*$ sets $X_{-1} = *$, the terminal presheaf in degree $-1$. We will deal mostly with $(-)_*$, which we call the \textbf{trivial augmentation} and if $X$ is a simplicial presheaf, then we will silently consider it as trivially augmented.

From $i_n$, we obtain the classical skeleton-coskeleton adjoint triple for simplicial presheaves \cite[Section IV.3.2]{Goerss2009}:
\begin{equation*}
    \begin{tikzcd}
	{\ncat{sPre}_{\leq n}(\cat{C})} && {\ncat{sPre}(\cat{C})}
	\arrow[""{name=0, anchor=center, inner sep=0}, "{\text{sk}_n}", shift left=2, curve={height=-12pt}, from=1-1, to=1-3]
	\arrow[""{name=1, anchor=center, inner sep=0}, "{\text{cosk}_n}"', shift right=2, curve={height=12pt}, from=1-1, to=1-3]
	\arrow[""{name=2, anchor=center, inner sep=0}, "{\text{tr}_n}"{description}, from=1-3, to=1-1]
	\arrow["\dashv"{anchor=center, rotate=-90}, draw=none, from=0, to=2]
	\arrow["\dashv"{anchor=center, rotate=-90}, draw=none, from=2, to=1]
\end{tikzcd}
\end{equation*}
where $\text{tr}_n = i_n^*$ restricts a simplicial presheaf, and its left adjoint is the \textbf{$n$-skeleton}\footnote{Dugger et al \cite[Section 4.7]{Dugger2004} call this the $n$-degeneration functor.} functor $\text{sk}_n$ which freely adds degenerate simplices in degrees $m > n$ and its right adjoint is the \textbf{$n$-coskeleton} functor $\text{cosk}_n$, which has a unique simplex in degrees $m > n$ for every possible boundary $\partial \Delta^m \to \text{cosk}_n$. Note that both $\text{sk}_n$ and $\text{cosk}_n$ are fully faithful\footnote{In fact, whenever $F : \cat{C} \to \cat{D}$ is fully faithful, so are the left and right adjoints to the precomposition functor $F^*$.}.

We similarly obtain an augmented version of this adjoint triple
\begin{equation*}
     \begin{tikzcd}
	{\ncat{sPre}^+_{\leq n}(\cat{C})} && {\ncat{sPre}(\cat{C})^+}
	\arrow[""{name=0, anchor=center, inner sep=0}, "{\text{sk}^+_n}", shift left=2, curve={height=-12pt}, from=1-1, to=1-3]
	\arrow[""{name=1, anchor=center, inner sep=0}, "{\text{cosk}^+_n}"', shift right=2, curve={height=12pt}, from=1-1, to=1-3]
	\arrow[""{name=2, anchor=center, inner sep=0}, "{\text{tr}^+_n}"{description}, from=1-3, to=1-1]
	\arrow["\dashv"{anchor=center, rotate=-90}, draw=none, from=0, to=2]
	\arrow["\dashv"{anchor=center, rotate=-90}, draw=none, from=2, to=1]
\end{tikzcd}   
\end{equation*}
with fully faithful left and right adjoints which can be described in the same way as above.

\begin{Lemma}
Given $n \geq 0$, $m \geq 0 \; (\geq -1)$ and an $n$-truncated (augmented) simplicial presheaf $X$, we have
\begin{equation*}
   \text{sk}_n^{(+)}(X)_m \cong \ncolim{\substack{[m] \twoheadrightarrow [k] \\ k \leq n}} X_k, \qquad \text{cosk}_n^{(+)}(X)_m \cong \lim_{\substack{[k] \hookrightarrow [m] \\ k \leq n}} X_k,
\end{equation*}
where the left hand colimit is indexed over the category of surjective order-preserving maps from $[m]$ to finite ordinals $[k]$, $k \leq n$ and the right hand limit is indexed over the category of injective order-preserving maps from $[k]$ to finite ordinals $[m]$, $k \leq n$.
\end{Lemma}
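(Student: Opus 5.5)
The plan is to identify both sides with left and right Kan extensions along $i_n$ (respectively $i_n^+$) and then evaluate the pointwise formulas. Since $\text{tr}_n = i_n^*$ is precomposition with the fully faithful inclusion $i_n : \mathsf{\Delta}_{\leq n} \hookrightarrow \mathsf{\Delta}$, its left adjoint $\text{sk}_n$ is the left Kan extension $\text{Lan}_{i_n^\op}$ and its right adjoint $\text{cosk}_n$ is the right Kan extension $\text{Ran}_{i_n^\op}$. These Kan extensions exist and are computed pointwise because $\ncat{Pre}(\cat{C})$ is complete and cocomplete, with (co)limits computed objectwise. The same holds in the augmented case with $i^+_n$.

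The first step is to write down the pointwise formulas. For $F : (\mathsf{\Delta}_{\leq n})^\op \to \ncat{Pre}(\cat{C})$ they read
\begin{equation*}
(\text{Lan} F)_m = \ncolim{([m] \to [k]) \in ([m] \downarrow i_n)^{\op}} X_k, \qquad (\text{Ran} F)_m = \lim_{([k] \to [m]) \in (i_n \downarrow [m])^{\op}} X_k.
\end{equation*}
Here the comma categories are taken in $\mathsf{\Delta}$ with the variance appropriate to presheaves on $\mathsf{\Delta}$. The indexing categories run over all order-preserving maps $[m] \to [k]$, respectively $[k] \to [m]$, with $k \leq n$.

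The second step is to cut these indexing categories down to surjections and injections respectively, using the epi–mono factorization in $\mathsf{\Delta}$ (and in $\mathsf{\Delta}^+$). For the colimit, I will show that the full subcategory of surjections $[m] \twoheadrightarrow [k]$ is final. Given any $\alpha : [m] \to [k]$ with $k \leq n$, factor it as $[m] \twoheadrightarrow [k'] \hookrightarrow [k]$ with $k' \leq k \leq n$. The comma category of surjections under $\alpha$ then has an initial object given by this factorization, because the factorization is unique and functorial: if $\alpha$ factors through some $\beta : [m]\twoheadrightarrow [l]$, then the image factorization of $\alpha$ factors uniquely through $\beta$. Hence the comma category is connected and nonempty, which gives finality. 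Dually, for the limit, factor any $[k] \to [m]$ as $[k] \twoheadrightarrow [k'] \hookrightarrow [m]$ with $k' \leq n$. This shows that the subcategory of injections is initial, so the limit may be computed over injections.

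The main point to check carefully is the variance, namely which side receives the final or initial functor, together with connectivity of the comma categories. For the augmented case, $[-1]$ must also be handled: the empty map $[-1] \hookrightarrow [m]$ is injective and is included, while surjections from $[m]$ with $m \geq 0$ never land in $[-1]$. When $m = -1$, the only surjection is the identity of $[-1]$, so the formula gives $X_{-1}$, as required. The epi–mono factorization remains valid in $\mathsf{\Delta}^+$, since $[-1]$ is strictly initial, so the same finality and initiality argument applies verbatim.
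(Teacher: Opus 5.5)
Your proof is correct, and its skeleton matches the paper's: identify $\text{sk}_n^{(+)}$ and $\text{cosk}_n^{(+)}$ with pointwise left and right Kan extensions, then shrink the indexing category to surjections (resp.\ injections) using the epi--mono factorization in $\mathsf{\Delta}^{(+)}$. The difference is in how finality is verified.

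You track that $([m]\to[k])\mapsto X_k$ is contravariant, so the colimit is indexed by $([m]\downarrow\mathsf{\Delta}_{\leq n})^{\op}$. In that orientation the factorization $\alpha = je$ is itself an initial object of the relevant comma category. Equivalently, $(e,j)$ is terminal among factorizations $\alpha = g\beta$ with $\beta$ surjective; this follows from uniqueness of image factorizations in $\mathsf{\Delta}$ and $\beta$ being epi. That gives nonemptiness and connectedness in one stroke, and the dual argument handles $\text{cosk}_n^{(+)}$.

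The paper instead treats the colimit as indexed by $([m]\downarrow\mathsf{\Delta}_{\leq n})$ itself. It therefore finds that ``$j$ points in the wrong direction'', manufactures a map $\alpha\to e$ using an order-adjoint $j_!$ of $j$, and cites \cite[Lemma 2.9]{riehl2014theory} for connectedness.

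Your orientation is the right one. Over the unopposed comma category, $[m]\to[0]$ would be terminal and the colimit would collapse to $X_0$. So your route is both shorter and self-contained. It also avoids the order-adjoint detour: a monotone map $[a]\to[b]$ has a left adjoint only when it preserves the top element, so such adjoints do not exist for arbitrary maps of finite ordinals.
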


\begin{proof}
Using the classical descriptions of Kan extensions \cite[Lemma A.69]{Minichiello2025}, we have
\begin{equation*}
      \text{sk}_n^{(+)}(X)_m \cong \ncolim{\substack{[m] \to [k] \\ k \leq n}} X_k, \qquad \text{cosk}_n^{(+)}(X)_m \cong \lim_{\substack{[k] \to [m] \\ k \leq n}} X_k.
\end{equation*}
We want to reduce these (co)limits to epimorphisms and monomorphisms. Let us just consider the case of $\text{sk}_n$, as the argument for $\text{cosk}_n$ is dual.

The colimit is over the category $([m] \downarrow \mathsf{\Delta}^{(+)}_{\leq n})$. Consider the full subcategory $\cat{D} = ([m] \downarrow \mathsf{\Delta}^{(+)}_{\text{surj}, \leq n})$ consisting of maps $[m] \twoheadrightarrow [k]$. We want to show that the inclusion functor $i : \cat{D} \hookrightarrow ([m] \downarrow \mathsf{\Delta}_{\leq n}^{(+)})$ is final. Hence we must show that for every $\alpha : [m] \to [k]$, $k \leq n$, that the category $(\alpha \downarrow \cat{D})$ is connected.

Factor $\alpha$ as
\begin{equation*}
    [m] \overset{e}{\twoheadrightarrow} [\ell] \xhookrightarrow{j} [k].
\end{equation*}
We want to get a map $\alpha \to e$ in $(\alpha \downarrow \cat{D})$ to show its nonempty, but $j$ points in the wrong direction. But $j$ is a map of total orders, and hence preserves both joins and meets. Hence any poset map $f : [a] \to [b]$ of finite ordinals has both a left $f_!$ and right adjoint $f_*$, defined as follows
\begin{equation*}
    f_!(y) = \text{max} \{x \in [a] \, : y \leq f(x) \}, \qquad f_*(y) = \text{min} \{ x \in [a] \, : f(x) \leq y \}. 
\end{equation*}
Furthermore these adjoints have the property that if $f : [a] \to [b]$ is injective, then $f_!(f(x)) = f_*(f(x)) = x$, which implies that $f_!$ and $f_*$ are surjective, and if $f$ is surjective, then $f(f_!(y)) = f(f_*(y)) = y$, which implies that $f_!$ and $f_*$ are injective.

So given $\alpha$ as above we have a diagram
\begin{equation*}
\begin{tikzcd}
	{[m]} & {[\ell]} \\
	{[k]}
	\arrow["e", two heads, from=1-1, to=1-2]
	\arrow["\alpha"', from=1-1, to=2-1]
	\arrow[""{name=0, anchor=center, inner sep=0}, "j"', hook', from=1-2, to=2-1]
	\arrow[""{name=1, anchor=center, inner sep=0}, "{j_!}"', shift right=3, two heads, from=2-1, to=1-2]
	\arrow["\dashv"{anchor=center, rotate=135}, draw=none, from=1, to=0]
\end{tikzcd}    
\end{equation*}
with $j_! \alpha = j_! j e = e$. Hence $(\alpha \downarrow \cat{D})$ is nonempty. To show it is connected, it is enough to see that every epi-mono factorization is connected by a zig-zag, which implies the same for the mono's adjoint inverse, and this follows from \cite[Lemma 2.9]{riehl2014theory}.
\end{proof}

Now let $\text{Sk}^{(+)}_n = \text{sk}^{(+)}_n \text{tr}^{(+)}_n$ and $\text{Cosk}^{(+)}_n = \text{cosk}^{(+)}_n \text{tr}^{(+)}_n$. Then we obtain the adjunctions
\begin{equation*}
\begin{tikzcd}
	{\ncat{sPre}(\cat{C})} && {\ncat{sPre}(\cat{C})} && {\ncat{sPre}^+(\cat{C})} && {\ncat{sPre}^+(\cat{C})}
	\arrow[""{name=0, anchor=center, inner sep=0}, "{{\text{Sk}_n}}", shift left=2, from=1-1, to=1-3]
	\arrow[""{name=1, anchor=center, inner sep=0}, "{{\text{Cosk}_n}}", shift left=2, from=1-3, to=1-1]
	\arrow["{{\text{Sk}^+_n}}", shift left=2, from=1-5, to=1-7]
	\arrow["{{\text{Cosk}^+_n}}", shift left=2, from=1-7, to=1-5]
	\arrow["\dashv"{anchor=center, rotate=-90}, draw=none, from=0, to=1]
\end{tikzcd}  
\end{equation*}

\begin{Def} \label{def latching and matching objects}
Given $n \geq 0$ and a simplicial presheaf $X$ on a small category $\cat{C}$, we define its $n$-th (augmented) \textbf{latching object} as the presheaf
\begin{equation*}
 L_n^{(+)} X = (\text{Sk}^{(+)}_{n-1}(X))_n   
\end{equation*}
and similarly, the $n$-th (augmented) \textbf{matching object} is the presheaf
\begin{equation*}
    M_n^{(+)} X = (\text{Cosk}^{(+)}_{n-1}(X))_n.
\end{equation*}
By the universal properties of (co)limits, we obtain maps
\begin{equation*}
    L_n^{(+)} X \xrightarrow{\ell^{(+)}_n} X_n \xrightarrow{m_n^{(+)}} M_n^{(+)} X
\end{equation*}
of presheaves, called the (augmented) latching and matching maps, respectively.
\end{Def}

The next couple of results will allow us to manipulate latching and matching objects with greater ease.

\begin{Lemma}[{\cite[Examples 3.14, 3.22]{riehl2014theory}}]
Given an (augmented) simplicial presheaf $X$ on a small category $\cat{C}$, the (augmented) latching object $L_n^{(+)} X$ is the presheaf such that for $U \in \cat{C}$
\begin{equation*}
    L_n^{(+)}X(U) = \{\text{degenerate $n$-simplices of }X(U)\},
\end{equation*}
and similarly for the (augmented) matching object
\begin{equation*}
    M_n^{(+)} X(U) = \ncat{sSet}^{(+)}(\partial \Delta^n, X(U)),
\end{equation*}
with the convention that $\partial \Delta^n$ is given the trivial augmentation.
The (augmented latching map $\ell_n^{(+)} : L_n^{(+)} X \to X_n$ is the inclusion map for degenerate simplices and the (augmented) matching map $m_n^{(+)} : X_n \to M_n^{(+)} X$ takes a simplex $\Delta^n \to X$ to its boundary $\partial \Delta^n \hookrightarrow \Delta^n \to X$.
\end{Lemma}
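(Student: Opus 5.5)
The statement is the last lemma shown: for an (augmented) simplicial presheaf $X$, the latching object $L_n^{(+)}X(U)$ is the set of degenerate $n$-simplices of $X(U)$, the matching object is $M_n^{(+)}X(U) = \ncat{sSet}^{(+)}(\partial \Delta^n, X(U))$, and the latching and matching maps are the evident inclusion and boundary maps.

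Everything is computed objectwise, since (co)limits of presheaves and the functors $\text{sk}_n$, $\text{cosk}_n$ are computed objectwise. So I will fix $U \in \cat{C}$ and work with the (augmented) simplicial set $Y = X(U)$, reducing the claim to the classical descriptions of latching and matching objects of simplicial sets. For both halves I will use the preceding lemma, which expresses $\text{sk}_{n-1}^{(+)}(\text{tr}_{n-1} Y)_n$ and $\text{cosk}_{n-1}^{(+)}(\text{tr}_{n-1}Y)_n$ as a colimit over surjections $[n] \twoheadrightarrow [k]$ and a limit over injections $[k] \hookrightarrow [n]$, with $k \leq n-1$.

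\emph{Matching object.} The limit $\lim_{[k]\hookrightarrow[n],\,k\le n-1} Y_k$ is exactly $Y$ evaluated on the category of simplices of $\partial\Delta^n$. The proper faces of $\Delta^n$ are precisely the injections $[k]\hookrightarrow[n]$ with $k<n$. Since $\partial\Delta^n \cong \colim_{\Delta^k \hookrightarrow \partial\Delta^n}\Delta^k$, this limit is $\ncat{sSet}(\partial\Delta^n, Y)$ by Yoneda. In the augmented case the index $k=-1$ is included; this contributes the augmentation compatibility, matching the convention that $\partial\Delta^n$ carries the trivial augmentation. The matching map is induced by restricting $\Delta^n \to Y$ along the faces, i.e.\ along $\partial\Delta^n\hookrightarrow\Delta^n$.

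\emph{Latching object.} The colimit $\colim_{[n]\twoheadrightarrow[k],\,k\le n-1} Y_k$ maps to $Y_n$ via the degeneracies, and its image is exactly the set of degenerate $n$-simplices. The main point is injectivity. For this I will use the Eilenberg--Zilber lemma, as in Lemma \ref{lem simplicial sets are split}: each degenerate simplex $y$ is uniquely $s^*x$ with $s$ a surjection and $x$ nondegenerate. Given any representative $(\sigma:[n]\twoheadrightarrow[k], z\in Y_k)$, write $z = t^*x$ with $t$ surjective and $x$ nondegenerate. The pair $(\sigma, z)$ is then connected in the indexing category, via the morphism $t$, to $(t\sigma, x)$. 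By uniqueness, any two representatives of the same degenerate simplex are therefore identified in the colimit.

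I expect this injectivity step to be the only real obstacle. One must check that the zig-zag stays inside the comma category of surjections with $k\le n-1$; this holds since $x$ has dimension $\le k \le n-1$. The augmented case needs no extra work, because surjections $[n]\twoheadrightarrow[-1]$ do not exist for $n \geq 0$.
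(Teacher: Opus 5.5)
Your proof is correct. The paper gives no argument of its own for this lemma; it only cites Riehl--Verity. What you have written is essentially the standard proof that the citation stands in for:
\begin{itemize}
\item For the latching object, the Eilenberg--Zilber uniqueness of $y = s^*x$ shows that the map $\colim_{[n]\twoheadrightarrow[k],\,k<n} X_k(U) \to X_n(U)$ is injective, with image exactly the degenerate simplices.
\item For the matching object, Yoneda identifies the limit with $\ncat{sSet}(\partial\Delta^n, X(U))$.
\end{itemize}

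The difference from the cited source is mainly packaging. Riehl--Verity work in the language of (co)limits weighted by the boundary of a representable, in which $M_nX = \ncat{sSet}(\partial\Delta^n, X)$ is close to a tautology, and the latching computation is the Eilenberg--Zilber lemma. You instead start from the paper's explicit Kan-extension formulas for $\text{sk}$ and $\text{cosk}$. This is more elementary, but it means you must supply the presentation
\[
\partial\Delta^n \cong \colim_{[k]\hookrightarrow[n],\,k<n} \Delta^k,
\]
for instance from the coequalizer $\sum_{i<j}\Delta^{n-2} \rightrightarrows \sum_i \Delta^{n-1} \to \partial\Delta^n$.

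One wording fix: this colimit runs over the \emph{nondegenerate} simplices (proper faces) of $\partial\Delta^n$, not over its full category of simplices as you wrote.

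Your handling of the augmented case is right on both counts:
\begin{itemize}
\item In the matching object, the empty face contributes the common augmentation, which matches the trivially augmented $\partial\Delta^n$.
\item In the latching object nothing changes, since there are no maps $[n]\to[-1]$.
\end{itemize}
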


\begin{Lemma}[{\cite[Section 4.5]{Dugger2004}}] \label{lem props of matching and latching}
Given an augmented simplicial presheaf $X$ over a small category $\cat{C}$, we have the following
\begin{enumerate}
    \item $M_0^+ X \cong X_{-1}$,
    \item $M_1^+ X \cong X_0 \times_{X_{-1}} X_0$,
    \item $M_n^+ X \cong M_n X$ for $n \geq 2$,
    \item $L_n^+ X \cong L_n X$ for $n \geq 0$, and
    \item $L_0^+ X \cong L_0 X \cong \varnothing$.
\end{enumerate}
\end{Lemma}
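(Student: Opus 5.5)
The plan is to compute everything objectwise, since limits and colimits in $\ncat{Pre}(\cat{C})$ are computed objectwise. I would use the Kan extension formulas from the earlier lemma: $M_n^{(+)}X$ is the limit of $X_k$ over injections $[k]\hookrightarrow[n]$ with $k\le n-1$, and $L_n^{(+)}X$ is the colimit of $X_k$ over surjections $[n]\twoheadrightarrow[k]$ with $k\le n-1$. In the augmented case the index $k$ is allowed to equal $-1$. Equivalently, I can use the description $M_n^+X(U)=\ncat{sSet}^+(\partial\Delta^n,X(U))$ with $\partial\Delta^n$ trivially augmented. Each item of the lemma then reduces to identifying the relevant index category.

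\emph{Latching objects.} Since $[-1]=\varnothing$ is strictly initial, there is no map $[n]\to[-1]$ when $n\ge 0$, and in particular no surjection. So the index category for $L_n^+X$ is the same as the one for $L_n X$, which gives $L_n^+X\cong L_nX$. For $n=0$ a surjection $[0]\twoheadrightarrow[k]$ with $k\le -1$ would be needed, and none exists even unaugmented with $k\le -1$. The colimit is therefore over the empty category, so $L_0X\cong\varnothing$.

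\emph{Matching objects in low degrees.} For $n=0$, the only injection $[k]\hookrightarrow[0]$ with $k\le -1$ is $\varnothing\hookrightarrow[0]$, so $M_0^+X\cong X_{-1}$. Concretely, an augmented map out of $\partial\Delta^0=\varnothing$ with augmentation $*$ is just a section of $X_{-1}$. For $n=1$, the index category consists of $\varnothing$ and the two vertex inclusions $d^0,d^1:[0]\hookrightarrow[1]$, with $\varnothing$ mapping to both. This is a cospan diagram $X_0\to X_{-1}\leftarrow X_0$ whose two legs are the augmentation, and its limit is $X_0\times_{X_{-1}}X_0$.

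\emph{Matching objects for $n\ge 2$.} This is the step needing actual argument. The index category for $M_n^+X$ is the one for $M_nX$ together with an extra object $\varnothing\hookrightarrow[n]$ that maps to everything. I must show that adding it does not change the limit. Given a cone over the unaugmented part, I will define the $X_{-1}$-component as $\varepsilon$ composed with any vertex component. This is well defined because, for $n\ge 2$, every edge $[1]\hookrightarrow[n]$ lies in the indexing diagram, and the simplicial identity $\varepsilon d_0=\varepsilon d_1$ forces the two endpoint vertices to have equal augmentation. The vertices of $\partial\Delta^n$ are connected by edges, so all choices agree.

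Hence cones over the two diagrams correspond bijectively, naturally in $U$, and $M_n^+X\cong M_nX$. In terms of the earlier lemma: an augmented map $\partial\Delta^n\to X(U)$ is an ordinary map together with a compatible point of $X_{-1}(U)$. That point is forced because $\partial\Delta^n$ is connected for $n\ge2$. The same reasoning shows the failure at $n=1$, where $\partial\Delta^1$ is disconnected and the pullback condition is genuine extra data.
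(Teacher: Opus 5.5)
Your proof is correct. The paper gives no proof of its own, only the citation to \cite[Section 4.5]{Dugger2004}. Your objectwise computation matches how the paper uses these identifications in the proof of Lemma \ref{lem local triv fib iff matching map loc epi}: it uses $M_n^+X(U)=\ncat{sSet}^+(\partial\Delta^n,X(U))$ with $\partial\Delta^n$ trivially augmented, and your connectivity argument for $n\ge 2$ is what justifies $M_n^+X\cong M_nX$ there.
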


Using the Eilenberg-Zilber Lemma, we can restate the above results as follows. Given an (augmented) ($n$-truncated) simplicial presheaf $X$, we have an isomorphism
\begin{equation} \label{eq structure of latching object}
    L_k^{(+)}X \cong \sum_{\sigma : [k] \twoheadrightarrow [m]} X^\nd_m,
\end{equation}
where $X^{\nd}_m$ denotes the sub-presheaf of $X_m$ consisting of non-degenerate simplices, and where the coproduct is indexed over all surjective, order-preserving maps $[k] \to [m]$ with $0 \leq m < k \; (\leq n)$.

\begin{Rem}
The notion of split simplicial presheaves (Definition \ref{def split}) still makes sense for augmented simplicial presheaves, as there are no degeneracies in degrees $n = -1, 0$, and similarly for $n$-truncated and $n$-truncated, augmented simplicial presheaves.
\end{Rem}

\begin{Lemma}
An ($n$-truncated) (augmented) simplicial presheaf $X$ is split if and only if the canonical map
\begin{equation*}
X^{\nd}_k + L_k^{(+)} X \to X_k
\end{equation*}
is an isomorphism for every $0 \leq k \; (\leq n)$.
\end{Lemma}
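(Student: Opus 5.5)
The plan is to reduce the statement to the Eilenberg--Zilber description of the latching object (\ref{eq structure of latching object}), working objectwise. Since limits, colimits, latching objects and degenerate simplices are all computed objectwise in $U \in \cat{C}$, and the truncated and augmented variants only change which degrees are considered (by Lemma \ref{lem props of matching and latching}, $L^+_k X \cong L_k X$ and $L_0 X \cong \varnothing$), I would treat all cases together. I would fix $0 \le k\;(\le n)$ and show that the canonical map $X^{\nd}_k + L_k X \to X_k$ is an isomorphism if and only if $X$ satisfies Definition \ref{def split} in degree $k$, with $N_k = X^{\nd}_k$.

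For the direction ($\Leftarrow$), I would assume the canonical map is an isomorphism for all $k$ and substitute (\ref{eq structure of latching object}) for $L_k X$. This gives
\begin{equation*}
X_k \cong X^{\nd}_k + \sum_{\sigma : [k] \twoheadrightarrow [m],\, m<k} X^{\nd}_m \cong \sum_{\sigma : [k] \twoheadrightarrow [m]} X^{\nd}_m ,
\end{equation*}
where the last sum includes the identity surjection. I would then check that this composite is exactly the canonical splitting map, so that the subpresheaves $N_m = X^{\nd}_m \hookrightarrow X_m$ give a splitting.

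For the direction ($\Rightarrow$), I would assume subpresheaves $N_m \hookrightarrow X_m$ with $\sum_{\sigma} N_m \cong X_k$. The summands indexed by non-identity surjections $\sigma$ consist of degenerate simplices, because they are images of degeneracy maps. By Eilenberg--Zilber uniqueness, these summands together are exactly $L_k X$; the objectwise description of $L_k$ as degenerate simplices is what lets me identify them. The remaining summand $N_k$ is then the complement of $L_k X$ in $X_k$. Objectwise this complement is the set of nondegenerate simplices, so $N_k = X^{\nd}_k$ as subpresheaves. Note that $X^{\nd}_k$ is a subpresheaf precisely in this split situation, because of the characterization in Lemma \ref{lem split simplicial presheaf iff factors through nondegen}. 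This yields the isomorphism $X^{\nd}_k + L_k X \to X_k$.

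I expect the main obstacle to be a subtle one: $X^{\nd}_k$ need not be closed under the presheaf restriction maps for a general $X$. So the statement only makes sense once we know that each $X(f)$ preserves nondegeneracy. I would handle this by using Lemma \ref{lem split simplicial presheaf iff factors through nondegen} in both directions. In ($\Rightarrow$) it shows that $X^{\nd}_k$ is a presheaf. In ($\Leftarrow$), the hypothesis that the map is an isomorphism of presheaves already presupposes that $X^{\nd}_k$ is a sub-presheaf, and this is the same as $X$ factoring through $\ncat{sSet}_{\nd}$. In the truncated case I would also confirm that Eilenberg--Zilber still applies, since all surjections involved have codomain of degree at most $n$.
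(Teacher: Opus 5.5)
Your proposal is correct and follows the route the paper takes implicitly: the paper gives no separate proof of this lemma and presents it as a repackaging of Lemma \ref{lem split simplicial presheaf iff factors through nondegen} together with the Eilenberg--Zilber decomposition (\ref{eq structure of latching object}), and you are right that the real content is that each $X^{\nd}_k$ be a sub-presheaf, since objectwise $X_k(U) = X^{\nd}_k(U) \sqcup L_kX(U)$ holds tautologically. The one step to phrase more carefully is in ($\Rightarrow$): the fact that no degenerate simplex lies in $N_k$ follows from the disjointness of the splitting summands, not from Eilenberg--Zilber uniqueness alone (if $x = s_i y \in N_k(U)$, then writing $y$ via the splitting in degree $k-1$ places $x$ in a non-identity summand as well, contradicting injectivity).
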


Let $p : X \to {}^c Y$ be a degreewise semi-representable ($n$-truncated) augmented simplicial presheaf on a small category $\cat{C}$. Then $X_k \cong \sum_{i \in I_k} y(U_i)$ for every $-1 \leq k \; (\leq n)$. Let $\text{ind}(X)$ denote the underlying ($n$-truncated) augmented simplicial set obtained from the index sets of the coproducts in every degree. We call this the \textbf{index simplicial set} associated with $X$. Given a map $a: K \to \text{ind}(X)$, we obtain a diagram $a : \nd(K)^\op \to (\ncat{Pre}(\cat{C}) \downarrow Y)$, where $\nd(K)$ is the category of non-degenerate simplices of $K$, and $(\ncat{Pre}(\cat{C}) \downarrow
Y)$ is the category of presheaf maps to $Y$. More precisely, $\nd(K)$ is the category whose objects are simplices $x : \Delta^k \to K$ and morphisms are commutative triangles. The functor $a$ takes a non-degenerate $k$-simplex $\sigma$ of $K$ and gives the map $y(U_{a(\sigma)}) \to Y$, where $y(U_{a(\sigma)})$ is the representable corresponding to the simplex $a(\sigma) \in \text{ind}(X)_k$, and the map to $Y$ is given by the augmentation of $X$. Let $X(a)$ denote the limit of this diagram.

\begin{Lemma}[{\cite[Proposition 4.15]{Dugger2004}}] \label{lem characterization of matching object}
Given $0 \leq k \; (\leq n+1)$, and a degreewise semi-representable ($n$-truncated) augmented simplicial presheaf $X$, there is an isomorphism
\begin{equation*}
    M_k^+ X \cong  \sum_{a : \partial \Delta^k \to \text{ind}(X)} X(a),
\end{equation*}
furthermore for $0 \leq k \; (\leq n)$, the matching map $m^+_k : X_k \to M_k^+ X$ is given by the limit cone maps on the representables.
\end{Lemma}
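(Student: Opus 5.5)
We prove the displayed isomorphism
\begin{equation*}
M_k^+ X \cong \sum_{a : \partial \Delta^k \to \text{ind}(X)} X(a)
\end{equation*}
objectwise, working one $U \in \cat{C}$ at a time. By the preceding lemma, $M_k^+X(U) = \ncat{sSet}^+(\partial\Delta^k, X(U))$, where $\partial \Delta^k$ carries the trivial augmentation. The first step is to present $\partial\Delta^k$, as an augmented simplicial set, as the colimit of its simplices over its category of non-degenerate simplices $\nd(\partial\Delta^k)$, together with the augmentation $[-1]$. Since $\partial\Delta^k$ is the nerve of a poset (the proper nonempty faces, plus the empty face), this is the standard presentation of a nerve as the colimit of its nondegenerate simplices. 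Consequently
\begin{equation*}
\ncat{sSet}^+(\partial\Delta^k, X(U)) \cong \lim_{\sigma \in \nd(\partial\Delta^k)^\op} X_{\dim\sigma}(U),
\end{equation*}
where the limit includes the augmentation vertex $X_{-1}(U) = Y(U)$.

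Next we use semi-representability. Each $X_m(U) = \sum_{i \in I_m} \cat{C}(U, U_i)$, so an element of $X_m(U)$ is a pair consisting of an index $i \in \text{ind}(X)_m$ and a map $U \to U_i$. A compatible family in the limit above therefore records two things. First, for each nondegenerate $\sigma$ it records an index in $\text{ind}(X)_{\dim \sigma}$. These indices are compatible with the face maps, because the face maps of $X$ are induced by the index-level face maps $\text{ind}(d_i)$ together with maps of representables. The indices therefore assemble into a map $a : \partial\Delta^k \to \text{ind}(X)$; degenerate simplices are determined automatically, since maps out of $\partial\Delta^k$ are determined by their values on nondegenerate simplices. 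Second, for fixed $a$, the family records maps $U \to U_{a(\sigma)}$ that are compatible over $Y$ and along faces. Such a family is exactly a cone from $y(U)$ to the diagram $a : \nd(\partial\Delta^k)^\op \to (\ncat{Pre}(\cat{C})\downarrow Y)$, which is the same as an element of $X(a)(U)$.

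Summing over $a$ gives the bijection. It is natural in $U$ because precomposition with $V \to U$ does not change indices. Limits in the slice $(\ncat{Pre}(\cat{C})\downarrow Y)$ are computed as limits in $\ncat{Pre}(\cat{C})$ of the diagram augmented by $Y$, and this matches the inclusion of the $[-1]$ vertex. For $k = 0$, $\partial\Delta^0 = \varnothing$ and the formula reduces to $X(a) = Y$, in agreement with Lemma \ref{lem props of matching and latching}. In the $n$-truncated case with $k \le n+1$, the simplices of $\partial\Delta^k$ have dimension at most $k-1 \le n$, so only the available levels of $X$ are used.

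Finally we identify the matching map. A simplex in the summand $y(U_i) \subseteq X_k$, with $i \in \text{ind}(X)_k$, has boundary given by the index map $\partial\Delta^k \to \Delta^k \xrightarrow{i} \text{ind}(X)$. Its components are the face maps $y(U_i) \to y(U_{d\,i})$, and these are precisely the cone legs that induce $y(U_i) \to X(a)$. The step we expect to be the main obstacle is the splitting of elements into index data plus maps. This requires each face map of $X$ to send the summand $y(U_i)$ into a single summand $y(U_{j})$, and that holds because $y(U_i)$ is connected, meaning $\cat{C}(U_i, -)$ evaluated at $U_i$ contains $1_{U_i}$, so its image lies in one summand. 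We then have to check that the induced index maps assemble into the augmented simplicial set $\text{ind}(X)$ functorially.
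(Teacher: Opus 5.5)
Your argument is correct in substance, but one of its justifications is false. The paper gives no proof of its own here; it only cites \cite[Proposition 4.15]{Dugger2004}. Your objectwise argument is the standard direct proof: write $M_k^+X(U)=\ncat{sSet}^+(\partial\Delta^k,X(U))$ as a limit over the faces of $\partial\Delta^k$, split each element of $X_m(U)=\sum_i\cat{C}(U,U_i)$ into an index and a map, and let the indices assemble into $a$ and the maps into a cone over the diagram defining $X(a)$.

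The false claim is that $\partial\Delta^k$ is the nerve of the poset of its proper faces. That nerve is the barycentric subdivision: for $k=2$ it is a hexagon, not a triangle. After adjoining the empty face as a bottom element it becomes a cone, hence contractible.

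What is true, and all you actually use, is the following. $\nd(\partial\Delta^k)$ is isomorphic to that poset. Moreover $\partial\Delta^k$, as an ordered simplicial complex, is the colimit of the representables $\Delta^S_+$ over the proper faces $S\subsetneq[k]$, with $S=\varnothing$ included. To see this, note that an $m$-simplex of $\partial\Delta^k$ is a non-surjective monotone map $[m]\to[k]$. It factors through $\Delta^S$ exactly when its image lies in $S$, and the colimit identifies exactly these factorizations. In degree $-1$ the colimit is a point because the poset has a bottom element, which is why including $\varnothing$ yields the trivial augmentation. With this replacement your limit formula for $\ncat{sSet}^+(\partial\Delta^k,X(U))$ holds, and the rest of your argument goes through, including the truncated case and the description of $m_k^+$.

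Two smaller points.

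\begin{enumerate}
\item The step you call the main obstacle is already in the paper. The lemma on semi-representable maps, just before the definition of basal maps, shows by Yoneda that a map $\sum_i y(U_i)\to\sum_{i'}y(V_{i'})$ is an index function $\alpha$ together with components $U_i\to V_{\alpha(i)}$. (Your $\cat{C}(U_i,-)$ should read $\cat{C}(-,U_i)$.) Functoriality of $\text{ind}$ is also automatic: $\colim_{\cat{C}^{\op}}y(U)=*$, so $\text{ind}(X)\cong\colim_{\cat{C}^{\op}}X$ degreewise.

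\item Your check that $X(a)=Y$ for $k=0$ is right only when $Y$ is representable, which is the case in all of the paper's applications. For a general semi-representable $Y=\sum_\ell y(W_\ell)$, the empty face carries its own index $a(\varnothing)\in\text{ind}(X)_{-1}$ and the value $y(W_{a(\varnothing)})$. Your limit over faces including $\varnothing$ handles this correctly. A limit in the slice over all of $Y$ would instead give $|\text{ind}(X)_{-1}|$ copies of $Y$ at $k=0$.
\end{enumerate}
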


\section{Verdier Sites} \label{section verdier sites}

In this section we provide a full proof of Proposition \ref{prop can refine DHI-hypercover by verdier hypercover on Verdier site}.

\begin{Rem}
We will need several technical results in this section, which follow implicitly from results in \cite{Dugger2004}. But since we are introducing new notions such as covering basal maps and Verdier hypercovers, we give full proofs. Let us also remind the reader that we are using the published version of \cite{Dugger2004} which has different numbering than the arXiv version.
\end{Rem}

\begin{Lemma}[{\cite[Lemma 9.1]{Dugger2004}}] \label{lem pulling back basal maps of presheaves}
Suppose that $(\cat{C}, j)$ is a  pullback-stable site, $f : X \to Y$ is a basal map of presheaves, $g : Z \to Y$ is a semi-representable map of presheaves, and the following commutative diagram is a pullback
\begin{equation} \label{eq pullback of basal is basal}
\begin{tikzcd}[ampersand replacement=\&]
	{P} \& X \\
	Z \& Y
	\arrow["p", from=1-1, to=1-2]
	\arrow["q"', from=1-1, to=2-1]
	\arrow["\lrcorner"{anchor=center, pos=0.125}, draw=none, from=1-1, to=2-2]
	\arrow["f", from=1-2, to=2-2]
	\arrow["g"', from=2-1, to=2-2]
\end{tikzcd} 
\end{equation}
then the map $q$ is basal. Furthermore if $f$ is covering basal, then so is $q$.
\end{Lemma}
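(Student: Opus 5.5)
Since presheaf pullbacks are computed objectwise and coproducts in $\ncat{Pre}(\cat{C})$ are disjoint and universal, the plan is to reduce to the case of a single representable on each side, where pullback stability of $j$ does the work. Write $X \cong \sum_{i \in I} y(U_i)$, $Y \cong \sum_{i' \in I'} y(V_{i'})$, $Z \cong \sum_{k \in K} y(W_k)$, and let $f$ be given by $\alpha : I \to I'$ and basal morphisms $f_i : U_i \to V_{\alpha(i)}$. Let $g$ be given by $\beta : K \to I'$ and morphisms $g_k : W_k \to V_{\beta(k)}$. (If $f$ is the initial map then $P \cong \varnothing$ and $q$ is initial, hence basal; so assume $f$ is semi-representable.)

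\textbf{Step 1: decompose the pullback.} Pullbacks commute with coproducts in a presheaf topos, so
\begin{equation*}
P \cong \sum_{k \in K} \; \sum_{i \in \alpha^{-1}(\beta(k))} y(W_k) \times_{y(V_{\beta(k)})} y(U_i).
\end{equation*}
Terms with $\alpha(i) \neq \beta(k)$ vanish by disjointness of coproducts. Under this isomorphism, $q$ is the coproduct of the projections to $y(W_k)$.

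\textbf{Step 2: use pullback stability on each summand.} Each $f_i$ is a basal morphism. Pick a covering family $r \in j(V_{\beta(k)})$ containing $f_i$. Because $j$ is pullback stable, the pullback $W_k \times_{V_{\beta(k)}} U_i$ exists in $\cat{C}$, and $g_k^*(r)$ is a covering family on $W_k$. The Yoneda embedding preserves limits, so
\begin{equation*}
y(W_k) \times_{y(V_{\beta(k)})} y(U_i) \cong y(W_k \times_{V_{\beta(k)}} U_i).
\end{equation*}
The projection to $W_k$ belongs to $g_k^*(r)$ and is therefore basal. Hence $P$ is semi-representable with index set $\{(k,i) : \alpha(i) = \beta(k)\}$, and each component of $q$ is basal, so $q$ is basal.

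\textbf{Step 3: the covering basal case.} Suppose $f$ is covering basal and fix $k \in K$. The family $\mathbf{q}(k)$ is
\begin{equation*}
\{W_k \times_{V_{\beta(k)}} U_i \to W_k\}_{i \in \alpha^{-1}(\beta(k))}.
\end{equation*}
If this family is nonempty, then $\mathbf{f}(\beta(k))$ is nonempty, so it is $j$-covering. Then $\mathbf{q}(k) = g_k^*(\mathbf{f}(\beta(k)))$, which is covering by pullback stability. This shows $q$ is covering basal.

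The main subtlety is bookkeeping rather than mathematics. The isomorphism in the definition of semi-representability only holds up to isomorphism, so one must check that the family induced by $q$ agrees, up to the chosen isomorphisms, with the literal pulled-back family. The coverages considered are isomorphism closed, so composing with isomorphisms preserves covering families. I expect this identification to be the only step needing care.
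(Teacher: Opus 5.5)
Your proposal is correct and follows essentially the same route as the paper. Both distribute the pullback over the coproducts using universality and disjointness of coproducts in presheaves, then identify each surviving summand with $y(W_k \times_{V_{\beta(k)}} U_i)$ via pullback-stability and the Yoneda embedding preserving pullbacks, and finally read off the covering condition from $\mathbf{q}(k) = g_k^*(\mathbf{f}(\beta(k)))$. If anything, yours is slightly more careful than the paper's: you justify basality of each projection by its membership in $g_k^*(r)$, and you require covering-ness only when $\mathbf{q}(k)$ is nonempty, which is exactly what the definition of covering basal demands.
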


\begin{proof}
We can rewrite the righthand span as
\begin{equation*}
\begin{tikzcd}[ampersand replacement=\&]
	\& {\sum_{i \in I} y(U_i)} \\
	{\sum_{k \in K} y(W_k)} \& {\sum_{j \in J} y(V_j)}
	\arrow["{\sum_i f_i}", from=1-2, to=2-2]
	\arrow["{\sum_k g_k}"', from=2-1, to=2-2]
\end{tikzcd} 
\end{equation*}
Now $\Pre(\cat{C})$ has universal colimits, hence
\begin{equation*}
\begin{aligned}
    \sum_{k \in K} y(W_k) \times_{\sum_{j \in J} y(V_j)} \sum_{i \in I} y(U_i) & \cong \sum_{k \in K} \left( y(W_k) \times_{\sum_{j \in J} y(V_j)} \sum_{i \in I} y(U_i) \right) \\
    & \cong \sum_{k \in K} \sum_{i \in I} \left( y(W_k) \times_{\sum_{j \in J} y(V_j)} y(U_i) \right)
\end{aligned}
\end{equation*}
We now wish to show that each summand is representable. So fix $i \in I$, $k \in K$ and let $Z \in \cat{C}$
\begin{equation*}
\begin{aligned}
    \Pre(\cat{C})(y(Z), y(U_i) \times_{\sum_j y(V_j)} y(W_k)) & \cong \cat{C}(Z, U_i) \times_{\sum_j \cat{C}(Z, V_j)} \cat{C}(Z, W_k) \\
\end{aligned}
\end{equation*}
Now the right hand side is isomorphic to the set of pairs of maps $p : Z \to U_i$ and $q : Z \to W_k$ such that $f_i p = g_k q$. Let $\alpha : I \to J$ and $\beta: K \to J$ be the corresponding index maps. If $\alpha(i) \neq \beta(k)$, then it is impossible for $f_i p = g_k q$, and therefore the summand $y(U_i) \times_{\sum_j y(V_j)} y(W_k)$ is isomorphic to the initial presheaf $\varnothing$. If however $\alpha(i) = \beta(k)$, then we have the commutative diagram
\begin{equation*}
    \begin{tikzcd}[ampersand replacement=\&]
	{y(W_k) \times_{y(V_{\alpha(i)})} y(U_i)} \& {y(U_i)} \\
	{y(W_k)} \& {y(V_{\alpha(i)})} \\
	{y(W_k)} \& {\sum_{j \in J} y(V_j)}
	\arrow[from=1-1, to=1-2]
	\arrow[from=1-1, to=2-1]
	\arrow["{f_i}", from=1-2, to=2-2]
	\arrow["{g_k}"', from=2-1, to=2-2]
	\arrow[equals, from=2-1, to=3-1]
	\arrow[hook, from=2-2, to=3-2]
	\arrow["{g_k}"', from=3-1, to=3-2]
\end{tikzcd}
\end{equation*}
where both squares are pullbacks, and hence the outer rectangle is a pullback. Since the Yoneda embedding preserves pullbacks, and pullbacks of basal maps are assumed to exist in $\cat{C}$, we have 
\begin{equation*}
    y(W_k) \times_{y(V_{\alpha(i)})} y(U_i) \cong y(W_k \times_{V_{\alpha(i)}} U_i),
\end{equation*}
and furthermore the map $W_k \times_{V_{\alpha(i)}} U_i \to W_k$ is basal.
Hence
\begin{equation*}
    \sum_{k \in K} \sum_{i \in I} \left( y(W_k) \times_{\sum_{j \in J} y(V_j)} y(U_i) \right) \cong \sum_{i \in I, k \in K, \alpha(i) = \beta(k)} y(W_k \times_{V_{\alpha(i)}} U_i).
\end{equation*}
Therefore the map $q$ in (\ref{eq pullback of basal is basal}), which is componentwise the result of pulling back a basal map, is basal. 

Now if $f$ is covering basal, then for each fixed $k$, the family $\{ W_k \times_{V_{\beta(k)}} U_i \to W_k \}_{i \in \alpha^{-1}(\beta(k))}$ is $j$-covering, since $j$ is a pullback-stable coverage and $\{U_{i'} \to V_{\beta(k)} \}_{i' \in \alpha^{-1}(\beta(k))}$ is $j$-covering by assumption. Thus the pulled back map $q$ is covering basal.
\end{proof}

\begin{Def} \label{def truncated hypercover}

Given a site $(\cat{C}, j)$, with $U \in \cat{C}$, let $\ncat{Hyp}^{\leq n}_{U}$ denote the category of \textbf{$n$-truncated hypercovers}. These are $n$-truncated, augmented simplicial presheaves $p : H \to y(U)$ (Section \ref{section augmented and truncated simplicial presheaves}) such that $H_k \to M_k^+ H$ is a local epimorphism for all $k \leq n$. Let us also extend this definition in the obvious way to $n$-truncated DHI, basal and Verdier hypercovers.
\end{Def}

We say that a simplicial set $K$ is \textbf{finite} if it has finitely many nondegenerate simplices. We say a finite simplicial set $K$ has \textbf{dimension $n$} if its largest nondegenerate simplex belongs to $K_n$. We say that $K$ has dimension $-1$ if it is the empty simplicial set.

If $H$ is an $n$-truncated, augmented simplicial presheaf, and $K$ is a simplicial set of dimension at most $n$, then let $H_+^K$ denote the presheaf of sets
\begin{equation*}
    H_+^K = \ncat{sSet}^+_{\leq n}(\text{tr}_n K, H(-)),
\end{equation*}
where $\text{tr}_n K$ denotes the $n$-truncation of $K$, which is assumed to be trivially augmented.

\begin{Lemma}[{\cite[Lemma 9.2]{Dugger2004}}] \label{lem matching object is representable for basal hypercovers}
Suppose that $p: H \to y(U)$ is an $n$-truncated basal hypercover on a pullback-stable site $(\cat{C}, j)$, and let $K$ be a finite simplicial set of dimension $n$. Then $H^K_+$ is semi-representable.
\end{Lemma}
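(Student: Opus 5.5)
I will argue by induction on the number of nondegenerate simplices of $K$, using that $H^K_+$ turns colimits of simplicial sets into limits of presheaves. Since $H_+^{(-)} = \ncat{sSet}^+_{\leq n}(\text{tr}_n(-), H(-))$ is a representable-style hom, it sends pushouts of finite simplicial sets of dimension at most $n$ to pullbacks of presheaves. This uses that $\text{tr}_n$ preserves colimits, and that limits of presheaves are computed objectwise. Every finite $K$ of dimension at most $n$ is obtained from $\varnothing$ by finitely many cell attachments
\begin{equation*}
K' = K \amalg_{\partial \Delta^k} \Delta^k, \qquad k \leq n,
\end{equation*}
which gives pullback squares
\begin{equation*}
H_+^{K'} \cong H_+^{K} \times_{H_+^{\partial \Delta^k}} H_+^{\Delta^k} \cong H_+^{K} \times_{M_k^+ H} H_k .
\end{equation*}
Here $H_+^{\Delta^k} = H_k$ and $H_+^{\partial\Delta^k} = M^+_k H$ by the description of augmented matching objects, with the right vertical map being the augmented matching map $m_k^+$. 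The base case is $K = \varnothing$, which with its trivial augmentation gives $H_+^{\varnothing} = H_{-1} = y(U)$, a representable.

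For the inductive step I will apply Lemma \ref{lem pulling back basal maps of presheaves}. Its hypotheses are that $m_k^+ : H_k \to M_k^+ H$ is a basal map of presheaves, which holds because $H$ is a basal hypercover, and that the other leg $H^K_+ \to M_k^+ H$ is a semi-representable map. The lemma then says the pullback $H^{K'}_+$ is semi-representable, and moreover that the projection $H^{K'}_+ \to H^K_+$ is basal.

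The main obstacle is that the induction must also know that $M_k^+ H = H_+^{\partial\Delta^k}$ is itself semi-representable. I will handle this by strengthening the induction: prove simultaneously, by induction on $\dim K$ and then on the number of cells, that $H_+^K$ is semi-representable for all finite $K$ of dimension at most $n$. Since $\partial \Delta^k$ has dimension $k-1 < k$ and strictly fewer nondegenerate simplices, $M_k^+H$ is semi-representable by the inductive hypothesis. Then both presheaves $H^K_+$ and $M^+_k H$ are semi-representable, so the attaching map between them is a semi-representable map in the sense of Definition \ref{def semi-representable map}, as the lemma requires.

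One subtlety remains: $m_k^+$ is only known to be basal as a map into $M_k^+H$, and that notion already presupposes $M_k^+H$ is semi-representable. The ordering above ensures this is established before it is used. Alternatively, one could cite Lemma \ref{lem characterization of matching object} directly, but that lemma expresses $M_k^+H$ via limits $X(a)$ whose representability is essentially the content being proved. I expect this bookkeeping about semi-representability of matching objects, and checking that the degree $-1$ and $0$ cases match Lemma \ref{lem props of matching and latching}, to be where care is needed. The pullback computation itself is routine.
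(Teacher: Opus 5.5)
Your proposal is correct and follows the paper's proof. Both induct on dimension, starting from $H^{\varnothing}_+ = y(U)$, and attach cells along $\partial\Delta^k \to \Delta^k$ so that each attachment becomes a pullback $H^K_+ \times_{M^+_k H} H_k$, to which Lemma \ref{lem pulling back basal maps of presheaves} applies. Your explicit lexicographic induction (first on dimension, then on the number of cells) makes precise two points the paper leaves implicit: that $M^+_k H = H^{\partial\Delta^k}_+$ is already known to be semi-representable when it is used, and that several $k$-cells are attached one at a time.
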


\begin{proof}
We will prove this by induction on the dimension of $k$. If the dimension of $K$ is $-1$, then $K_m = \varnothing$ for all $m \geq 0$ and $K_{-1} = *$, so $H_+^K \cong H_{-1} \cong y(U)$. This proves the base case.

Let us now prove the inductive step. Suppose that $H_+^L$ is semi-representable for every simplicial set $L$ of dimension $n-1$. If $K$ is a simplicial set of dimension $n$, then we can construct $K$ from $L = \text{sk}_{n-1} K$ by adding finitely many $n$-simplices to $L$ via a sequence of pushouts of the form
\begin{equation*}
    \begin{tikzcd}[ampersand replacement=\&]
	{\partial \Delta^n} \& L \\
	{\Delta^n} \& {L'}
	\arrow[from=1-1, to=1-2]
	\arrow[hook, from=1-1, to=2-1]
	\arrow[from=1-2, to=2-2]
	\arrow[from=2-1, to=2-2]
	\arrow["\lrcorner"{anchor=center, pos=0.125, rotate=180}, draw=none, from=2-2, to=1-1]
\end{tikzcd}
\end{equation*}
Let us show that if $L'$ is produced via a pushout from an $(n-1)$-dimensional simplicial set $L$ as above, then $H^{L'}_+$ is semi-representable. We wish to show that the corresponding diagram
\begin{equation*}
    \begin{tikzcd}[ampersand replacement=\&]
	{H^{L'}_+} \& {H^{\Delta^n}_+} \\
	{H^L_+} \& {H^{\partial \Delta^n}_+}
	\arrow[from=1-1, to=1-2]
	\arrow[from=1-1, to=2-1]
	\arrow["\lrcorner"{anchor=center, pos=0.125}, draw=none, from=1-1, to=2-2]
	\arrow[from=1-2, to=2-2]
	\arrow[from=2-1, to=2-2]
\end{tikzcd}
\end{equation*}
is a pullback of presheaves. But this is clear because limits of presheaves are computed objectwise and $H^{(-)}_+ \cong \ncat{sSet}^+_{\leq n}(\text{tr}_n(-), H)$ sends colimits to limits since $\text{tr}_n$ is a left adjoint (see Section \ref{section augmented and truncated simplicial presheaves}).

Now by assumption $H_n \to M_n^+H$ is basal. But this is precisely the right hand vertical map above. By the induction hypothesis, $H^L_+$ is semi-representable. So by Lemma \ref{lem pulling back basal maps of presheaves}, $H^{L'}_+$ is semi-representable.
\end{proof}

\begin{Cor}
Suppose that $p: H \to y(U)$ is an $n$-truncated DHI-hypercover on a pullback-stable site $(\cat{C}, j)$. Fix $n \geq 0$ and suppose that the augmented matching map
\begin{equation*}
    H_k \to M_k^+ H
\end{equation*}
is basal for all $k \leq n$. Then the matching object $M_{n+1}^+ H$ is semi-representable.
\end{Cor}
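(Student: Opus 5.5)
The plan is to apply Lemma \ref{lem matching object is representable for basal hypercovers} to a single finite simplicial set. By Definition \ref{def latching and matching objects} and the description of matching objects via maps out of $\partial \Delta^{n+1}$, we have
\begin{equation*}
M_{n+1}^+ H(V) \cong \ncat{sSet}^+(\partial \Delta^{n+1}, H(V)).
\end{equation*}
The simplicial set $\partial \Delta^{n+1}$ is finite and has dimension $n$: its nondegenerate simplices are the proper faces of $\Delta^{n+1}$, and the largest of these live in degree $n$. Since $\partial\Delta^{n+1}$ has no nondegenerate simplices above degree $n$, we have $\partial \Delta^{n+1} \cong \text{sk}_n \text{tr}_n \partial\Delta^{n+1}$. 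Hence maps $\partial\Delta^{n+1} \to H(V)$ depend only on $\text{tr}_n H$, and by the $\text{sk}_n \dashv \text{tr}_n$ adjunction (augmented version)
\begin{equation*}
M_{n+1}^+ H \cong \ncat{sSet}^+_{\leq n}(\text{tr}_n \partial\Delta^{n+1}, \text{tr}_n H(-)) = (\text{tr}_n H)^{\partial \Delta^{n+1}}_+ .
\end{equation*}
Naturality in $V$ is immediate, because everything is computed objectwise.

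Next we check the hypotheses of Lemma \ref{lem matching object is representable for basal hypercovers} for $\text{tr}_n H$. This is an $n$-truncated augmented simplicial presheaf over $y(U)$. Each $H_k$ is semi-representable because $H$ is a DHI-hypercover. Each map $H_k \to M_k^+ H$ is a local epimorphism by the hypercover condition, and it is basal for $k \le n$ by assumption. For $k \le n$ the matching objects of $H$ and of $\text{tr}_n H$ agree, since $M_k^+$ depends only on degrees $\le k-1$. So $\text{tr}_n H$ is an $n$-truncated basal hypercover on the pullback-stable site. The lemma applied with $K = \partial\Delta^{n+1}$ then gives that $(\text{tr}_n H)^{\partial\Delta^{n+1}}_+ \cong M_{n+1}^+H$ is semi-representable.

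The main obstacle is a bookkeeping mismatch with the lemma's hypothesis, not any mathematics. The lemma is stated for $K$ of dimension $n$, but its inductive proof passes through lower-dimensional skeleta and uses the basal matching maps only in degrees $\le n$. If a literal reading forces exact dimension, I will simply invoke the inductive proof directly. It builds $\partial \Delta^{n+1}$ cell by cell from $\partial\Delta^{n+1}$'s $(n-1)$-skeleton, and at each pushout along $\partial\Delta^m \hookrightarrow \Delta^m$ with $m \le n$ it pulls back the basal map $H_m \to M_m^+H$ along a semi-representable map, which Lemma \ref{lem pulling back basal maps of presheaves} allows. One also needs that $H$ being an honest (untruncated) DHI-hypercover causes no trouble; this is handled by working with $\text{tr}_n H$ throughout, as above.
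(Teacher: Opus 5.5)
Your proposal is correct and is exactly the argument the paper intends (it states the corollary without proof): identify $M^+_{n+1}H$ with $H^{\partial\Delta^{n+1}}_+$ and apply Lemma~\ref{lem matching object is representable for basal hypercovers} with $K=\partial\Delta^{n+1}$. Neither worry in your last paragraph arises, since $\partial\Delta^{n+1}$ has dimension exactly $n$ and $H$ is already $n$-truncated in the statement, so $\text{tr}_n H = H$.
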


To proceed further, we will need to put extra conditions on our underlying site $(\cat{C}, j)$. We note that the following definition is slightly different than that of \cite[Definition 9.1]{Dugger2004}. This slight difference has an important consequence however, as now we need to reprove \cite[Proposition 9.3 and Theorem 9.4]{Dugger2004} using this new definition. We need the covering basal condition rather than just the basal condition to obtain whole covering families, rather than just subsets of covering families.

\begin{Def}[{\cite[Definition 9.1]{Dugger2004}}] 
Given a small category $\cat{C}$, a \textbf{Verdier pretopology} $j$ on $\cat{C}$ is a Grothendieck pretopology (Definition \ref{def grothendieck pretopology}) such that if $X \to y(U)$ is a covering basal map of presheaves, then so is the induced diagonal map
\begin{equation*}
    X \to X \times_{y(U)} X.
\end{equation*}
We call a site $(\cat{C}, j)$ a \textbf{Verdier site} if $j$ is a Verdier pretopology.
\end{Def}

Many sites of interest are Verdier, see Section \ref{section examples of sites} for the full collection of Verdier sites of interest. Let us prove one example to show how this works.

\begin{Lemma}
The site $(\ncat{Man}, j_{\text{open}})$ is a Verdier site.
\end{Lemma}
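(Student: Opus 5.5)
We need to show that $(\ncat{Man}, j_{\text{open}})$ is a Grothendieck pretopology, and that diagonals of covering basal maps over representables are again covering basal.

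The pretopology part is already in Example \ref{ex coverages on Man}. There it is shown that pullbacks of open embeddings along arbitrary smooth maps exist and are again open embeddings, given by preimages. In particular, pulling back an open cover gives an open cover. The same example shows that $j_{\text{open}}$ is composition closed. So the work is entirely in the diagonal condition.

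Let $f : X \to y(M)$ be a covering basal map. Writing $X \cong \sum_{i \in I} y(U_i)$, this means each $f_i : U_i \to M$ is an open embedding and $\{f_i\}_{i\in I}$ covers $M$ (if $I$ is empty, $X$ is initial and the diagonal is trivially an initial map, hence basal; but $M$ is nonempty, so $I\neq\varnothing$). Since open embeddings are pullback stable, universality of coproducts in $\Pre(\ncat{Man})$ gives
\begin{equation*}
X \times_{y(M)} X \cong \sum_{(i,k) \in I\times I} y(U_i \times_M U_k) \cong \sum_{(i,k)} y(U_i \cap U_k).
\end{equation*}
This is the same computation as in Lemma \ref{lem pulling back basal maps of presheaves}. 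One subtlety: components with $U_i\cap U_k=\varnothing$ are initial presheaves, since the empty set is not a manifold. We simply drop them from the index set, so $X\times_{y(M)}X$ is semi-representable with index set the pairs $(i,k)$ with nonempty intersection.

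Next I describe the diagonal $\Delta : X \to X\times_{y(M)} X$. It sends the summand $y(U_i)$ to the summand indexed by $(i,i)$ via the map $U_i \to U_i\times_M U_i$. Because $f_i$ is a monomorphism (it is an injective embedding), $U_i\times_M U_i\cong U_i$ and this map is an isomorphism. So, in the notation after Definition \ref{def semi-representable map}, the index function is $\alpha(i)=(i,i)$, and each component is an isomorphism, hence basal by axiom (1) of Definition \ref{def coverage}. For the covering condition, fix an index $(i,k)$. If $i=k$, the family $\mathbf{\Delta}(i,i)$ is the singleton isomorphism, which is covering. If $i\neq k$, then $\mathbf{\Delta}(i,k)$ is empty, and the definition of covering basal only constrains nonempty fibers, so there is nothing to check. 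Hence $\Delta$ is covering basal.

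The main obstacle I expect is bookkeeping rather than geometry. I need to verify that the identification of the fibre product with intersections is compatible with the index maps, and that empty intersections really vanish rather than producing a non-representable summand. I also have to be careful that "covering basal" permits empty fibers $\mathbf{f}(i')$; that is exactly what Definition \ref{def basal and covering basal maps} stipulates. If it did not, the off-diagonal summands would break the argument, and one would instead have to refine via intersections.
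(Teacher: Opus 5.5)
Your proof is correct and follows essentially the same route as the paper: write $X\times_{y(M)}X$ as a coproduct over pairs of indices, observe that the diagonal lands only in the $(i,i)$ summands via the isomorphisms $U_i \to U_i\times_M U_i$ (open embeddings are monomorphisms), and note that the off-diagonal summands have empty fibres, which the definition of covering basal permits. Your remark that empty intersections give initial summands, because $\varnothing$ is not an object of $\ncat{Man}$, is a detail the paper's proof passes over silently, and you handle it correctly.
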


\begin{proof}
Suppose that $X \to y(M)$ is a $j_{\text{open}}$-covering basal map of presheaves, where $M$ is a finite dimensional smooth manifold. Then $X \cong \sum_i y(U_i)$, and the corresponding induced family $\mathbf{r} = \{r_i : U_i \to M \}$ is a $j_{\text{open}}$-covering family, so it is a collection of open embeddings. Now we consider the induced diagonal map $\Delta_X: X \to X \times_{y(M)} X$. This is isomorphic to the map $\Delta_X: \sum_i y(U_i) \to \sum_{i,j} y(U_i \times_M U_j)$, which factors through $\sum_i y(U_i) \to \sum_i y(U_i \times_M U_i)$. This map, when restricted to each component $U_i$, is isomorphic to the identity $U_i \to U_i \times_M U_i \cong U_i$, since each map $r_i : U_i \to M$ is a monomorphism in $\ncat{Man}$. None of the $U_i \times_M U_j$ with $i \neq j$ are ``hit'' by the diagonal map, hence the diagonal map is covering basal since each isomorphism $\{ U_i \to U_i \times_M U_i \}$ is a $j_{\text{open}}$-covering family.
\end{proof}

\begin{Lemma}[{\cite[Proposition 9.3]{Dugger2004}}] \label{lem building covering basal maps from maps of simplicial sets}
Given a map $f : K \to L$ of finite simplicial sets whose dimensions are at most $k$, let $p : H \to y(U)$ be a $k$-truncated Verdier hypercover on a Verdier site. Then the map $H^f_+ : H^L_+ \to H^K_+$ is covering basal.
\end{Lemma}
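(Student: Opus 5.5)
We follow the strategy of \cite[Proposition 9.3]{Dugger2004}, replacing ``basal'' by ``covering basal'' throughout. First, covering basal maps of presheaves are closed under composition and under pullback along semi-representable maps. Pullback stability is Lemma \ref{lem pulling back basal maps of presheaves}. Composition uses that a Grothendieck pretopology is composition closed: componentwise, a composite of covering families indexed over each target summand is again covering. Isomorphisms and coproducts of covering basal maps are also covering basal. Second, every map $f : K \to L$ of finite simplicial sets factors as a monomorphism followed by an epimorphism, so it suffices to treat these two cases separately. Throughout, Lemma \ref{lem matching object is representable for basal hypercovers} guarantees that every $H^K_+$ involved is semi-representable, since a Verdier hypercover is in particular basal; this is what makes the pullback lemma applicable.

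For a monomorphism $K \hookrightarrow L$, we write $L$ as obtained from $K$ by finitely many pushouts along boundary inclusions $\partial\Delta^m \hookrightarrow \Delta^m$ with $m \leq k$, attaching simplices in order of increasing dimension. Since $H^{(-)}_+$ sends these pushouts to pullbacks, as in the proof of Lemma \ref{lem matching object is representable for basal hypercovers}, each step $H^{L'}_+ \to H^{L''}_+$ is a pullback of $H^{\Delta^m}_+ \to H^{\partial\Delta^m}_+$, i.e.\ of the augmented matching map $H_m \to M_m^+ H$. This matching map is covering basal because $H$ is Verdier. The case $K = \varnothing$ is handled by the same argument, since $H^{\varnothing}_+ \cong y(U)$. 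By pullback stability and composition, $H^L_+ \to H^K_+$ is covering basal.

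For an epimorphism $K \twoheadrightarrow L$, we use the diagonal axiom of a Verdier site. Every epimorphism of finite simplicial sets is built from iterated pushouts of the codegeneracies $s^i : \Delta^{m+1} \to \Delta^m$. More cleanly, we write $L$ as a coequalizer and express $H^L_+ \to H^K_+$ as a pullback of a diagonal $H^{K'}_+ \to H^{K'}_+ \times_{H^{K''}_+} H^{K'}_+$ for suitable sub-simplicial sets. The key input is the relative version of the diagonal condition. If $X \to Y$ is covering basal with $Y$ semi-representable, then the diagonal $X \to X \times_Y X$ is covering basal. To prove this, we reduce componentwise over the summands $y(V)$ of $Y$ and apply the Verdier condition for the covering basal map $X|_{V} \to y(V)$. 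The fibre products involved are semi-representable by pullback stability.

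The main obstacle is making the epimorphism step precise. We need an explicit presentation of a codegeneracy $\Delta^{m+1} \twoheadrightarrow \Delta^m$, relative to a subcomplex, as a gluing whose induced map on $H^{(-)}_+$ is a pullback of a diagonal of a map already known to be covering basal by the monomorphism case. Our first attempt is the following. We use that $s^i$ is a retraction of the face $d^i$. Then $H^{s^i}_+$ is a section of $H^{d^i}_+$, which is covering basal by the monomorphism case. A section of a map $X \to Y$ is the pullback of the diagonal $X \to X \times_Y X$ along $(\mathrm{id}, s\circ p)$-type maps, and we conclude by the diagonal axiom together with Lemma \ref{lem pulling back basal maps of presheaves}.
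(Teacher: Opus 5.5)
The parts of your proof that agree with the paper are sound. Your treatment of monomorphisms, pullback stability and composition is the paper's. Your section argument for a codegeneracy is also correct: $H^{s^i}_+$ is a section of $H^{d^i}_+$, so it is the pullback of the relative diagonal of $H^{d^i}_+$ along $(\mathrm{id}, H^{s^i}_+ H^{d^i}_+)$.

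The gap is the reduction of arbitrary epimorphisms to codegeneracies. Your claim that every epimorphism of finite simplicial sets is an iterated pushout of codegeneracies is false. For example, $\Delta^0 + \Delta^0 \to \Delta^0$ identifies two vertices joined by no edge, so there is nothing to collapse.

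Interleaving monomorphisms does not fix this, because in this lemma everything must stay in dimension $\leq k$. In degree $k$, a pushout of $s^i : \Delta^{m+1} \to \Delta^m$ with $m+1 \leq k$ only identifies $k$-simplices whose common image comes from a $k$-simplex of $\Delta^m$. Since $m<k$, that image is degenerate. Monomorphisms identify nothing. So no such composite ever identifies two distinct nondegenerate $k$-simplices with nondegenerate common image, and the fold map $\Delta^k + \Delta^k \to \Delta^k$ is out of reach. You could escape by passing to dimension $k+1$ through $\mathrm{cosk}^+_k H$ and $\Delta^k \times \Delta^1$, but you set up neither. Your ``more cleanly'' sentence about coequalizers and unspecified $K', K''$ points the right way but is not an argument.

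The missing idea is to use the codiagonal $\nabla : \Delta^n + \Delta^n \to \Delta^n$, for $n \leq k$, as the basic epimorphism.
\begin{itemize}
\item Since $H^{\Delta^n + \Delta^n}_+ \cong H_n \times_{y(U)} H_n$, the map $H^{\nabla}_+$ is exactly the diagonal of $H_n \to y(U) \cong H^{\varnothing}_+$.
\item The map $H_n \to y(U)$ is covering basal by your own monomorphism case applied to $\varnothing \hookrightarrow \Delta^n$.
\item So the Verdier axiom applies verbatim; no relative diagonal condition is needed.
\item Every epimorphism $K \to L$ is a finite composite of pushouts of such codiagonals along maps $(\sigma, \sigma') : \Delta^n + \Delta^n \to K$, where $\sigma, \sigma'$ have the same image. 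Codegeneracies are included, by pairing a simplex with a degenerate one.
\end{itemize}
The epi--mono factorization, together with closure under pushout and composition, then handles all maps. This is how the paper concludes.
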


\begin{proof}
Given a $k$-truncated Verdier hypercover $p : H \to y(U)$ over a Verdier site $(\cat{C}, j)$, let $C$ be the class of maps $f : K \to L$ of finite simplicial sets of dimension at most $k$ such that $H^f_+ : H^L_+ \to H^K_+$ is covering basal. We want to show that $C$ consists of all maps between at most dimension $k$ simplicial sets.

We know that all the maps $\partial \Delta^n \to \Delta^n$ with $n \leq k$ belong to $C$, since $H^{\Delta^n}_+ \to H^{\partial \Delta^n}_+$ is precisely the matching map $H_n \to M_n^+ H$, which is assumed to be covering basal since $p : H \to y(U)$ is a Verdier hypercover.

Now suppose that $f : K \to L$ is in $C$, and let $ g : K \to M$ be an arbitrary morphism of simplicial sets. We want to show that the pushout map 
\begin{equation*}
    \begin{tikzcd}[ampersand replacement=\&]
	K \& L \\
	M \& N
	\arrow["f", from=1-1, to=1-2]
	\arrow["g"', from=1-1, to=2-1]
	\arrow[from=1-2, to=2-2]
	\arrow["{g_*(f)}"', from=2-1, to=2-2]
	\arrow["\lrcorner"{anchor=center, pos=0.125, rotate=180}, draw=none, from=2-2, to=1-1]
\end{tikzcd}
\end{equation*}
also belongs to $C$. As we saw in the proof of Lemma \ref{lem matching object is representable for basal hypercovers}, from this we obtain a pullback diagram
\begin{equation*}
    \begin{tikzcd}[ampersand replacement=\&]
	{H^N_+} \& {H^L_+} \\
	{H^M_+} \& {H^K_+}
	\arrow[from=1-1, to=1-2]
	\arrow["{H^{g_*(f)}_+}"', from=1-1, to=2-1]
	\arrow["\lrcorner"{anchor=center, pos=0.125}, draw=none, from=1-1, to=2-2]
	\arrow["{H^f_+}", from=1-2, to=2-2]
	\arrow["{H^g_+}"', from=2-1, to=2-2]
\end{tikzcd}
\end{equation*}
Now $H^f_+$ is covering basal by assumption, so by Lemma \ref{lem pulling back basal maps of presheaves}, so is $H^{g_*(f)}_+$, therefore $g_*(f) \in C$.

Now since $(\cat{C}, j)$ is a Verdier site, that means it is composition closed, which implies that $C$ is closed under composition as well.

Now every finite simplicial set $K$ can be obtained by finite composites of pushouts of boundary inclusions as in the proof of Lemma \ref{lem matching object is representable for basal hypercovers}. Hence every inclusion of a dimension at most $k$ subcomplex $L \subseteq K$ belongs to $C$. 

Thus the map $\varnothing \hookrightarrow \Delta^n$ belongs to $C$, and so the map $H_+^{\Delta^n} \to H_+^{\varnothing}$, isomorphic to the map $H_n \to y(U)$, is covering basal.

Now consider the codiagonal map $\Delta^n + \Delta^n \to \Delta^n$. By the axiom of Verdier sites (Definition \ref{def verdier site}), the induced diagonal map
\begin{equation*}
    H_n \cong H_+^{\Delta^n} \to H_+^{\Delta^n + \Delta^n} \cong H_n \times_{y(U)} H_n
\end{equation*}
is covering basal.

Hence the codiagonal map belongs to $C$. But every epimorphism $p : K \to L$ of finite simplicial sets can be obtained by a finite composition of codiagonal maps and pushouts. Indeed, we can build up any epimorphism $p : K \to L$ of finite simplicial sets by taking finitely many composites of pushouts of the following form. For every pair of $n$-simplices $\sigma, \sigma' \in K_n$ such that $p(\sigma) = p(\sigma')$, we take the pushout
\begin{equation*}
    \begin{tikzcd}[ampersand replacement=\&]
	{\Delta^n + \Delta^n} \& K \\
	{\Delta^n} \& {L_0}
	\arrow["{(\sigma, \sigma')}", from=1-1, to=1-2]
	\arrow[from=1-1, to=2-1]
	\arrow[from=1-2, to=2-2]
	\arrow["{p(\sigma)}"', from=2-1, to=2-2]
	\arrow["\lrcorner"{anchor=center, pos=0.125, rotate=180}, draw=none, from=2-2, to=1-1]
\end{tikzcd}
\end{equation*}
Thus every epimorphism belongs to $C$ and every monomorphism belongs to $C$. But every map of simplicial sets can be factored into an epimorphism followed by a monomorphism, hence $C$ consists of all maps between finite simplicial sets.
\end{proof}

Given an augmented simplicial presheaf $X$, let $L_n^+ X$ denote the presheaf of sets where $L_n^+ X(U)$ is the set of degenerate $n$-simplices in $X(U)$. Since maps of simplicial sets are completely determined by their action on nondegenerate simplices, if $ f: U \to V$ is a map in $\cat{C}$, we obtain a map $L_n^+ X(V) \to L_n^+ X(U)$ as a subobject of $X_n(V) \to X_n(U)$. We call $L_n^+ X$ the \textbf{$n$th augmented latching object} of $X$.

There is a canonical map $L_n^+ X \to X_n \to M_n^+ X$ of presheaves for any augmented simplicial presheaf $X$, given by including the degenerate $n$-simplices, and then considering their boundaries, see \cite[Section IV.3.2]{Goerss2009}.

\begin{Prop}[{\cite[Theorem 9.4]{Dugger2004}}] 
Let $p : H \to y(U)$ be a DHI-hypercover on a Verdier site $(\cat{C}, j)$. Then it can be refined by a split, Verdier hypercover.
\end{Prop}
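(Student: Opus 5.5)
We build the refining hypercover $\widetilde H \to y(U)$ together with a map $\widetilde H \to H$ over $y(U)$ degree by degree, inducting on truncations as in the proof of Theorem \ref{th refine local trivs on top mfds by cech nerves}. The induction hypothesis at stage $n$ is that we have an $n$-truncated, split, Verdier hypercover $\text{tr}_n\widetilde H \to y(U)$ and a map $f_n : \text{tr}_n \widetilde H \to \text{tr}_n H$ of augmented truncated simplicial presheaves over $y(U)$. For the base case we apply Lemma \ref{lem refine local epi by covering basal map} to the local epimorphism $H_0 \to y(U) = M_0^+H$ and the semi-representable $y(U)$. This produces a covering basal map $\widetilde H_0 \to y(U)$ with a lift to $H_0$. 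Splitness is vacuous in degree $0$.

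For the inductive step, Lemma \ref{lem matching object is representable for basal hypercovers} and its corollary show that $M^+_{n+1}\widetilde H$ is semi-representable, since a Verdier hypercover is basal and the site is pullback stable. The map $f_n$ induces $M^+_{n+1}\widetilde H \to M^+_{n+1} H$. Because $H$ is a hypercover, $H_{n+1} \to M^+_{n+1}H$ is a local epimorphism. Lemma \ref{lem refine local epi by covering basal map} then gives a covering basal $N_{n+1} \to M^+_{n+1}\widetilde H$ with a lift $N_{n+1} \to H_{n+1}$ compatible with the matching maps. We take $N_{n+1}$ to be the nondegenerate part and set
\begin{equation*}
\widetilde H_{n+1} := N_{n+1} + L^+_{n+1}\widetilde H,
\end{equation*}
where $L^+_{n+1}\widetilde H \cong \sum_{[n+1]\twoheadrightarrow[m]} \widetilde H_m^{\nd}$ is freely generated by degeneracies, as in (\ref{eq structure of latching object}). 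The face maps on $N_{n+1}$ come from its map to $M^+_{n+1}\widetilde H$, and the degeneracies are the canonical inclusions. This makes $\text{tr}_{n+1}\widetilde H$ split, and it stays degreewise semi-representable. We extend $f_n$ by the lift on $N_{n+1}$ and by the forced values on degenerate simplices, which commute with faces because everything was built over the matching objects.

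The main obstacle is checking that the new matching map
\begin{equation*}
\widetilde H_{n+1} = N_{n+1} + L^+_{n+1}\widetilde H \to M^+_{n+1}\widetilde H
\end{equation*}
is covering basal, and not merely basal. On $N_{n+1}$ it is covering basal by construction. On the latching summand we use Lemma \ref{lem building covering basal maps from maps of simplicial sets}. Each component $\widetilde H_m^{\nd} \to M^+_{n+1}\widetilde H$ is, up to the splitting, the map $H^{s}_+ : \widetilde H^{\Delta^m}_+ \to \widetilde H^{\partial\Delta^{n+1}}_+$ induced by the composite $\partial \Delta^{n+1} \hookrightarrow \Delta^{n+1} \xrightarrow{s} \Delta^m$ of finite simplicial sets of dimension at most $n$. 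That lemma, which is where the Verdier axiom on diagonals enters, shows that $H^s_+$ is covering basal. A coproduct of covering basal maps over the same target, with possibly overlapping components, is still covering basal in the sense of Definition \ref{def basal and covering basal maps}, since the family over each index is a union of covering families containing a covering family. One subtlety needs care here. In the sifted sense, a family containing a covering family is covering only up to refinement closure. So we either pass to $\text{sat}(j)$, which does not change the Čech or local structures, or we check directly that the relevant family contains $N_{n+1}$'s covering family over each index; for the latter we make the lemma's output index-by-index.

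Since each $\widetilde H_{k+1} \to M^+_{k+1}\widetilde H$ is covering basal, it is in particular a local epimorphism. By Lemma \ref{lem local triv fib iff matching map loc epi}, the colimit $\widetilde H$ of the truncations, which is well defined because $\text{tr}_n\widetilde H$ stabilizes degreewise, is therefore a local trivial fibration over $y(U)$. It is split and degreewise semi-representable, so it is a split Verdier hypercover, and the compatible maps $f_n$ assemble to the required refinement $\widetilde H \to H$ over $y(U)$.
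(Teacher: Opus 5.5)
Your overall strategy is the paper's (and Dugger--Hollander--Isaksen's): build $\widetilde H$ level by level, refine the matching map by a covering basal map, glue in the latching object, and control the degenerate part with Lemma \ref{lem building covering basal maps from maps of simplicial sets}. In the inductive step you apply Lemma \ref{lem refine local epi by covering basal map} directly to the cospan $M^+_{n+1}\widetilde H \to M^+_{n+1}H \leftarrow H_{n+1}$. That is the right move. The paper instead pulls back along $m_{n+1}$ as if it were covering basal, but a DHI-hypercover only makes $m_{n+1}$ a local epimorphism, and $M^+_{n+1}H$ need not be semi-representable.

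The gap is in the step you call the main obstacle. The summand of $L^+_{n+1}\widetilde H$ indexed by $\sigma:[n+1]\twoheadrightarrow[m]$ is $\widetilde H^{\nd}_m$. Its map to $M^+_{n+1}\widetilde H$ is therefore only the \emph{restriction} of $\widetilde H^{\sigma\iota}_+ : \widetilde H_m \to M^+_{n+1}\widetilde H$ to a sub-coproduct, and restricting a covering basal map generally destroys the covering condition.

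More seriously, your claim that a coproduct of covering basal maps over a common target is covering basal amounts to saying that a union of $j$-covering families is $j$-covering. This fails for general Verdier pretopologies. Take the pretopology whose covering families are the singletons $\{f\}$ with $f$ an isomorphism; it is Verdier. There your construction already fails at level $1$: $N_1$ and $L^+_1\widetilde H\cong\widetilde H_0$ both sit over the single index of $M^+_1\widetilde H$, which gives a two-element family of isomorphisms. Neither of your patches closes this. Passing to $\text{sat}(j)$ yields a Verdier hypercover for a different coverage, one that need not even be pullback stable. A family that merely contains $N_{n+1}$'s covering family is still not $j$-covering. (The paper's proof passes over the same union step.)

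The repair is short. On index simplicial sets, a degenerate simplex is determined by its boundary (an Eilenberg--Zilber argument), and $\sigma^*$ is injective because $\sigma$ has a section. Consequently:
\begin{itemize}
\item The fiber of $\widetilde H^{\sigma\iota}_+$ over the index $\partial(\sigma^*x)$ is exactly the singleton $\{U_x \to \widetilde H(\partial\sigma^*x)\}$, which is therefore $j$-covering. Restricting to $\widetilde H^{\nd}_m$ thus only drops whole fibers.
\item Distinct components $(\sigma,x)$ of $L^+_{n+1}\widetilde H\cong\sum_\sigma \widetilde H^{\nd}_m$ land over distinct indices.
\end{itemize}
So the latching part is covering basal with singleton fibers and no overlaps.

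Now keep in $N_{n+1}$ only the part of the output of Lemma \ref{lem refine local epi by covering basal map} lying over indices of $M^+_{n+1}\widetilde H$ that are not of the form $\partial\tau$ with $\tau$ degenerate. Every index then carries exactly one family, which is $j$-covering. Hence the matching map is covering basal, and since every index is hit, it is a local epimorphism.

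That last point is also needed in your final paragraph. Covering basal alone does not imply local epimorphism: the initial map $\varnothing\to y(U)$ is covering basal. You must use that every index of the target receives a family, which the construction does guarantee.
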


\begin{proof}
Let us construct the refined split, Verdier hypercover $q : \widetilde{H} \to y(U)$ by induction.

In degree $0$, let us define $\widetilde{H}_0$ as follows. We know that $p_0 : H_0 \to M_0^+H$ is a local epimorphism. But $M_0^+H \cong y(U)$, and the matching map is precisely $p_0 : H_0 \to y(U)$. Since $p_0$ is a local epimorphism, there exists a $j$-covering family $r = \{r_i : U_i \to U \}_{i \in I}$ and for each $i \in I$ maps $s_i : y(U_i) \to H_0$ making the following diagram commute
\begin{equation*}
\begin{tikzcd}[ampersand replacement=\&]
	{y(U_i)} \& {H_0} \\
	{y(U)} \& {y(U)}
	\arrow["{s_i}",from=1-1, to=1-2]
	\arrow["{r_i}"', from=1-1, to=2-1]
	\arrow["{p_0}", from=1-2, to=2-2]
	\arrow[equals, from=2-1, to=2-2]
\end{tikzcd}
\end{equation*}
We set $q_0 : \widetilde{H}_0 \to y(U)$ to be $\sum_i y(U_i) \xrightarrow{\sum_i r_i} y(U)$, and the refinement $\widetilde{H}_0 \to H_0$ to be $\sum_i s_i : \sum_i y(U_i) \to H_0$.

Now suppose that we have an $n$-truncated, augmented, split, simplicial presheaf $q : \widetilde{H} \to y(U)$ that refines $p$ up to level $n$, and such that $\widetilde{H}_k \to M_k^+\widetilde{H}$ is covering basal for $k \leq n$. We want to now construct $\widetilde{H}_{n+1}$ and show that the resulting $(n+1)$-truncated hypercover satisfies the desired properties.

Consider the cospan
\begin{equation*}
\begin{tikzcd}[ampersand replacement=\&]
	\& {H_{n+1}} \\
	{M_{n+1}^+\widetilde{H}} \& {M_{n+1}^+H}
	\arrow["{m_{n+1}}", from=1-2, to=2-2]
	\arrow["{M_{n+1}^+(s)}"', from=2-1, to=2-2]
\end{tikzcd}    
\end{equation*}
The matching map $m_{n+1}$ is covering basal by assumption, and by Lemma \ref{lem matching object is representable for basal hypercovers}, $M_{n+1}^+ \widetilde{H}$ is semi-representable. Thus by Lemma \ref{lem pulling back basal maps of presheaves}, the pullback $\hat{H}_{n+1} = M^+_{n+1} \widetilde{H} \times_{M^+_{n+1}H} H_{n+1} \to M^+_{n+1}\widetilde{H}$ is covering basal.

Let $\widetilde{H}_{n+1} = \hat{H}_{n+1} + L_{n+1}^+ \widetilde{H}$, where $L_{n+1}^+ \widetilde{H}$ is the $(n+1)$-augmented latching object of $\widetilde{H}$. Thus $\widetilde{H}$ is a split, $(n+1)$-truncated simplicial presheaf.

We just need to show that $m: L_{n+1}^+ \widetilde{H} \to M^+_{n+1}\widetilde{H}$ is covering basal. First note that for every codegeneracy map $s^i: [n+1] \to [n]$ there is a corresponding map $s_i : \widetilde{H}_n \to L_{n+1}^+ \widetilde{H}$, which is just an inclusion map. Composing this with $m : L_{n+1}^+ \widetilde{H} \to M_{n+1}^+ \widetilde{H}$, we obtain $\widetilde{H}_n \to M_{n+1}^+\widetilde{H}$ which is precisely the map $\widetilde{H}_+^{s^i \iota} : \widetilde{H}_+^{\Delta^n} \to \widetilde{H}_+^{\partial \Delta^{n+1}}$ where $\iota :  \partial \Delta^{n+1} \hookrightarrow \Delta^{n+1}$ is the inclusion and $s^i : \Delta^{n+1} \to \Delta^n$ is the codegeneracy map. By Lemma \ref{lem building covering basal maps from maps of simplicial sets}, $\widetilde{H}^{s^i \iota}_+$ is covering basal. 

Thus, fixing some codegeneracy $s^i$, we have a commutative triangle
\begin{equation*}
\begin{tikzcd}[ampersand replacement=\&]
	{\widetilde{H}_n} \& {L_{n+1}^+\widetilde{H}} \\
	\& {M_{n+1}^+\widetilde{H}}
	\arrow["{s_i}", from=1-1, to=1-2]
	\arrow["{\widetilde{H}^{s^i\iota}_+}"', from=1-1, to=2-2]
	\arrow["m", from=1-2, to=2-2]
\end{tikzcd} 
\end{equation*}
where the left hand map is covering basal. Now we note that $L_{n+1}^+ \widetilde{H}$ is semi-representable, since $\widetilde{H}_n$ is, and if we let $\widetilde{H}_n \cong \sum_{k \in I_n} y(U_k)$, then
\begin{equation*}
 L_{n+1}^+ \widetilde{H} \cong \sum_{s^i : [n+1] \to [n]} \widetilde{H}_n \cong \sum_{(s^i, k)} y(U_k),
\end{equation*}
where the first sum is indexed over all codegeneracy maps. Thus for each fixed pair $(s^i, k \in I_n)$, the induced family $\mathbf{s_i}(s^i,k) = \{U_k \to U_k \}$ is just the identity map. Hence for each fixed index $\ell$ of the index set of $M_{n+1}^+\widetilde{H}$, the induced family $\mathbf{\widetilde{H}_+^{s^i \iota}}(\ell) = \{U_k \to U_\ell \}$ is just given by the corresponding induced family of $m$. In other words, since $\widetilde{H}_+^{s^i \iota}$ is covering basal, and $m$ shares the same induced families of morphisms, $m$ is also covering basal.

Thus both the maps $\hat{H}_{n+1} \to M_{n+1}^+ \widetilde{H}$ and $L_{n+1}^+ \to M_{n+1}^+ \widetilde{H}$ are covering basal, so the induced map
\begin{equation*}
    \widetilde{H}_{n+1} = \hat{H}_{n+1} + L_{n+1}^+ \widetilde{H} \rightarrow M_{n+1}^+ \widetilde{H}
\end{equation*}
is also covering basal. Thus we have constructed an $(n+1)$-truncated, augmented, split, Verdier hypercover $q_{\leq}: \widetilde{H}_{\leq n+1} \to y(U)$ refining $p_{\leq n+1} : H_{\leq n+1} \to y(U)$.

By induction, we construct an augmented, split, Verdier hypercover $q : \widetilde{H} \to y(U)$ refining $p : H \to y(U)$.
\end{proof}

\section{Matching Verdier Sites} \label{section matching verdier sites}

The condition in Definition \ref{def verdier site} that the site $(\cat{C}, j)$ be pullback stable is very strong. In the next sections we obtain weaker but more complicated conditions that a site can satisfy and still guarantee the existence of refinement by Verdier hypercovers, without pullback-stability. We call sites satisfying these conditions matching Verdier sites.

We should acknowledge that while matching Verdier sites appear to loosen the stricter requirements of Verdier sites, we have been unable to produce examples of sites that are matching Verdier but not Verdier. Hence we leave this section as a collection of partial results that may be of independent interest but do not tell a finished story.

Given an ($n$-truncated) augmented simplicial presheaf $X$, recall from Lemma \ref{lem characterization of matching object} the characterization of the matching object
\begin{equation*}
    M_k^+X \cong \sum_{a : \partial \Delta^k \to \text{ind}(X)} X(a).
\end{equation*}
We are going to investigate the structure of the matching object more closely.

Let $p : H \to y(U)$ denote an augmented, $n$-truncated, split simplicial presheaf, and consider the diagram $\tau : \nd(\Delta^k)^\op \to (\ncat{Pre}(\cat{C}) \downarrow y(U))$. Since the projection functor $(\ncat{Pre}(\cat{C}) \downarrow y(U)) \to \ncat{Pre}(\cat{C})$ reflects connected limits, we can ignore the maps down to $y(U)$ when $k > 1$. The resulting diagram will look like the barycentric subdivision of $\Delta^k$, with representables labeling every vertex, and basal maps labeling the face maps.

\begin{equation*}
    \begin{tikzcd}
	&& {y(U_1)} \\
	& {y(U_{01})} & {y(U_{012})} & {y(U_{12})} \\
	{y(U_0)} && {y(U_{02})} && {y(U_2)}
	\arrow[from=2-2, to=1-3]
	\arrow[from=2-2, to=3-1]
	\arrow[from=2-3, to=2-2]
	\arrow[from=2-3, to=2-4]
	\arrow[from=2-3, to=3-3]
	\arrow[from=2-4, to=1-3]
	\arrow[from=2-4, to=3-5]
	\arrow[from=3-3, to=3-1]
	\arrow[from=3-3, to=3-5]
\end{tikzcd}
\end{equation*}

Now in general, the face maps from the barycenter $y(U_{01 \dots k})$ form a cone over the boundary diagram $\partial \tau$. We call this the canonical cone of $\partial \tau$. In general, there is no reason for the canonical cone to be a limit cone. However, we wish to argue that if the diagram $\tau$ is degenerate and all the maps in the diagram are monomorphisms, then it is a limit cone. 

So suppose that $\tau: \Delta^k \to \text{ind}(\widetilde{H})$ is degenerate, then $\tau = s_i \theta$ for some simplex $\theta$, and we know that $d_i \tau = d_{i+1} \tau = \theta$, and every other face $d_\ell \tau$, $\ell \neq i, i+1$ will be degenerate on a face of $\theta$.

Below is an example of such a diagram for a degenerate $2$-simplex along with the canonical cone
\begin{equation*}
\begin{tikzcd}
	&& {y(U_0)} \\
	& {y(U_0)} & {y(U_{01})} & {y(U_{01})} \\
	{y(U_0)} && {y(U_{01})} && {y(U_1)}
	\arrow[equals, from=2-2, to=1-3]
	\arrow[equals, from=2-2, to=3-1]
	\arrow[from=2-3, to=2-2]
	\arrow[equals, from=2-3, to=2-4]
	\arrow[equals, from=2-3, to=3-3]
	\arrow[from=2-4, to=1-3]
	\arrow[from=2-4, to=3-5]
	\arrow[from=3-3, to=3-1]
	\arrow[from=3-3, to=3-5]
\end{tikzcd}
\end{equation*}

Let $y(U_{d_\ell \tau})$ denote the representable at the barycenter of the face $d_\ell \tau$, so $y(U_{d_i \tau}) = y(U_{d_{i+1} \tau}) = y(U_\theta)$. Suppose that $Q$ is a cone over the boundary $\partial \tau$, with cone maps $\lambda_\ell : Q \to y(U_{d_\ell \tau})$. If $ \lambda_i : Q \to y(U_\theta)$ and $\lambda_{i+1} : Q \to y(U_\theta)$ denote the cone maps to those faces, then since $d_i \tau$ and $d_{i+1} \tau$ must share a common vertex $z$, labeled with representable $y(U_z)$, there is a monomorphism $f : y(U_\theta) \to y(U_z)$ such that $f \lambda_i = f \lambda_{i+1}$. Hence $\lambda_i = \lambda_{i+1} = \lambda$. Since all of the other faces $d_\ell \tau$ are degenerate on faces of $\theta$, this means that the cone map $\lambda_\ell : Q \to y(U_{d_\ell \tau})$ factors through $\lambda$, as at least one of the face maps $y(U_{d_\ell \tau}) \to y(U_{d_m d_\ell \tau})$ will be the identity map. 
\begin{equation*}
\begin{tikzcd}
	&& {y(U_{\ell})} & Q \\
	& {y(U_\ell)} & {y(U_{\theta})} & {y(U_\theta)} \\
	{y(U_{\ell})} && {y(U_\theta)} && {y(U_z)}
	\arrow["{\lambda_\ell}"{description}, from=1-4, to=2-2]
	\arrow["{\lambda_{i+1}}", from=1-4, to=2-4]
	\arrow["{\lambda_i}"{pos=0.7}, from=1-4, to=3-3]
	\arrow[equals, from=2-2, to=1-3]
	\arrow[equals, from=2-2, to=3-1]
	\arrow[from=2-3, to=2-2]
	\arrow[equals, from=2-3, to=2-4]
	\arrow[equals, from=2-3, to=3-3]
	\arrow[from=2-4, to=1-3]
	\arrow[from=2-4, to=3-5]
	\arrow[from=3-3, to=3-1]
	\arrow[from=3-3, to=3-5]
\end{tikzcd}
\end{equation*}
This means that the cone with apex $Q$ factors uniquely through the cone with apex $y(U_\theta)$. Hence $y(U_\theta)$ forms a limit cone over $\partial \tau$.

When $k = 1$, without loss of generality, assume that $\tau = s_0(x)$. Then the limit of $\tau$ is the pullback
\begin{equation*}
   \begin{tikzcd}
	{H(\tau)} & {y(U_0)} \\
	{y(U_0)} & {y(U)}
	\arrow[from=1-1, to=1-2]
	\arrow[from=1-1, to=2-1]
	\arrow["r", from=1-2, to=2-2]
	\arrow["r"', from=2-1, to=2-2]
\end{tikzcd} 
\end{equation*}
but $r$ is a basal map and therefore a monomorphism, hence $H(\tau) \to y(U_0)$ is isomorphic to the identity.

Hence we have proven the following result.

\begin{Lemma} \label{lem boundary of degen in matching is nondegen limit}
Let $(\cat{C}, j)$ be a site where all basal maps are monomorphisms, and let $q : H \to y(U)$ be a split, degreewise semi-representable, augmented, $n$-truncated simplicial presheaf. If $\tau : \Delta^k \to \text{ind}(H)$ is a degenerate simplex, with $\tau = \sigma(x)$, where $\sigma$ and $x$ are the unique corresponding surjection and non-degenerate simplex respectively, then
\begin{equation*}
   H(\partial \tau) \cong H(x).
\end{equation*}
\end{Lemma}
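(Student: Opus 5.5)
I plan to prove that $H(\partial\tau) \cong H(x)$ by identifying the boundary diagram $\partial\tau : \nd(\partial\Delta^k)^\op \to \ncat{Pre}(\cat{C})$ and showing that a specific cone over it is a limit cone. Throughout I use only two facts. First, since $H$ is split, every degenerate face of $\tau$ is labeled by the representable of its nondegenerate root; for example, a face $\sigma'(x')$ is labeled by $y(U_{x'})$. Second, the structure maps of the diagram are built from face maps of $H$, hence are basal or identities, and so are monomorphisms in $\ncat{Pre}(\cat{C})$ because $y$ preserves monos.

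The proof will go by induction on the number of collapsed directions in $\sigma : [k] \twoheadrightarrow [m]$. This reduces the general case to $\tau = s_i\theta$ with $\theta$ itself possibly degenerate. Applying the inductive hypothesis to $\theta$, it suffices to show $H(\partial(s_i\theta)) \cong H(\theta)$-apex, i.e. that the representable $y(U_\theta)$ with its canonical cone is a limit. Here I treat $H(x)$ for nondegenerate $x$ as the limit object associated to $x$, and identify it with $y(U_x)$ when $x$ itself labels a vertex.

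The case $k=1$ is handled by the pullback of the basal monomorphism $r$ along itself, which is the identity. This is exactly the computation given above.

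For $k \ge 2$, I work in $\ncat{Pre}(\cat{C})$ and drop the maps to $y(U)$, since the projection reflects connected limits. Take a cone $Q$ with legs $\lambda_\ell : Q \to y(U_{d_\ell\tau})$. The plan has three steps:
\begin{enumerate}
\item Show $\lambda_i = \lambda_{i+1}$. Both faces equal $\theta$, and they share a vertex $z$. Composing with the mono $y(U_\theta) \to y(U_z)$ gives equal maps, so monicity forces the legs to agree. Call the common leg $\lambda$.
\item Show every other leg $\lambda_\ell$, for $\ell \ne i,i+1$, factors through $\lambda$. Such a face is $s_{i'} d_{\ell'}\theta$, and one of its own face maps is an identity onto a shared subface of $\theta$. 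Compatibility of the cone, together with monicity, then gives $\lambda_\ell = (\text{face map})\circ\lambda$.
\item Conclude that $Q \to y(U_\theta)$ is the unique factorization. Uniqueness holds because the leg $\lambda$ already is this factorization.
\end{enumerate}
Finally, any lower-dimensional vertices of the barycentric diagram receive legs that factor through these top faces, so the cone is determined.

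The main obstacle is step 2. The index bookkeeping must confirm, for each $\ell$, that the identity edge lands so that the factorization is compatible with all the cone relations, and not just with one of them. I would first check this on the face $d_\ell s_i\theta$ using the simplicial identities $d_\ell s_i = s_{i-1}d_\ell$ (for $\ell<i$) and $d_\ell s_i = s_i d_{\ell-1}$ (for $\ell>i+1$). Then I would argue that all remaining compatibilities follow because every map into a representable through monos is determined by its composite to any common vertex.
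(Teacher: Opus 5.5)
Your proposal is essentially the paper's own argument. The $k=1$ case goes through the pullback of the basal monomorphism along itself, and for $k\ge 2$ you use the same three steps: force $\lambda_i=\lambda_{i+1}$ by monicity at a common subface, factor the other legs through $\lambda$ via an identity face map, and get uniqueness because the canonical leg to $d_i\tau=\theta$ is the identity. Your induction on $\sigma$ is superfluous, since $y(U_\theta)=y(U_x)$ follows directly from splitness rather than from an inductive hypothesis about $\partial\theta$. Like the paper, you also tacitly need every structure map of the diagram to be a monomorphism. This does not follow from the stated hypotheses on $H$, because only basal maps are assumed monic and nothing forces the face maps of $H$ to be basal. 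It does hold for the truncated Verdier hypercovers to which the lemma is applied.
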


\begin{Def} \label{def matching verdier}
We say a site $(\cat{C}, j)$ is \textbf{matching Verdier} if
\begin{enumerate}
    \item for every $U \in \cat{C}$, every $n$-truncated, split Verdier hypercover $p : H \to y(U)$ and every $\tau : \Delta^{n+1} \to \text{ind}(H)$, the limit $X(\partial \tau)$ exists in $\cat{C}$, and
    \item every $j$-basal map $r_i : U_i \to U$ in $\cat{C}$ is a monomorphism.
\end{enumerate}
\end{Def}

\begin{Rem}
The first condition of Definition \ref{def matching verdier} guarantees that when we build up a Verdier hypercover by induction using Lemma \ref{lem refine local epi by covering basal map}, that the resulting matching objects are semi-representable and hence we can continue the induction. The second condition allows us to use Lemma \ref{lem boundary of degen in matching is nondegen limit}, with which we can use to attach latching objects.
\end{Rem}

\begin{Prop} \label{prop can refine hypercovers by Verdier hypercovers on matching Verdier site}
If $(\cat{C}, j)$ is a matching Verdier site and $p : H \to y(U)$ is a hypercover, then $p$ can be refined by a split Verdier hypercover.
\end{Prop}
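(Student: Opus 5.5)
I will follow the inductive strategy of the proof of the refinement result on Verdier sites in Appendix \ref{section verdier sites}. The difference is that the semi-representability of matching objects is supplied by condition (1) of Definition \ref{def matching verdier}, not by pullback-stability. Condition (2), through Lemma \ref{lem boundary of degen in matching is nondegen limit}, then handles the degenerate part.

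\emph{Base case.} Since $p$ is a hypercover, $p_0 \colon H_0 \to M_0^+H \cong y(U)$ is a local epimorphism. Apply Lemma \ref{lem refine local epi by covering basal map} with $Z = y(U)$. This gives a covering basal map $\widetilde{H}_0 = \sum_i y(U_i) \to y(U)$ together with a map $\widetilde{H}_0 \to H_0$ over $y(U)$. The result is a $0$-truncated, split Verdier hypercover refining $\text{tr}_0 p$.

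\emph{Inductive step.} Suppose we have an $n$-truncated, augmented, split Verdier hypercover $\widetilde{H} \to y(U)$ and a map $s \colon \widetilde{H} \to \text{tr}_n H$ over $y(U)$. By Lemma \ref{lem characterization of matching object},
\[
M_{n+1}^+\widetilde{H} \cong \sum_{\tau \colon \partial\Delta^{n+1} \to \text{ind}(\widetilde{H})} \widetilde{H}(\tau).
\]
Each summand is the limit of a finite diagram of representables, and condition (1) says this limit exists in $\cat{C}$. Since $y$ preserves limits, each summand is representable, so $M_{n+1}^+\widetilde{H}$ is semi-representable. Now $m_{n+1}\colon H_{n+1} \to M_{n+1}^+H$ is a local epimorphism, because $p$ is a local trivial fibration. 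Apply Lemma \ref{lem refine local epi by covering basal map} to the cospan $M_{n+1}^+\widetilde{H} \xrightarrow{M^+_{n+1}(s)} M_{n+1}^+H \xleftarrow{m_{n+1}} H_{n+1}$. This produces a covering basal map $\hat{H}_{n+1} \to M_{n+1}^+\widetilde{H}$ and a compatible map $\hat{H}_{n+1} \to H_{n+1}$. Set
\[
\widetilde{H}_{n+1} = \hat{H}_{n+1} + L_{n+1}^+\widetilde{H}.
\]
Face maps come from the composite to the matching object, and degeneracies are the summand inclusions. By construction the result is split, and the map to $H_{n+1}$ is $\hat{H}_{n+1} \to H_{n+1}$ on the first summand and the degeneracies of $H$ applied to $s_n$ on the second. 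Passing to the limit over $n$ gives a split Verdier hypercover refining $p$.

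\emph{Main obstacle.} The hard part is showing that the whole matching map $\widetilde{H}_{n+1} \to M_{n+1}^+\widetilde{H}$ is covering basal, and in particular its restriction to $L_{n+1}^+\widetilde{H}$. On a Verdier site this used Lemma \ref{lem building covering basal maps from maps of simplicial sets}, which needs pullback stability, so it is unavailable here. Instead I will argue summand by summand. A degenerate boundary datum $\tau = \sigma(x)$ satisfies $\widetilde{H}(\partial\tau) \cong \widetilde{H}(x) = y(U_x)$ by Lemma \ref{lem boundary of degen in matching is nondegen limit}, which uses that basal maps are monomorphisms. The latching summand indexed by this degenerate simplex is $y(U_x)$, and it maps isomorphically onto this component. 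Covering families contain every isomorphism, so this map is covering basal.

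The remaining care is bookkeeping. One must check that distinct degenerate simplices land in distinct components of $M_{n+1}^+\widetilde{H}$, which should follow from uniqueness in the Eilenberg--Zilber decomposition. One must also check that the components hit by $\hat{H}_{n+1}$ and by $L^+_{n+1}\widetilde{H}$ are compatible, since for each index the induced family must be covering. If they overlap, I will instead restrict $\hat{H}_{n+1}$ to the components of $M_{n+1}^+\widetilde{H}$ indexed by nondegenerate boundary data, so that the families over each index come from exactly one source. The same argument works on arbitrary sites satisfying Definition \ref{def matching verdier}, since only conditions (1) and (2) were used.
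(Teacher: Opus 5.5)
Your proposal is correct and follows essentially the same route as the paper. You induct on degree, use condition (1) of Definition \ref{def matching verdier} to make $M_{n+1}^+\widetilde{H}$ semi-representable so that Lemma \ref{lem refine local epi by covering basal map} applies, set $\widetilde{H}_{n+1} = \hat{H}_{n+1} + L_{n+1}^+\widetilde{H}$, and use Lemma \ref{lem boundary of degen in matching is nondegen limit} to see that the matching map is componentwise an isomorphism on the latching summand.

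Two small differences are in your favour. First, you skip the paper's preliminary reduction to a split DHI-hypercover; this is unnecessary, since Lemma \ref{lem refine local epi by covering basal map} only needs $M_{n+1}^+\widetilde{H}$ to be semi-representable. Second, you restrict $\hat{H}_{n+1}$ away from components of the form $\partial\tau$ with $\tau$ degenerate. This handles a point the paper passes over when it combines the two summands: a covering family plus an extra isomorphism need not be covering. For this to work you also need that a degenerate simplex is determined by its faces, which follows from the simplicial identities rather than from Eilenberg--Zilber uniqueness alone.
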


\begin{proof}
We will construct a split Verdier hypercover $\widehat{p}: \widehat{H} \to y(U)$ by induction. First, by \cite{Low2014Notes}, we can assume that $p$ is a split DHI-hypercover. In degree $0$, consider the cospan
\begin{equation*}
    \begin{tikzcd}
	& {H_0} \\
	{y(U)} & {y(U)}
	\arrow["{p_0}", from=1-2, to=2-2]
	\arrow[equals, from=2-1, to=2-2]
\end{tikzcd}
\end{equation*}
Since $p_0 = m_0^+$ is a local epimorphism and $y(U)$ is representable, by Lemma \ref{lem refine local epi by covering basal map} there exists a covering basal map $\widehat{p}_0 : \widehat{H}_0 \to y(U)$ and a map $g_0 : \widehat{H}_0 \to H_0$ making the following diagram commute
\begin{equation*}
    \begin{tikzcd}
	{\widehat{H}_0} & {H_0} \\
	{y(U)} & {y(U)}
	\arrow["{g_0}", from=1-1, to=1-2]
	\arrow["{\widehat{p}_0}"', from=1-1, to=2-1]
	\arrow["{p_0}", from=1-2, to=2-2]
	\arrow[equals, from=2-1, to=2-2]
\end{tikzcd}
\end{equation*}
This is equivalent to a map $g : \widehat{H} \to \text{tr}_0 H$ of $0$-truncated simplicial presheaves over $y(U)$.

Now for $n > 0$, suppose that $\widehat{p} : \widehat{H} \to y(U)$ is a semi-representable, split, $n$-truncated, augmented simplicial presheaf equipped with a map $g : \widehat{H} \to \text{tr}_n H$ over $y(U)$. Consider the cospan
\begin{equation*}
\begin{tikzcd}
	& {H_{n+1}} \\
	{M_{n+1}^+ \widehat{H}} & {M_{n+1}^+ H}
	\arrow["{m_{n+1}^+}", from=1-2, to=2-2]
	\arrow["{M_{n+1}^+(g)}"', from=2-1, to=2-2]
\end{tikzcd}    
\end{equation*}
since $m_{n+1}^+$ is a local epimorphism and $M_{n+1}^+ \widehat{H}$ is semi-representable since $(\cat{C}, j)$ is a matching Verdier site, then again by Lemma \ref{lem refine local epi by covering basal map}, we obtain a covering basal map $\widehat{p}_{n+1} : W \to M_{n+1}^+ \widehat{H}$ and a map $g_{n+1} : W \to H_{n+1}$ making the following diagram commute
\begin{equation*}
    \begin{tikzcd}
	{W} & {H_{n+1}} \\
	{M_{n+1}^+ \widehat{H}} & {M_{n+1}^+ H}
	\arrow["{g_{n+1}}", from=1-1, to=1-2]
	\arrow["{\widehat{p}_{n+1}}"', from=1-1, to=2-1]
	\arrow["{m_{n+1}^+}", from=1-2, to=2-2]
	\arrow["{M_{n+1}^+(g)}"', from=2-1, to=2-2]
\end{tikzcd}
\end{equation*}
So we now wish to extend $\widehat{H}$ from an $n$-truncated simplicial presheaf to an $(n+1)$-truncated simplicial presheaf. So let $\widehat{H}_{n+1} = W + L_{n+1}^+ \widehat{H}$. Attaching the latching object allows us to define degeneracy maps $\sigma : \widehat{H}_k \to \widehat{H}_{n+1}$ for any surjection $\sigma : [n+1] \to [k]$ using the description from (\ref{eq structure of latching object}).

Let us define face maps $d_i : \widehat{H}_{n+1} \to \widehat{H}_n$ as follows. Let $\delta_i : M^+_{n+1} \widehat{H} \to \widehat{H}_n$ denote the map that precomposes a map $\partial \Delta^i \to \text{ind}(\widehat{H})$ with the map $d^i : \Delta^{i-1} \to \partial \Delta^i$. Now define $d_i$ on the component $W$ to be $d_i = \delta_i \widehat{p}_{n+1}$. On $L_{n+1}^+\widehat{H}$, $d_i$ is defined via the structure of faces of degenerate simplices. Hence $\widehat{H}$ is a well-defined $(n+1)$-truncated simplicial presheaf. We must now show that the matching map $\widehat{m}_{n+1}^+ : \widehat{H}_{n+1} \to M_{n+1}^+ \widehat{H}$ is covering basal. 

By construction, $\widehat{m}_{n+1}^+$ restricted to $W$ is $\widehat{p}_{n+1}$, which is covering basal. Hence we need only to show that $\widehat{m}_{n+1}^+$ restricted to $L_{n+1}^+ \widehat{H}$ is covering basal. So let $\tau : \Delta^{n+1} \to \text{ind}(\widehat{H})$ be degenerate, with $\tau = \sigma(x)$ for $\sigma$ and $x$ are the unique corresponding surjection and non-degenerate simplex, with $\sigma : [n+1] \to [k]$ and $x \in \text{ind}(\widehat{H})_k$, $k < n+1$.

Consider the representable $y(U_\tau)$ in component $\tau$ of $L_{n+1}^+ \widehat{H}$. By definition this is $y(U_x)$. Now the face maps of $\widehat{H}$ define maps $U_x = U_\tau \to U_{d_i \tau}$, forming a cone over the matching basal diagram $\tau : \nd(\Delta^{n+1})^\op \to \text{ind}(\widehat{H})$, hence defining a unique map to the limit $U_{\tau} \to \widehat{H}(\partial \tau)$. By Lemma \ref{lem boundary of degen in matching is nondegen limit}, $\widehat{H}(\partial \tau) \cong \widehat{H}(x)$. Hence the matching map $m_{n+1}^+$ restricted to $L_{n+1}^+ \widehat{H}$ is componentwise isomorphic to the identity map, and hence is covering basal.

Thus we have defined an $(n+1)$-truncated, split, Verdier hypercover $\widehat{H} \to y(U)$ which refines $\text{tr}_{n+1} H \to y(U)$. Continuing by induction we obtain a split Verdier hypercover refining $p : H \to y(U)$.
\end{proof}

\begin{Cor}
If $(\cat{C}, j)$ is a matching Verdier site, then $\hat{\H}(\cat{C}, j)$ is equal to the left Bousfield localization of $\H(\cat{C})$ at the Verdier hypercovers.
\end{Cor}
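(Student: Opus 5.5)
The plan is to follow the proof of Corollary \ref{cor localizing at DHI and split Verdier is equiv}, with Proposition \ref{prop can refine hypercovers by Verdier hypercovers on matching Verdier site} replacing Proposition \ref{prop can refine DHI-hypercover by verdier hypercover on Verdier site}. Since $\ncat{VHyp}$ is a proper class, I will not construct $L_{\ncat{VHyp}}\H(\cat{C})$ directly. Instead I will show that the $\ncat{VHyp}$-local objects are exactly the $\ncat{DHIHyp}$-local objects, that is, the $\infty$-hyperstacks. Then the $\ncat{VHyp}$-local weak equivalences equal the $\ncat{DHIHyp}$-local weak equivalences, which are the local weak equivalences by Theorem \ref{th bousfield localization at DHI hypercovers}. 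The localization at $\ncat{VHyp}$ therefore exists and equals $\hat{\H}(\cat{C}, j)$. This is the same pattern as the proof of Proposition \ref{prop different descriptions of local proj}.

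First, I work with the split Verdier hypercovers that the refinement procedure actually produces. These are degreewise semi-representable and split, so they are projective cofibrant by Proposition \ref{prop projective cofibrant}. Hence for an objectwise fibrant $X$, being local with respect to them means that the map $\u{\ncat{sPre}}(\cat{C})(y(U),X) \to \u{\ncat{sPre}}(\cat{C})(H',X)$ is a weak equivalence, with no cofibrant replacement needed. One inclusion is then immediate. Since $\ncat{VHyp}_U \subseteq \ncat{Hyp}_U$ and every hypercover is a local weak equivalence, every $\infty$-hyperstack is $\ncat{VHyp}$-local.

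For the converse, let $X$ be objectwise fibrant and local with respect to the split Verdier hypercovers. I must show that $X$ is local with respect to every DHI-hypercover $H \to y(U)$. By Proposition \ref{prop can refine hypercovers by Verdier hypercovers on matching Verdier site}, every hypercover over $y(U)$ is refined by a split Verdier hypercover. Thus inside $[\ncat{Hyp}_U]$, which is cofiltered by Proposition \ref{prop homotopy cat of hypercovers is cofiltered}, the split Verdier hypercovers are coinitial. This is exactly the input required by the argument of \cite[Theorem 6.2]{Dugger2004}. In that argument, the fibrant objects of the localization at a class $\mathcal{S}$ of hypercovers are the hyperstacks provided every hypercover is refined by one in $\mathcal{S}$; the proof compares homotopy groups by lifting spheres along refinements and uses cofilteredness to make the choices compatible.

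Concretely, for surjectivity on $\pi_k$, I take a class in $\pi_k\,\u{\ncat{sPre}}(\cat{C})(H,X)$ and pull it back along a refinement $H' \to H$. Using the equivalence for $H'$, I obtain a class on $y(U)$ whose restriction to $H'$ agrees with it. To conclude that it agrees on $H$ itself, I need the Čech-type argument showing that $H' \to H$ induces an injection after applying $X$. For that step I apply the hypothesis to the hypercover $H'\times_H \dots$, refined again, and work over $X^{\partial\Delta^{k}}$ and $X^{\Delta^k}$, which remain $\ncat{VHyp}$-local because locality is preserved by cotensoring.

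I expect this last step to be the main obstacle: showing that restriction along a refinement of hypercovers is injective on homotopy. If necessary, I will simply invoke the refinement form of \cite[Theorem 6.2]{Dugger2004} as used in Corollary \ref{cor localizing at DHI and split Verdier is equiv}. With the equality of local objects established, the statement follows as described in the first paragraph.
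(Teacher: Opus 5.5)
Your proposal is correct and ends up resting on the same two ingredients as the paper: Proposition~\ref{prop can refine hypercovers by Verdier hypercovers on matching Verdier site}, to refine any hypercover by a split Verdier hypercover, followed by the refinement form of \cite[Theorem 6.2]{Dugger2004}. The hand-made homotopy-group argument is not needed, and as you acknowledge it is not completed (in particular the injectivity of restriction along $H' \to H$); the paper simply invokes Dugger--Hollander--Isaksen at that point, exactly as your fallback does.
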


\begin{proof}
This follows from Proposition \ref{prop can refine hypercovers by Verdier hypercovers on matching Verdier site} and \cite[Theorem 6.2]{Dugger2004}.
\end{proof}

\section{Lurie's Lemma} \label{section luries lemma}
In this section we give some background on Lurie's lemma, a point-set topological result that is the core lemma we need for Theorem \ref{th many sites are strictly hypercomplete}.

\begin{Def} \label{def locally finite}
A collection $\closed = \{ C_i \}_{i \in I}$ of subsets of a topological space $X$ is \textbf{locally finite} if for each $x \in X$ there exists an open neighborhood $U_x$ of $x$ which intersects only finitely many elements of $\closed$, i.e. there exist $i_0, \dots, i_n \in I$ such that
\begin{equation*}
    U_x \cap \left( \bigcup_{i \in I} C_i \right) = U_x \cap \left( C_{i_0} \cup \dots \cup C_{i_n} \right).
\end{equation*}
\end{Def}

\begin{Lemma} \label{lem union of loc finite closed is closed}
If $\closed = \{ C_i \}_{i \in I}$ is a locally finite collection of closed subsets of $X$ then $\bigcup_{i \in I} \, C_i$ is closed in $X$.
\end{Lemma}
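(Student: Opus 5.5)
The statement to prove is Lemma \ref{lem union of loc finite closed is closed}: the union $C = \bigcup_{i \in I} C_i$ of a locally finite family of closed sets is closed. I will show that the complement $X \setminus C$ is open, by producing an open neighborhood around each of its points that misses $C$ entirely.

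Fix $x \in X \setminus C$. By local finiteness (Definition \ref{def locally finite}) there is an open neighborhood $U_x$ of $x$ and finitely many indices $i_0, \dots, i_n \in I$ such that
\begin{equation*}
    U_x \cap C = U_x \cap (C_{i_0} \cup \dots \cup C_{i_n}).
\end{equation*}
The finite union $F = C_{i_0} \cup \dots \cup C_{i_n}$ is closed, since finite unions of closed sets are closed. Hence $V = U_x \setminus F = U_x \cap (X \setminus F)$ is open. It contains $x$, because $x \notin C \supseteq F$.

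It remains to check that $V$ is disjoint from $C$. Suppose $y \in V \cap C$. Then $y \in U_x \cap C = U_x \cap F$, so $y \in F$, which contradicts $y \in V$. Thus $V \subseteq X \setminus C$, so $X \setminus C$ is open and $C$ is closed.

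There is no real obstacle here. The one point to be careful about is that local finiteness is used only to replace the possibly infinite union by a finite one inside $U_x$. Both closedness of finite unions and the fact that $U_x$ is open are essential to the argument.
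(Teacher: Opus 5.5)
Your proof is correct and takes essentially the same approach as the paper: pick a point outside the union, use local finiteness to get a neighborhood meeting only finitely many $C_{i_k}$, and remove those finitely many closed sets. The only cosmetic difference is that the paper intersects the neighborhood with separate open neighborhoods of $x$ inside each $X - C_{i_k}$, whereas you subtract the closed finite union $F$ in one step, which yields the same open set.
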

\begin{proof}
It suffices to show that the complement of the union is open. Suppose that $x \in \left( X - \cup_i C_{i} \right)$. Since $\closed$ is locally finite then there exists an open neighborhood $V_{x}$ of $x$ only intersecting finitely many elements of $\closed$, label them $C_{i_0}, \ldots, C_{i_n}$. For each $k \in \{0, \ldots, n\}$, since $C_{i_k}$ is closed we can find an open neighborhood $U_{i_k}$ of $x$ such that $U_{i_k} \subset (X - C_{i_k})$. Now $( \cap_{i=0}^n U_{i_k} ) \cap V_x$ is an open neighborhood of $x$ which is disjoint from each $C_i$. Thus $\cup_{i \in I} \, C_i$ is closed.
\end{proof}

Given a topological space $X$, a collection $\mathscr{S} = \{ S_i \}_{i \in I}$ of subsets of $X$, and a sub-indexing set $I_0 \subseteq I$, let
\begin{equation*}
    S(I_0) = \bigcap_{i \in I_0} S_i.
\end{equation*}
Given $k \geq 0$, and a subset $I_0 \subseteq I$ with cardinality $|I_0| = k+1$, we say that $I_0$ is a \textbf{$(k+1)$-subindex}. Let $I(k+1)$ denote the set of $(k+1)$-subindices of $I$.

\begin{Lemma}\label{lem collection of intersections is loc finite}
Given a topological space $X$, and a locally finite collection $\closed = \{ C_{i}\}_{i \in I}$ of closed subsets of $X$, then for every $k \geq 1$, the collection $\left\{ C(I_0) \right\}_{I_0 \in I(k + 1)}$ is a locally finite collection of closed subsets of $X$.
\end{Lemma}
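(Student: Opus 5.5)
Two things need to be shown: each set $C(I_0)$ is closed, and the family $\{C(I_0)\}_{I_0 \in I(k+1)}$ is locally finite.

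Closedness is immediate. Each $C(I_0) = \bigcap_{i \in I_0} C_i$ is a finite intersection of closed sets, so it is closed. (Any intersection of closed sets is closed, so finiteness is not even needed here.)

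For local finiteness, fix $x \in X$. Local finiteness of $\closed$ gives an open neighborhood $U_x$ of $x$ and a finite set $F = \{i_0, \dots, i_n\} \subseteq I$ such that $U_x \cap C_i = \varnothing$ for every $i \notin F$. This is the same condition written in Definition \ref{def locally finite}, since meeting only finitely many members is exactly what that equation of unions encodes. I claim the same $U_x$ works for the family of intersections.

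Suppose $I_0 \in I(k+1)$ and $U_x \cap C(I_0) \neq \varnothing$. Since $C(I_0) \subseteq C_i$ for every $i \in I_0$, we get $U_x \cap C_i \neq \varnothing$ for every $i \in I_0$. Hence $I_0 \subseteq F$. So the subindices $I_0$ whose intersection meets $U_x$ are among the $(k+1)$-element subsets of $F$. There are at most $\binom{n+1}{k+1}$ of these, which is finite.

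One bookkeeping point remains. Different subindices $I_0$ may give the same set $C(I_0)$, and some $C(I_0)$ may be empty. Both are harmless, because local finiteness is a statement about indexed members, and we have bounded the number of indices. So the collection is locally finite.

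The only mild care needed is to read Definition \ref{def locally finite} as "meets only finitely many members". The argument has no real obstacle.
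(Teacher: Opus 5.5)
Your proof is correct and is essentially the paper's argument: the same neighborhood $U_x$ works, because any $I_0 \in I(k+1)$ with $U_x \cap C(I_0) \neq \varnothing$ must be a subset of the finite set of indices $i$ with $U_x \cap C_i \neq \varnothing$. You are a bit more careful than the paper, which claims an ``if and only if'' where only this implication holds and leaves closedness implicit; your one slip is the aside that the displayed union equation in Definition \ref{def locally finite} exactly encodes ``meets only finitely many members'' (the equation is weaker, e.g.\ when members repeat), but the reading you actually use is the intended one, and it is the one the paper's proof uses.
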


\begin{proof}
Let $x \in X$, since $\closed$ is locally finite, there exists an open neighborhood $U_x$ of $x$ intersecting only finitely many $C_i$, label them $C_{i_0}, \ldots, C_{i_n}$. Fix $k \geq 1$. Then for $I_0 \in I(k + 1)$, the open set $U_x$ intersects $C(I_0)$ if and only if $I_0$ is a subset of $\{ i_0, \dots, i_n\}$; in other words $U_x$ only intersects $C(I_0)$ for finitely many $I_0$, as there are only finitely many subsets of $\{i_0, \dots, i_n \}$. Thus $\left\{ C(I_0) \right\}_{I_0 \in I(k + 1)}$ is a locally finite collection of closed subsets of $X$.
\end{proof}

\begin{Lemma}[{\cite[Lemma 39.1]{Munkres2000}}] \label{lem closure of locally finite collection}
Let $\mathscr{S} = \{ S_i \}_{i \in I}$ be a locally finite collection of subsets of $X$. Then 
\begin{enumerate}
\item Any subcollection $\mathscr{S}' \subseteq \mathscr{S}$ is locally finite,
\item The collection $\overline{\mathscr{S}} = \{ \overline{S_i} \}_{i \in I}$ is locally finite, where $\overline{S_i}$ is the closure of the subset $S_i \subseteq X$, and
\item $\overline{\bigcup\limits_{i \in I} S_i} = \bigcup\limits_{i \in I} \overline{S_i}$.
\end{enumerate}
\end{Lemma}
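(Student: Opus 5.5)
The statement is the standard Munkres lemma on locally finite collections. I would prove its three parts in order, each directly from the definition of local finiteness (Definition \ref{def locally finite}).

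For (1), let $\mathscr{S}' \subseteq \mathscr{S}$ and $x \in X$. Take the neighborhood $U_x$ witnessing local finiteness of $\mathscr{S}$. It meets only finitely many members of $\mathscr{S}$, so it meets at most those same finitely many members of $\mathscr{S}'$.

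For (2), the key observation is this: if an open set $U$ meets $\overline{S_i}$, then $U$ meets $S_i$. Indeed, a point $y \in U \cap \overline{S_i}$ has $U$ as an open neighborhood, and every open neighborhood of a closure point meets $S_i$. So the same neighborhood $U_x$ that meets only $S_{i_0}, \dots, S_{i_n}$ can meet at most $\overline{S_{i_0}}, \dots, \overline{S_{i_n}}$. Hence $\overline{\mathscr{S}}$ is locally finite.

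For (3), we prove two inclusions.

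The inclusion $\bigcup_i \overline{S_i} \subseteq \overline{\bigcup_i S_i}$ holds for any collection. Each $S_i$ lies in $\bigcup_i S_i$, so each $\overline{S_i}$ lies in its closure, by monotonicity of closure.

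For the reverse inclusion:
\begin{enumerate}
    \item By (2), the collection $\{\overline{S_i}\}$ is a locally finite collection of closed sets.
    \item By Lemma \ref{lem union of loc finite closed is closed}, the union $\bigcup_i \overline{S_i}$ is therefore closed.
    \item This closed set contains $\bigcup_i S_i$.
    \item Since $\overline{\bigcup_i S_i}$ is the smallest closed set containing $\bigcup_i S_i$, it is contained in $\bigcup_i \overline{S_i}$.
\end{enumerate}

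The only mildly delicate point is the observation in (2), that an open set meeting a closure meets the original set. Everything else is routine once (2) is established and combined with Lemma \ref{lem union of loc finite closed is closed}.
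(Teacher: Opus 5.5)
Your proof is correct and is essentially the standard Munkres argument that the paper cites without reproducing. Parts (1) and (2) match it exactly, and for (3) you route the reverse inclusion through (2) and Lemma \ref{lem union of loc finite closed is closed} instead of using Munkres's direct point argument; that lemma is proved from the definition alone, so there is no circularity. The two versions of (3) coincide in substance: Munkres's step (if $x \in \overline{\bigcup_i S_i}$ avoided $\overline{S_{i_0}} \cup \dots \cup \overline{S_{i_n}}$, then $U_x - (\overline{S_{i_0}} \cup \dots \cup \overline{S_{i_n}})$ would be a neighborhood of $x$ missing $\bigcup_i S_i$) is exactly the argument used to prove that lemma.
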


\begin{Def} \label{def regular, normal top space}
We say that a topological space $X$ is \textbf{regular} if for every $x \in X$ and every closed subset $C \subset X$, there exist disjoint open neighborhoods $U_x, U_C$ of $x$ and $C$ respectively. We say that $X$ is \textbf{normal} if for every pair of closed subsets $C_1, C_2 \subset X$, there exist disjoint open neighborhoods $U_1$, $U_2$ of $C_1$, $C_2$ respectively.
\end{Def}

\begin{Lemma}[{\cite[Theorem 41.1]{Munkres2000}}]
If $X$ is a paracompact, Hausdorff topological space, then $X$ is regular and, moreover, normal. 
\end{Lemma}

\begin{comment}
\begin{proof}
First regularity is proven. Let $a \in X$ and $B \subset X$ be closed and disjoint from $a$. For each $b \in B$, since $X$ is Hausdorff, choose an open neighborhood $U_b$ of $b$ whose closure is disjoint from $a$. Cover $X$ with $\{U_b\}_{b \in B} \cup \{ X - B \}$ and then by paracompactness take a locally finite open refinement $\mathscr C$ covering $X$. Next form the subcollection $\mathscr D \subset \mathscr C$ of elements which have nonempty intersection with $B$; which now covers $B$. The closure of each open subset in $\mathscr D$ also is disjoint from $a$. Now define $W = \bigcup\limits_{V \in \mathscr D} V$ which is an open set containing $B$ but by the previous lemma we have $\overline{W} =  \bigcup\limits_{V \in \mathscr D} \overline{V}$ and so $a \notin \overline{W}$ which means that $a$ and $B$ are separated by $U_a = X - \overline{W}$ and $U_B = W$. To further prove normality, repeat the proof for $a$ replaced by a closed set $A$ and instead of using Hausdorff use regularity.
\end{proof}
\end{comment}

Recall that if $X$ is a topological space, and $\mathscr{U} = \{U_i \}_{i \in I}$, $\mathscr{V} = \{ V_j \}_{j \in J}$ are open covers of $X$, then we say that $\mathscr{V}$ is a \textbf{refinement} of $\mathscr{U}$, and write $\mathscr{V} \leq \mathscr{U}$ if there exists a function $\pi : J \to I$ such that $V_j \subseteq U_{\pi(j)}$. We say that a refinement $\mathscr{V} \leq \mathscr{U}$ is \textbf{index-constant} if $\mathscr{V} = \{V_i \}_{i \in I}$ and $\mathscr{U} = \{U_i \}_{i \in I}$ have the same index set, and $\pi : I \to I$ is the identity function. 

\begin{Lemma}[{\cite[Lemma 41.6]{Munkres2000}}] \label{lem shrinking lemma}
Let $X$ be a paracompact Hausdorff topological space and let $\mathscr{U} = \{ U_i \}_{i \in I}$ be an open cover of $X$. Then there exists a locally finite, index-constant refinement $\mathscr{V} = \{V_i \}_{i \in I}$ of $\mathscr{U}$ such that $\overline{V_i} \subset U_{i}$ for each $i \in I$.
\end{Lemma}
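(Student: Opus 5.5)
The statement to prove is the shrinking lemma: for a paracompact Hausdorff $X$ and an open cover $\mathscr{U} = \{U_i\}_{i \in I}$, there is a locally finite, index-constant refinement $\mathscr{V} = \{V_i\}_{i\in I}$ with $\overline{V_i} \subseteq U_i$. I would follow the standard route: first produce \emph{some} locally finite open refinement whose members have closures inside members of $\mathscr{U}$, then regroup it according to the index set $I$.

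\textbf{Step 1: regularity.} By the preceding lemma, $X$ is regular. For each $x \in X$ choose $i(x)$ with $x \in U_{i(x)}$. Separate $x$ from the closed set $X - U_{i(x)}$ by disjoint open sets. This yields an open neighborhood $W_x$ of $x$ with $\overline{W_x} \subseteq U_{i(x)}$.

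\textbf{Step 2: paracompactness.} The family $\{W_x\}_{x\in X}$ covers $X$, so it admits a locally finite open refinement $\mathscr{C} = \{C_\beta\}_{\beta \in B}$. For each $\beta$, pick $x$ with $C_\beta \subseteq W_x$ and set $f(\beta) = i(x)$. Then
\[
\overline{C_\beta} \subseteq \overline{W_x} \subseteq U_{f(\beta)}.
\]

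\textbf{Step 3: regrouping.} Define $V_i = \bigcup_{\beta \in f^{-1}(i)} C_\beta$, with $V_i = \varnothing$ if the fiber is empty. Each $V_i$ is open, and $\{V_i\}$ covers $X$ because $\mathscr{C}$ does.

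\textbf{Step 4: the three required properties.}
\begin{itemize}
\item \emph{Closures.} The subcollection $\{C_\beta\}_{\beta \in f^{-1}(i)}$ is locally finite by Lemma \ref{lem closure of locally finite collection}(1). Part (3) of that lemma then gives
\[
\overline{V_i} = \bigcup_{\beta \in f^{-1}(i)} \overline{C_\beta} \subseteq U_i .
\]
\item \emph{Local finiteness.} Given $x$, take a neighborhood $N$ meeting only finitely many $C_\beta$. If $N$ meets $V_i$, it meets some $C_\beta$ with $f(\beta) = i$. Hence $N$ meets at most as many $V_i$ as there are such $\beta$, which is finitely many.
\item \emph{Index-constancy.} This holds by construction, since $\mathscr{V}$ is indexed by $I$ itself.
\end{itemize}

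The main obstacle is the closure identity in Step 4. The union defining $V_i$ may be infinite, so the inclusion $\overline{V_i} \subseteq U_i$ is only available because each fiber family inherits local finiteness and Lemma \ref{lem closure of locally finite collection} applies. The same regrouping must be checked not to destroy local finiteness, which is the second bullet. Everything else is routine.
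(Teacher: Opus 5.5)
Your proposal is correct and is essentially the standard proof of Munkres' Lemma 41.6, which the paper cites without reproducing. Munkres also uses regularity to get open sets whose closures lie in members of $\mathscr{U}$, takes a locally finite refinement by paracompactness, and regroups along a choice function $f$, using that closure commutes with locally finite unions.
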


We call the next result \textbf{Lurie's Lemma}. It will be pivotal in understanding hypercovers on paracompact Hausdorff spaces.

\begin{Lemma}[{\cite[Lemma 7.2.3.5]{Lurie2009}}] \label{lurie's lemma}
Let $X$ be a paracompact Hausdorff space with an open cover $\mathscr{U} = \{U_i \}_{i \in I}$. Fix $k \geq 1$, and suppose that for every $(k+1)$-subindex $I_0 \subseteq I$ there is an open cover $\mathscr{V}_{I_0} = \{ V_b \}_{b \in B(I_0)}$ of the intersection $U(I_0)$. Then there exists an open cover $\mathscr{W} = \{W_{\overline{i}}\}_{\overline{i} \in \overline{I}}$ of $X$, which is a refinement of $\mathscr{U}$ under $\pi: \overline{I} \to I$ satisfying the following property: If $\overline{I}_0$ is a $(k+1)$-subindex of $\overline{I}$ with $\pi(\overline{I}_0) = I_0$, then there exists a $b \in B(I_0)$ such that $W(\overline{I}_0) \subseteq V_b$.
\end{Lemma}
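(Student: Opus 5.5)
The plan is to reduce to a locally finite situation with closed shrinkings, and then choose, for each point, a single index in a new index set whose open set is small enough to lie in an appropriate $V_b$ for every relevant $(k+1)$-subindex at once. First, by paracompactness and Lemma \ref{lem shrinking lemma}, replace $\mathscr{U}$ by a locally finite index-constant refinement $\{U'_i\}_{i \in I}$ with $\overline{U'_i} \subseteq U_i$. Set $C_i = \overline{U'_i}$. Then $\{C_i\}$ is a locally finite family of closed sets, and by Lemma \ref{lem collection of intersections is loc finite} the same holds for $\{C(I_0)\}_{I_0 \in I(k+1)}$. This step costs nothing, since $\mathscr{V}_{I_0}$ still covers $U(I_0) \supseteq C(I_0)$.

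Next, for each $x \in X$, define the finite set $I_x = \{i : x \in C_i\}$. Choose an open neighborhood $N_x$ with the following properties:
\begin{enumerate}
    \item $N_x$ meets only finitely many $C_i$;
    \item $N_x \subseteq U_i$ for every $i \in I_x$;
    \item $N_x$ avoids every $C_i$ with $i \notin I_x$, which is possible by Lemma \ref{lem union of loc finite closed is closed}, because the union of those $C_i$ is closed and misses $x$;
    \item for every $(k+1)$-subindex $I_0 \subseteq I_x$ we have $N_x \subseteq V_{b(x,I_0)}$ for some chosen $b(x,I_0) \in B(I_0)$.
\end{enumerate}
Property (4) can be arranged because $x \in C(I_0) \subseteq U(I_0)$ and only finitely many $I_0 \subseteq I_x$ occur, so we intersect finitely many opens. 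Now take $\overline{I} = X \times I$ restricted to pairs $(x,i)$ with $i \in I_x$. Put $W_{(x,i)} = N_x \cap U'_i$ and $\pi(x,i) = i$. These sets cover $X$: any $y$ lies in some $U'_i$, so $i \in I_y$ and $y \in W_{(y,i)}$. They also refine $\mathscr{U}$.

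The key verification, and the main obstacle, is the following. Suppose $\overline{I}_0 = \{(x_0,i_0),\dots,(x_k,i_k)\}$ has $\pi(\overline{I}_0) = I_0$ of size $k+1$, so the $i_j$ are distinct. We need $W(\overline{I}_0) \subseteq V_b$ for some $b$. The danger is that the $x_j$ differ, so no single $N_{x_j}$ is known to sit in a common $V_b$. The fix is to compare indices. If $W(\overline{I}_0)$ is empty the claim is trivial. Otherwise pick $y$ in it. Then $y \in N_{x_0} \cap U'_{i_j} \subseteq N_{x_0} \cap C_{i_j}$ for all $j$, and property (3) forces every $i_j \in I_{x_0}$. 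Hence $I_0 \subseteq I_{x_0}$, and property (4) gives $W(\overline{I}_0) \subseteq N_{x_0} \subseteq V_{b(x_0,I_0)}$. This uses only $x_0$, so the argument goes through. I expect the delicate point to be ensuring property (3) really holds, which is exactly why the shrinking to closed $C_i$ with local finiteness is needed. I would double-check this step carefully and not worry about $|I_0|$ possibly being smaller than $k+1$, since the hypothesis $\pi(\overline{I}_0) = I_0$ is a $(k+1)$-subindex.
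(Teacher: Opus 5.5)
Your proof is correct, and it takes a different route from the paper's. Both start from the same locally finite closed shrinking. The paper then uses a label-indexed family, $\overline{I} = \{(i,I_0,b) : I_0 \in I(k+1),\ i \in I_0,\ b \in B(I_0)\} + I$, with $W_{(i,I_0,b)} = \bigl(U'_i - \bigcup_{I_1 \in I(k+1,i)} \overline{U'}(I_1)\bigr) \cup (V_b \cap U'_i)$. So each open set carries a choice of $b$ for a single subindex only. You instead index by pairs $(x,i)$ and make $N_x$ carry a choice $b(x,I_0)$ for every $(k+1)$-subset of the finite set $I_x$ at once. Your property (3) then turns mere nonemptiness of $W(\overline{I}_0)$ into $\pi(\overline{I}_0) \subseteq I_{x_0}$, which is exactly what guarantees your $b$ lies in $B(\pi(\overline{I}_0))$.

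This is also where the paper's argument, as written, falls short. It correctly reduces to $W(\overline{I}_0) = \bigcap_\ell (\widetilde{V}_{b_\ell} \cap U'_{i_\ell})$, since every point of $W(\overline{I}_0)$ lies in $U'(I_0)$ and each removed part avoids it. But it then only gets $W(\overline{I}_0) \subseteq V_{b_{\ell_0}}$ with $b_{\ell_0} \in B(I_0^{\ell_0})$, and the subindex $I_0^{\ell_0}$ recorded in the label need not equal $\pi(\overline{I}_0)$. Concretely, take $k=1$, $X = \mathbb{R}$, $I = \{0,1,2\}$ and $U_i = U'_i = \mathbb{R}$ (an admissible shrinking). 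Let $\mathscr{V}_{\{0,2\}} = \mathscr{V}_{\{1,2\}} = \{\mathbb{R}\}$ and $\mathscr{V}_{\{0,1\}} = \{(-\infty,1),(-1,\infty)\}$. Then $W_{(0,\{0,2\},\ast)} = W_{(1,\{1,2\},\ast)} = \mathbb{R}$, this pair lies over $\{0,1\}$, and $\mathbb{R}$ is contained in no member of $\mathscr{V}_{\{0,1\}}$.

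There is also a smaller slip in the paper: its convention $\widetilde{V}_b = U'_i$ for plain indices gives $W_i = U'_i$. The paper's own example uses $\varnothing$ there, which is what its emptiness claim needs.

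The label-based approach could be repaired by letting a label over $i$ carry a choice $I_1 \mapsto b(I_1) \in B(I_1)$ for every $I_1 \in I(k+1,i)$, using local finiteness to keep the resulting infinite intersection open. That is just a global version of your pointwise choice, so your route is both simpler and complete.

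Two minor remarks. Your properties (1) and (2) are redundant: finiteness of $I_x$ follows directly from local finiteness, and $W_{(x,i)} \subseteq U'_i \subseteq U_i$ anyway. Also, like the paper, your claim that the empty case is trivial tacitly assumes $B(I_0) \neq \varnothing$ when $U(I_0) = \varnothing$. That is a convention issue in the statement, not a gap in your argument.
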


\begin{proof}
By Lemma \ref{lem shrinking lemma}, there exists a locally finite, index-constant refinement $\mathscr{U}' = \{U'_{i} \}_{i \in I}$ of $\mathscr{U}$ such that for each $i \in I$, $\overline{U'_i} \subset U_i$. The key property we will use here is that for every $(k+1)$-subindex $I_0 \subseteq I$ we have $\overline{U'}(I_0) \subseteq U(I_0)$, and therefore $\overline{U'}(I_0)$ is covered by $\{ V_b \}_{b \in B(I_0)}$.

Now note that the collection $\{ \overline{U'_i} \}_{i \in I}$ is locally finite by Lemma \ref{lem closure of locally finite collection}. Let $I(k+1)$ denote the set of $(k+1)$-subindexes of $I$. For each $I_0 \in I(k+1)$, consider $\overline{U'}(I_0) = \cap_{i \in I_0} \overline{U'_i}$. Note that the collection $\{\overline{U'}(I_0)\}_{I_0 \in I(k+1)}$ is a locally finite collection of closed subsets by Lemma \ref{lem collection of intersections is loc finite}. Let us now define a new index set $\overline{I}$ as
$$\overline{I} = \{ (i, I_0, b) \mid I_0 \in I(k+1), \,  i \in I_0 \text{ and } b \in B(I_0) \} + I.$$
Let $\pi: \overline{I} \to I$ be the function defined by $\pi(i, I_0, b) = i$ and $\pi(i) = i$. The reasoning for this indexing set will hopefully become clear by the end of the proof. Given $i \in I$, let $I(k+1, i)$ denote the subset of $I(k+1)$ consisting of those $(k+1)$-subindices $I_1$ of $I$ that contain $i$. Define $W_{\overline{i}}$ for each $\overline{i} \in \overline{I}$ as follows:
\begin{equation*}
    W_{\overline{i}} = \left( U'_i - \bigcup_{I_1 \in I(k+1, i)} \overline{U'}(I_1) \right) \cup (\widetilde{V}_b \cap U'_i),
\end{equation*}
where if $\overline{i} = (i, I_0, b)$, then $\widetilde{V}_b = V_b$, and if $\overline{i} = i \in I$, then $\widetilde{V}_b = U'_i$.

Now by Lemma \ref{lem closure of locally finite collection}, the collection $\{ \overline{U'}(I_1) \}_{I_1 \in I(k+1, i)}$ is locally finite, since it is a subcollection of $\{ \overline{U'}(I_1) \}_{I_1 \in I(k+1)}$. Therefore, by Lemma \ref{lem union of loc finite closed is closed}, $\cup_{I_1 \in I(k+1, i)} \overline{U'}(I_1)$ is closed. 

Now if $\pi(\overline{i}) = i$, then $W_{\overline{i}} \subseteq U'_i \subseteq U_i$, so to check that $\mathscr{W} = \{ W_{\overline{i}}\}$ is a refinement of $\mathscr{U}$, it is sufficient to show that it is a cover of $X$. Suppose that $x \in X$, then since $\mathscr{U}$ is an open cover, there exists a $U'_{i}$ which contains $x$. 

Now if $x \notin \bigcup_{I_1 \in I(k+1, i)} \overline{U'}(I_1)$, then $x \in W_{i}$. On the other hand, if there exists an $I_1 \in I(k+1)$ such that $x \in \overline{U'}(I_1)$, then since $\overline{U'}(I_1) \subseteq U(I_1)$, and $\{V_b \}_{b \in B(I_1)}$ is a cover of $U(I_1)$, $x \in (V_b \cap U'_i)$ for some $b \in B(I_1)$ and therefore $x \in W_{(i, I_1, b)}$. Therefore $\mathscr{W}$ is an open cover refining $\mathscr{U}$.

Now we need only show that $\mathscr{W}$ satisfies the stated property in the lemma. Suppose that $\overline{I}_0$ is a $(k+1)$-subindex of $\overline{I}$ with $\pi(\overline{I}_0) = I_0$, then we want to show that there exists a $b \in B(I_0)$ such that $W(\overline{I}_0) \subseteq V_b$.

We have
\begin{equation*}
 W(\overline{I}_0) = \bigcap_{\ell = 0}^k W_{\overline{i}_\ell} = \bigcap_{\ell = 0}^k \left( U'_{i_\ell} - \bigcup_{I_1^\ell \in I(k+1, i_\ell)} \overline{U'}(I^\ell_1) \right) \cup \bigcap_{\ell = 0}^k (\widetilde{V}_{b_\ell} \cap U'_{i_\ell} ).
\end{equation*}

Now note that 
\begin{equation*}
    \bigcap_{\ell = 0}^k \left( U'_{i_\ell} - \bigcup_{I_1^\ell \in I(k+1, i_\ell)} \overline{U'}(I^\ell_1) \right) = \bigcap_{\ell = 0}^k U'_{i_\ell} - \bigcap_{\ell = 0}^k \, \bigcup_{I^\ell_1 \in I(k+1, i_\ell)} \overline{U'}(I^\ell_1) = \varnothing,
\end{equation*}
because $\cap_{\ell = 0}^k U'_{i_\ell} = U'(I_0)$, and $I_0 \in I(k+1, i_\ell)$ for each $0 \leq \ell \leq k$.

Therefore
\begin{equation*}
   W(\overline{I}_0) = \bigcap\limits_{\ell = 0}^k W_{\overline{i}_\ell} = \bigcap_{\ell = 0}^k (\widetilde{V}_{b_\ell} \cap U'_{i_\ell})
\end{equation*}
Now if each $\overline{i}_\ell \in I$, then $W(\overline{I}_0) = \varnothing$, and hence $W(\overline{I}_0) \subseteq V_b$ for every $b \in B(I_0)$. So assume that at least one $\overline{i}_\ell = (i_\ell, I_0^\ell, b_\ell)$. Then
\begin{equation*}
    W(\overline{I}_0) = \bigcap_{\ell = 0}^k (\widetilde{V}_{b_\ell} \cap U'_{i_\ell}) \subseteq V_{b_{\ell_0}}
\end{equation*}
for some $0 \leq \ell_0 \leq k$. Thus $\mathscr{W} = \{W_{\overline{i}} \}_{\overline{i} \in \overline{I}}$ is a refinement of $\mathscr{U}$ with the desired property.
\end{proof}

\begin{Ex} \label{ex for luries proof}
The previous proof should become more clear with the following example. Consider the open cover $\mathscr{U} = \{U_0, U_1 \}$ of $\R$ given by $U_0 = (-\infty, 1)$ and $U_1 = (-1, \infty)$. Thus $I = \{0, 1\}$, and for $k = 1$, there is only $1$ $(k+1)$-subindex of $I$, given by $I$ itself. Now let us choose a refinement $\mathscr{U'} = \{U'_0, U'_1 \}$ with $U'_0 = (-\infty, 1/2)$ and $U'_1 = (-1/2, \infty)$, and clearly $\overline{U}_0 \subseteq U_0$ and $\overline{U}_1 \subseteq U_1$. Let $\mathscr{V} = \{ V_0, V_1 \}$ denote the open cover of $U_0 \cap U_1 = (-1, 1)$ given by $V_0 = (-1, 1/2)$ and $V_1 = (-1/2, 1)$. Thus $B(I) = \{0, 1 \}$. So we have $\overline{I} = \{ (0, I, 0), (1, I, 0), (0,I,1), (1,I,1)\}$. Now $\overline{U'_0} \cap \overline{U'_1} = [-1/2,1/2]$, so we get
\begin{itemize}
    \item $W_{(0,I,0)} = (U'_0 - [-1/2,1/2]) \cup (V_0 \cap U'_0) = (-\infty, 1/2)$,
    \item $W_{(0,I,1)} = (U'_0 - [-1/2,1/2]) \cup (V_1 \cap U'_0) = (-\infty, -1/2) \cup (-1/2, 1/2)$,
    \item $W_{(1,I,0)} = (U_1 - [-1,1]) \cup (V_0 \cap U_1) = (-1/2, 1/2) \cup (1/2, \infty)$,
    \item $W_{(1,I,1)} = (U_1 - [-1,1]) \cup (V_1 \cap U_1) = (-1/2, \infty)$,
    \item $W_0 = (U_0 - [-1,1]) = (-\infty, -1/2)$,
    \item $W_1 = (U_1 - [-1,1]) = (1/2, \infty)$.
\end{itemize}
Now let us check all the $(k+1)$-subindices $\overline{I}_0 \subseteq \overline{I}$ such that $\pi(\overline{I}_0) = I$. These are
\begin{itemize}
\item $\{ (0, I, 0), (1, I, 0) \}$, $\{ (0, I, 0), (1, I, 1) \}$, $\{ (0, I, 1), (1, I, 0) \}$, $\{(0, I, 1), (1, I, 1) \}$, with intersection $W(\overline{I}_0) = (-1/2, 1/2)$,
\item  $\{ (0, I, 0), 1 \}$, $\{ (0, I, 1), 1 \}$, $\{ (1, I, 0), 0 \}$, $\{ (1, I, 1), 0 \}$, $\{ 0, 1 \}$, with intersection $W(\overline{I}_0) = \varnothing$.
\end{itemize}
Thus for each such $\overline{I}_0$, $W(\overline{I}_0)$ is a subset of $V_0$ or $V_1$.
\end{Ex}

\section{Homotopical Categories} \label{section homotopical categories}

In this section we briefly review the different homotopical structures that play a role in Section \ref{section plus construction}.

\begin{Def} \label{def homotopical category}
Given a category $\cat{C}$ and a class $W \subseteq \text{Mor}(\cat{C})$ of morphisms we call \textbf{weak equivalences}, we say that the pair $(\cat{C}, W)$ is a \textbf{homotopical category} if $W$ satisfies the \textbf{$2$-of-$6$ property}: given morphisms $f : U \to V$, $g: V \to W$, $h : W \to X$, if $hg$ and $gf$ are weak equivalences, then so are $f$, $g$, $h$, and $hgf$.

If $(\cat{C}, W)$ is a homotopical category with $U,V \in \cat{C}$, we say that $U$ and $V$ are \textbf{weak equivalent} if there exists a finite zig-zag of weak equivalences 
\begin{equation*}
    U \xleftarrow{w_0} U_0 \xrightarrow{w_1} U_1 \leftarrow \dots \xrightarrow{w_n} V 
\end{equation*}
where the $w_i \in W$. We will write $U \simeq V$ to mean they are weak equivalent.
\end{Def}

Given a homotopical category $(\cat{C}, W)$, let $L_W \cat{C}$ denote its Hammock localization. This is a simplicial category (a category enriched in $\ncat{sSet}$), where the simplicial sets $\u{L_W \cat{C}}(X,Y)$ are constructed by creating ``hammocks'' from the morphisms in $\cat{C}$, which are a generalization of the zig-zags of the $1$-localization $\cat{C}[W^{-1}]$, see \cite[Section 2]{Dwyer1980} for more details on its construction. Dwyer and Kan prove that Hammock localization satisfy the following properties.

\begin{Lemma}[{\cite[Proposition 3.1]{Dwyer1980}}]
Given a homotopical category $(\cat{C}, W)$, with $X,Y \in \cat{C}$, then if we let $\pi_0 L_W \cat{C}$ denote the category with the same objects as $\cat{C}$ and whose sets of morphisms are given by $[\pi_0 L_W \cat{C}](X,Y) \coloneqq \pi_0 [ \u{L_W \cat{C}}(X,Y)]$, then there is an equivalence of categories
\begin{equation*}
    \pi_0 L_W \cat{C} \simeq \cat{C}[W^{-1}].
\end{equation*}
\end{Lemma}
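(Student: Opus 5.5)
The plan is to build functors in both directions between $\pi_0 L_W \cat{C}$ and $\cat{C}[W^{-1}]$, each the identity on objects, and to check that they are mutually inverse. This uses only the combinatorial description of the Hammock localization from \cite[Section 2]{Dwyer1980}, recalled as follows. A $0$-simplex of $\u{L_W \cat{C}}(X,Y)$ is a reduced zig-zag $X \to \cdot \leftarrow \cdot \to \dots Y$ whose backward arrows lie in $W$. A $k$-simplex is a hammock of width $k+1$: $k+1$ such zig-zags of the same length and the same pattern of arrow directions, stacked in rows, joined by vertical maps that lie in $W$, with every square commuting. Composition is concatenation of zig-zags followed by reduction. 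Reduction composes adjacent arrows pointing in the same direction, deletes identities, and cancels an arrow against an adjacent copy of itself pointing the opposite way. Consequently, two zig-zags have the same image in $\pi_0$ exactly when they are joined by a chain of width-$2$ hammocks.

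The first step is to define $\Phi : \pi_0 L_W \cat{C} \to \cat{C}[W^{-1}]$. A zig-zag is sent to the composite in $\cat{C}[W^{-1}]$ of its forward arrows and of the formal inverses of its backward arrows. This assignment is invariant under each reduction move. Composing same-direction arrows respects functoriality of $\cat{C} \to \cat{C}[W^{-1}]$, identities go to identities, and cancelling $w$ against $w^{-1}$ is exactly the defining relation of $\cat{C}[W^{-1}]$. It is also invariant under width-$2$ hammocks. In such a hammock every vertical map becomes invertible in $\cat{C}[W^{-1}]$, and all squares commute, so a column-by-column induction shows that the top and bottom composites agree. Hence $\Phi$ is well defined on $\pi_0$. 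It is a functor because concatenation of zig-zags goes to composition in $\cat{C}[W^{-1}]$.

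The second step is to define $\Psi : \cat{C}[W^{-1}] \to \pi_0 L_W \cat{C}$ through the universal property of the localization. Let $\gamma : \cat{C} \to \pi_0 L_W \cat{C}$ send $f$ to the class of the length-one zig-zag $f$. By the reduction rules, $\gamma$ is a functor. It remains to show that $\gamma(w)$ is invertible for $w : U \to V$ in $W$. The candidate inverse is the backward zig-zag $V \xleftarrow{w} U$. The composite $U \xrightarrow{w} V \xleftarrow{w} U$ reduces directly to the identity. For the other composite $V \xleftarrow{w} U \xrightarrow{w} V$, I would exhibit a width-$2$ hammock whose top row is $V \xleftarrow{w} U \xrightarrow{w} V$ and whose bottom row is $V \xleftarrow{1} V \xrightarrow{1} V$, with vertical maps $w$ at the middle node and identities at the endpoints. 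Every square commutes, and the vertical maps lie in $W$. The bottom row reduces to $1_V$, so the composite is the identity in $\pi_0$.

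Finally, $\Phi\Psi$ is the identity on generators, hence everywhere by the universal property. $\Psi\Phi$ is also the identity, because every zig-zag class is the composite of the classes of its individual arrows, and $\Psi\Phi$ fixes each such class. This gives an isomorphism, which is in particular an equivalence of categories.

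I expect the main obstacle to be the bookkeeping in the well-definedness of $\Phi$. The hammock relation has to be compatible with reduction, and the rows of a hammock must share their direction pattern only after suitable padding by identities. The explicit hammock trivializing $w$ followed by $w^{-1}$ is the other point that needs care; I would first try the diagram described above, padding it with identity columns if the definition requires the rows to have equal length.
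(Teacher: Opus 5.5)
The paper gives no proof of its own here; it just cites Dwyer--Kan. Your direct zig-zag argument is therefore a legitimate self-contained route, and the construction of $\Phi$, including the column-by-column induction on possibly unreduced rows, is correct. Step 2, however, contains a concrete error. In the Dwyer--Kan hammock localization, reduction only composes adjacent columns pointing the same way and deletes columns consisting entirely of identities. There is no rule cancelling $w$ against an adjacent backward copy of $w$. Such a rule would in fact make reduction order-dependent: $X \xrightarrow{f} U \xrightarrow{w} V \xleftarrow{w} U$ could reduce either to $X \xrightarrow{f} U$ or to $X \xrightarrow{wf} V \xleftarrow{w} U$.

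So $U \xrightarrow{w} V \xleftarrow{w} U$ is a reduced $0$-simplex of $\u{L_W \cat{C}}(U,U)$ different from $1_U$, and your claim that it ``reduces directly to the identity'' is false. As written, you have only shown that $[V \xleftarrow{w} U]$ is a one-sided inverse of $\gamma(w)$. That is not enough to invoke the universal property of $\cat{C}[W^{-1}]$. Your check that $\Psi\Phi$ fixes backward arrows also relies on a two-sided inverse.

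The repair is the mirror image of the hammock you already wrote down. Take the rows $U \xrightarrow{1} U \xleftarrow{1} U$ and $U \xrightarrow{w} V \xleftarrow{w} U$, joined by the vertical map $w \colon U \to V$ in the middle column and by identities at the ends. This is a valid hammock:
\begin{itemize}
\item both squares commute, since each reduces to $w \circ 1 = w \circ 1$;
\item no column consists only of identities, since each contains $w$;
\item all backward and vertical maps lie in $W$.
\end{itemize}
Its two faces are $1_U$ (after reducing the all-identity row) and $U \xrightarrow{w} V \xleftarrow{w} U$.

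Relatedly, you can drop the worry about padding. By definition all rows of a hammock have the same length and direction pattern, but individual rows need not be reduced. Only the hammock as a whole must have alternating columns with no all-identity column, and faces are formed by deleting rows and then reducing. That is exactly why your hammock with bottom row $V \xleftarrow{1} V \xrightarrow{1} V$ is a legitimate $1$-simplex. With this fix, your argument yields an identity-on-objects isomorphism $\pi_0 L_W \cat{C} \cong \cat{C}[W^{-1}]$.
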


\begin{Lemma}[{\cite[Prop 3.3]{Dwyer1980}}]
Given a homotopical category $(\cat{C}, W)$, with objects $X,Y,Z \in \cat{C}$ and $f: X \to Y$ a weak equivalence, then the induced maps
$$\u{L_W\cat{C}}(Z,f) : \u{L_W \cat{C}}(Z,X) \to \u{L_W \cat{C}}(Z,Y), \qquad \u{L_W \cat{C}}(f,Z) : \u{L_W \cat{C}}(Y,Z) \to \u{L_W \cat{C}}(X,Z)$$
are weak equivalences of simplicial sets.
\end{Lemma}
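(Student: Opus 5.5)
The approach is to prove that $\u{L_W\cat{C}}(Z,f)$ is a simplicial homotopy equivalence by writing down an explicit homotopy inverse built from hammocks. The contravariant case $\u{L_W\cat{C}}(f,Z)$ is formally dual: reverse all hammocks and prepend instead of append.

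\textbf{Step 1: the maps.} Postcomposition $f_* : \u{L_W\cat{C}}(Z,X) \to \u{L_W\cat{C}}(Z,Y)$ sends a hammock $h$ to the hammock obtained by appending a forward column labelled $f$. Since $f \in W$, we may also append a \emph{backward} column labelled $f$. This gives
\begin{equation*}
  g : \u{L_W\cat{C}}(Z,Y) \to \u{L_W\cat{C}}(Z,X),
\end{equation*}
which sends $h$ to $h \cdot f^{-1}$. On an $n$-simplex (a hammock of height $n$) the new column consists of $n+1$ parallel copies of $f$ joined by identity verticals. This visibly commutes with faces and degeneracies, so $g$ is a simplicial map. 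I will need to check that appending a column is compatible with the reduction relation in the hammock construction. This holds because reductions only collapse identity columns or adjacent columns of the same direction; when such a collapse touches the new column, the two sides still agree after reduction.

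\textbf{Step 2: $g f_* \simeq 1$.} For a hammock $h$ from $Z$ to $X$, compare $h\cdot f \cdot f^{-1}$ with $h \cdot 1_X \cdot 1_X^{-1}$. The latter reduces to $h$, because identity columns are removed. A morphism of hammocks between them is given by vertical maps $(1_X, f, 1_X)$ on the three final objects $X \to X \to X$ versus $X \to Y \to X$, and all squares commute. Hammock morphisms whose verticals lie in $W$ are exactly what the next simplicial level of $\u{L_W\cat{C}}$ encodes. Doing this uniformly for every height-$n$ hammock therefore assembles a map $\Delta^1 \times \u{L_W\cat{C}}(Z,X) \to \u{L_W\cat{C}}(Z,X)$. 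Concretely, each $(n+1)$-simplex of $\Delta^1\times \Delta^n$ is sent to the hammock obtained by inserting one extra row of $(1,f,1)$ vertical maps at the appropriate position. This is a simplicial homotopy from $1$ to $gf_*$.

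\textbf{Step 3: $f_* g \simeq 1$.} This is the symmetric argument: compare $h\cdot f^{-1}\cdot f$, whose last objects are $Y \leftarrow X \rightarrow Y$, with $h\cdot 1_Y^{-1} \cdot 1_Y$, using vertical maps $(1_Y, f, 1_Y)$. Both backward arrows and verticals are in $W$, as the hammock axioms require. This yields a homotopy equivalence, hence a weak equivalence of simplicial sets.

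\textbf{Main obstacle.} The main obstacle is Step 2's assembly of the levelwise hammock morphisms into a genuine simplicial homotopy. I must check that the required "prism" hammocks are well defined modulo the reduction equivalence and compatible with all face maps. In particular, the face that deletes the inserted row should recover either $h$ or $h\cdot f\cdot f^{-1}$ after reduction. If the direct prism construction becomes messy, I would fall back on the following route: the zigzags $(f,f^{-1})$ and $(1,1^{-1})$ are connected by a $1$-simplex in $\u{L_W\cat{C}}(X,X)$. Composition in $L_W\cat{C}$ is a simplicial functor, so composing with $h$ transports this $1$-simplex, and it suffices to know that composition is simplicial and that reduced identity columns act as units.
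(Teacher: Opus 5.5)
The paper gives no proof of its own here; it only cites Dwyer--Kan, Prop.~3.3. Your argument is correct and is essentially the Dwyer--Kan one: adjoining $f$ backwards gives the homotopy inverse, and the width-one hammocks from $X \xrightarrow{1} X \xleftarrow{1} X$ to $X \xrightarrow{f} Y \xleftarrow{f} X$, and from $Y \xleftarrow{f} X \xrightarrow{f} Y$ to $Y \xleftarrow{1} Y \xrightarrow{1} Y$, supply the homotopies through the simplicial composition of $L_W\cat{C}$. Your fallback phrasing, that $f$ and its backward copy are homotopy-inverse $0$-simplices of the simplicial category $L_W\cat{C}$, is the cleanest packaging, and it also settles the contravariant case by precomposing with the same two $1$-simplices.
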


However the Hammock localization is useless in practice, as its construction is extremely complicated. To obtain a smaller model for the derived mapping space, one can use a category of fibrant objects structure.

\begin{Def}[{\cite{Brown1973}}] \label{def cat of fibrant objects}
A category of fibrant objects consists of a category $\cat{C}$ with finite products equipped with two classes of maps $\ncat{Weak}$ and $\ncat{Fib}$, called weak equivalences and fibrations respectively, (we call the intersection $\ncat{Fib} \cap \ncat{Weak}$ trivial fibrations) such that:
\begin{enumerate}
	\item $\ncat{Weak}$ and $\ncat{Fib}$ contain all isomorphisms of $\cat{C}$,
	\item $\ncat{Weak}$ satisfies the $2$-of-$3$ property, and $\ncat{Fib}$ is closed under composition,
	\item fibrations and trivial fibrations are stable under pullback,
	\item there exist \textbf{path objects} in $\cat{C}$, i.e. for every object $X \in \cat{C}$, there exists an object $X^I$ such that the diagonal $X \xrightarrow{\Delta} X \times X$ factors as a weak equivalence followed by a fibration:
\begin{equation*}
	\begin{tikzcd}
	& {X^I} \\
	X && {X \times X}
	\arrow["w", from=2-1, to=1-2]
	\arrow["f", from=1-2, to=2-3]
	\arrow["{\Delta = (1_X, 1_X)}"', from=2-1, to=2-3]
\end{tikzcd}
\end{equation*}
Note that path objects are not necessarily unique, nor is the factorization.
\item every object is fibrant, i.e. the unique map $X \to *$ is a fibration.
\end{enumerate}
\end{Def}

\begin{Rem}
The full subcategory on the fibrant objects of a model category form a category of fibrant objects.
\end{Rem}

Low augments this definition by adding simplicial enrichment to it. Namely a \textbf{simplicial category of fibrant objects} is a simplicially enriched category, whose underlying category is a category of fibrant objects, with some compatibility requirements \cite[Definition 5.1]{Low2015}.

\begin{Th}[{\cite[Prop 6.2]{Low2015}, \cite[Proposition 4.33]{Jardine2015}}] \label{Th loc fib simplicial presheaves form cat of fib objects}
Given a site $(\cat{C}, j)$, the category\footnote{There are set-theoretic issues here, see Remark \ref{rem size considerations for low's theorem}.} of locally fibrant simplicial presheaves forms a simplicial category of fibrant objects, where the the fibrations are the local fibrations and the weak equivalences are the local weak equivalences.
\end{Th}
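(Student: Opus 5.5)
The statement immediately above is Theorem \ref{Th loc fib simplicial presheaves form cat of fib objects}: for a site $(\cat{C}, j)$, the locally fibrant simplicial presheaves, with local fibrations and local weak equivalences, form a simplicial category of fibrant objects. The plan is to check the axioms of Definition \ref{def cat of fibrant objects} one at a time, using the local lifting descriptions. Local fibrations are characterized by local solutions to horn-filling problems (Definition \ref{def j local fibration}). Local trivial fibrations are characterized by local solutions to boundary-filling problems, equivalently by the matching maps $X_n \to Y_n \times_{M_nY} M_nX$ being local epimorphisms.

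Axiom (1) and the composition part of (2) are direct. For composition, given a horn problem for $qp$ over $U$, first solve it locally for $q$ on a cover $\{U_i \to U\}$. Then solve the resulting problem for $p$ on covers of each $U_i$. The composite family is a covering family if $j$ is composition closed. Since local epimorphisms and local lifting properties depend only on the Grothendieck closure $\text{Gro}(j)$, I will first replace $j$ by $\text{sat}(j)$; the plan is to argue that this does not change either notion. Two-out-of-three for local weak equivalences follows from the homotopy sheaf description: $f$ is a local weak equivalence iff it induces isomorphisms on the sheafified $\pi_0$ and on the sheafified $\pi_n$ fibred over $X_0$. Two-out-of-three holds for isomorphisms of sheaves, and sheafification is exact.

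Axiom (3) is stability under pullback. Pullbacks of simplicial presheaves are computed objectwise, so a lifting problem against a pullback $p'$ of $p$ maps to a lifting problem against $p$. A local solution for that problem then induces one for $p'$ via the universal property of the pullback on each $U_i$. The same argument works for boundary inclusions, which handles trivial fibrations via the characterization above. Axiom (5) is immediate from the definition of local fibrancy.

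For path objects in axiom (4) I would take $X^{\Delta^1}$ with the constant map $X \to X^{\Delta^1}$ and the evaluation $X^{\Delta^1} \to X \times X$. The evaluation is a local fibration because the lifting problem $\Lambda^n_k \to X^{\Delta^1}$ is adjoint to a problem $\Lambda^n_k \times \Delta^1 \cup \Delta^n \times \partial\Delta^1 \to X$. This is a pushout-product with an anodyne map, and so is built from horn inclusions; one solves the horns successively, refining the cover at each of the finitely many steps.

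The main obstacle is that $X \to X^{\Delta^1}$ must be a local weak equivalence even though $X$ is only locally fibrant. I would reduce to the fibrant case by objectwise fibrant replacement $X \to \operatorname{Ex}^\infty X$. Then I would show, via the lifting description of Definition \ref{def j local weak equiv}, that local homotopy classes are preserved by this replacement, since $\operatorname{Ex}^\infty$ commutes with finite limits. Finally, the simplicial enrichment axioms of \cite[Definition 5.1]{Low2015} follow by the same adjunction trick with $X^K$ for finite $K$.
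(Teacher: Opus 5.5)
\textbf{There is a genuine gap: the reduction to $\text{sat}(j)$ is false with this paper's definitions.} The paper gives no proof of its own here. It cites Low and Jardine, who work over Grothendieck topologies, where local epimorphisms are defined by covering sieves and are automatically closed under composition. Your axiom-by-axiom check is the standard proof in that setting.

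You assert that local epimorphisms and local lifting properties depend only on $\text{Gro}(j)$, so that $j$ may be replaced by $\text{sat}(j)$. Definitions \ref{def local epi}, \ref{def local lifting problem} and \ref{def j local weak equiv} ask for a \emph{single} $j$-covering family. Remark \ref{rem local epis are problematic} says explicitly that this differs in general from the $\text{Gro}(j)$-notion. The two agree precisely when every composite of $j$-covering families is refined by a $j$-covering family. Every place where you ``refine the cover again'' uses exactly this hypothesis: composing fibrations, filling the finitely many horns of a pushout-product, the cotensor axioms, and identifying local isomorphisms with isomorphisms after sheafification.

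Without that hypothesis the statement itself is false. Suppose $r \in j(U)$ and $t^i \in j(U_i)$ have a composite $r \circ t$ that no $j$-covering family of $U$ refines. Then $\check{C}(r \circ t) \to \check{C}(r)$ and $p_r : \check{C}(r) \to y(U)$ are local trivial fibrations between objectwise Kan simplicial presheaves. Indeed, their matching maps are local epimorphisms in degree $0$ by the coverage axiom, and isomorphisms in higher degrees. Yet the composite $p_{r \circ t}$ is not even a local weak equivalence, because the vertex $1_U \in y(U)(U)$ has no local lift. A concrete example is the poset $a < b < c$ with $j(c) = \{\{b \to c\}, \{1_c\}\}$, $j(b) = \{\{a \to b\}, \{1_b\}\}$ and $j(a) = \{\{1_a\}\}$. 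There the chain becomes $y(a) \to y(b) \to y(c)$.

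So you have two options. Either you read ``local'' in the $\text{Gro}(j)$ sense throughout, in which case you are proving the cited theorem for $\text{Gro}(j)$ and not for the paper's classes of maps. Or you keep the paper's definitions and explicitly assume that composites of covering families are refined by covering families. That hypothesis holds for every site in Example \ref{ex sites}, including $(\ncat{Cart}, j_{\text{good}})$, since every open cover of a cartesian space has a good refinement.

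\textbf{Two smaller points.} First, the step you call the main obstacle is not one. The evaluation $X^{\Delta^1} \to X$ at a vertex is a local trivial fibration by your own pushout-product argument, now applied to the anodyne inclusion $\partial\Delta^n \times \Delta^1 \cup \Delta^n \times \{0\} \to \Delta^n \times \Delta^1$ and the locally fibrant $X$. The constant map is a section of it, so two-out-of-three shows the constant map is a local weak equivalence. The $\text{Ex}^\infty$ detour is unnecessary, and its justification does not work: $\text{Ex}^\infty$ does not commute with the cotensor $(-)^{\Delta^1}$, even though it preserves finite limits.

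Second, two-out-of-three is less formal than you make it. Suppose $f : X \to Y$ and $gf$ are local weak equivalences, with $g : Y \to Z$. Exactness of sheafification only tells you that the sheafified map $\pi_n Y \to Y_0 \times_{Z_0} \pi_n Z$ becomes an isomorphism after pulling back along $X_0 \to Y_0$. That map need not be a local epimorphism; only $\pi_0 X \to \pi_0 Y$ is. To conclude you need an extra argument, for instance transporting homotopy sheaves along local paths after objectwise fibrant replacement.
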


\begin{Rem}
Notice that the above category of fibrant objects structure is not induced from a model structure on the category of simplicial presheaves.
\end{Rem}

The improved mapping space for two objects $X$ and $Y$ in a category of fibrant objects $C$ is given as follows. 

\begin{Def} \label{def cocycle category}
Given two objects $X,Y$ in a category of fibrant objects, a cocycle between them is a span:
\begin{equation*}
    \begin{tikzcd}
	X & A & Y
	\arrow["{\sim}"', two heads, from=1-2, to=1-1]
	\arrow[from=1-2, to=1-3]
\end{tikzcd}
\end{equation*}
where the left hand map is a trivial fibration. A map of two cocycles is a diagram:
\begin{equation*}
    \begin{tikzcd}
	& A \\
	X && Y \\
	& B
	\arrow["{\sim}"',two heads, from=1-2, to=2-1]
	\arrow[from=1-2, to=2-3]
	\arrow["{\sim}", two heads, from=3-2, to=2-1]
	\arrow[from=3-2, to=2-3]
	\arrow["{\sim}", from=1-2, to=3-2]
\end{tikzcd}
\end{equation*}
where the vertical map is a weak equivalence. Let $\ncat{Cocycle}(X,Y)$ denote the category of cocycles between $X$ and $Y$.
\end{Def}

The category of cocycles is homotopical in the following sense.

\begin{Prop}[{\cite[Corollary 2.11]{Low2015}}]\label{prop cocycle homs are homotopical}
If $X,Y,Z$ are objects in a simplicial category of fibrant objects, and $f: X \to Y$ is a weak equivalence, then the induced maps
$$N \ncat{Cocycle}(Z,X) \xrightarrow{f_*} N\ncat{Cocycle}(Z,Y), \qquad N \ncat{Cocycle}(Y,Z) \xrightarrow{f^*} N\ncat{Cocycle}(X,Z)$$
are weak equivalences of simplicial sets.
\end{Prop}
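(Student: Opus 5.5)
The plan is to reduce to two special kinds of weak equivalence using Brown's factorization lemma, and to handle each kind by exhibiting explicit functors between cocycle categories together with natural transformations. Two standard facts are used throughout. First, a natural transformation between functors induces a simplicial homotopy between the induced maps of nerves. Second, weak equivalences of simplicial sets satisfy $2$-of-$3$.

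\textbf{Step 1: reduction to two cases.} In any category of fibrant objects (Definition \ref{def cat of fibrant objects}), every map $f : X \to Y$ factors as $f = q s$. Here $q : X \times_Y Y^I \to Y$ is a fibration, and $s : X \to X \times_Y Y^I$ is a section of a trivial fibration $p : X \times_Y Y^I \to X$; this uses the path object $Y^I$ and pullback stability. If $f$ is a weak equivalence, then $q$ is a trivial fibration by $2$-of-$3$.

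The assignments $f \mapsto f_*$ and $f \mapsto f^*$ are functorial. For $f_*$ (postcomposition of the right leg) this is strict. For $f^*$, defined by pulling back the left trivial fibration along $f$, it holds up to natural isomorphism. Given this, $2$-of-$3$ reduces the claim to two cases: $f$ a trivial fibration, and $f = s$ a section of a trivial fibration $p$. The second case follows from the first: $p_* s_* = \mathrm{id}$ and $s^* p^* \cong \mathrm{id}$, and $p_*$, $p^*$ are weak equivalences by the first case. So everything reduces to the case where $f$ is a trivial fibration.

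\textbf{Step 2: $f_*$ for a trivial fibration $f : X \to Y$.} I define a functor $g : \ncat{Cocycle}(Z,Y) \to \ncat{Cocycle}(Z,X)$ by sending $Z \leftarrow A \to Y$ to $Z \leftarrow A \times_Y X \to X$. The left leg is the composite $A \times_Y X \to A \to Z$, which is a trivial fibration since trivial fibrations are pullback stable and closed under composition.

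\begin{itemize}
\item The projection $A \times_Y X \to A$ is a weak equivalence and a map of cocycles. It is natural in $A$, giving a natural transformation $f_* g \Rightarrow \mathrm{id}$.
\item For a cocycle $Z \leftarrow B \xrightarrow{b} X$, the map $(1_B, b) : B \to B \times_Y X$ is a map of cocycles. It is a weak equivalence by $2$-of-$3$, since its composite with the trivial fibration $B \times_Y X \to B$ is $1_B$. It is natural in $B$, giving a natural transformation $\mathrm{id} \Rightarrow g f_*$.
\end{itemize}

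Hence $N f_*$ is a homotopy equivalence.

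\textbf{Step 3: $f^*$ for a trivial fibration $f$.} This case is dual. I define $h : \ncat{Cocycle}(X,Z) \to \ncat{Cocycle}(Y,Z)$ by sending $X \leftarrow B \to Z$ to $Y \leftarrow B \to Z$, where the left leg is the composite with $f$ (a trivial fibration, as a composite of trivial fibrations).

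\begin{itemize}
\item For a cocycle $Y \xleftarrow{a} A \to Z$, the object $h f^*(A) = A \times_Y X$, and the projection $A \times_Y X \to A$ is a natural weak equivalence of cocycles.
\item For a cocycle $X \xleftarrow{b} B \to Z$, the object $f^* h(B) = B \times_Y X$ (pullback of $fb$ along $f$), and the map $(1_B, b) : B \to B \times_Y X$ is again a natural weak equivalence of cocycles, by $2$-of-$3$ as before.
\end{itemize}

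Hence $N f^*$ is a homotopy equivalence.

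The main point requiring care is Step 1. I expect the factorization lemma itself to be routine. What needs checking is that functoriality of $f^*$ holds only up to coherent natural isomorphism, which still induces homotopic maps on nerves. Also, the cocycle categories are large, so the nerves must be taken in a larger universe, as in Remark \ref{rem size considerations for low's theorem}. The simplicial enrichment is not needed for this argument.
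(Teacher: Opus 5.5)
Your proof is correct, and it follows the standard argument behind the result the paper quotes. The paper gives no proof of its own and simply cites \cite[Corollary 2.11]{Low2015}. In your argument, Brown's factorization lemma together with $2$-of-$3$ reduces everything to trivial fibrations. For a trivial fibration $f$, your natural transformations are exactly the unit and counit of the adjunctions $f_* \dashv (-)\times_Y X$ and $(f\circ -) \dashv f^*$ between cocycle categories, so the induced maps on nerves are homotopy equivalences.
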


\begin{Prop}[{\cite[Theorem 3.12]{Low2015}}]
Given a a category of fibrant objects $\cat{C}$, we can consider the hammock localization of its underlying homotopical category, and there exists a weak equivalence:
\begin{equation}
    N \ncat{Cocycle}(X,Y) \simeq \u{L_W \cat{C}}(X,Y).
\end{equation}
\end{Prop}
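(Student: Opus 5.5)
The plan is to compare both sides with the nerve of the category of all spans $X \xleftarrow{\sim} A \to Y$ whose left leg is merely a weak equivalence, and whose morphisms are weak equivalences between apexes commuting with both legs. Call this category $\ncat{Span}_W(X,Y)$. There is an evident inclusion
\begin{equation*}
\iota : \ncat{Cocycle}(X,Y) \hookrightarrow \ncat{Span}_W(X,Y).
\end{equation*}
I would prove two weak equivalences: first $N\ncat{Span}_W(X,Y) \simeq \u{L_W \cat{C}}(X,Y)$, and then that $N\iota$ is a weak equivalence.

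For the first equivalence, I would use Dwyer--Kan's results from \emph{Calculating simplicial localizations}. If $(\cat{C},W)$ admits a homotopy calculus of right fractions, then the hammock mapping space is weakly equivalent to the nerve of the category of zig-zags of the form $\xleftarrow{\sim}\,\to$, i.e. to $N\ncat{Span}_W(X,Y)$. So I need to verify the homotopy calculus of right fractions in a category of fibrant objects. The key inputs are the following.
\begin{enumerate}
\item Brown's factorization lemma: using path objects, every map $f : A \to B$ factors as a weak equivalence (a section of a trivial fibration) followed by a fibration. Concretely, $A \to A \times_B B^I \to B$.
\item Pullback stability of (trivial) fibrations, axiom (3) of Definition \ref{def cat of fibrant objects}.
\end{enumerate}
Given a cospan $A \to B \xleftarrow{\sim} C$, I would first replace $C \to B$ by a trivial fibration using (1) and 2-of-3. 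Then I pull it back along $A \to B$, using (2), to obtain the required span $A \xleftarrow{\sim} A' \to C$. The homotopy (rather than strict) version of the calculus condition, namely contractibility of the relevant categories of such completions, should follow by the same pullback construction. This works because pullback along a fixed trivial fibration is functorial.

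For the second equivalence, I would write down a functor
\begin{equation*}
\rho : \ncat{Span}_W(X,Y) \to \ncat{Cocycle}(X,Y)
\end{equation*}
by the mapping-path construction. Fix once and for all a path object $X \xrightarrow{\sim} X^I \twoheadrightarrow X \times X$. Given a span $(w : A \to X,\, g : A \to Y)$, set $A' = A \times_X X^I$, where the pullback is taken along the first projection $X^I \to X$. The composite $A' \to X^I \to X$ with the second projection is a fibration, and by 2-of-3 it is a trivial fibration since $w$ is a weak equivalence. The right leg of the new span is $A' \to A \xrightarrow{g} Y$. This construction is strictly functorial in $A$ over $X$, because the path object $X^I$ is fixed. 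The canonical map $A \to A'$, induced by $X \to X^I$, is a weak equivalence commuting with both legs. It therefore gives natural transformations
\begin{equation*}
1 \Rightarrow \iota\rho, \qquad 1 \Rightarrow \rho\iota .
\end{equation*}
Natural transformations induce simplicial homotopies on nerves, so $N\iota$ is a homotopy equivalence.

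The main obstacle I expect is the first step: checking precisely the hypotheses of Dwyer--Kan's calculus-of-fractions theorem (their Proposition 6.2 / 8.4 setting) rather than the classical 1-categorical Ore conditions. In particular, the weak equivalences used there must be ``homotopy right fractions'' closed under the required compositions. If that verification becomes unwieldy, I would fall back on comparing hammocks of arbitrary length to length-two ones directly. To do so, I would collapse each backward--forward pair inductively via the pullback construction above, and show that each collapsing map is a weak equivalence of nerves via Quillen's Theorem A, in the form of Proposition \ref{prop homotopy final iff induces weak equiv on hocolim}. Size issues are handled as in Remark \ref{rem size considerations for low's theorem}.
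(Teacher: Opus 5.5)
Your second step is correct. Because $X$ is fixed, a single path object $X^I$ makes $\rho$ strictly functorial, and the section $A\to A\times_X X^I$ supplies both natural transformations, so $N\ncat{Cocycle}(X,Y)\simeq N\ncat{Span}_W(X,Y)$.

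The gap is in the first step, which carries the actual content of the statement. The paper gives no argument here and simply cites Low.

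\emph{The completion you describe does not commute.} Write $C\xrightarrow{j}C'\xrightarrow{p}B$ for the Brown factorization and $r\colon C'\to C$ for the retraction. Then your map $A\times_B C'\to C$ is $r\circ\mathrm{pr}_2$. Since $vr\neq p$ (the two are only homotopic through $B^I$), the square over $B$ commutes only up to homotopy. No strict completion exists in general. For example, in spaces with homotopy equivalences and Hurewicz fibrations, the cospan $*\xrightarrow{1}[0,1]\xleftarrow{0}*$ admits no commuting square $*\xleftarrow{\sim}A'\to *$, because $A'\neq\varnothing$.

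\emph{What must be proved.} The real statement is the zigzag-level one behind Dwyer--Kan's homotopy calculus of right fractions. Let $\mathcal{Z}(X,Y)$ be the category of zigzags $X\xleftarrow{\sim}A\to B\xleftarrow{\sim}Y$, with morphisms given by weak equivalences at $A$ and $B$ fixing $X$ and $Y$. One must show that the inclusion $i\colon\ncat{Span}_W(X,Y)\hookrightarrow\mathcal{Z}(X,Y)$ is a weak equivalence on nerves.

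\emph{Why your justification fails.} ``Pullback along a fixed trivial fibration is functorial'' does not apply. The middle object $B$ varies over $\mathcal{Z}(X,Y)$, so the trivial fibration $Y\times_B B^I\to B$ is not fixed, and path objects in a category of fibrant objects are not functorial. Your fallback fails for the same reason. By the example above, the comma categories $(i\downarrow z)$ can be empty, and $(z\downarrow i)$ is empty whenever $Y\xrightarrow{\sim}B$ has no retraction. So Theorem A does not apply to $i$, and a collapsing functor in the other direction would need exactly the functorial completion you lack.

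\emph{How to close the gap with your own tools.}

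First, sending a zigzag to $(B,v)$ makes $\mathcal{Z}(X,Y)$ a Grothendieck opfibration over the category $Y/W$ of weak equivalences out of $Y$. The fibre over $(B,v)$ is $\ncat{Span}_W(X,B)$, the transition functors are postcompositions $b_*$, and $i$ is precisely the inclusion of the fibre over the initial object $(Y,1_Y)$.

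Second, by Thomason's theorem, $N\mathcal{Z}(X,Y)$ is the homotopy colimit of $N\ncat{Span}_W(X,-)$ over $Y/W$. If every $b_*$ is a weak equivalence, the maps out of the initial object give an objectwise weak equivalence from the constant diagram at $N\ncat{Span}_W(X,Y)$. Since $N(Y/W)$ is contractible, $Ni$ is then a weak equivalence.

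Third, $b_*$ is a weak equivalence; this is the homotopy invariance of Proposition \ref{prop cocycle homs are homotopical} (Low's Corollary 2.11), and here your pullback idea genuinely works.
\begin{itemize}
\item If $b$ is a trivial fibration, pulling back along the fixed map $b$ gives a functor $b^*$ with natural transformations $b_*b^*\Rightarrow 1$ and $1\Rightarrow b^*b_*$.
\item For general $b$, factor $b=pj$ by Brown's lemma, with $p$ a trivial fibration and $rj=1$ for a trivial fibration $r$. Then $r_*j_*=1$ forces $j_*$, and hence $b_*$, to be a weak equivalence.
\end{itemize}

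Alternatively, when path objects are functorial, your construction $z\mapsto A\times_B(Y\times_B B^{\Delta^1})$ becomes natural. This is the case for the simplicial categories of fibrant objects the paper actually uses. It then yields natural zigzags $z\leftarrow z'\to i\rho(z)$ and $s\to\rho i(s)$, which also suffices. The statement, however, concerns arbitrary categories of fibrant objects.
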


Thus for a category of fibrant objects, its derived mapping space can be computed as a nerve of a category, which is easier to work with than the hammock localization.

To obtain the most useful model for the derived mapping space of a homotopical category, one must use a simplicial model category. We refer to \cite[Chapter 9]{Hirschhorn2009} for the definition of a simplicial model category and for the necessary background in model category theory.

\begin{Def}
Given a simplicial model category $\cat{M}$, we let
\begin{equation*}
    \R \cat{M}(X,Y) \coloneqq \u{\cat{M}}(Q(X), R(Y)),
\end{equation*}
denote the \textbf{derived mapping space} of $X$ and $Y$, where $Q$ and $R$ are fixed cofibrant and fibrant replacement functors for $\cat{M}$, respectively. We let $\R \cat{M}_{Q,R} : \cat{M}^\op \times \cat{M} \to \ncat{sSet}$ denote the corresponding functor, when we wish to make the (co)fibrant replacement functors explicit.
\end{Def}

\begin{Prop}
If $f: X \to X'$ and $g : Y \to Y'$ are weak equivalences in a simplicial model category $\cat{M}$, then
\begin{equation*}
    \R\cat{M}(f,Y) : \R\cat{M}(X',Y) \to \R\cat{M}(X,Y), \qquad \text{and} \qquad \R\cat{M}(X,g): \R\cat{M}(X,Y) \to \R\cat{M}(X,Y')
\end{equation*}
are homotopy equivalences of Kan complexes.
\end{Prop}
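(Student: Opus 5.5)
We prove the statement by showing that the derived mapping space construction is homotopy invariant in each variable separately, by the standard argument for simplicial model categories. The key input is axiom SM7. For $Q(X)$ cofibrant, the functor $\u{\cat{M}}(Q(X), -)$ sends trivial fibrations between fibrant objects to trivial Kan fibrations. Dually, for $R(Y)$ fibrant, $\u{\cat{M}}(-, R(Y))$ sends trivial cofibrations between cofibrant objects to trivial Kan fibrations. It also sends cofibrations to Kan fibrations. In particular, every $\R\cat{M}(X,Y)$ is a Kan complex, since $\u{\cat{M}}(Q X, RY) \to \u{\cat{M}}(QX, *) = *$ is a fibration.

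For the second map, fix $X$ and a weak equivalence $g : Y \to Y'$. By functoriality of $R$, the map $R(g) : R(Y) \to R(Y')$ is a weak equivalence between fibrant objects. By Ken Brown's lemma, applied to the functor $\u{\cat{M}}(Q(X), -)$ restricted to fibrant objects, it suffices to know that this functor sends trivial fibrations between fibrant objects to weak equivalences, which is SM7. Concretely:
\begin{itemize}
\item Factor $(R(g), 1) : R(Y) \to R(Y') \times R(Y)$ as a trivial cofibration followed by a fibration $P$.
\item The projections $P \to R(Y)$ and $P \to R(Y')$ are then trivial fibrations between fibrant objects.
\item $R(g)$ is the composite of a section of the first projection with the second projection.
\end{itemize}
Applying $\u{\cat{M}}(Q(X), -)$ and using 2-out-of-3 shows that $\R\cat{M}(X,g)$ is a weak equivalence. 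The first map is dual: $Q(f)$ is a weak equivalence between cofibrant objects, and Ken Brown's lemma together with the dual SM7 statement shows that $\u{\cat{M}}(Q(f), R(Y))$ is a weak equivalence.

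Finally, a weak equivalence between Kan complexes is a homotopy equivalence, by Whitehead's theorem for simplicial sets, since all simplicial sets are cofibrant and Kan complexes are fibrant. This upgrades both maps to homotopy equivalences of Kan complexes.

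The only mildly delicate step is the Ken Brown reduction. One must check that the factorization produces objects that stay within the fibrant (respectively cofibrant) objects, so that SM7 applies at each stage. Since this is standard (e.g.\ \cite[Corollary 9.3.3]{Hirschhorn2009}), we would cite it rather than reprove it.
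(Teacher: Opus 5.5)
Your proof is correct: SM7 makes $\u{\cat{M}}(Q(X),-)$ and $\u{\cat{M}}(-,R(Y))$ send trivial fibrations, respectively trivial cofibrations, to trivial Kan fibrations; Ken Brown's lemma extends this to weak equivalences between fibrant, respectively cofibrant, objects; and Whitehead's theorem upgrades the resulting weak equivalences of Kan complexes to homotopy equivalences. The paper gives no separate proof here and simply appeals to these standard properties of simplicial model categories (essentially \cite[Corollary 9.3.3]{Hirschhorn2009}), so your argument is the one it implicitly relies on; the only detail worth stating explicitly is that $Q(f)$ and $R(g)$ are weak equivalences by naturality of the replacement maps together with 2-out-of-3, not by functoriality alone.
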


\section{Some History} \label{section history}

In this section, we provide our understanding of the history of ideas that led to the study of hypercovers in differential geometry\footnote{Warning, the authors are not historians.}. Let us start with the notion of a sheaf of abelian groups on a topological space. Sheaves were first conceived of by Jean Leray while a prisoner of war in Austria during World War II \cite{Miller2000}. It did not take long for the French algebraic geometers including Serre, Grothendieck and Verdier to master the theory of sheaves and push it to new heights, culminating in the massively influential works \'{E}l\'{e}ments de G\'{e}om\'{e}trie Alg\'{e}brique (EGA) \cite{Grothendieck1960EGA}, Fondements de la G\'{e}ometrie Alg\'{e}brique (FGA) \cite{Grothendieck1957FGA} and S\'{e}minaire de G\'{e}om\'{e}trie Alg\'{e}brique du Bois-Marie (SGA) \cite{Grothendieck1960SGA}\footnote{See \href{https://thosgood.net/translations/}{Tim Hosgood's Translation Webpage} where there are partial translations of EGA, SGA and a full translation of FGA into English.}. It is in the last of these, SGA, that Verdier proves his now famous theorem, the Verdier Hypecovering Theorem \cite[Expose V, Section 7.4]{Grothendieck1960SGA}. Typically one computes the cohomology of a sheaf $A$ of abelian groups on a topological space $X$, using an injective resolution of $A$. Verdier, however, computes sheaf cohomology by instead ``resolving'' $X$ in terms of hypercovers. One can then connect sheaf cohomology to \v{C}ech cohomology via a spectral sequence using hypercovers. Hypercovers thus became a useful tool for algebraic geometers.

Given an arbitrary topological space $X$, and a sheaf $A$ of abelian groups, there is a canonical map
\begin{equation*}
    \check{H}^n(X,A) \to H^n(X,A),
\end{equation*}
from the $n$-th \v{C}ech cohomology of $X$ to the $n$th sheaf cohomology of $X$ with values in $A$. In degrees $0$ and $1$ this map is an isomorphism, and in degree $2$ it is injective. However, if $X$ is a Hausdorff, paracompact topological space, then the above map is an isomorphism for all $n \geq 0$ and all abelian sheaves $A$. This was proven by Godemont as Theorem 5.10.1 in \cite{Godement1958}\footnote{But see \cite[Section 13.3]{Gallier2022} for a modern, readable discussion and proof.}.

In 1973, Kenneth Brown published ``Abstract Homotopy Theory and Generalized Sheaf Cohomology'' \cite{Brown1973} where he constructed a category of fibrant objects structure (Definition \ref{def cat of fibrant objects}) on the category of locally fibrant simplicial presheaves, with weak equivalences the local weak equivalences (Definition \ref{def j local weak equiv}), and fibrations the local fibrations (Definition \ref{def j local fibration}). Using this structure, he showed how one could compute abelian sheaf cohomology of a topological space using homotopical machinery. In modern parlance, we would say that Brown computes abelian sheaf cohomology using the connected components of the derived mapping space $\R \Hom(X,Y)$
between locally fibrant simplicial presheaves
\begin{equation*}
    H^n(X,A) \cong \pi_0 \R \Hom(X, K(A,n)).
\end{equation*}
Brown then proved a slightly more general\footnote{Brown proves Verdier's theorem using local trivial fibrations of locally fibrant simplicial presheaves, whereas Verdier asks for a stricter notion of hypercover, what we call a Verdier hypercover in this paper, see Definition \ref{def DHI-hypercover, basal and covering basal} for different distinctions.} version of Verdier's theorem with this machinery, which can be stated as an isomorphism
\begin{equation*}
\pi_0 \R \Hom(X, K(A,n)) \cong \ncolim{p : H \to X \in [\ncat{Triv}_{X}]} [H, K(A,n)],
\end{equation*}
see \cite[Section 3]{Brown1973} for more details. Also see the article \cite{Jardine2012} by Jardine for an elegant, modern proof of Verdier's theorem using the category of fibrant objects structure. Also in 1973, Brown-Gerstein \cite{Brown1973A} constructed a model structure on simplicial sheaves over a topological space $X$, now called the local projective model structure, with weak equivalences the local weak equivalences and cofibrations defined as the projective cofibrations (Proposition \ref{prop projective model structure}). In other words, they define the left Bousfield localization of the projective model structure by the local weak equivalences, of course before the term was invented.

In 1984, Joyal \cite{Joyal1984} constructed a model structure on simplicial sheaves over an arbitrary site, whose weak equivalences were local weak equivalences and whose cofibrations are monomorphisms. Soon after, Jardine \cite{Jardine1987} constructed a very similar model structure on the category of simplicial presheaves over a site, whose weak equivalences are also the local weak equivalences. These model structures are nowadays referred to as the local injective model structure on simplicial sheaves and simplicial presheaves, respectively, over a site. 

In 1998, Hirschowitz and Simpson \cite{Hirschowitz1998} wrote a more than 200 page preprint in French detailing a theory of $n$-stacks using model structures on presheaves of Segal spaces, where a version of \v{C}ech descent is used.

In 1998, Dugger \cite{Dugger1998} wrote a set of unfinished notes ``Sheaves and Homotopy Theory," where he introduced the \v{C}ech projective model structure and the $\R$-local model structure\footnote{Which would later be re-understood and clarified in the papers \cite{Bunke2013, Bunk2022}.} and left open the problem of showing that the \v{C}ech weak equivalences are precisely the local weak equivalences on the site of smooth manifolds. 

In 2001, Blander \cite{Blander2001} constructed the local projective model structure on simplicial presheaves. Since the identity map is a Quillen equivalence between the injective and projective versions of all of the model structures we will discuss here, we will consider only the projective model structure.

In 2004, Toen-Vezzosi \cite{Toen2004} used a simplicially enriched version of the local projective model structures on categories of simplicial presheaves, but now the underlying site is allowed to be a simplicially enriched site. They based their paper on the unpublished work of Rezk on homotopy toposes \cite{Rezk2010}. Their paper on model toposes served as a foundation for their later book \cite{Toen2008} on homotopical algebraic geometry. In the same year, Dugger-Hollander-Isaksen \cite{Dugger2004} gave an alternate construction of the local projective model structure. They showed that it is a left Bousfield localization of the corresponding projective model structure on simplicial presheaves at the class of maps $ p : H \to y(U)$ of simplicial presheaves that are local trivial fibrations in the sense of Brown and where $H$ is degreewise representable (Definition \ref{def DHI-hypercover, basal and covering basal}). They also constructed the left Bousfield localization of the projective model structure at the maps $p : \check{C}(r) \to y(U)$ of projections of \v{C}ech nerves. This is precisely the \v{C}ech projective model structure that Dugger introduced in \cite{Dugger1998}. Also in 2004, Tibor Beke \cite{Beke2004} proved an analogue of Verdier's theorem for truncated hypercovers.

In 2009, Lurie published his landmark book \cite{Lurie2009}, in which he developed the theory of $\infty$-categories. One of the great achievements of this work was to connect the homotopy theory of simplicial (pre)sheaves as discussed above with his new notion of $\infty$-sheaf. He proved that the \v{C}ech projective model structure presented an $\infty$-category equivalent to his own notion of the corresponding $\infty$-category of $\infty$-sheaves. He also proved that the local projective model structure presented the \textit{hypercompletion} of the $\infty$-category of $\infty$-sheaves. In other words, Lurie was able to argue that the \v{C}ech model structure was the more natural structure, but in some situations the $\infty$-category of $\infty$-sheaves was equivalent to its hypercompletion, i.e. the \v{C}ech and local model structures were Quillen equivalent (by a zig-zag of Quillen equivalences). 

At this point, higher sheaf theory had taken off as a subject, and we can hardly hope to cover its development. However, let us mention the early work of Fiorenza-Schreiber-Stasheff \cite{Fiorenza2011} and Freed-Hopkins \cite{Freed2013} where the above mentioned simplicial sheaf technology is used in the context of differential geometry to develop differential cohomology. A key issue, most clearly emphasized by Schreiber \cite{Schreiber2013}, is that the derived mapping Hom for simplicial presheaves should be thought of as an enhanced, generalized form of cohomology. In the context of differential geometry, $\infty$-sheaves would be the coefficient objects of generalized notions of sheaf cohomology, and they could be manipulated like homotopy types. One could then go back and forth between cocycle descriptions of geometric structures and the global geometric structures represented by $\infty$-sheafification, i.e. \v{C}ech fibrant replacement. Hence, from this formalism, to develop various exotic cohomology theories, it was necessary to be able to work with the \v{C}ech model structure, and understand fibrant replacement. 

Little work went into this direction, with many preferring to work entirely with $\infty$-categories, until in 2014, when Zhen Lin Low asked on MathOverflow \cite{Low2014MO} if there was a generalization of Verdier's Hypercovering theorem, in the sense that one could find a formula for the fibrant replacement of a locally fibrant simplicial presheaf in the local projective model structure. After a discussion with Rezk on the website, Low was able to solve his problem and recorded his solution in his paper \cite{Low2015}, the subject of which is Section \ref{section plus construction}.
\printbibliography

\end{document}